\def\Xint#1{\mathchoice
  {\XXint\displaystyle\textstyle{#1}}%
  {\XXint\textstyle\scriptstyle{#1}}%
  {\XXint\scriptstyle\scriptscriptstyle{#1}}%
  {\XXint\scriptscriptstyle\scriptscriptstyle{#1}}%
  \!\int}
\def\XXint#1#2#3{{\setbox0=\hbox{$#1{#2#3}{\int}$}
  \vcenter{\hbox{$#2#3$}}\kern-.5\wd0}}
\def\dashint{\Xint-}
\newcommand{\al}{\alpha}       
\newcommand{\vt}{\vartheta}    
\newcommand{\lda}{\lambda}
\newcommand{\om}{\Omega}            
\newcommand{\pa}{\partial}
\newcommand{\va}{\varepsilon}       
\newcommand{\ud}{\mathrm{d}}
\newcommand{\be}{\begin{equation}} 
\newcommand{\ee}{\end{equation}}
\newcommand{\w}{\omega}      
\newcommand{\Lda}{\Lambda}
\newcommand{\bA}{\mathbb{A}}
\newcommand{\cB}{\mathcal{B}}
\newcommand{\cC}{\mathcal{C}}
\newcommand{\cD}{\mathcal{D}}
\newcommand{\cF}{\mathcal{F}}
\newcommand{\bG}{\mathbb{G}}
\newcommand{\cG}{\mathcal{G}}
\newcommand{\cL}{\mathcal{L}} 
\newcommand{\Z}{\mathbb{Z}}
\newcommand{\M}{\mathcal{M}}
\newcommand{\cN}{\mathcal{N}}
\newcommand{\R}{\mathbb{R}}   
\newcommand{\cR}{\mathcal{R}}
\newcommand{\wc}{\rightharpoonup}        
\newcommand{\HH}{\mathcal{H}}
\newcommand{\vp}{\varphi}
\newcommand{\ga}{\gamma}
\newcommand{\ift}{\infty} 
\newcommand{\wt}{\widetilde}
\newcommand{\wh}{\widehat}
\newcommand{\f}{\frac}
\newcommand{\ol}{\overline}
\newcommand{\op}{\operatorname}
\newcommand{\Sg}{\Sigma}
\newcommand{\na}{\nabla}
\DeclareMathOperator{\dist}{dist}
\DeclareMathOperator{\diam}{diam}
\DeclareMathOperator{\supp}{supp}
\DeclareMathOperator{\sing}{sing}
\DeclareMathOperator{\loc}{loc}
\DeclareMathOperator{\CN}{CN}
\DeclareMathOperator{\HSV}{HSV}
\def\<{\langle}\def\>{\rangle}
\def\({\left(}\def\){\right)}
\def\[{\left[}\def\]{\right]}
\numberwithin{equation}{section}
\theoremstyle{plain}
\newtheorem{thm}{Theorem}[section]
\newtheorem{cor}[thm]{Corollary}
\newtheorem{lem}[thm]{Lemma}
\newtheorem{prop}[thm]{Proposition}
\theoremstyle{definition}
\newtheorem{defn}[thm]{Definition}
\newtheorem{rem}[thm]{Remark}
\title[supercritical semilinear elliptic equations]{Quantitative stratification and sharp regularity estimates for supercritical semilinear elliptic equations}
\author{Haotong Fu}
\address{School of Mathematical Sciences, Peking University, Beijing 100871, China}
\email{547434974@qq.com}
\author{Wei Wang}
\address{School of Mathematical Sciences, Peking University, Beijing 100871, China}
\email{2201110024@stu.pku.edu.cn}
\author{Zhifei Zhang}
\address{School of Mathematical Sciences, Peking University, Beijing 100871, China}
\email{zfzhang@math.pku.edu.cn}
\date{\today}
\begin{document}

\begin{abstract}
In this paper, we investigate the interior regularity theory for stationary solutions of the supercritical semilinear elliptic equation
$$
-\Delta u=|u|^{p-1}u\quad\text{in }\om,\quad p>\f{n+2}{n-2},
$$
where $ \om\subset\R^n $ is a bounded domain with $ n\geq 3 $. Our primary focus is on structures of the stratification for the singular sets. We define the $ k $-th stratification $ S^k(u) $ of $ u $ based on the tangent functions and tangent measures. We show that $ S^k(u) $ is $ k $-rectifiable and establish estimates for volumes associated with points, which have lower bounds on the regular scales. These estimates enable us to derive sharp interior estimates for the solutions. Specifically, if $ \al_p=\f{2(p+1)}{p-1} $ is not an integer, then for any $ j\in\Z_{\geq 0} $, we have
$$
D^ju\in L_{\loc}^{q_j,\ift}(\om).
$$
Precisely, for any $ \om'\subset\subset\om $, 
$$
\sup\{\lda>0:\lda^{q_j}\cL^n(\{x\in\om':|D^ju(x)|>\lda\})\}<+\ift, 
$$
where $ \cL^n(\cdot) $ is the $ n $-dimensional Lebesgue measure, and
$$
q_j=\f{(p-1)(\lfloor\al_p\rfloor+1)}{2+j(p-1)},
$$
with $ \lfloor\al_p\rfloor $ being the integer part of $ \al_p $. The proofs of these results rely on Reifenberg-type theorems developed by A. Naber and D. Valtorta to study the stratification of harmonic maps. 
\end{abstract}

\maketitle

\tableofcontents

\section{Introduction}

\subsection{Problem setting and main results} In this paper, we study the semilinear elliptic equation
\be
-\Delta u=|u|^{p-1}u\quad\text{in }\om,\label{superequation}
\ee
where unless otherwise specified, $ \om\subset\R^n $ is assumed to be a bounded domain, $ n\in\Z_{\geq 3} $, and $ p>\f{n+2}{n-2} $ is a supercritical exponent.

We mainly consider the regularity theory concerning some suitable solutions to this equation. In particular, we shall focus on the stationary solution defined below.

\begin{defn}[Stationary solutions]
A measurable function $ u:\om\to\R $ is called a stationary solution of the equation \eqref{superequation} if $ u\in(H^1\cap L^{p+1})(\om) $, and exhibits the following two properties.
\begin{enumerate}[label=$(\theenumi)$]
\item $ u $ is a weak solution of \eqref{superequation}, meaning
\be
\int_{\om}(\na u\cdot\na\vp-|u|^{p-1}u\vp)=0,\label{WeakCon}
\ee
for any $ \vp\in C_0^{\ift}(\om) $.
\item $ u $ satisfies the stationary condition, namely
\be
\int_{\om}\left[\(\f{|\na u|^2}{2}-\f{|u|^{p+1}}{p+1}\)\op{div}Y-DY(\na u,\na u)\right]=0,\label{StaCon}
\ee
for any vector field $ Y\in C_0^{\ift}(\om,\R^n) $.
\end{enumerate}
\end{defn}

Here, \eqref{WeakCon} and \eqref{StaCon} arise naturally from the variational conditions. To clarify this point, we define the corresponding energy functional
\be
\cF(u,\om):=\int_{\om}\(\f{|\na u|^2}{2}-\f{|u|^{p+1}}{p+1}\).\label{FunctEll}
\ee
The function $ u $ satisfies the formula \eqref{WeakCon}, if and only if
$$
\left.\f{\ud}{\ud t}\right|_{t=0}\cF(u+t\vp,\om)=0,
$$
for any $ \vp\in C_0^{\ift}(\om) $. Additionally, the stationary condition \eqref{StaCon} can be equivalently expressed as the property that $ u $ is a critical point of $ \cF(\cdot,\om) $ under inner perturbations of $ \om $. Precisely
$$
\left.\f{\ud}{\ud t}\right|_{t=0}\cF(u(\cdot+tY(\cdot)),\om)=0,
$$
for any $ Y\in C_0^{\ift}(\om,\R^n) $. Similar stationary conditions play significant roles in analyzing diverse geometric variational problems, such as harmonic maps, Yang-Mills connections, and varifolds. Interested readers can refer to the relevant literatures \cite{All72,Bet93,Eva90,Lin99,Pri83,Sim83,Tia00,TT04}. Stationary conditions, in general, ensure the validity of Pohozaev-type identities, indicating the monotonicity of specific energy densities. These structural characteristics allow for the application of techniques from geometric measure theory.

For stationary solutions of equation \eqref{superequation}, partial regularity and convergence results were initially established in seminal works \cite{Pac93,Pac94}. Recently, in \cite{WW15}, the authors conducted a blow-up analysis using tools developed by \cite{Lin99} in the context of harmonic maps (also refer to \cite{LW99,LW02a,LW02b} for similar arguments in the study of Ginzburg-Landau models and corresponding dynamic problems). Moreover, a rich body of literature explores various problem settings related to equation \eqref{superequation}, including stable solutions. Specifically, a solution $ u $ is called stable if it satisfies \eqref{WeakCon} and the second variation of the functional \eqref{FunctEll} is nonnegative. To be precise,
$$
\left.\f{\ud^2}{\ud t^2}\right|_{t=0}\cF(u+t\vp,\om)=\int_{\om}(|\na\vp|^2-p|u|^{p-1}\vp)\geq 0,
$$
for any $ \vp\in C_0^{\ift}(\om) $. The article \cite{Far07} provided a priori estimates for stable solutions defined as above, and in the case when the solution $ u $ is positive, the subsequent study \cite{Wan12} established improved estimates for the Hausdorff dimensions of singular sets. Notably, the results of these two papers demonstrate that if $ u $ is a positive stable solution of \eqref{superequation}, then $ u\in H_{\loc}^2 $ and $ u^p\in L_{\loc}^2 $. Consequently, for any $ Y\in C_0^{\ift}(\om,\R^n) $, we can apply the condition \eqref{WeakCon} with $ \vp=Y\cdot\na u $ and obtain \eqref{StaCon} through integration by parts, indicating that $ u $ is a stationary solution. Hence, the results presented in our paper apply to positive stable solutions of \eqref{superequation}. Furthermore, the study of stable solutions for more general semilinear elliptic equations of the form $ -\Delta u=f(u) $, with some appropriate function $ f $, remains a significant and active area of research. One of the most remarkable recent breakthroughs is the work \cite{CFRS20}, which showed that if $ n\leq 9 $, $ f $ is locally Lipschitz and nonnegative, then the stable solution $ u $ belongs to the H\"{o}lder space $ C^{0,\al} $ for some dimensional constant $ \al\in(0,1) $. For a comprehensive overview of stable solutions of semilinear elliptic equations, readers are referred to the monograph \cite{Dup11} and the survey paper \cite{Cab17}.

Concerning the supercritical equation \eqref{superequation}, if $ u\in(H^1\cap L^{p+1})(\om) $ is a stationary solution, it is plausible that $ u $ may lack global regularity in $ \om $. However, for subcritical ($ 1<p<\f{n+2}{n-2} $) and critical ($ p=\f{n+2}{n-2} $) cases, weak solutions defined by \eqref{WeakCon} are known to be smooth, as evidenced by the application of Moser's iteration techniques (see, for instance, Lemma B.3 in \cite{Str00}). For the supercritical case considered in this paper, to measure the singularity of stationary solutions, we define the singular set of $ u $ as the complement of the regular set. In particular,
\begin{align*}
\sing(u):&=\{x\in\om:u\text{ is continuous in }B_r(x)\text{ for some }r>0\}^{c},\\
&=\{x\in\om:u\text{ is smooth in }B_r(x)\text{ for some }r>0\}^{c}.
\end{align*}

To study the microscopic behavior of the solution $ u $ at a given point $ x\in\om $, it is necessary to conduct a blow-up analysis. The concept of blow-ups for functions can be formally defined as follows.

\begin{defn}[Blow-ups of functions]
Let $ u:\om\to\R $ be a measurable function, $ x\in\om $, and $ 0<r<\dist(x,\pa\om) $. We define the blow-up of $ u $ (centered at $ x $, with scale $ r $) as the function $ T_{x,r}u $, given by
$$
T_{x,r}u(y):=r^{\f{2}{p-1}}u(x+ry),
$$
which makes sense in $ r^{-1}(\om-x) $.
\end{defn}

These blow-ups are connected to the elliptic equation $ -\Delta u=|u|^{p-1}u $. If $ u $ is a stationary solution of \eqref{superequation}, then $ T_{x,r}u $ is also a stationary solution. Before introducing blow-ups for Radon measures, certain settings and conventions must be established. Let $ U\subset\R^n $ be a domain, and we denote by $ \M(U) $ the set of nonnegative Radon measures on $ U $. Given a sequence $ \{\mu_i\}\subset\M(U) $ and $ \mu\in\M(U) $, we call $ \mu_i\wc^*\mu $ in $ \M(U) $ if $ \mu_i\to\mu $ in the sense of Radon measures. Precisely,
\be
\lim_{i\to+\ift}\left|\int_Uf\ud\mu_i-\int_Uf\ud\mu\right|=0,\label{Radonsense}
\ee
for any $ f\in C_0(U) $. Furthermore, for $ d\in\Z_{\geq 2} $, we use $ \M(U,\R^d) $ to denote the set of vector-valued Radon measures on $ U $. Specifically, for any $ \mu\in M(U,\R^d) $, we can express $ \mu=(\mu^j)_{j=1}^d $, where $ \mu^j $ is a Radon measure (not necessarily nonnegative), for any $ j\in\{1,2,...,d\} $. Let $ \{\mu_i\}\subset\M(U,\R^d) $ and $ \mu\in\M(U,\R^d) $, we say that $ \mu_i\wc^*\mu $, if for any $ j\in\{1,2,...,d\} $, $ \mu_i^j\to\mu^j $ in the sense of \eqref{Radonsense}.

In some situations, we will translate solutions of \eqref{superequation} into Radon measures and utilize results from geometric measure theory. The following definition serves to facilitate this process.

\begin{defn}[Induced measures]
Let $ u\in(H^1\cap L^{p+1})(\om) $. The Radon measure induced by $ u $ on $ \om $ is defined as
$$
m_u:=\(\f{p-1}{2}|\na u|^2+\f{p-1}{p+1}|u|^{p+1}\)\ud y.
$$
\end{defn}

Next, the blow-ups for Radon measures are introduced. In this context, a crucial index denoted by $ \al_p $ is defined as
$$
\al_p:=\f{2(p+1)}{p-1}\in(2,n),
$$
which is associated with the supercritical problem. 

\begin{defn}[Blow-ups of Radon measures]
Let $ \mu\in\M(\om) $, $ x\in\om $, and $ 0<r<\dist(x,\pa\om) $. We define the blow-up of $ \mu $ (centered at $ x $, with scale $ r $) as the measure $ T_{x,r}\mu\in\M(r^{-1}(\om-x)) $ such that
$$
T_{x,r}\mu(A):=r^{\al_p-n}\mu(x+rA),
$$
for any $ \mu $-measurable set $ A\subset r^{-1}(\om-x) $.
\end{defn}

Here we see that if $ x\in\om $, $ 0<r<\dist(x,\pa\om) $, and $ u\in(H^1\cap L^{p+1})(\om) $, then $ T_{x,r}m_u=m_{T_{x,r}u} $. It is natural to explore the convergence of blow-ups for stationary solutions and the associated measures they induce.

\begin{defn}[Tangent functions and tangent measures]
Let $ x\in\om $. Assume that $ u\in(H^1\cap L^{p+1})(\om) $ is a stationary solution of \eqref{superequation}. We have the following definitions.
\begin{enumerate}[label=$(\theenumi)$]
\item If there exist a sequence $ \{r_i\} $, with $ r_i\to 0^+ $, and $ v\in(H_{\loc}^1\cap L_{\loc}^{p+1})(\R^n) $ such that $ T_{x,r_i}u\wc v $ weakly in $ (H_{\loc}^1\cap L_{\loc}^{p+1})(\R^n) $, then we call $ v $ as a tangent function of $ u $ at $ x $. We define $ \cF_x(u) $ as the set of all tangent functions of $ u $ at $ x $.
\item If there exist a sequence $ \{s_i\} $, with $ s_i\to 0^+ $, and $ \mu\in\M(\R^n) $ such that $ T_{x,s_i}m_u\wc^*\mu $ in $ \M(\R^n) $, then we call $ \mu $ as a tangent measure of $ u $ at $ x $. We let $ \M_x(u) $ be the set of all tangent measures of $ u $ at $ x $. 
\end{enumerate}
\end{defn}

The subsequent proposition establishes the existence of limits for blow-ups of stationary solutions and their induced measures.

\begin{prop}\label{tangentfm}
Assume that $ u\in (H^1\cap L^{p+1})(\om) $ is a stationary solution of \eqref{superequation}. For any $ x\in\om $, $ \cF_x(u)\neq\emptyset $ and $ \M_x(u)\neq\emptyset $. If $ x\notin\sing(u) $, namely, there exists $ r>0 $ such that $ u $ is smooth in $ B_r(x) $, then $ \cF_x(u) $ only contains the zero function and $ \M_x(u) $ only contains the zero measure.
\end{prop}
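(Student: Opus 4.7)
The central ingredient should be a Pohozaev-type monotonicity formula, developed for stationary solutions of \eqref{superequation} in \cite{Pac93, Pac94, WW15}. The idea is to show that the scale-invariant density
$$
\Theta_u(x, r) := T_{x, r} m_u(B_1) = r^{\alpha_p - n} m_u(B_r(x))
$$
is non-decreasing in $r \in (0, \dist(x, \partial \om))$, possibly modulo an explicit lower-order correction that vanishes as $r \to 0^+$; the exponent $\alpha_p - n$ is precisely what makes this quantity invariant under the rescalings $T_{x, r}$. This monotonicity follows by testing the stationary condition \eqref{StaCon} with a radial vector field $Y(y) = (y - x)\chi(|y-x|/r)$ and combining with \eqref{WeakCon} tested against $u\chi$. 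The consequence needed is the uniform upper bound
$$
T_{x, r} m_u(B_R) \leq C(R)
$$
valid for all $r > 0$ with $r R < \dist(x, \partial \om)$, where $C(R)$ depends on $u$, $R$, and $\dist(x, \partial \om)$.

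With this bound in hand, existence of tangent measures follows by a standard Banach--Alaoglu compactness plus diagonal extraction argument: for any sequence $s_i \to 0^+$, passing to a subsequence gives $T_{x, s_i} m_u \wc^* \mu$ in $\M(\R^n)$, so $\M_x(u) \neq \emptyset$. Existence of tangent functions then follows because $T_{x, r} m_u = m_{T_{x, r}u}$, so the mass bound translates directly into uniform control of $\|T_{x, r} u\|_{H^1(B_R)} + \|T_{x, r} u\|_{L^{p+1}(B_R)}$ for every $R > 0$; reflexivity of $H^1$ and $L^{p+1}$ plus another diagonal extraction produce a weak limit $v \in (H^1_{\loc} \cap L^{p+1}_{\loc})(\R^n)$, so $\cF_x(u) \neq \emptyset$.

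For the regular-point assertion, suppose $u \in C^\infty(B_\rho(x))$ with $|u|, |\nabla u| \leq M$ on $B_{\rho/2}(x)$. A direct change of variables gives, for every $R > 0$ and all $s \leq \rho/(2R)$,
$$
\int_{B_R}\!\left(|\nabla T_{x, s} u|^2 + |T_{x, s}u|^{p+1}\right) dy + T_{x, s} m_u(B_R) \leq C(M, R)\, s^{\alpha_p}.
$$
Since $\alpha_p > 2 > 0$, the right-hand side vanishes as $s \to 0^+$, so $T_{x, s}u \to 0$ strongly in $(H^1_{\loc} \cap L^{p+1}_{\loc})(\R^n)$ and $T_{x, s}m_u \to 0$ in $\M(\R^n)$. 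Consequently every subsequential weak, respectively weak-$^*$, limit is zero, forcing $\cF_x(u) = \{0\}$ and $\M_x(u) = \{0\}$. The main technical work is the careful derivation of the monotonicity formula and the verification that the corrective boundary terms are harmless; all remaining steps reduce to routine weak-compactness arguments.
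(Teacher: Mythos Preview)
Your proposal is correct and follows essentially the same route as the paper: a Pohozaev-type monotonicity formula yields uniform $(H^1_{\loc}\cap L^{p+1}_{\loc})$ bounds on the blow-ups, after which weak/weak-$^*$ compactness gives existence of tangent functions and measures, and for regular points a direct scaling computation shows all limits vanish. The only technical difference is that the paper works with its modified smoothed density $\vt_r(u,x)$ (Definition~\ref{defnofphi}, Proposition~\ref{MonFor}) together with the domination $\theta_{4\rho}\leq C\vt_\rho$ (Proposition~\ref{NonDegThePro}), rather than the raw density $\Theta_u(x,r)=\theta_r(u,x)$ you write down---note that $\theta_r$ itself is \emph{not} monotone, so your hedge ``modulo a lower-order correction'' really means replacing it by the Pacard density $\theta_r^{\op{sem}}$ of \eqref{semidensi} or the paper's $\vt_r$, and then invoking a separate step to recover the bound on $\theta_r$.
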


To fully understand the properties of stationary solutions, we introduce the concept of tangent pairs, which allows us to effectively combine the information from both tangent functions and tangent measures at a given point $ x\in\om $.

\begin{defn}[Tangent pairs]
Assume that $ u\in (H^1\cap L^{p+1})(\om) $ is a stationary solution of \eqref{superequation}. We say that $ (v,\mu)\in \cF_x(u)\times\M_x(u) $ is a tangent pair of $ u $ at $ x $, if there exists a sequence $ \{r_i\} $, with $ r_i\to 0^+ $ such that $ T_{x,r_i}u\wc v $ weakly in $ (H_{\loc}^1\cap L_{\loc}^{p+1})(\R^n) $ and $ T_{x,r_i}m_u\wc^*\mu $ in $ \M(\R^n) $.
\end{defn}

Given that the objects and estimates we will be considering are all in the local form, we can assume that $ \om=B_{40} $ and focus on the properties of stationary solutions within $ B_{10} $. Taking subsequence if necessary, Proposition \ref{tangentfm} indicates that for any stationary solution $ u\in(H^1 \cap L^{p+1})(B_{40}) $ of \eqref{superequation} and $ x\in B_{40} $, there exists a tangent pair associated with $ u $ at that point. The existence of the tangent pair enables us to classify points within $ B_{40} $, which we refer to as the concept of stratification. Before presenting further definitions and results about this, it is necessary to give some concepts related to functions and measures that possess special symmetry properties.

\begin{defn}[$ k $-symmetric functions]
Let $ k\in\{1,2,...,n\} $, $ r>0 $, and $ x\in\R^n $. A function $ u\in L^2(B_r(x)) $ (or $ u\in L_{\loc}^2(\R^n) $) is called as $ k $-symmetric at $ x $ in $ B_r(x) $ (or $ \R^n $), with respect to a $ k $-dimensional subspace $ V\subset\R^n $, if it satisfies the following two properties.
\begin{enumerate}[label=$(\theenumi)$]
\item $ u $ is $ -\f{2}{p-1} $-homogeneous at $ x $, i.e. for any $ 0<\lda<1 $, and a.e. $ y\in B_r $ (or $ y\in\R^n $), we have $ u(x+\lda y)=\lda^{-\f{2}{p-1}}u(x+y) $.
\item $ u $ is invariant with respect to $ V $, i.e. for any $ v\in V $ and a.e. $ y\in B_r(x) $ such that $ y+v\in B_r(x) $ (or a.e. $ y\in\R^n $), there holds $ u(y+v)=u(y) $. 
\end{enumerate}
For simplicity, if $ x=0 $, we call that $ u $ is $ k $-symmetric in $ B_r $ (or $ \R^n $), and if $ u\in L_{\loc}^2(\R^n) $, we call that $ u $ is $ k $-symmetric at $ x $. 
\end{defn}

\begin{defn}[$ k $-symmetric Radon measures]\label{kmeasure}
Let $ k\in\{1,2,...,n\} $, $ r>0 $, and $ x\in\R^n $. $ \mu\in\M(B_r(x)) $ (or $ \mu\in\M(\R^n) $) is called as $ k $-symmetric at $ x $ in $ B_r(x) $ (or $ \R^n $), with respect to a $ k $-dimensional subspace $ V\subset\R^n $, if it satisfies the following two properties.
\begin{enumerate}[label=$(\theenumi)$]
\item $ \mu $ is $ (n-\al_p) $-homogeneous at $ x $, i.e. for any $ 0<\lda<1 $, and $ \mu $-measurable set $ A\subset B_r(x) $ (or $ A\subset\R^n $), we have $ \mu(x+\lda(A-x))=\lda^{n-\al_p}\mu(A) $.
\item $ \mu $ is invariant with respect to $ V $, i.e. for any $ v\in V $ and $ \mu $-measurable set $ A\subset B_r(x) $ such that $ A+v\subset B_r(x) $ (or any $ \mu $-measurable set $ A\subset\R^n $), there holds $ \mu(v+A)=\mu(A) $. 
\end{enumerate}
For simplicity, if $ x=0 $, we call that $ \mu $ is $ k $-symmetric in $ B_r $ (or $ \R^n $) and if $ \mu\in\M(\R^n) $, we call that $ \mu $ is $ k $-symmetric at $ x $. 
\end{defn}

\begin{rem}\label{extend}
It is worth mentioning that $ n $-symmetric functions and measures are equivalent to zero functions and measures, respectively.

Given the homogeneous property, we can uniquely extend $ k $-symmetric functions and measures within balls to $ \R^n $. In some cases, if there is no ambiguity, we may use these two notions interchangeably.    
\end{rem} 

We can use the definitions of $ k $-symmetric functions and measures to define $ k $-symmetric pairs.

\begin{defn}\label{kpair}
Let $ k\in\{1,2,...,n\} $, $ r>0 $, and $ x\in\R^n $. The pair $ (v,\mu)\in L^2(B_r(x))\times\M(B_r(x)) $ (or $ (v,\mu)\in L_{\loc}^2(\R^n)\times\M(\R^n) $) is called as $ k $-symmetric at $ x $ in $ B_r(x) $ (or $ \R^n $), with respect to a $ k $-dimensional subspace $ V\subset\R^n $, if $ v $ and $ \mu $ are both $ k $-symmetric at $ x $ in $ B_r(x) $ (or $ \R^n $), with respect to $ V $.
\end{defn}

\begin{rem}
The essential point in the definition of $ k $-symmetric pair is that the function and measure in this pair have to be invariant with respect to the same $ k $-dimensional subspace.
\end{rem}

Given Proposition \ref{tangentfm}, for a stationary solution $ u\in(H^1\cap L^{p+1})(B_{40}) $, we define
$$
S^0(u)\subset S^1(u)\subset S^2(u)\subset...\subset S^{n-1}(u)\subset\sing(u),
$$
where for any $ k\in\{0,1,...,n-1\} $,
$$
S^k(u):=\{x\in B_{40}:\text{no tangent pair }(v,\mu)\text{ of }u\text{ at }x\text{ is }(k+1)\text{-symmetric}\}.
$$
A preliminary result on the stratification is on the estimate of the Hausdorff dimension.

\begin{thm}\label{Snminusalp}
Assume that $ u\in(H^1\cap L^{p+1})(B_{40}) $ is a stationary solution of \eqref{superequation}. Then 
$$
S^0(u)\subset S^1(u)\subset...\subset S^{n-\lceil\al_p\rceil}(u)=\sing(u),
$$
where $ \lceil a\rceil=\inf\{b\in\Z:b\geq a\} $, for any $ a\in\R $. Moreover, 
$$
\dim_{\HH}(S^k(u))\leq k,
$$
for any $ k\in\{0,1,2,...,n-\lceil\al_p\rceil\} $, where $ \dim_{\HH} $ denotes the Hausdorff dimension.
\end{thm}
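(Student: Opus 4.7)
The proof splits into the equality $ S^{n-\lceil\al_p\rceil}(u)=\sing(u) $ and the Hausdorff dimension bound $ \dim_{\HH}S^k(u)\leq k $. For the equality, the inclusion $ S^{n-\lceil\al_p\rceil}(u)\subset\sing(u) $ is immediate from Proposition~\ref{tangentfm}, since at a regular point the unique tangent pair is $ (0,0) $, which is $ n $-symmetric by Remark~\ref{extend} and hence $ (n-\lceil\al_p\rceil+1) $-symmetric. For the reverse inclusion my plan is the following homogeneity rigidity: if $ \mu\in\M(\R^n) $ is $ j $-symmetric with respect to a $ j $-plane $ V $, then the $ V $-invariance combined with the $ (n-\al_p) $-homogeneity disintegrates $ \mu $ as $ \cL^j|_V\otimes\mu' $ for a nonnegative Radon measure $ \mu' $ on $ V^\perp $ of homogeneity $ n-\al_p-j $. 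When $ j\geq n-\lceil\al_p\rceil+1 $ this exponent is strictly negative, so the identity $ \mu'(B_{\lda r})=\lda^{n-\al_p-j}\mu'(B_r) $ pushed to $ \lda\to 0^+ $ forces $ \mu'\equiv 0 $, hence $ \mu\equiv 0 $; since $ m_v=\mu $ controls both $ |\na v|^2 $ and $ |v|^{p+1} $, the companion $ v $ is also zero. Combined with the $ \ep $-regularity built into the proof of Proposition~\ref{tangentfm}, which guarantees a nontrivial tangent pair at every singular point, this rules out any $ (n-\lceil\al_p\rceil+1) $-symmetric tangent pair on $ \sing(u) $, giving $ \sing(u)\subset S^{n-\lceil\al_p\rceil}(u) $.

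For the dimension bound I would run the classical Federer--Almgren dimension reduction by induction on $ k $. Assume inductively that $ \dim_{\HH}S^{k-1}(u)\leq k-1 $ and, for contradiction, that $ \HH^{k+\delta}(S^k(u))>0 $ for some $ \delta>0 $. A Frostman-type density-point argument then yields $ x\in S^k(u)\setminus S^{k-1}(u) $ and a sequence $ r_i\to 0^+ $ along which the rescaled sets $ r_i^{-1}(S^k(u)-x)\cap B_1 $ cluster on a compact $ T\subset\ol{B_1} $ with $ \HH^{k+\delta}(T)>0 $; via a diagonal extraction (using that $ x\notin S^{k-1} $ admits at least one $ k $-symmetric tangent pair) I may assume $ T_{x,r_i}u $ and $ T_{x,r_i}m_u $ converge to a tangent pair $ (v,\mu) $ at $ x $ that is itself $ k $-symmetric with respect to some $ k $-plane $ V $ and, after a further blowup of a blowup, $ 0 $-symmetric at the origin. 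Upper semicontinuity of the symmetry defect under joint weak $ (H^1\cap L^{p+1})_{\loc} $-convergence and weak-$ * $ convergence of induced measures gives $ T\subset S^k(v) $. For any $ y\in T\setminus V $ with $ y\neq 0 $, the cone structure of $ v $ produces translation invariance of the tangent pair at $ y $ in direction $ y $, while the $ V $-invariance of $ v $ transfers directly; the resulting tangent pair at $ y $ is then $ (k+1) $-symmetric with respect to $ V+\op{span}(y) $, contradicting $ y\in S^k(v) $. Hence $ T\subset V\cup\{0\} $, but $ \HH^{k+\delta}(V\cap\ol{B_1})=0 $ since $ \dim V=k<k+\delta $, which is the desired contradiction.

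The principal obstacle will be the defect-measure analysis required to justify the symmetry-defect upper semicontinuity used above. For a sequence of stationary solutions $ u_i\wc v $ weakly in $ (H^1\cap L^{p+1})_{\loc} $, the weak-$ * $ limit of the induced measures $ m_{u_i} $ need not equal $ m_v $: there is a nonnegative defect measure $ \sg:=\mu-m_v $ encoding concentration of gradient and potential energy. One must show that $ \sg $ inherits the $ (n-\al_p) $-homogeneity and $ V $-invariance of $ (v,\mu) $ and that the stationary condition \eqref{StaCon}---which is strictly stronger than the weak-solution identity \eqref{WeakCon} and survives the weak limit through a Pohozaev-type integration-by-parts---yields enough rigidity on $ \supp\sg $ to run the dimension reduction on $ \sing(v)\cup\supp\sg $ rather than on $ \sing(v) $ alone. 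This is the analogue in the supercritical semilinear setting of Lin's defect-measure framework for stationary harmonic maps, and it is precisely why tracking tangent measures $ \mu $ in parallel with tangent functions $ v $ is indispensable throughout the argument.
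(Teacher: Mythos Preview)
Your argument for the equality $S^{n-\lceil\al_p\rceil}(u)=\sing(u)$ is correct and essentially matches the paper's Lemma~\ref{symproen}; the disintegration you describe is just an alternative packaging of the volume-counting there, and the minor slip ``$m_v=\mu$'' is harmless since $m_v\leq\mu$ as nonnegative measures, so $\mu=0$ still forces $v=0$.

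For the dimension bound you take a genuinely different route. The paper does \emph{not} run Federer dimension reduction. Instead it introduces an equivalent stratification $\Sg^k(u)$ via the invariant subspace $\Sg(\mu)=\{y:\vt(\mu,y)=\vt(\mu,0)\}$ of each tangent measure (Lemma~\ref{5prop}), proves $\Sg^k(u)=S^k(u)$ (Lemma~\ref{sgkSkrela}), and then shows by a single compactness/upper-semicontinuity argument (Lemma~\ref{deltaaplem}) that near every $x\in\Sg^k(u)$ the points with nearly maximal density lie $\delta$-close to a $k$-plane at every small scale. This $(\delta,k)$-approximation property feeds directly into the abstract Lemma~\ref{lemhau}, yielding $\dim_{\HH}\Sg^k(u)\leq k$ without iterating blowups or ever needing a stratification theory for the limit pair $(v,\mu)$.

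Your sketch has a concrete gap that this route avoids. You assert that ``via a diagonal extraction'' one can choose the \emph{same} sequence $r_i\to 0^+$ along which (i) the rescalings of $S^k(u)$ accumulate on a set $T$ with $\HH^{k+\delta}(T)>0$ and (ii) the tangent pair is $k$-symmetric. Property~(i) comes from a density-point argument and fixes the scales $r_i$; property~(ii) is only guaranteed along a possibly \emph{different} sequence, since $x\notin S^{k-1}(u)$ merely gives the existence of one $k$-symmetric tangent pair, not that all are. There is no diagonal argument that reconciles these. The classical Federer reduction sidesteps this by \emph{not} demanding $k$-symmetry of the first blowup: one accepts a merely $0$-symmetric limit, then iterates---finding a density point $y\neq 0$ of $S^k$ of the limit, blowing up again to gain one direction of invariance, and repeating $k+1$ times. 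But each step of that iteration requires a well-posed notion of $S^k$ for pairs $(v,\mu)$ carrying a defect measure, together with the analogues of Lemma~\ref{tanfunmea} and the cone-splitting lemmas for such objects. You flag this in your third paragraph but do not supply it; the paper's approach simply never needs it, because Lemma~\ref{deltaaplem} uses only the density $\vt_r(\mu,y)$ of a single tangent measure (formula~\eqref{thetamuya}) and the fact that $\Sg(\mu)$ is a linear subspace.
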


The results in this theorem are similar to those discussed in the study of harmonic maps (see Proposition 10.34 of \cite{GM05} or Corollary 1.12 of \cite{Lin99}). By employing the methods outlined in these references, an alternative form of classification for $ \sing(u) $ can be established. In \S \ref{Preliminaries}, we will demonstrate the equivalence of these two different forms. Part I of \cite{WW21} presented similar properties for the heat equation associated with the static problem \eqref{superequation}. The proof of this theorem is straightforward, utilizing basic arguments similar to those in harmonic maps. Although sharper results on the stratification will be provided later, we are proving this result independently to introduce certain fundamental concepts as preparations for the subsequent proofs of our main theorems.

In \cite{CN13b,NV17}, a series of methods was developed by the authors to improve the understanding of the stratification for harmonic maps. Specifically, they explored the estimates of the Minkowski dimension and rectifiability for the $ k $-stratification. We also aim to apply these tools, known as quantitative stratification and Reifenberg-type theorems, to investigate $ S^k(u) $ in our problem settings. To carry out this procedure, the definitions of quantitative symmetry and stratification need to be established. Prior to that, some preparatory steps are required.

We define the metric on $ \M(B_1) $ to characterize the differences of Radon measures. Since the space $ C_0(B_1) $ is separable for the norm $ \|\cdot\|_{\ift} $, we can select a countable and dense subset of it, denoted by $ \{f_i\} $ such that $ \{f_i\}\cap C_0(B_{1-j^{-1}}) $ is dense in $ C_0(B_{1-j^{-1}}) $ for any $ j\in\Z_{\geq 2} $. We can further assume that for any $ j\in\Z_{\geq 3} $, there exists $ f_j\in\{f_i\} $ such that $ f_j\equiv 1 $ in $ B_{1-2j^{-1}} $, $ 0\leq f_j\leq 1 $ in $ B_1 $, and $ \supp f_j\subset B_{1-j^{-1}} $. Through this choice of $ \{f_i\} $, we can define a metric on $ \M(B_1) $ that aligns with the weak$^*$ topology induced by the convergence of Radon measures. Drawing inspiration from the constructions in \cite{CHN15,Del08}, we set
$$
d_{0,1}(\mu,\eta):=\sum_{i=1}^{+\ift}\f{1}{2^i}\cdot\f{\left|\int_{B_1}f_i\ud\mu-\int_{B_1}f_i\ud\eta\right|}{1+\left|\int_{B_1}f_i\ud\mu-\int_{B_1}f_i\ud\eta\right|},
$$
for any $ \mu,\eta\in\M(B_1) $. Letting $ \{\mu_i\}\subset\M(B_1) $ and $ \mu\in\M(B_1) $, we have 
$$
d_{0,1}(\mu_i,\mu)\to 0\text{ if and only if }\mu_i\wc^*\mu\text{ in }\M(B_1).
$$
For any $ r>0 $ and $ x\in\R^n $, $ d_{0,1}(\cdot,\cdot) $ induces a metric on $ \M(B_r(x)) $, denoted by $ d_{x,r}(\cdot,\cdot) $. Indeed, since $ \{f_i\} $ is dense in $ C_0(B_1) $, we can deduce that $ \{f_i^{x,r}\} $ is a dense subset of $ C_0(B_r(x)) $, where $ f_i^{x,r}(\cdot)=f_i(\f{\cdot-x}{r}) $ for any $ i\in\Z_+ $. Consequently, we define a metric on $ \M(B_r(x)) $ by
$$
d_{x,r}(\mu,\eta):=\sum_{i=1}^{+\ift}\f{1}{2^i}\cdot\f{r^{\al_p-n}\left|\int_{B_r(x)}f_i^{x,r}\ud\mu-\int_{B_r(x)}f_i^{x,r}\ud\eta\right|}{1+r^{\al_p-n}\left|\int_{B_r(x)}f_i^{x,r}\ud\mu-\int_{B_r(x)}f_i^{x,r}\ud\eta\right|},
$$
for any $ \mu,\eta\in\M(B_r(x)) $. Through the change of variables for integrals to Radon measures, we have the scaling property
\be
d_{0,1}(T_{x,r}\mu,T_{x,r}\eta)=d_{x,r}(\mu,\eta),\label{Scaled01}
\ee
where $ \mu,\eta\in\M(B_r(x)) $. 

Let $ r>0 $ and $ x\in\R^n $. Based on $ d_{x,r}(\cdot,\cdot) $, we define a metric on $ L^2(B_r(x))\times\M(B_r(x)) $ by
$$
D_{x,r}((u,\mu),(v,\eta))=r^{\al_p-n-2}\int_{B_r(x)}|u-v|^2+d_{x,r}(\mu,\eta),
$$
for two pairs $ (u,\mu),(v,\eta)\in L^2(B_r(x))\times\M(B_r(x)) $. By \eqref{Scaled01}, $ D_{x,r}(\cdot,\cdot) $ satisfies
$$
D_{0,1}((T_{x,r}u,T_{x,r}\mu),(T_{x,r}v,T_{x,r}\eta))=D_{x,r}((u,\mu),(v,\eta)).
$$

Now, we can give our definitions of quantitative symmetry and stratification. 

\begin{defn}[Quantitative symmetry by pairs]\label{qunsybypair}
Assume that $ u\in(H^1\cap L^{p+1})(B_{40}) $ is a stationary solution of \eqref{superequation}. For $ \va>0 $ and $ k\in\{0,1,2,...,n\} $, we say that $ u $ is $ (k,\va) $-symmetric in $ B_r(x)\subset B_{40} $, if there exists a $ k $-symmetric pair $ (v,\mu)\in L_{\loc}^2(\R^n)\times\M(\R^n) $ (or simply a $ k $-symmetric pair $ (v,\mu)\in L^2(B_1)\times\M(B_1) $ in $ B_1 $) such that 
$$
D_{0,1}((T_{x,r}u,T_{x,r}m_u),(v,\mu))<\va.
$$
\end{defn}

Intuitively, quantitative symmetry implies that the blow-up of the pair $ (u,m_u) $ at $ x $, with scale $ r $, is in the $ \va $-neighborhood of a $ k $-symmetric pair $ (v,\mu) $.

\begin{defn}[Quantitative stratification by pairs]\label{defSkvar}
Assume that $ u\in(H^1\cap L^{p+1})(B_{40}) $ is a stationary solution of \eqref{superequation}. For any $ \va>0 $, $ k\in\{0,1,2,...,n-1\} $, and $ 0<r<1 $, the $ k $-th $ (\va,r) $-stratification of $ u $, denoted by $ S_{\va,r}^k(u) $, is given by
$$
S_{\va,r}^k(u):=\{x\in B_{10}:u\text{ is not }(k+1,\va)\text{-symmetric in }B_s(x)\text{ for any }r\leq s<1\}.
$$
We also define
$$
S_{\va}^k(u):=\bigcap_{0<r<1}S_{\va,r}^k(u).
$$
\end{defn}

From the definitions above, we see that
$$
S_{\va}^k(u)=\{x\in B_{10}:u\text{ is not }(k+1,\va)\text{ symmetric in }B_r(x)\text{ for any }0<r<1\}.
$$
Letting $ B_r(x)\subset B_{40} $, the energy density is given by
\be
\theta_r(u,x):=r^{\al_p-n}\int_{B_r(x)}\(\f{p-1}{2}|\na u|^2+\f{p-1}{p+1}|u|^{p+1}\).\label{DenDef1}
\ee
Note that $ \theta_r(u,x)=r^{\al_p-n}m_u(B_r(x)) $. We can present our main theorems on the estimates for $ r $-neighborhoods and the rectifiability of quantitative stratification.

\begin{thm}\label{volthm}
Let $ \va>0 $ and $ k\in\{0,1,2,...,n-\lceil\al_p\rceil\} $. Assume that $ u\in(H^1\cap L^{p+1})(B_{40}) $ is a stationary solution of \eqref{superequation}, satisfying $ \theta_{40}(u,0)\leq\Lda $. There exists a constant $ C>0 $, depending only on $ \va,\Lda,n $, and $ p $ such that for any $ 0<r<1 $, there holds
\be
\cL^n(B_r(S_{\va,r}^k(u)\cap B_1))\leq Cr^{n-k},\label{main1es}
\ee
where for $ A\subset\R^n $, 
$$
B_r(A):=\bigcup_{x\in A}B_r(x)=\{y\in\R^n:\dist(y,A)<r\} 
$$
is the $ r $-neighborhood of $ A $. In particular,
\be
\cL^n(B_r(S_{\va}^k(u)\cap B_1))\leq Cr^{n-k}.\label{main2es}
\ee
Moreover, $ S_{\va}^k(u) $ is upper Ahlfors $ k $-regular. 
\end{thm}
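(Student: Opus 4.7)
The plan is to adapt the Naber--Valtorta quantitative stratification machinery of \cite{NV17} to the pair $(u, m_u)$. The argument rests on three pillars: a quantitative monotonicity formula for $\theta_r(u,x)$, quantitative rigidity and cone-splitting lemmas for pairs, and a covering argument built on the discrete Reifenberg theorem.

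First, I would derive a quantitative monotonicity. Testing the stationary condition \eqref{StaCon} with a radial cutoff vector field $Y(y) = \phi(|y-x|/r)(y-x)$, together with the weak equation \eqref{WeakCon} against $\phi(|y-x|/r)u$, produces the Pohozaev-type identity
$$\theta_{r_2}(u,x) - \theta_{r_1}(u,x) \geq c\int_{B_{r_2}(x)\setminus B_{r_1}(x)}\f{\left|(y-x)\cdot\na u(y) + \f{2}{p-1}u(y)\right|^2}{|y-x|^{n-\al_p+2}}\,\ud y,$$
so that $r\mapsto\theta_r(u,x)$ is nondecreasing. Combined with the bound $\theta_{40}(u,0)\leq\Lda$ and standard density-transfer estimates, the dyadic drops $\sum_j[\theta_{2^{-j}}(u,x) - \theta_{2^{-j-1}}(u,x)]$ are summable uniformly.

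Second, I would establish two structural lemmas for pairs. \emph{Quantitative rigidity}: if $\theta_{2r}(u,x) - \theta_r(u,x) < \delta$, then $(T_{x,r}u, T_{x,r}m_u)$ is $(0,\eta(\delta))$-symmetric at the origin, with $\eta(\delta)\to 0$ as $\delta\to 0$. This is proved by a compactness-contradiction argument using Proposition \ref{tangentfm}: a defect-free limit of rescalings must be $-\f{2}{p-1}$-homogeneous in $H^1\cap L^{p+1}$ and its induced measure $(n-\al_p)$-homogeneous. \emph{Cone splitting}: if $(T_{x,r}u, T_{x,r}m_u)$ is $(0,\va)$-symmetric at $k+1$ points whose affine span is $\rho$-effectively a $k$-dimensional subspace $V$, then it is $(k,\eta'(\va,\rho))$-symmetric with respect to $V$. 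With these in hand, for $x\in S^k_{\va,r}(u)$, the contrapositive of cone-splitting at every scale $s\in[r,1]$ forces nearby points of the stratification to concentrate in a narrow $k$-dimensional tube whose width is controlled by the density drop at that scale.

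Third, I would run the Reifenberg covering. Let $\cC = S^k_{\va,r}(u)\cap B_1$ and let $\{x_\alpha\}$ be a maximal $r$-separated subset of $\cC$; set $\mu_r = \sum_\alpha r^k\delta_{x_\alpha}$, so that \eqref{main1es} reduces to $\mu_r(B_1)\leq C$. Combining cone-splitting with the summability of density drops yields a Jones-type $\beta_2$ bound
$$\int_0^1\!\!\int_{B_1}\beta_{\mu_r,2}^k(x,s)^2\,\ud\mu_r(x)\,\f{\ud s}{s} \leq C(\va)\int_{B_1}[\theta_1(u,x) - \theta_0(u,x)]\,\ud\mu_r(x) \leq C(\va,\Lda).$$
The discrete Reifenberg theorem of \cite{NV17} then delivers $\mu_r(B_1)\leq C$, giving \eqref{main1es}. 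Estimate \eqref{main2es} is immediate from $S^k_\va\subset S^k_{\va,r}$, and upper Ahlfors $k$-regularity of $S^k_\va(u)$ follows by running the same argument with $B_1$ replaced by $B_r(x_0)$ for $x_0\in S^k_\va(u)$.

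The principal obstacle is that the argument must track pairs rather than the function $u$ alone. In the supercritical regime $|T_{x,r_i}u|^{p+1}$ may concentrate along a subsequence, so the weak-$*$ limit $\mu$ of $T_{x,r_i}m_u$ need not coincide with $m_v$ for the weak-$H^1$ limit $v$; this is precisely why $\M_x(u)$ carries strictly more information than $\cF_x(u)$ and why Definition \ref{kpair} demands a \emph{common} invariance subspace for both components. Consequently quantitative rigidity and cone-splitting must be formulated and proved in the pair metric $D_{0,1}$, and verifying that the scalar monotonicity defect simultaneously controls the $L^2$ distance of the rescaled function and the weak-$*$ distance of the rescaled measure---so that closeness in $D_{0,1}$ genuinely follows from a single density drop---is the technical heart of the proof.
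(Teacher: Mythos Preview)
Your proposal captures the correct high-level architecture—monotonicity, quantitative rigidity, cone-splitting for pairs, an $L^2$-best approximation estimate linking $\beta$-numbers to density drops, and discrete Reifenberg—but the final step as written contains a genuine gap that the paper devotes most of \S\ref{coveringlemma} to resolving.

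The displayed Jones-type inequality
\[
\int_0^1\!\!\int_{B_1}\beta_{\mu_r,2}^k(x,s)^2\,\ud\mu_r(x)\,\f{\ud s}{s}\leq C(\va)\int_{B_1}[\theta_1(u,x)-\theta_0(u,x)]\,\ud\mu_r(x)\leq C(\va,\Lda)
\]
is circular in two places. First, passing from the pointwise bound $D_{\mu_r}^k(x,s)\leq Cs^{-k}\int_{B_s(x)}W_s(u,y)\,\ud\mu_r(y)$ to the integrated form requires Fubini, which produces a factor $s^{-k}\mu_r(B_s(y))$; controlling this factor already presupposes the upper Ahlfors $k$-regularity you are trying to prove. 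Second, the final inequality uses only $\theta_1-\theta_0\leq\Lda$, so the right-hand side is $C(\va)\Lda\,\mu_r(B_1)$, again the quantity to be bounded. Even granting both, Theorem~\ref{Rei1} requires the $\beta$-integral to be \emph{smaller than the dimensional constant} $\delta_{Rei,1}$, not merely bounded; a bound of size $C(\va,\Lda)$ does not trigger the theorem.

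The paper breaks this circularity by an iterated covering (Lemmas~\ref{maincover}, \ref{Cover2}, \ref{cover1}) rather than a single Reifenberg application. At each stage one covers $S^k_{\va,\delta r}(u)\cap B_R$ by balls on which either the radius has reached $r$ or the supremum of the (modified) density $\vt$ has dropped by a fixed $\delta>0$; since $0\leq\vt\leq C(\Lda)$, the number of iterations is bounded by $C(\Lda)/\delta$. Inside Lemma~\ref{cover1} the covering is built so that every surviving center $x$ is \emph{pinched}: $\vt_{r_x/20}(u,x)>E-\xi$ with $\xi$ as small as one likes. This pinching makes every dyadic drop $W_s(u,x)$ small on the relevant range, and the Ahlfors bound $\mu(B_s)\leq Cs^k$ is then proved \emph{inductively in the scale $s$} (Step~4 of the proof of Lemma~\ref{cover1}), so that the rough bound at scale $2^jr$ feeds the Reifenberg hypothesis at scale $2^{j+1}r$. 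Only after this do the pieces assemble into \eqref{main1es}.

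A second point: you work throughout with $\theta_r(u,x)$, but the paper replaces it by the smoothed density $\vt_r(u,x)$ of \eqref{DenDef2}, built with a cutoff $\phi$. This is not cosmetic. The standard densities $\theta_r$ and $\theta_r^{\mathrm{sem}}$ fail the comparison $\theta_r(\cdot,x)\leq(r/s)^{\ast}\theta_s(\cdot,y)$ for $B_r(x)\subset B_s(y)$ that the harmonic-map argument in \cite{NV17,NV18} uses, and they do not pass cleanly to limits under weak-$*$ convergence of $m_{u_i}$. The cutoff makes $\vt_r$ continuous under Radon-measure convergence (so rigidity and cone-splitting for \emph{pairs} go through by compactness without invoking unique continuation for measures, which is unavailable) and yields the clean monotonicity of Proposition~\ref{MonFor} and Corollary~\ref{coruse} that drive the $L^2$-best approximation Theorem~\ref{beta2}. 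Your identification of ``tracking pairs'' as the principal obstacle is right, and the smoothed density is precisely the device that makes the pair-level compactness arguments work.
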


A set $ A\subset\R^n $ is said to be upper Ahlfors $ k $-regular, if there exists some constant $ C>0 $ such that 
$$
\HH^k(A\cap B_r(x))\leq Cr^k, 
$$
for any $ x\in A $ and $ 0<r<\diam(A) $.

This theorem provides the estimates for the Minkowski dimensions of $ S_{\va}^k(u) $ with $ \va>0 $. To clarify them, we will define the Minkowski content and dimension.

\begin{defn}[Minkowski content, measure, and dimension]\label{Minkdef}
Let $ k\in\{0,1,2,...,n\} $, $ r>0 $, and $ S\subset\R^n $. The $ k $-dimensional Minkowski $ r $-content of $ S $ is given by
$$
\op{Min}_r^k(S):=(2r)^{k-n}\cL^n(B_r(S)).
$$
Define the upper and lower Minkowski measures by
\begin{align*}
\ol{\op{Min}}_0^k(S)&:=\limsup_{r\to 0^+}\op{Min}_r^k(S),\\
\underline{\op{Min}}_0^k(S)&:=\liminf _{r \rightarrow 0^+}\op{Min}_r^k(S).
\end{align*}
Minkowski dimension (or box-dimension) is defined by
$$
\dim_{\op{Min}}S:=\inf\{k\geq 0:\ol{\op{Min}}_0^k(S)=0\}.
$$
\end{defn}

In view of this definition, we can directly observe that for any $ \va>0 $, if $ u\in(H^1\cap L^{p+1})(B_{40}) $ is given by Theorem \ref{volthm}, then 
$$
\dim_{\op{Min}}(S_{\va}^k(u))\leq k,
$$
for any $ k\in\{0,1,2,...,n-\lceil\al_p\rceil\} $. Next, we consider the rectifiability of $ S_{\va}^k(u) $ and $ S^k(u) $. Rectifiability is a generalized concept that characterizes sets as the graphs of Lipschitz functions. To further apply this concept, we provide some definitions as follows.

\begin{defn}[Rectifiability]
Let $ N\in\Z_+ $ and $ k\in\{1,2,...,N\} $. We call a set $ M\subset\R^N $ as countably $ n $-rectifiable (or simply rectifiable), if
$$
M\subset M_0\cup\bigcup_{i\in\Z_+} f_i(\R^k),
$$
where $ \HH^k(M_0)=0 $, and $ f_i:\R^k\to\R^N $ is a Lipschitz map for any $ i\in\Z_+ $. 
\end{defn}

By the extension theorem of Lipschitz maps (see Theorem 1.2 of Chapter 2 in \cite{Sim83} for instance), this is equivalent to
$$
M=M_0\cup\bigcup_{i\in\Z_+}f_i(A_i),
$$
where $ \HH^k(M_0)=0 $, and $ f_i:A_i\subset\R^k\to\R^N $ is a Lipschitz map for any $ i\in\Z_+ $. In the study of rectifiable sets, the most remarkable property is the $ \HH^k $-a.e. existence of approximate tangent spaces. 

\begin{defn}[Approximate tangent space]
Let $ N\in\Z_+ $ and $ k\in\{1,2,...,N\} $. Assume that $ M $ is an $ \HH^k $-measurable subset of $ \R^N $ with $ \HH^k(M\cap K)<+\ift $, for any compact subset $ K\subset\R^N $. We call that a $ k $-dimensional subspace $ V\subset\R^N $ is the approximate tangent space for $ M $ at $ x\in M $, if
$$
\lim_{\lda\to 0^+}\int_{\lda^{-1}(M-x)}\vp(y)\ud\HH^k(y)=\int_{V}\vp(y)\ud\HH^k(y),
$$
for any $ \vp\in C_0(\R^N) $. If such $ V $ exists, we denote it by $ T_xM $.
\end{defn}

The following theorem establishes the connection between rectifiability and the existence of the approximate tangent spaces.

\begin{thm}[\cite{Sim83}, Theorem 1.6 of Chapter 3]\label{Simexis}
Let $ N\in\Z_+ $ and $ k\in\{1,2,...,N\} $. Assume that $ M $ is $ \HH^k $-measurable with $ \HH^k(M\cap K)<+\ift $ for any compact set $ K\subset \R^N $. Then $ M $ is $ k $-rectifiable if and only if the approximate tangent space $ T_xM $ exists for $ \HH^k $-a.e. $ x\in M $.
\end{thm}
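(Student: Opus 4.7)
The plan is to prove each direction of the biconditional separately, using the standard decomposition of rectifiable sets into Lipschitz graphs combined with the measure-theoretic meaning of approximate tangent spaces.

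For the forward direction (rectifiable implies a.e. existence of approximate tangent spaces), write $ M=M_0\cup\bigcup_i f_i(A_i) $ with $ \HH^k(M_0)=0 $ and $ f_i:A_i\subset\R^k\to\R^N $ Lipschitz. By Rademacher's theorem each $ f_i $ is differentiable $ \cL^k $-a.e. on $ A_i $, and on the subset where $ Df_i(y) $ has full rank one can further partition $ A_i $ into countably many Borel pieces on which $ f_i $ is bi-Lipschitz onto its image (via a Lusin-type approximation of Lipschitz maps by $ C^1 $ maps). On each such bi-Lipschitz piece, a direct change-of-variables with the area formula identifies the approximate tangent space $ T_xM $ with $ Df_i(y)(\R^k) $ at $ x=f_i(y) $ for $ \cL^k $-a.e. $ y $. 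A density argument then shows that discarding $ M_0 $ and the countably many overlaps does not destroy tangent space existence at $ \HH^k $-a.e. remaining point of $ M $, since $ \HH^k $-null subsets have $ k $-density zero at $ \HH^k $-a.e. point of a $ k $-set of locally finite mass.

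For the converse, fix a countable dense family $ \{V_j\} $ of $ k $-dimensional subspaces of $ \R^N $, and for each $ j $ and $ \delta>0 $ let $ M_{j,\delta} $ consist of the points $ x\in M $ where $ T_xM $ exists and makes angle at most $ \delta $ with $ V_j $. By hypothesis and density, the family $ \{M_{j,\delta}\} $ covers $ M $ up to an $ \HH^k $-null set as $ \delta $ runs through a countable sequence tending to zero. The definition of $ T_xM $ being within angle $ \delta $ of $ V_j $ forces rescaled copies of $ M $ to concentrate in the cone $ x+C_\eta(V_j) $ for any fixed $ \eta>\delta $, where $ C_\eta(V):=\{y\in\R^N:\dist(y,V)<\eta|y|\} $: for $ \HH^k $-a.e. $ x\in M_{j,\delta} $ there is a scale $ r(x)>0 $ beyond which $ M\cap B_{r(x)}(x)\subset x+C_\eta(V_j) $. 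Using Egorov's theorem on the convergence implicit in the tangent space definition, pass to a compact subset $ E\subset M_{j,\delta} $ of almost full $ \HH^k $-measure on which this cone inclusion holds for a common scale $ r_0>0 $. Covering $ E $ by finitely many balls of radius $ r_0/2 $, the uniform one-sided cone condition forces $ E $ to be contained in the graph of a Lipschitz function from $ V_j $ to $ V_j^\perp $ by the standard projection argument. Taking a countable union over $ (j,\delta) $ then exhausts $ M $ up to an $ \HH^k $-null set.

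The main obstacle is the passage in the converse from the measure-theoretic definition of $ T_xM $ to the set-theoretic cone inclusion $ M\cap B_r(x)\subset x+C_\eta(V_j) $ at small scales. The tangent space condition, being a weak limit of integrals against $ \vp\in C_0(\R^N) $, only asserts concentration of rescaled copies of $ M $ on $ V_j $ in a measure-theoretic sense; it does not immediately localize the set to a cone. To bridge this gap one uses that $ \HH^k $-a.e. point of $ M $ has upper $ k $-density bounded away from zero, so that any positive-mass cluster of $ M $ near $ x $ lying outside $ x+C_\eta(V_j) $ would produce a strictly positive limiting integral against a bump function supported in $ \R^N\setminus C_\eta(V_j) $, contradicting the tangent space limit. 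Once the set-theoretic cone inclusion is established at every small scale and then made uniform through Egorov, the remaining Lipschitz graph construction and the combinatorial bookkeeping over $ (j,\delta) $ are routine.
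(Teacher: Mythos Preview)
The paper does not prove this theorem: it is quoted as Theorem~1.6 of Chapter~3 in \cite{Sim83} and used as a black box in the proof of Theorem~\ref{rectthm}. So there is no ``paper's own proof'' to compare against; your proposal is an attempt to supply a proof the authors deliberately outsourced.

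That said, one point in your converse argument deserves tightening. You claim that the existence of $T_xM$ close to $V_j$ yields, for small $r$, a genuine set-theoretic inclusion $M\cap B_{r(x)}(x)\subset x+C_\eta(V_j)$, and you justify this by saying any positive-mass cluster outside the cone would contradict the weak limit. But a single point $y\in M$ outside the cone need not carry any mass at its own scale unless you already know $M$ has positive lower density at $y$. The correct formulation is to first pass to the full-$\HH^k$-measure subset $M'\subset M$ on which both $T_xM$ exists and the lower $k$-density is bounded below (the latter follows from the standard fact that $\Theta^{*k}(\HH^k\llcorner M,x)\ge 2^{-k}$ at $\HH^k$-a.e.\ $x\in M$, which actually forces the density to equal $1$ wherever the approximate tangent plane exists). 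Then for $x,y\in M'$ close together with $y\notin x+C_\eta(V_j)$, the positive density at $y$ produces mass $\gtrsim\rho^k$ in $B_\rho(y)$ for $\rho$ comparable to $|x-y|$, and this ball sits outside a slightly narrower cone at $x$, contradicting the tangent space condition at $x$. So the cone inclusion you want is for $M'$, not for $M$, and it holds only after discarding a null set and restricting to pairs of good points; once you make this precise, the Egorov-plus-projection step goes through as you describe.
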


For more results on the rectifiability and recent developments in the study of geometric measure theory and variational problems, one can refer to the survey paper \cite{Mat23}.

\begin{thm}\label{rectthm}
Let $ \va>0 $ and $ k\in\{1,2,...,n-\lceil\al_p\rceil\} $. Assume that $ u\in(H^1\cap L^{p+1})(B_{40}) $ is a stationary solution of \eqref{superequation}, satisfying $ \theta_{40}(u,0)\leq\Lda $. Then $ S_{\va}^k(u) $ and $ S^k(u) $ are both $ k $-rectifiable. Moreover, for $ \HH^k $-a.e. $ x\in S_{\va}^k(u) $ or $ x\in S^k(u) $, there exists a $ k $-dimensional subspace $ V\subset\R^n $ such that any tangent pair of $ u $ at $ x $ is $ k $-symmetric with respect to $ V $.
\end{thm}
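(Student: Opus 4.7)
The plan is to adapt the Naber--Valtorta rectifiable Reifenberg program (developed for harmonic maps in \cite{NV17}) to the pair $(u,m_u)$ and the pair-valued notion of quantitative symmetry. Theorem \ref{volthm} already delivers that $S_\va^k(u) \cap B_1$ is upper Ahlfors $k$-regular, so $\HH^k(S_\va^k(u) \cap B_1) < +\ift$; by Theorem \ref{Simexis} it then suffices to produce an approximate tangent $k$-plane at $\HH^k$-a.e. point of $S_\va^k(u)$. In the Reifenberg framework this reduces to a summable $L^2$-Jones-number bound along the set, which is the central estimate to establish.

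The two engines are (a) the monotonicity of the density $\theta_r(u,x)$ (a Pohozaev identity encoded in the stationarity condition \eqref{StaCon}), which provides the summable scale-drop
$$
\sum_{j \geq 0} \bigl(\theta_{2^{-j}}(u,x) - \theta_{2^{-j-1}}(u,x)\bigr) \leq C\Lda
$$
and identifies vanishing local drop with $0$-symmetric blow-ups; and (b) a quantitative cone-splitting lemma for pairs, asserting that if $(u,m_u)$ is $(k,\delta)$-symmetric in $B_1(x)$ with spine $V$ and is also $(0,\delta)$-symmetric at a point $y \in B_1(x)$ with $\dist(y,V) \geq \eta$, then $(u,m_u)$ is in fact $(k+1,\Psi)$-symmetric in $B_1(x)$ with $\Psi = \Psi(\delta\,|\,\eta,n,p,\Lda) \to 0$ as $\delta \to 0$. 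Lemma (b) is obtained by contradiction-compactness, using that an exactly $(k,0)$- and $(0,0)$-symmetric pair about $V$ and an off-$V$ point is $(k+1,0)$-symmetric about $\op{span}\{V,y-x\}$. Combining (a) and (b) via the standard Naber--Valtorta argument bounds the Jones number
$$
\beta_{2,k}^2(x,r) := \inf_{L^k\ \text{affine}} r^{-k-2} \int_{S_\va^k(u) \cap B_r(x)} \dist(z,L)^2 \, \ud\HH^k(z)
$$
by the local density drop at scale $r$; summing in scale yields $\sum_{j\geq 0} \beta_{2,k}^2(x,2^{-j}) \leq C\Lda$ at $\HH^k$-a.e. $x$, which is exactly the hypothesis of the rectifiable Reifenberg theorem and delivers $k$-rectifiability of $S_\va^k(u)$.

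With rectifiability in hand, at $\HH^k$-a.e. $x \in S_\va^k(u)$ the approximate tangent $k$-plane $V := T_x S_\va^k(u)$ exists. Given any tangent pair $(v,\mu)$ arising from a blow-up sequence $r_i \to 0^+$, the rescaled sets $r_i^{-1}(S_\va^k(u)-x)$ converge to $V$, so we may select $k$ points in these rescaled sets tending to a basis of $V$; at each such point $T_{x,r_i}(u,m_u)$ is asymptotically $0$-symmetric by the density-drop identification, and passing the cone-splitting lemma (b) to the limit forces $(v,\mu)$ to be translation-invariant along $V$ and hence $k$-symmetric with respect to $V$. The main technical obstacle throughout is the cone-splitting for \emph{pairs}: the metric $D_{x,r}$ couples an $L^2$-norm on $u$ with a weak$^*$-topology on $m_u$, and the compactness arguments must verify that limiting objects remain a genuine induced pair (not two decoupled limits). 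This is handled by showing that along blow-ups at a stationary solution $T_{x,r_i}u \to v$ strongly enough in $L_{\loc}^{p+1} \cap H_{\loc}^1$ for $T_{x,r_i}m_u \wc^* m_v$, so that the class of $(k+1)$-symmetric pairs is preserved under limits.

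Finally, we pass from $S_\va^k(u)$ to $S^k(u)$. If $x \in S^k(u) \setminus \bigcup_{m \in \Z_+} S_{1/m}^k(u)$, then for every $m$ there exists $s_m \in (0,1)$ at which $(u,m_u)$ is $(k+1,1/m)$-symmetric in $B_{s_m}(x)$; extracting a subsequence (either $s_m \to 0^+$ directly, or $s_m$ bounded away from zero followed by one further blow-up) and invoking compactness of $(k+1)$-symmetric pairs produces a $(k+1)$-symmetric element of $\cF_x(u) \times \M_x(u)$, contradicting the definition of $S^k(u)$. Hence $S^k(u) = \bigcup_{m \in \Z_+} S_{1/m}^k(u)$, a countable union of $k$-rectifiable sets is $k$-rectifiable, and the tangent-direction statement for $S^k(u)$ follows from the one for each $S_{1/m}^k(u)$.
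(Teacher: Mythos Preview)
Your overall architecture is correct and matches the paper: upper Ahlfors regularity from Theorem \ref{volthm}, an $L^2$-best approximation estimate feeding into the rectifiable Reifenberg theorem (Theorem \ref{Rei2}), and the passage $S^k(u)=\bigcup_m S^k_{1/m}(u)$ via Lemma \ref{SkSkva}. The tangent-direction conclusion via approximate tangent planes and quantitative cone-splitting (Proposition \ref{QuaConSpl}) is also what the paper does.

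However, there is a genuine gap in your handling of what you call the ``main technical obstacle.'' You assert that along blow-ups $T_{x,r_i}u\to v$ strongly enough in $H^1_{\loc}\cap L^{p+1}_{\loc}$ that $T_{x,r_i}m_u\wc^* m_v$, so that limits remain ``genuine induced pairs.'' This is \emph{false} when $\al_p$ is an integer: Proposition \ref{DefMea} shows that defect measures $\nu_1,\nu_2$ can appear, and in general the limiting measure is $\mu=m_v+\nu$ with $\nu\neq 0$. The whole point of the paper's stratification by tangent \emph{pairs} (rather than tangent functions alone) is precisely to accommodate this phenomenon. Your compactness argument for cone-splitting cannot proceed by reducing to induced pairs $(v,m_v)$; instead one must show directly that the limiting pair $(v,\mu)$ inherits the claimed symmetry even when $\mu\neq m_v$. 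The paper does this through Lemma \ref{lemusefre} and the analysis in Lemma \ref{tanfunmea}, which use the stationary condition \eqref{StaCon} and the structure of the matrix-valued defect measure $(\nu_1^{\beta\ga})$ to propagate invariance to $\nu$ (and hence to $\mu$).

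A secondary point: your Jones-number bound is stated only schematically. The paper makes this precise via Corollary \ref{beta22}, which requires both that $u$ is $(0,\delta)$-symmetric at the given scale (obtained from small density drop via Lemma \ref{SmaHom}) and that $u$ is \emph{not} $(k+1,\va)$-symmetric (which holds by membership in $S_\va^k(u)$). You should make explicit that this non-symmetry hypothesis is what produces the lower bound $\inf_V\int|V\cdot\nabla u|^2>\gamma$ needed in Theorem \ref{beta2}; without it the displacement $D_\mu^k$ cannot be controlled by $W_r$.
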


Owing to the analogous findings for harmonic maps presented in \cite{NV17}, the estimates \eqref{main1es} and \eqref{main2es}, together with the properties of the rectifiability of $ S^k(u) $ are all sharp. Notably, the proof of rectifiability for the top stratum $ S^{n-\rceil\al_p\rceil}(u) $ is more straightforward and has been established in \cite{WW15}, drawing on techniques from \cite{Pre87}. In a self-contained manner, \cite{Lin99} gave similar results for harmonic maps, and we believe these arguments can be adapted to demonstrate the rectifiability of $ S^{n-\lceil\al_p\rceil}(u) $. 

The central idea in the proof of Theorem \ref{rectthm} is that we can regard the set $ S_{\va}^k(u) $ as an approximation of $ S^k(u) $. Moreover, the relation is given by Lemma \ref{QuantiStrati}. Generally, $ S^k(u) $ is the union of all $ S_{\va}^k(u) $ with $ \va>0 $. Since $ S^k(u) $ is characterized by tangent pairs, here is a taking limit procedure, which makes the analysis more complicated. However, in the definition of $ S_{\va}^k(u) $, there is no such a process, and we can apply more methods here. Since we are concerned with the rectifiability, there is no loss since the union of $ S_{\va}^k(u) $ is actually countable by the inclusion property.

Numerous studies explore related themes. For approximate harmonic maps, \cite{NV18} is related; for energy-minimizing $ Q $-valued maps, one may consult \cite{DMSV18,HSV19}; and for $ p' $-harmonic maps, the findings are detailed in \cite{Ved21}. Additional relevant literature covers diverse geometric variational problems, as evidenced in works such as \cite{Alp18,Alp20,EE19,GJXZ24,Wan21}.

In contrast to the existing studies, in the problem of this paper, we perform the stratification of the stationary solution $ u $ for \eqref{superequation} based on tangent pairs, rather than only the tangent functions, applied in \cite{CN13a,CN13b}. The stratification in our paper is more subtle than those given by tangent functions. For $ u\in(H^1\cap L^{p+1})(B_{40}) $, being a stationary solution of \eqref{superequation}, we can define the stratification by tangent functions in the spirit of \cite{CN13a,CN13b} by
$$
S_{[\CN]}^k(u):=\{x\in B_{10}:\text{there is no }(k+1)\text{-symmetric tangent }\text{function of }u\text{ at }x\}.
$$
Consequently, we can also establish quantitative stratification related to $ S_{[\CN]}^k(u) $.

\begin{defn}[Quantitative stratification by tangent functions]\label{quansybyfun}
Let $ \va>0 $, $ k\in\{0,1,2,...,n\} $, $ 0<r<1 $, and $ B_r(x)\subset B_{40} $. Assume that $ u\in(H^1\cap L^{p+1})(B_{40}) $ is a stationary solution of \eqref{superequation}. We call $ u $ is [CN] $ (k,\va) $-symmetric in $ B_r(x) $ if there exists a $ k $-symmetric function $ v\in L_{\loc}^2(\R^n) $ (or simply a $ k $-symmetric function $ v\in L^2(B_1) $ in $ B_1 $) such that
$$
\int_{B_1}|T_{x,r}u-v|^2<\va.
$$
We also define $ S_{[\CN];\va,r}^k(u) $ as the collection of points $ x\in B_{10} $ such that $ u $ is not [CN] $ (k+1,\va) $-symmetric in $ B_s(x) $ for any $ r\leq s<1 $.
\end{defn}

An interesting question is the connection between $ S_{\va,r}^k(u) $ and $ S_{[\CN];\va,r}^k(u) $. In some cases, these two are equivalent, while in other cases, one is stronger. In a related study, similar results on the comparison of different types of stratification were provided in \cite{HSV19}. In this context, we present the following proposition and its corollary, which give the answer to this question.

\begin{prop}\label{proprela}
Let $ k\in\{0,1,2,...,n\} $, $ r>0 $, and $ x\in\R^n $. Assume that $ u\in(H^1\cap L^{p+1})(B_r(x)) $ is a stationary solution of \eqref{superequation}, satisfying $ \theta_r(u,x)\leq\Lda $. The following properties hold.
\begin{enumerate}[label=$(\theenumi)$]
\item For any $ \va>0 $, if $ u $ is $ (k,\va) $-symmetric in $ B_r(x) $, then $ u $ is $ [\CN] $ $ (k,\va) $-symmetric in $ B_r(x) $.
\item Assume that $ \al_p $ is not an integer. For any $ \va>0 $, there exists $ \delta>0 $, depending only on $ \va,\Lda,n $, and $ p $ such that if $ u $ is $ [\CN] $ $ (k,\delta) $-symmetric in $ B_r(x) $, then $ u $ is $ (k,\va) $-symmetric in $ B_r(x) $.
\end{enumerate}
\end{prop}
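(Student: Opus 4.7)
This is an unpacking of definitions. By Definition \ref{qunsybypair}, $(k,\va)$-symmetry of $u$ in $B_r(x)$ gives a $k$-symmetric pair $(v,\mu)$ with
$$
\int_{B_1}|T_{x,r}u-v|^2+d_{0,1}(T_{x,r}m_u,\mu)=D_{0,1}((T_{x,r}u,T_{x,r}m_u),(v,\mu))<\va.
$$
Dropping the nonnegative measure term yields $\int_{B_1}|T_{x,r}u-v|^2<\va$, and since $v$ is itself $k$-symmetric (Definition \ref{kpair}), this is precisely the condition for $u$ to be $[\CN]$ $(k,\va)$-symmetric in $B_r(x)$ (Definition \ref{quansybyfun}).

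\textbf{Part (2), compactness setup.} The plan is a blow-up/contradiction argument. Suppose the statement fails: there exist $\va_0>0$, $\delta_i\to 0^+$, stationary solutions $u_i\in(H^1\cap L^{p+1})(B_{r_i}(x_i))$ with $\theta_{r_i}(u_i,x_i)\leq\Lda$, and $k$-symmetric functions $w_i$ with respect to $k$-dimensional subspaces $V_i\subset\R^n$ satisfying $\int_{B_1}|T_{x_i,r_i}u_i-w_i|^2<\delta_i$, yet $u_i$ is not $(k,\va_0)$-symmetric in $B_{r_i}(x_i)$. Rescaling via $T_{x_i,r_i}$ reduces to $x_i=0$, $r_i=1$. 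Invoking the compactness underlying Proposition \ref{tangentfm} (cf.\ \cite{WW15}), pass to subsequences so that $u_i\wc u_\ift$ weakly in $(H^1\cap L^{p+1})_{\loc}$ with $u_\ift$ a stationary solution, $m_{u_i}\wc^*\mu_\ift$ in $\M$, $V_i\to V$ in the Grassmannian, and $u_i\to u_\ift$ strongly in $L_{\loc}^2$ by Rellich--Kondrachov. Combined with $\|u_i-w_i\|_{L^2(B_1)}\to 0$, this forces $w_i\to u_\ift$ in $L^2(B_1)$. Since the $-2/(p-1)$-homogeneity at $0$ and $V_i$-translation invariance pass to $L^2$ limits, $u_\ift$ is $k$-symmetric with respect to $V$.

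\textbf{The crux: $k$-symmetry of the limit measure.} The goal is to produce a $k$-symmetric Radon measure $d_{0,1}$-close to $m_{u_i}$ for large $i$; paired with $u_\ift$, this yields a $k$-symmetric pair close to $(u_i,m_{u_i})$ in $D_{0,1}$, contradicting the failure of pair $(k,\va_0)$-symmetry. The natural candidate is $\mu_\ift$. Decomposing $\mu_\ift=m_{u_\ift}+\nu$ with $\nu\geq 0$ the defect measure, $m_{u_\ift}$ is automatically $k$-symmetric because the $k$-symmetry of $u_\ift$ makes its energy density $(-\al_p)$-homogeneous and $V$-invariant. The approximate $V_i$-invariance of $u_i$ (inherited from $w_i$) transfers in the limit to $V$-invariance of $\mu_\ift$ and hence of $\nu$. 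The Pohozaev monotonicity of $r\mapsto r^{\al_p-n}m_{u_i}(B_r)$ passed to the limit gives monotonicity of $r\mapsto r^{\al_p-n}\mu_\ift(B_r(0))$, while the $k$-symmetry of $u_\ift$ forces the corresponding quantity for $m_{u_\ift}$ to be constant. Combined with the structural results of \cite{WW15} on the $(n-\al_p)$-rectifiable support of $\nu$, the non-integrality of $\al_p$ then yields the required $(n-\al_p)$-homogeneity of $\nu$, completing the symmetry of $\mu_\ift$.

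\textbf{Main obstacle.} The chief difficulty is precisely this last step, namely controlling the defect measure $\nu$ in the weak$^*$ limit and showing it inherits $k$-symmetry rather than carrying symmetry-breaking mass. The non-integrality of $\al_p$ is essential: a $V$-invariant, $(n-\al_p)$-rectifiable defect would have to live on a product set $V\times S$ with $S\subset V^\perp$ of fractional dimension $n-\al_p-k$, and the Pohozaev-derived monotonicity together with this fractional-dimension rigidity forces the correct $(n-\al_p)$-homogeneity. For integer $\al_p$ this rigidity fails, which is exactly the regime in which the pair stratification and the tangent-function stratification can genuinely diverge and part (2) must degenerate.
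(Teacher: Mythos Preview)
Your Part (1) is fine and matches the paper exactly.

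For Part (2), your compactness setup is correct, but the ``crux'' step takes an unnecessarily hard route and contains real gaps. You try to prove that the defect measure $\nu$ in the decomposition $\mu_\ift=m_{u_\ift}+\nu$ is itself $k$-symmetric, via (i) an inherited $V$-invariance and (ii) a fractional-dimension rigidity forcing $(n-\al_p)$-homogeneity. Neither step is justified as written: the $u_i$ are only close to the $k$-symmetric $w_i$ in $L^2(B_1)$, which gives no control on $\nabla u_i$ or on $|u_i|^{p+1}$, so there is no mechanism to transfer approximate $V_i$-invariance to the energy measures $m_{u_i}$ and hence to $\mu_\ift$. Your homogeneity argument (``a $V$-invariant $(n-\al_p)$-rectifiable defect lives on $V\times S$ with $S$ of fractional dimension $n-\al_p-k$, and Pohozaev monotonicity then forces homogeneity'') is a heuristic, not a proof; it would require substantial additional work and is not how the non-integrality hypothesis is actually used.

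The paper's argument is much simpler and bypasses all of this: by the fifth property of Proposition~\ref{DefMea} (the \cite{WW15} compactness), when $\al_p\notin\Z$ the defect measure vanishes identically, $\nu=0$, and in fact $u_i\to u_\ift$ \emph{strongly} in $(H^1\cap L^{p+1})(B_1)$. Hence $m_{u_i}\wc^* m_{u_\ift}$ with no defect, and $\mu_\ift=m_{u_\ift}$ is automatically $k$-symmetric because $u_\ift$ is. Pairing $u_\ift$ with $m_{u_\ift}$ then gives the $k$-symmetric pair you need, and $D_{0,1}((u_i,m_{u_i}),(u_\ift,m_{u_\ift}))\to 0$ yields the contradiction. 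In short: you should invoke $\nu=0$ directly rather than attempt to show $\nu$ is symmetric.
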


\begin{cor}\label{correla}
Let $ k\in\{0,1,2,...,n\} $, and $ 0<r<1 $. Assume that $ u\in(H^1\cap L^{p+1})(B_{40}) $ is a stationary solution of \eqref{superequation}, satisfying $ \theta_{40}(u,x)\leq\Lda $. The following properties hold.
\begin{enumerate}[label=$(\theenumi)$]
\item For $ \va>0 $, $ S_{[\CN];\va,r}^k(u)\subset S_{\va,r}^k(u) $.
\item If $ \al_p $ is not an integer, then for any $ \va>0 $, there exists $ \delta>0 $, depending only on $ \va,\Lda,n $, and $ p $ such that $ S_{\va,r}^k(u)\subset S_{[\CN];\delta,r}^k(u) $. 
\item If $ \al_p $ is not an integer, then $ S_{[\CN]}^k(u)=S^k(u) $.
\end{enumerate}
\end{cor}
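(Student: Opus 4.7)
The corollary is essentially the pointwise, scale-by-scale transcription of Proposition \ref{proprela}, combined with the characterizations
\begin{equation*}
S^k(u)=\bigcup_{\va>0}S_\va^k(u),\qquad S_{[\CN]}^k(u)=\bigcup_{\va>0}S_{[\CN];\va}^k(u),
\end{equation*}
which are anticipated by the forthcoming Lemma \ref{QuantiStrati}. The plan is therefore to derive (1) and (2) directly by contraposing Proposition \ref{proprela} at each scale $s\in[r,1)$, and then deduce (3) by passing to the union over $\va$.

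For part (1), take $x\in S_{[\CN];\va,r}^k(u)$ and fix any $s\in[r,1)$. If $u$ were $(k+1,\va)$-symmetric in $B_s(x)$, there would exist a $(k+1)$-symmetric pair $(v,\mu)$ with $D_{0,1}((T_{x,s}u,T_{x,s}m_u),(v,\mu))<\va$; since $D_{0,1}$ dominates its $L^2$-component, this would give $\int_{B_1}|T_{x,s}u-v|^2<\va$ with $v$ itself a $(k+1)$-symmetric function, contradicting $x\in S_{[\CN];\va,r}^k(u)$. This is just the contrapositive of Proposition \ref{proprela}(1) applied scale-by-scale, and requires no hypothesis on $\al_p$.

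Part (2) is symmetric. Given $\va>0$, set $\delta=\delta(\va,\Lda,n,p)>0$ as in Proposition \ref{proprela}(2), and take $x\in S_{\va,r}^k(u)$. For each $s\in[r,1)$ the density $\theta_s(u,x)$ is controlled by $\Lda$ through the monotonicity built into the stationarity condition \eqref{StaCon} together with $\theta_{40}(u,0)\leq\Lda$, so Proposition \ref{proprela}(2) applies at this scale. Were $u$ to be $[\CN]$ $(k+1,\delta)$-symmetric in $B_s(x)$, the conclusion of Proposition \ref{proprela}(2) would upgrade this to full $(k+1,\va)$-symmetry of the pair in $B_s(x)$, contradicting $x\in S_{\va,r}^k(u)$; hence $x\in S_{[\CN];\delta,r}^k(u)$. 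Part (3) is then the countable union (over $\va=1/j$, $j\in\Z_+$) of the inclusions from (1) and (2), modulo the two characterization identities above.

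The hard part is not the corollary itself but its input, Proposition \ref{proprela}(2): one must extract from mere $L^2$-closeness of $T_{x,s}u$ to a $(k+1)$-symmetric function the joint closeness in the pair metric $D_{0,1}$, and this is precisely where the non-integrality of $\al_p$ enters — integer values would allow measure concentration on $k$-planes that is invisible to the $L^2$-approximation of the function part. A secondary, routine point is the identity $S^k(u)=\bigcup_\va S_\va^k(u)$, which must accommodate the case that scales $r_\va$ realizing $(k+1,\va)$-symmetry accumulate at some $r^*>0$; in that scenario the blow-up $(T_{x,r^*}u,T_{x,r^*}m_u)$ is itself $(k+1)$-symmetric at finite scale, forcing $u$ to be $-\frac{2}{p-1}$-homogeneous at $x$ on $B_{r^*}(x)$, so that any further rescaling $r_j\to 0^+$ produces a genuine $(k+1)$-symmetric tangent pair.
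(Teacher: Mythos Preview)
Your treatment of parts (1) and (2) is correct and matches the paper's: both are immediate contrapositives of Proposition \ref{proprela} applied at each scale $s\in[r,1)$.

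For part (3) you take a different route from the paper. You reduce to the identities $S^k(u)=\bigcup_{\va>0}S_\va^k(u)$ and $S_{[\CN]}^k(u)=\bigcup_{\va>0}S_{[\CN];\va}^k(u)$, then combine them with (1) and (2). This works, but it leans on Lemma \ref{SkSkva} (proved later) for the first identity and on an analogous characterization for $S_{[\CN]}^k(u)$ that is never stated in the paper and would require its own compactness argument. The paper instead argues directly at the tangent level: the inclusion $S_{[\CN]}^k(u)\subset S^k(u)$ is immediate from the definitions, and for the reverse, if $x\notin S_{[\CN]}^k(u)$ one picks a $(k+1)$-symmetric tangent function $v$ along some $r_i\to 0^+$; since $\al_p\notin\Z$, Proposition \ref{DefMea}(5) forces strong convergence with no defect measure, so the tangent measure along the same sequence is $m_v$, and $(v,m_v)$ is a $(k+1)$-symmetric tangent pair. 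This is shorter and avoids the forward reference. Your approach has the advantage of making (3) a formal consequence of (1) and (2), but the paper's approach isolates more cleanly where the non-integrality of $\al_p$ is actually used.

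One minor correction: your reference ``Lemma \ref{QuantiStrati}'' points to a section label, not a lemma; the relevant result is Lemma \ref{SkSkva}.
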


The stratification based on symmetric properties of tangent pairs offers a significant advantage. In cases when $ \alpha_p $ is an integer, it is possible that 
$$
\sing(u)\backslash S_{[\CN]}^{n-1}(u)\neq\emptyset.
$$
As a result, a point could belong to the singular set even if the tangent function vanishes. Similar problems arise in the study of other scenarios such as harmonic maps, as demonstrated in \cite{HLP92,Poo91}. Bubbling conditions on the target manifold, as discussed in \cite{Lin99,SU82}, can help resolve such issues in the context of harmonic maps. However, for the current model, imposing such conditions would be unnatural, given that the bubble represents a non-trivial smooth solution to the lower-dimensional Yamabe problem (refer to Theorem 1.2 in \cite{WW15}). Consequently, the stratification based on tangent pairs avoids this phenomenon and can lead to more robust estimates.

The outcomes derived in our model exhibit similarities to those of $ p' $-harmonic maps as presented in \cite{Ved21}. This connection can be intuitively understood from the energy density in \eqref{DenDef1}. More precisely, for any $ x\in\sing(u)\backslash S_{[\CN]}^{n-1}(u) $, up to a subsequence $ r_i\to 0^+ $, there exist measures $ \nu_1,\nu_2\in\M(\R^n) $, and $ (\nu_1^{\beta\ga})_{\beta,\ga=1}^n\in\M(\R^n,\R^{n\times n}) $ such that
\be
\begin{gathered}
\f{|\na u_i|^2}{2}\ud y\wc^*\nu_1,\,\,\f{|u_i|^{p+1}}{p+1}\ud y\wc^*\nu_2\text{ in }\M(\R^n),\\
\(\f{\pa_{\beta}u_i\pa_{\ga}u_i}{2}\ud y\)\wc^*(\nu_1^{\beta\ga})\text{ in }\M(\R^n,\R^{n\times n}),
\end{gathered}\label{limitupau}
\ee
where $ u_i:=T_{x,r_i}u $. From Lemma 2.6 of \cite{WW15} or the second property of Proposition \ref{DefMea}, we deduce that $ 2\nu_1=(p+1)\nu_2 $. Consequently, a tangent measure at $ x $ is given by
$$
\mu=(p-1)(\nu_1+\nu_2)=\f{(p-1)(p+3)}{p+1}\nu_1.
$$
This, together with \eqref{StaCon} and \eqref{limitupau}, implies that
\be
\int_{\R^n}(\op{div}Y\ud\mu-\al_pD_{\beta}Y_{\ga}\ud\mu^{\beta\ga})=0,\label{statimeas}
\ee
for any $ Y\in C_0^{\ift}(\R^n,\R^n) $, where 
$$
(\mu^{\beta\ga}):=\(\f{(p-1)(p+3)}{p+1}\nu_1^{\beta\ga}\)\in\M(\R^n,\R^{n\times n}).
$$
By the terminology in \cite{Mos03}, we see that the matrix-valued Radon measure $ (\mu^{\beta\ga})_{\beta,\ga=1}^n $ is identified as a stationary measure with index $ \al_p $. For $ p' $-harmonic map $ f\in W^{1,p'}(B_{40},\cN) $, where $ \cN\subset\R^d $ is a smooth and compact manifold, we see that $
\{|\na f|^{p'-2}\pa_{\beta}f\cdot\pa_{\ga}f\}_{\beta,\ga=1}^n $ is a stationary measure with index $ p' $. The analysis above reveals the relations between our model and $ p' $-harmonic maps.\smallskip

Theorem \ref{volthm} holds importance in enhancing the regularity of stationary solutions of \eqref{superequation} when $ \al_p $ is not an integer. Initially, we introduce the concept of the regularity scale.

\begin{defn}
Let $ j\in\Z_{\geq 0} $. Assume that $ u\in(H^1\cap L^{p+1})(B_{40}) $ is a stationary solution of \eqref{superequation}. For $ x\in B_1 $, we define the regularity scale $ r_u^{j}(x) $ of $ u $ at $ x $ with order $ j $ by
$$
r_u^{j}(x):=\sup\left\{0\leq r\leq 1:\sup_{B_r(x)}\(\sum_{i=0}^{j}r^i|D^iu|\)\leq r^{-\f{2}{p-1}}\right\}.
$$
\end{defn}

It is worth noting that by standard regularity theory of elliptic equations, if $ r_u^j(x)=r>0 $ for some $ j\in\Z_+ $, then $ u\in C^{\ift}(B_{\f{r}{2}}(x)) $.

\begin{thm}\label{enhance}
Let $ j\in\Z_{\geq 0} $. Assume that $ u\in(H^1\cap L^{p+1})(B_{40}) $ is a stationary solution of \eqref{superequation}, satisfying $ \theta_{40}(u,0)\leq\Lda $. The following properties hold.
\begin{align*}
&\cL^n(B_r(\sing(u)\cap B_1))+\cL^n\(\left\{x\in B_1:\sum_{i=0}^{j}r^i|D^iu(x)|>r^{-\f{2}{p-1}}\right\}\)\\
&\quad\quad\leq\left\{\begin{aligned}
&Cr^{\lfloor\al_p\rfloor+1},&\text{ if }\al_p\notin\Z_+,\\
&Cr^{\al_p},&\text{ if }\al_p\in\Z_+,
\end{aligned}\right.
\end{align*}
for any $ r\in(0,1] $, where $ \lfloor a\rfloor=\sup\{b\in\Z:b\leq a\} $ for $ a\in\R $, and  $ C>0 $ depends only on $ j,\Lda,n $, and $ p $. 
\end{thm}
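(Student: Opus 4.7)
The plan is to reduce both set estimates to Theorem \ref{volthm} applied at the top stratum $k=n-\lceil\al_p\rceil$, noting that $n-k=\lceil\al_p\rceil$ equals $\lfloor\al_p\rfloor+1$ when $\al_p\notin\Z$ and $\al_p$ otherwise, so the target exponent is precisely what Theorem \ref{volthm} supplies. First, by the very definition of the regularity scale,
$$
\left\{x\in B_1:\sum_{i=0}^{j}r^i|D^iu(x)|>r^{-\f{2}{p-1}}\right\}\subset\{x\in B_1:r_u^j(x)<r\},
$$
and $\sing(u)\cap B_1\subset\{x\in B_1:r_u^j(x)<r\}$ for every $r>0$. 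So it suffices to exhibit a small set $E_r\subset B_1$ controlled by Theorem \ref{volthm} such that $\{r_u^j<r\}\cap B_1\subset E_r$ and $B_r(\sing(u)\cap B_1)\subset B_{Cr}(E_r)$, and then apply the Minkowski-type estimate \eqref{main1es}.

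The core ingredient I would establish is the following $\va$-regularity lemma: there exist constants $\va_0,\rho_0>0$, depending only on $j,\Lda,n,p$, such that whenever $u$ is as in the theorem and $u$ is $(n-\lceil\al_p\rceil+1,\va_0)$-symmetric in some $B_s(x)\subset B_{40}$, one has $r_u^j(x)\geq\rho_0 s$. I would argue by contradiction and compactness. Suppose stationary solutions $u_m$ with bounded energy are each $(n-\lceil\al_p\rceil+1,m^{-1})$-symmetric in $B_1$ while $r_{u_m}^j(0)<\rho_0$. Extracting a subsequence produces a tangent pair $(v,\mu)\in L_{\loc}^2(\R^n)\times\M(\R^n)$ that is $(n-\lceil\al_p\rceil+1)$-symmetric with respect to some subspace $V$ of dimension $n-\lceil\al_p\rceil+1$. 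A dimension count kills $\mu$: the $V$-invariant and $(n-\al_p)$-homogeneous measure $\mu$ disintegrates as Lebesgue measure on $V$ times a positive measure $\nu$ on $V^\perp\cong\R^{\lceil\al_p\rceil-1}$ (which has positive dimension since $\al_p>2$), and the scaling relation forces $\nu(\lambda A)=\lambda^{\lceil\al_p\rceil-\al_p-1}\nu(A)$; because $\lceil\al_p\rceil-\al_p-1<0$, letting $\lambda\to0^+$ shows $\nu(B_1)$ must vanish, so $\mu=0$. Consequently $m_{u_m}\wc^*0$ on $B_1$, giving $u_m\to 0$ strongly in $(H^1\cap L^{p+1})(B_{1/2})$; combining this with the $\va$-regularity theory for stationary solutions of \eqref{superequation} (as used in Proposition \ref{tangentfm}) and the standard elliptic bootstrap upgrades the convergence to a uniform $C^{j+1}$ bound with the correct scaling on a smaller ball, contradicting $r_{u_m}^j(0)<\rho_0$ for $\rho_0$ small enough.

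With this lemma in hand, the contrapositive reads: if $r_u^j(y)<r$, then for every $s\in[r/\rho_0,1)$, $u$ fails to be $(n-\lceil\al_p\rceil+1,\va_0)$-symmetric in $B_s(y)$, since otherwise $r_u^j(y)\geq\rho_0 s\geq r$. Hence
$$
\{y\in B_1:r_u^j(y)<r\}\subset S_{\va_0,r/\rho_0}^{n-\lceil\al_p\rceil}(u)\cap B_1.
$$
Also $\sing(u)=S^{n-\lceil\al_p\rceil}(u)\subset S_{\va_0,t}^{n-\lceil\al_p\rceil}(u)$ for all $t>0$, so $B_r(\sing(u)\cap B_1)\subset B_{r/\rho_0}(S_{\va_0,r/\rho_0}^{n-\lceil\al_p\rceil}(u)\cap B_1)$ after choosing $\rho_0\leq 1$. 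Applying Theorem \ref{volthm} with $\va=\va_0$, $k=n-\lceil\al_p\rceil$, and radius $r/\rho_0$ bounds both quantities by $C(r/\rho_0)^{\lceil\al_p\rceil}$, which is exactly $Cr^{\lfloor\al_p\rfloor+1}$ in the non-integer case and $Cr^{\al_p}$ in the integer case, after absorbing $\rho_0$ into $C$. The main obstacle is the $\va$-regularity lemma: the dimensional rigidity of the homogeneous measure is clean, but upgrading the weak/measure convergence of the contradicting sequence to the uniform $C^j$ estimate demanded by $r_u^j$ is delicate, and requires carefully combining the paper's $\va$-regularity theory with elliptic bootstrap in the presence of the potentially singular supercritical nonlinearity.
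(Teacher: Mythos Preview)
Your approach is correct and essentially identical to the paper's: the paper invokes Corollary~\ref{rucor} (built on Lemma~\ref{estimaterux}, Lemma~\ref{integraln}, and the measure dimension-rigidity Lemma~\ref{symproen}), which is exactly your compactness $\va$-regularity lemma, and then applies Theorem~\ref{volthm} at $k=k_{n,p}-1=n-\lceil\al_p\rceil$. One minor slip: the inclusion $S^{n-\lceil\al_p\rceil}(u)\subset S_{\va_0,t}^{n-\lceil\al_p\rceil}(u)$ you assert near the end is false in general (Lemma~\ref{SkSkva} gives the reverse containment), but you do not need it---your earlier observation $\sing(u)\cap B_1\subset\{r_u^j<r\}$ already yields $B_r(\sing(u)\cap B_1)\subset B_{r/\rho_0}\bigl(S_{\va_0,r/\rho_0}^{n-\lceil\al_p\rceil}(u)\cap B_1\bigr)$ directly.
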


A direct consequence is that if $ \al_p $ is not an integer, we can obtain improved interior regularity estimates for \eqref{superequation}.

\begin{cor}\label{regq}
Let $ j\in\Z_{\geq 0} $. Assume that $ u\in(H^1\cap L^{p+1})(B_{40}) $ is a stationary solution of \eqref{superequation}, satisfying $ \theta_{40}(u,0)\leq\Lda $. If $ \al_p $ is not an integer, then $ D^{j}u\in L^{q_j,\ift}(B_1) $, where
\be
q_j=\f{(p-1)(\lfloor\al_p\rfloor+1)}{2+j(p-1)}.\label{qj}
\ee
In particular, we have the estimate
$$
\|D^ju\|_{L^{q_j,\ift}(B_1)}\leq C,
$$
where $ C>0 $ depends only on $ j,\Lda,n $, and $ p $.
\end{cor}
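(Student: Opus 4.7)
The plan is to deduce Corollary \ref{regq} as a direct consequence of Theorem \ref{enhance} by choosing the scale $r$ appropriately in terms of the level $\lambda$. By definition of the weak $L^{q_j}$ norm, the goal is to show
$$
\sup_{\lda>0}\lda^{q_j}\cL^n(\{x\in B_1:|D^ju(x)|>\lda\})\leq C,
$$
where $C$ depends only on $j,\Lda,n,p$.

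First, I would treat the range $0<\lda\leq 1$ trivially: since $\cL^n(B_1)$ is a dimensional constant and $q_j>0$, we have $\lda^{q_j}\cL^n(\{|D^ju|>\lda\})\leq\cL^n(B_1)\leq C$. The substantive case is $\lda>1$. Given such a $\lda$, I would choose the scale
$$
r:=\lda^{-\f{p-1}{2+j(p-1)}}\in(0,1),
$$
which is tailored so that $r^j\cdot\lda=r^{-\f{2}{p-1}}$, i.e. the $j$-th term $r^j|D^ju|$ exceeds the threshold $r^{-\f{2}{p-1}}$ exactly when $|D^ju|>\lda$.

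Next, observe the pointwise inclusion
$$
\{x\in B_1:|D^ju(x)|>\lda\}\subset\left\{x\in B_1:\sum_{i=0}^{j}r^i|D^iu(x)|>r^{-\f{2}{p-1}}\right\},
$$
because for any $x$ in the left-hand side, the single term $r^j|D^ju(x)|$ already exceeds $r^j\lda=r^{-\f{2}{p-1}}$. Invoking Theorem \ref{enhance} (in the non-integer case $\al_p\notin\Z_+$) then yields
$$
\cL^n(\{x\in B_1:|D^ju(x)|>\lda\})\leq Cr^{\lfloor\al_p\rfloor+1}=C\lda^{-\f{(p-1)(\lfloor\al_p\rfloor+1)}{2+j(p-1)}}=C\lda^{-q_j},
$$
by the chosen relation between $r$ and $\lda$ and the definition \eqref{qj}. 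Multiplying by $\lda^{q_j}$ and taking the supremum over $\lda>1$ gives the desired weak-type bound; combining with the trivial range $\lda\leq 1$ completes the proof.

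There is essentially no conceptual obstacle here: the argument is a textbook level-set/weak-type $L^p$ extraction, and all the analytical work is encapsulated in Theorem \ref{enhance}. The only point requiring care is the algebraic matching of the exponents $r^{-2/(p-1)-j}=\lda$ and $r^{\lfloor\al_p\rfloor+1}=\lda^{-q_j}$, which is precisely the calculation that motivates the definition of $q_j$ in \eqref{qj}. The hypothesis that $\al_p$ is not an integer enters only through Theorem \ref{enhance}, where it is needed to obtain the sharper exponent $\lfloor\al_p\rfloor+1$ rather than $\al_p$.
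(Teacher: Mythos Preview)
Your proof is correct and is exactly the standard level-set argument the paper has in mind: the paper does not spell out a proof of Corollary~\ref{regq} at all, stating only that it is ``a direct consequence'' of Theorem~\ref{enhance}, and your choice $r=\lda^{-(p-1)/(2+j(p-1))}$ together with the inclusion $\{|D^ju|>\lda\}\subset\{\sum_{i=0}^jr^i|D^iu|>r^{-2/(p-1)}\}$ is precisely how that deduction goes. The algebraic identities $r^j\lda=r^{-2/(p-1)}$ and $r^{\lfloor\al_p\rfloor+1}=\lda^{-q_j}$ are correct, so there is nothing to add.
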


Given some interpolation methods in harmonic analysis (see Chapter 1 of \cite{Gra08}), for $ u $ given above, we observe that
$$
\|D^ju\|_{L^q(B_1)}\leq C(j,\Lda,n,p),\quad 0<q<q_j.
$$
Selecting $ p>\f{n+2}{n-2} $ such that $ n=\lfloor\al_p\rfloor+1 $ and $ \al_p $ is not an integer, we identify the function $ v_0(x)=c_0|x|^{-\f{2}{p-1}} $ as a stationary solution of \eqref{superequation}, where $ c_0\neq 0 $ is a constant dependent solely on $ n $ and $ p $. For any $ j\in\Z_+ $, it follows that
$$
\cL^n(\{x\in B_1:|D^jv_0(x)|>\lda\})\sim\lda^{-q_j},
$$
for any $ \lda>0 $, and $ q_j $ is given by \eqref{qj}. Consequently, the estimates provided in Corollary \ref{regq} are proven to be optimal. In cases when $ \al_p $ is an integer, the analysis of blow-ups may involve defect measures, preventing the enhancement of regularity as in instances where $ \al_p\not\in\Z_+ $ using the same method.

\subsection{Difficulties and approaches}

In the proof of main theorems in our model, we will adopt the Reifenberg-type theorems given in \cite{NV17}. However, arguments applied to the study of harmonic maps in \cite{NV17} cannot be directly used here due to several inherent difficulties in this problem.

In the study of harmonic maps, a commonly used energy density is defined as
$$
\theta_r^{\op{har}}(f,x):=r^{2-n}\int_{B_r(x)}|\na f|^2,
$$
where $ f\in H^1(B_{40},\cN) $ is a stationary harmonic map with some compact smooth manifold $ \cN\subset\R^d $ and $ B_r(x)\subset B_{40} $. An essential property of this energy density is that if $ B_r(x)\subset B_s(y)\subset B_{40} $, then
\be
\theta_r^{\op{har}}(f,x)\leq\(\f{r}{s}\)^{n-2}\theta_s^{\op{har}}(f,y).\label{simesti}
\ee
In \cite{NV17}, the authors applied this property to obtain a useful lemma (Lemma 8.2), which concerns the energy difference between two points with closed distance. In our problem, the widely used density given in \cite{Pac93} is
\be
\theta_r^{\op{sem}}(u,x):=r^{\al_p-n}\int_{B_r(x)}\(\f{|\na u|^2}{2}-\f{|u|^{p+1}}{p+1}\)+\f{r^{\al_p-n-1}}{p-1}\int_{\pa B_r(x)}|u|^2,\label{semidensi}
\ee
where $ u\in(H^1\cap L^{p+1})(B_{40}) $ denotes a stationary solution of \eqref{superequation} with $ B_r(x)\subset B_{40} $. Unfortunately, \eqref{simesti} does not hold for this density.

After \cite{NV17}, the same authors simplified the arguments in \cite{NV18} for approximate harmonic maps. However, they still used similar estimates in the proof (see formula (4.32) of that paper). Therefore, to apply the theories in \cite{NV17} to our problem, we need to give some non-trivial adjustments to the original energy density. Inspired by works of \cite{HSV19,Ved21,NV24,Sin18}, we will apply some modifications to the energy density above by utilizing cutoff functions (see \eqref{DenDef2}). These modifications allow us to employ the arguments in \cite{NV18} without relying on similar estimates like \eqref{simesti}. Intuitively, the idea of using cutoff functions to modify the energy density comes from the observation that even though a sequence of Radon measures $ \{\mu_i\}\subset\M(\R^n) $ satisfies $ \mu_i\wc^*\mu\in\M(\R^n) $, it is not necessarily true
$$
\lim_{i\to+\ift}\mu_i(B_r(x))=\mu(B_r(x)),
$$
for any $ B_r(x)\subset\R^n $, but
$$
\lim_{i\to+\ift}\int_{B_r(x)}\vp\(\f{|y-x|}{r}\)\ud\mu_i(y)=\int_{B_r(x)}\vp\(\f{|y-x|}{r}\)\ud\mu(y),
$$
where $ \vp\in C_0(B_1) $.

In the proof of \cite{NV18}, the authors also utilized the unique continuation property of harmonic maps to simplify the arguments from \cite{NV17}. This property asserts that if two weakly harmonic maps are smooth in a set with sufficiently large measure and identical in an open set, they are equivalent across the entire domain. In our problem settings, we can still show the unique continuation for weak solutions of \eqref{superequation} (see Proposition \ref{Ucp}). However, for measures, there is no such result. In this paper, most results are on tangent pairs, so we have to deal with the properties of measures. Consequently, we must find an alternative approach to replace the usage of these unique continuation properties. Fortunately, our modifications of the energy density also allow us to achieve this goal.

\subsection{Further discussions, applications, and remarks} The heat equation associated with \eqref{superequation} is given by
\be
\pa_tu-\Delta u =|u|^{p-1}u,\text{ in } B_1\times[0,T].\label{heatequ}
\ee
The blow-up analysis of the equation \eqref{heatequ}, based on the series methods developed by Lin and Wang in \cite{LW99,LW02a,LW02b}, was presented in the recent paper \cite{WW21}. Note that we can also investigate the enhancement of regularity through quantitative stratification. For mean curvature flow and harmonic heat flow, one can refer to \cite{CHN13} and \cite{CHN15}, respectively. In particular, the methods employed in \cite{CHN13,CHN15} can be applied to the study of the equation \eqref{heatequ}, yielding new estimates.

In this paper, we study stratification based on tangent pairs. Consequently, we can apply these arguments to consider the stratification of stationary measures $ (\mu^{\beta\ga})\in\M(\R^n,\R^{n\times n}) $ satisfying \eqref{statimeas}. We believe that the $ k $-stratum is $ k $-rectifiable, and similar estimates to those outlined in Theorem \ref{volthm} can likewise be derived.

\subsection{Organization of the paper} 

In this manuscript, we will follow the frameworks of \cite{NV18}. In \S \ref{Preliminaries}, we will present some primary ingredients in the proof, such as the modified energy density, the corresponding monotonicity formula, and properties of tangent functions and tangent measures. In \S \ref{QuantiStrati}, we apply the arguments in \cite{CN13b,NV17,NV18} to derive some essential properties in quantitative stratification. Moving on to \S \ref{RandL2}, we will introduce the Reifenberg-type theorems and present the $ L^2 $-best approximation results, which establish the relationships between Reifenberg-type theorems and the monotonicity formula. Subsequently, in \S \ref{coveringlemma}, we will employ inductive reasoning and theorems from preceding sections to establish valuable covering lemmas. Finally, we will use these lemmas to complete proofs of the main theorems in \S \ref{Mainproof}.

\subsection{Notations and conventions}
\begin{itemize}
\item Throughout this paper, we will use $ C $ to denote positive constants. Sometimes to emphasize that $ C $ depends on parameters $ a,b,... $, we use the notation $ C(a,b,...) $, which may change from line to line.
\item We will use the Einstein summation convention throughout this paper, summing the repeated index without the sum symbol.
\item For $ k\in\{1,2,...,n\} $, the Grassmannian $ \bG(n,k) $ is the set of all $ k $-dimensional subspaces of $ \R^n $ and $ \bA(n,k) $ is the collections of all $ k $-dimensional affine subspaces of $ \R^n $. For $ \{V\}\cup\{V_i\}\subset\bG(n,k) $, we call $ V_i\to V $ as $ i\to+\ift $ in the sense that $ \lim_{i\to+\ift}d_{\bG(n,k)}(V_i,V)=0 $, where $ d_{\bG(n,k)} $ is the Grassmannian metric. For $ \{L\}\cup\{L_i\}\subset\bA(n,k) $, we say $ L_i\to L $ if $ L_i=x_i+V_i $, $ L=x+V $, where $ \{V\}\cup\{V_i\}\subset\bG(n,k) $, with $ V_i\to V $ and $ x_i\to x $. 
\item Assume that $ U\subset\R^n $ is a domain. We call $ \mu\in\M(U) $ as a probability measure if $ \mu(U)=1 $. 
\item For $ r>0 $, $ x\in\R^n $, and $ k\in\{1,2,...,n\} $, we let 
$$
B_r^k(x):=\{x\in\R^k:|y-x|<r\}.
$$
If $ k=n $, we drop the superscription. If $ x=0 $, we denote it by $ B_r^k $. $ \HH^k $ is $ k $-dimensional Hausdorff measure on $ \R^n $. When $ k=n $, we denote $ \cL^n=\HH^n $ as the Lebesgue measure and $ \ud\cL^n(x)=\ud x $. If ambiguity occurs, we will drop $ \ud x $ in integrals. 
\item For a $ k $-dimensional subspace $ V=\op{span}\{v_i\}_{i=1}^k $, where $ \{v_i\}_{i=1}^k $ is an orthonormal basis and $ u\in H^1(\R^n) $, we set 
$$
|V\cdot\na u|^2=\sum_{i=1}^k|v_i\cdot\na u|^2.
$$
\item We have a convention that $ 0 $-dimensional affine subspaces refer to single points.
\item For a $ \cL^n $-measurable set $ A\subset\R^n $ with $ \cL^n(A)<+\ift $, and $ u\in L^1(A) $, we denote the average of integral of $ u $ on $ A $ by $ \dashint_Au:=\f{1}{\cL^n(A)}\int_{A}u $.
\item For $ k\in\Z_+ $, we denote $ \w_k:=\HH^k(B_1^k) $.
\end{itemize}

\section{Preliminaries}\label{Preliminaries}

\subsection{Monotonicity formula and partial regularity} In this subsection, we will establish the monotonicity formula for stationary solutions of \eqref{superequation}. Firstly, we need to modify the classical energy density \eqref{semidensi} by using some cutoff functions. 

\begin{defn}\label{defnofphi}
Let $ \phi:[0,+\ift)\to[0,+\ift) $ be a smooth function that satisfies the following properties.
\begin{enumerate}[label=$(\theenumi)$]
\item $ \supp\phi\subset[0,10) $.
\item $ \phi(t)\geq 0 $, and $ |\phi''(t)|+|\phi'(t)|\leq 100 $, for any $ t\in[0,10) $.
\item $ -2\leq\phi'(t)\leq -1 $, for any $ t\in[0,8] $.
\item $ \phi'(t)<0 $ for any $ t\in[0,9] $.
\end{enumerate}
For $ x\in\R^n $, we define $ \phi_{x,r},\dot{\phi}_{x,r}:\R^n\to[0,+\ift) $ as
$$
\phi_{x,r}(y):=\phi\(\f{|y-x|^2}{r^2}\),\quad\dot{\phi}_{x,r}(y):=\phi'\(\f{|y-x|^2}{r^2}\).
$$
\end{defn}
If $ u\in (H^1\cap L^{p+1})(B_{40}) $, and $ B_{10r}(x)\subset B_{40} $, we set
\be
\vt_r(u,x):=r^{\al_p-n}\int_{\R^n}\(\f{|\na u|^2}{2}-\f{|u|^{p+1}}{p+1}\)\phi_{x,r}-\f{2r^{\al_p-n-2}}{p-1}\int_{\R^n}|u|^2\dot{\phi}_{x,r}.\label{DenDef2}
\ee
By Definition \ref{defnofphi}, the integrals in the formula \eqref{DenDef2} are actually on $ B_{10r}(x) $, so they are all well-defined. Applying H\"{o}lder's inequality, we have
\be
\vt_r(u,x)\leq C\max\{\theta_{10r}(u,x),r^2(\theta_{10r}(u,x))^{\f{2}{p+1}}\},\label{thevt}
\ee
where $ C>0 $ depends only on $n$ and $p$. Additionally, through simple calculations, $ \theta_{(\cdot)}(\cdot,\cdot) $ (defined in \eqref{DenDef1}) and $ \vt_{(\cdot)}(\cdot,\cdot) $ are scaling invariant, namely, if $ u\in(H_{\loc}^1\cap L_{\loc}^{p+1})(\R^n) $, $ x,y\in\R^n $, and $ r,R>0 $, then
\be
\begin{aligned}
\theta_R(T_{x,r}u,y)&=\theta_{Rr}(u,x+ry),\\
\vt_R(T_{x,r}u,y)&=\vt_{Rr}(u,x+ry).
\end{aligned}\label{Scaleinv}
\ee

\begin{prop}[Monotonicity formula]\label{MonFor}
Let $ r>0 $ and $ x\in\R^n $. Assume that $ u\in (H^1\cap L^{p+1})(B_{10r}(x)) $ is a stationary solution of \eqref{superequation}. Then
\begin{align*}
&\vt_{r_2}(u,x)-\vt_{r_1}(u,x)\\
&\quad\quad=-2\int_{r_1}^{r_2}\(\rho^{\al_p-n-1}\int_{\R^n}\left|\f{(y-x)\cdot\na u}{\rho}+\f{2u}{(p-1)\rho}\right|^2\dot{\phi}_{x,\rho}\ud y\)\ud\rho\geq 0,
\end{align*}
for any $ 0<r_1<r_2<r $. In particular, $ \vt_{\rho}(u,x) $ is a nondecreasing function with respect to $ \rho\in(0,r) $.
\end{prop}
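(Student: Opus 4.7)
The plan is to compute $\f{d}{d\rho}\vt_\rho(u,x)$ by explicit differentiation in $\rho$ and to collapse the resulting bulk integrals into a single square by combining one instance of the stationary condition \eqref{StaCon} with one instance of the weak form \eqref{WeakCon}. By translation I assume $x=0$; since all supports lie in $\overline{B_{\sqrt{10}\rho}}\subset B_{10r}$ by Definition \ref{defnofphi}(i), every manipulation is carried out on a fixed compact set and existing integrability of $u$ is enough.

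First I would differentiate the explicit $\rho$-dependence of $\vt_\rho(u,0)$, which produces four families of terms: the prefactor contribution $(\al_p-n)\rho^{\al_p-n-1}\int\bigl(\f{|\na u|^2}{2}-\f{|u|^{p+1}}{p+1}\bigr)\phi_{0,\rho}$, a $|y|^2\dot\phi_{0,\rho}$-weighted integral from $\f{d}{d\rho}\phi_{0,\rho}$, the power coming from the second summand of $\vt_\rho$ multiplying $\int|u|^2\dot\phi_{0,\rho}$, and finally an integral involving $|y|^2\phi''(|y|^2/\rho^2)$ coming from $\f{d}{d\rho}\dot\phi_{0,\rho}$. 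The second-derivative cutoff term is reduced to a $\dot\phi_{0,\rho}$ integral via the divergence theorem: using $\na\bigl(\phi'(|y|^2/\rho^2)\bigr)=(2y/\rho^2)\phi''(|y|^2/\rho^2)$,
\[\int_{\R^n}|u|^2|y|^2\phi''(|y|^2/\rho^2)\,dy=-\f{\rho^2}{2}\int_{\R^n}\bigl[n|u|^2+2u(y\cdot\na u)\bigr]\dot\phi_{0,\rho}\,dy.\]

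Next I would derive two auxiliary identities at scale $\rho$. The stationary condition \eqref{StaCon} with the $C_0^\infty$ vector field $Y(y)=y\phi_{0,\rho}(y)$, noting $\op{div}Y=n\phi_{0,\rho}+\f{2|y|^2}{\rho^2}\dot\phi_{0,\rho}$ and $DY(\na u,\na u)=|\na u|^2\phi_{0,\rho}+\f{2}{\rho^2}(y\cdot\na u)^2\dot\phi_{0,\rho}$, gives the Pohozaev-type relation
\[\int\Bigl[\f{(n-2)|\na u|^2}{2}-\f{n|u|^{p+1}}{p+1}\Bigr]\phi_{0,\rho}+\f{2}{\rho^2}\int\Bigl[\Bigl(\f{|\na u|^2}{2}-\f{|u|^{p+1}}{p+1}\Bigr)|y|^2-(y\cdot\na u)^2\Bigr]\dot\phi_{0,\rho}=0,\]
and the weak form \eqref{WeakCon} with $\vp=u\phi_{0,\rho}$ yields
\[\int(|\na u|^2-|u|^{p+1})\phi_{0,\rho}+\f{2}{\rho^2}\int u(y\cdot\na u)\dot\phi_{0,\rho}=0.\]
Substituting both identities into the expression for $\f{d}{d\rho}\vt_\rho$ and using the algebraic identities $\al_p-2=\f{4}{p-1}$ and $\f{\al_p}{p+1}=\f{2}{p-1}$, the $|\na u|^2$- and $|u|^{p+1}$-weighted $\phi_{0,\rho}$-bulk integrals cancel completely, and the remaining cross-terms reorganize exactly into
\[\f{d}{d\rho}\vt_\rho(u,0)=-2\rho^{\al_p-n-1}\int_{\R^n}\left|\f{y\cdot\na u}{\rho}+\f{2u}{(p-1)\rho}\right|^2\dot\phi_{0,\rho}\,dy.\]

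Since $\dot\phi_{0,\rho}\le 0$ by Definition \ref{defnofphi}, the integrand is pointwise non-positive, so $\f{d}{d\rho}\vt_\rho\ge 0$; integrating from $r_1$ to $r_2$ gives the proposition. The main obstacle is purely algebraic bookkeeping: five separate bulk integrals must cancel in concert after applying the two Pohozaev-type identities, and the precise exponents $\al_p-2=\f{4}{p-1}$ and $\f{\al_p}{p+1}=\f{2}{p-1}$ are exactly what force the cancellation. A minor regularity point is that $\vp=u\phi_{0,\rho}$ lies in $H_0^1\cap L^{p+1}$ rather than $C_0^\infty$, so \eqref{WeakCon} must first be extended to this class by density, which is routine given $u\in H^1\cap L^{p+1}$.
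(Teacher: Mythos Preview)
Your proposal is correct and follows essentially the same approach as the paper: both use the stationary condition \eqref{StaCon} with $Y(y)=y\,\phi_{0,\rho}(y)$ and the weak form \eqref{WeakCon} with $\vp=u\,\phi_{0,\rho}$, and both rely on the same algebraic identities $\al_p-2=\f{4}{p-1}$, $\f{\al_p}{p+1}=\f{2}{p-1}$ to force the cancellations. The only organizational difference is that you differentiate $\vt_\rho$ directly and eliminate the $\phi''$ term by one spatial integration by parts, whereas the paper instead differentiates the weak identity \eqref{Weak1} in $\rho$ to obtain an auxiliary relation \eqref{Weak2}; these are equivalent manipulations and lead to the same final formula.
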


Before proving this proposition, we will present some relevant consequences.

\begin{cor}\label{HomProCor}
Let $ r>0 $, and $ x\in\R^n $. Assume that $ u\in (H^1\cap L^{p+1})(B_{10r}(x)) $ is a stationary solution of \eqref{superequation}. If $ \vt_{r_1}(u,x)=\vt_{r_2}(u,x) $, for some $ 0<r_1<r_2<r $, then $ u $ is $ 0 $-symmetric at $ x $ in $ B_{9r_2}(x) $, namely
\be
u(x+\lda y)=\lda^{-\f{2}{p-1}}u(x+y)\label{HomPro}
\ee
for any $ \lda\in(0,1) $ a.e. $ y\in B_{9r_2} $.
\end{cor}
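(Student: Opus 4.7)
The plan is to combine the monotonicity identity of Proposition \ref{MonFor} with the sign conditions built into the cutoff $\phi$, and then propagate the resulting infinitesimal homogeneity radially outward by unique continuation. First I would apply Proposition \ref{MonFor} directly: the hypothesis $\vt_{r_1}(u,x)=\vt_{r_2}(u,x)$ forces the nonnegative integrand
\[
-\rho^{\al_p-n-1}\left|\tfrac{(y-x)\cdot\na u}{\rho}+\tfrac{2u}{(p-1)\rho}\right|^{2}\dot\phi_{x,\rho}(y)
\]
to vanish for Lebesgue-a.e. $(\rho,y)\in(r_1,r_2)\times\R^{n}$. Property (4) of Definition \ref{defnofphi} guarantees that $\dot\phi_{x,\rho}$ is strictly negative precisely on $\{|y-x|<3\rho\}$, so the bracket itself must vanish there for a.e. $\rho$. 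Since the bracket does not involve $\rho$, letting $\rho\uparrow r_2$ through a set of full measure yields the pointwise identity
\[
(y-x)\cdot\na u(y)+\tfrac{2}{p-1}u(y)=0 \quad\text{for a.e. }y\in B_{3r_2}(x).
\]

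Next I would integrate this identity along radial rays emanating from $x$. Fixing $\omega\in\Ss^{n-1}$, the map $t\mapsto t^{2/(p-1)}u(x+t\omega)$ has vanishing weak derivative on $(0,3r_2)$ for a.e.\ $\omega$, so it is constant in $t$. This gives $u(x+t\omega)=t^{-2/(p-1)}h(\omega)$ on $B_{3r_2}(x)\setminus\{x\}$, where $h(\omega):=\rho_{0}^{2/(p-1)}u(x+\rho_{0}\omega)$ for any chosen $\rho_{0}\in(0,3r_2)$; equivalently, $u$ is already $0$-symmetric at $x$ on the smaller ball $B_{3r_2}(x)$.

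To upgrade this to $B_{9r_2}(x)$ I would introduce the homogeneous extension $\tilde u(x+z):=|z|^{-2/(p-1)}h(z/|z|)$ on $\R^{n}\setminus\{x\}$. Away from $x$, standard elliptic regularity makes $h$ smooth, and separation of variables in $-\Delta u=|u|^{p-1}u$ on $B_{3r_2}(x)\setminus\{x\}$ shows that $h$ satisfies the induced spherical ODE, so $\tilde u$ is a classical (in particular, weak) solution of the same equation on the entire punctured space $\R^{n}\setminus\{x\}$. Thus $u$ and $\tilde u$ are two weak solutions on the connected domain $B_{10r}(x)\setminus\{x\}$ that coincide on the open subset $B_{3r_2}(x)\setminus\{x\}$. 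Invoking the unique continuation property (Proposition \ref{Ucp}) forces $u\equiv\tilde u$ throughout $B_{10r}(x)\setminus\{x\}$, and since $9r_2<10r$ this delivers the desired $0$-symmetry on $B_{9r_2}(x)$.

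The main obstacle I anticipate is precisely this final propagation step: to apply unique continuation across the possibly singular point $x$ one must ensure enough joint regularity of $u$ and $\tilde u$ on $B_{10r}(x)\setminus\{x\}$ and use in an essential way that the punctured ball is connected (which relies on $n\geq 3$). Without this step one only obtains homogeneity on the smaller ball $B_{3r_2}(x)$ dictated by the support of $\dot\phi$, and the factor $9$ in the conclusion would shrink to the factor $3$ coming directly from Property (4) of $\phi$.
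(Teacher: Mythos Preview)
The paper's proof is direct and avoids unique continuation entirely. From $\vt_{r_1}(u,x)=\vt_{r_2}(u,x)$ and Proposition~\ref{MonFor} one obtains
\[
\int_{\R^n}\left|\tfrac{(y-x)\cdot\na u}{\rho}+\tfrac{2u}{(p-1)\rho}\right|^{2}\dot\phi_{x,\rho}\,\ud y=0\quad\text{for every }\rho\in(r_1,r_2),
\]
and the paper reads off the radial identity \eqref{yxup1} on $B_{9r_2}(x)$ immediately from the strict negativity of $\dot\phi_{x,\rho}$ there. Your factor $3$ comes from taking the argument of $\phi$ to be $|y-x|^{2}/\rho^{2}$ as literally written in Definition~\ref{defnofphi}, but every other use of the cutoff in the paper (the support of $\phi_{x,r}$ stated to lie in $B_{10r}(x)$ just after \eqref{DenDef2}, the ball $B_{8s}(x)$ in Corollary~\ref{coruse}, and $B_{9r_2}(x)$ here) is consistent only with the argument $|y-x|/\rho$; the square in the definition is evidently a typo. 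With that reading, property~(4) gives $\dot\phi_{x,\rho}<0$ on $\{|y-x|<9\rho\}$, and letting $\rho\uparrow r_2$ through the vanishing set yields \eqref{yxup1} on all of $B_{9r_2}(x)$ with no propagation step.

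Your unique continuation detour also has a real gap. The claim that ``standard elliptic regularity makes $h$ smooth'' is unjustified: a stationary solution of \eqref{superequation} may have singular set of dimension $n-\lceil\al_p\rceil$ (Theorem~\ref{Snminusalp}), so $u$ need not be smooth on $B_{3r_2}(x)\setminus\{x\}$ and $h$ may itself be singular on $\Ss^{n-1}$. Hence $\tilde u$ is generally not a classical solution on $\R^{n}\setminus\{x\}$, and the separation-of-variables step fails as written. One could repair this by showing instead that $\tilde u$ is a \emph{weak} solution (by scaling) whose singular set is a cone of finite $\HH^{n-\al_p}$ measure and then invoking Proposition~\ref{Ucp}; but the paper explicitly notes (just before Proposition~\ref{Ucp}) that it does not use unique continuation, and the direct argument above is the intended one.
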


\begin{rem}\label{RemHom}
If $ u\in H_{\loc}^1(\R^n) $, taking derivatives for both sides of \eqref{HomPro} with respect to $ \lda $, we can obtain that $ u $ is $ 0 $-symmetric at $ x $, if and only if 
\be
(y-x)\cdot\na u+\f{2u}{p-1}=0,\text{ for a.e. }y\in\R^n.\label{yxup1}
\ee 
\end{rem}

\begin{proof}[Proof of Corollary \ref{HomProCor}]
Since $ \vt_{r_1}(u,x)=\vt_{r_2}(u,x) $, we have, for any $ \rho\in(r_1,r_2) $,
$$
\int_{\R^n}\left|\f{(y-x)\cdot\na u}{\rho}+\f{2u}{(p-1)\rho}\right|^2\dot{\phi}_{x,\rho}\ud y=0.
$$
Given Definition \ref{defnofphi}, this implies that \eqref{yxup1} holds for a.e. $ y \in B_{9r_2}(x) $, and \eqref{HomPro} then follows directly from Remark \ref{RemHom}.
\end{proof}

\begin{cor}\label{coruse}
Let $ r>0 $ and $ x\in\R^n $. Assume that $ u\in(H^1\cap L^{p+1})(B_{20r}(x)) $ is a stationary solution of \eqref{superequation}. Then
$$
\vt_{2s}(u,x)-\vt_s(u,x)\geq Cs^{\al_p-n-2}\int_{B_{8s}(x)}\left|(y-x)\cdot\na u+\f{2u}{p-1}\right|^2\ud y
$$
for any $ 0<s<r $, where $ C>0 $ depends only on $ n $ and $ p $.
\end{cor}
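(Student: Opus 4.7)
The plan is to derive the claimed inequality as a direct consequence of the monotonicity formula in Proposition \ref{MonFor}. Applied on the interval $[r_1, r_2] = [s, 2s]$, it gives
\begin{equation*}
\vt_{2s}(u,x) - \vt_s(u,x) = -2 \int_s^{2s} \rho^{\al_p-n-3} \int_{\R^n} \left|(y-x)\cdot\na u + \f{2u}{p-1}\right|^2 \dot{\phi}_{x,\rho}(y)\, dy\, d\rho,
\end{equation*}
after absorbing the factor $\rho^{-2}$ that comes from squaring $\rho^{-1}$ inside the norm. The right-hand side is already in the form of a weighted $L^2$-seminorm of the quantity $(y-x)\cdot\na u + 2u/(p-1)$, so the task reduces to producing a uniform pointwise lower bound on the weight $-\dot{\phi}_{x,\rho}(y)$ on a sufficiently large ball.

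To this end, I would apply Fubini to swap the order of integration, obtaining
\begin{equation*}
\vt_{2s}(u,x) - \vt_s(u,x) = 2 \int_{\R^n} \left|(y-x)\cdot\na u + \f{2u}{p-1}\right|^2 W(|y-x|)\, dy,
\end{equation*}
with the radial weight $W(r) := \int_s^{2s}\rho^{\al_p-n-3}(-\phi'(r^2/\rho^2))\, d\rho$. By item (3) of Definition \ref{defnofphi}, $-\phi'(t) \geq 1$ for all $t \in [0,8]$, and $-\phi' \geq 0$ elsewhere on its support, so for every $y$ such that $|y-x|^2/\rho^2 \leq 8$ holds uniformly in $\rho \in [s, 2s]$ we obtain the clean bound $W(|y-x|) \geq \int_s^{2s} \rho^{\al_p-n-3}\, d\rho$. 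Since $\al_p < n$, the exponent $\al_p - n - 3 < -3 < -1$, and the $\rho$-integral evaluates to a positive multiple of $s^{\al_p-n-2}$ depending only on $n$ and $p$.

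Combining these two steps and restricting the $y$-integral to the set on which the pointwise bound is available gives the desired lower bound. The only real step requiring care is elementary constant-tracking: one has to verify that the ball $B_{8s}(x)$ appearing in the statement is contained in (or at least comparable to) the region where $-\phi'(|y-x|^2/\rho^2) \geq 1$ uniformly for all $\rho \in [s, 2s]$, which is a purely arithmetic check using the thresholds $8$, $9$, $10$ from Definition \ref{defnofphi}. I expect this bookkeeping to be the only obstacle; no additional analytic input beyond the monotonicity formula and the structural properties of $\phi$ is needed.
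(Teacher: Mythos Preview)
Your proposal is correct and follows exactly the paper's proof: apply Proposition~\ref{MonFor} on $[s,2s]$, use property~(3) of Definition~\ref{defnofphi} to bound $-\dot\phi_{x,\rho}\ge 1$ on the relevant region, and integrate in $\rho$ to produce the factor $s^{\al_p-n-2}$. On the arithmetic you flag: with the literal definition $\dot\phi_{x,\rho}(y)=\phi'\bigl(|y-x|^2/\rho^2\bigr)$, the condition $|y-x|^2/\rho^2\le 8$ uniformly for $\rho\in[s,2s]$ yields only $|y-x|\le 2\sqrt{2}\,s$, not $8s$; the radii $8s$, $9r_2$, $10r$ appearing here and in Corollary~\ref{HomProCor} and after \eqref{DenDef2} are consistent instead with the convention $\phi(|y-x|/r)$, so this is a notational inconsistency in the paper rather than a gap in your reasoning.
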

\begin{proof}
Using Definition \ref{defnofphi}, and Proposition \ref{MonFor}, we have
\begin{align*}
\vt_{2s}(u,x)-\vt_s(u,x)&\geq Cs^{\al_p-n-1}\int_s^{2s}\(\int_{B_{8s}(x)}\left|\f{(y-x)\cdot\na u}{s}+\f{2u}{(p-1)s}\right|^2\ud y\)\ud\rho\\
&\geq Cs^{\al_p-n-2}\int_{B_{8s}(x)}\left|(y-x)\cdot\na u+\f{2u}{p-1}\right|^2\ud y,
\end{align*}
for any $ 0<s<r $, which implies the result.
\end{proof}

We also want to provide some comparisons between the modified energy density $ \vt_{(\cdot)}(\cdot,\cdot) $ and the ordinary one, defined by \eqref{semidensi}. If $ u\in (H^1\cap L^{p+1})(B_r(x)) $ is a stationary solution of \eqref{superequation}, then we have the monotonicity formula
\be
\f{\ud}{\ud r}\theta_r^{\op{sem}}(u,x)=r^{\al_p-n}\int_{\pa B_r(x)}\left|\f{(y-x)\cdot\na u}{r}+\f{2u}{(p-1)r}\right|^2\ud\HH^{n-1}(y)\geq 0.\label{stanmo}
\ee
Considering this form, we see that $ \theta_{r_1}^{\op{sem}}(u,x)=\theta_{r_2}^{\op{sem}}(u,x) $ implies the $ -\f{2}{p-1} $-homogeneity in $ B_{r_2}(x)\backslash B_{r_1}(x) $. For the information of $ u $ in $ B_{r_1}(x) $, we have to apply the unique continuation Proposition \ref{Ucp}. Moreover, the estimate provided in Corollary \ref{coruse} may not be true for this density. When we consider the convergence of $ u $ in the sense of Radon measures, the cutoff function will allow us to take the limit directly.

\begin{proof}[Proof of Proposition \ref{MonFor}]
The proof is similar to that of the standard monotonicity formula \eqref{stanmo} given in \S 2 of \cite{Pac93}, and we will make some modifications here. Up to a translation, we can assume that $ x=0 $. For simplicity, we denote $ n_{\rho}(y):=\f{y}{\rho} $ with $ n_{\rho}^i(y)=\f{y_i}{\rho} $ for $ i\in\{1,2,...,n\} $, $ \phi_{\rho}=\phi_{0,\rho} $, and $ \pa_{\rho}u=n_{\rho}\cdot\na u $. Choosing the vector field $ Y(y)=\phi_{\rho}(y)y $, we can obtain 
\begin{align*}
\op{div}Y&=2|n_{\rho}|^2\dot{\phi}_{\rho}+n\phi_{\rho},\\
DY&=(\pa_iY_j)_{i,j=1}^n=(2n_{\rho}^in_{\rho}^j\dot{\phi}_{\rho}+\delta_{ij}\phi_{\rho})_{i,j=1}^n.    
\end{align*}
Using the stationary condition \eqref{StaCon}, we deduce that
\be
\begin{aligned}
2\int |\pa_{\rho}u|^2\dot{\phi}_{\rho}&=\f{n-2}{2}\int|\na u|^2\phi_{\rho}-\f{n}{p+1}\int|u|^{p+1}\phi_{\rho}\\
&\quad\quad+\int|\na u|^2|n_{\rho}|^2\dot{\phi}_{\rho}-\f{2}{p+1}\int |u|^{p+1}|n_{\rho}|^2\dot{\phi}_{\rho}.
\end{aligned}\label{Sta1}
\ee
Testing \eqref{WeakCon} by $ u\phi_{\rho} $, there holds
\be
\int|u|^{p+1}\phi_{\rho}-\int|\na u|^2\phi_{\rho}=2\rho^{-1}\int u\pa_{\rho}u\dot{\phi}_{\rho}.\label{Weak1}
\ee
Taking derivatives on both sides of \eqref{Weak1} with respect to $ \rho $, we have
\be
\rho^{-1}\int|\na u|^2|n_{\rho}|^2\dot{\phi}_{\rho}-\rho^{-1}\int|u|^{p+1}|n_{\rho}|^2\dot{\phi}_{\rho}=\f{\ud}{\ud\rho}\(\rho^{-1}\int u\pa_{\rho}u\dot{\phi}_{\rho}\).\label{Weak2}
\ee
Combining \eqref{Sta1}, \eqref{Weak1}, and \eqref{Weak2}, we obtain 
\begin{align*}
&\rho^{-1}\(\f{n-2}{2}-\f{n}{p+1}\)\int|u|^{p+1}\phi_{\rho}+\rho^{-1}\(\f{p-1}{p+1}\)\int|u|^{p+1}|n_{\rho}|^2\dot{\phi}_{\rho}\\
&\quad\quad=\rho^{-1}\int\(2|\pa_{\rho}u|^2+\f{(n-2)u\pa_{\rho}u}{\rho}\)\dot{\phi}_{\rho}-\f{\ud}{\ud\rho}\(\rho^{-1}\int u\pa_{\rho}u\dot{\phi}_{\rho}\).
\end{align*}
Multiplying both sides of the formula above by $ \rho^{\al_p-n} $, this yields that
\be
\begin{aligned}
&\rho^{\al_p-n-1}\int\(2|\pa_{\rho}u|^2+\f{(n-2)u\pa_{\rho}u}{\rho}\)\dot{\phi}_{\rho}\\
&\quad\quad=\rho^{\al_p-n}\f{\ud}{\ud\rho}\(\rho^{-1}\int u\pa_{\rho}u\dot{\phi}_{\rho}\)-\f{p-1}{2(p+1)}\f{\ud}{\ud\rho}\(\rho^{\al_p-n}\int|u|^{p+1}\phi_{\rho}\).
\end{aligned}\label{Cal1}
\ee
To proceed, since $ \al_p=\f{2(p+1)}{p-1} $, we see that
\be
\begin{aligned}
\rho^{\al_p-n}\f{\ud}{\ud\rho}\(\rho^{-1}\int u\pa_{\rho}u\dot{\phi}_{\rho}\)&=\f{\ud}{\ud\rho}\(\rho^{\al_p-n-1}\int u\pa_{\rho}u\dot{\phi}_{\rho}\)\\
&\quad\quad+\(n-\f{2(p+1)}{p-1}\)\(\rho^{\al_p-n-2}\int u\pa_{\rho}u\dot{\phi}_{\rho}\).
\end{aligned}\label{Cal2}
\ee
On the other hand, it follows from direct calculations that
\begin{align*}
&\f{(p+3)\rho^{\al_p-n-1}}{p-1}\int\(\f{2|u|^2}{(p-1)\rho^2}+\f{u\pa_{\rho}u}{\rho}\)\dot{\phi}_{\rho}\\
&\quad\quad=\f{1}{2}\f{\ud^2}{\ud\rho^2}\(\rho^{\al_p-n-1}\int |u|^2\dot{\phi}_{\rho}\)-\f{\ud}{\ud \rho}\(\rho^{\al_p-n-1}\int u\pa_{\rho}u\dot{\phi}_{\rho}\).
\end{align*}
This, together with \eqref{Cal1} and \eqref{Cal2}, implies that
\begin{align*}
&\f{p-1}{2(p+1)}\f{\ud}{\ud\rho}\(\rho^{\al_p-n}\int|u|^{p+1}\phi_{\rho}\)-\f{1}{2}\f{\ud^2}{\ud\rho^2}\(\rho^{\al_p-n-1}\int |u|^2\dot{\phi}_{\rho}\)\\
&\quad\quad=-\rho^{\al_p-n-1}\int\(2|\pa_{\rho}u|^2+\f{(p+7)u\pa_{\rho}u}{(p-1)\rho}+\f{2(p+3)|u|^2}{(p-1)^2\rho^2}\)\dot{\phi}_{\rho},
\end{align*}
and then
\begin{align*}
&-2\rho^{\al_p-n-1}\int_{\R^n}\left|\pa_{\rho}u+\f{2u}{(p-1)\rho}\right|^2\dot{\phi}_{\rho}\\
&\quad\quad=\f{\ud}{\ud\rho}\[\f{(p-1)\rho^{\al_p-n}}{2(p+1)}\int|u|^{p+1}\phi_{\rho}-\f{\ud}{\ud\rho}\(\f{\rho^{\al_p-n-1}}{2}\int |u|^2\dot{\phi}_{\rho}\)+\f{\rho^{\al_p-n-2}}{2}\int |u|^2\dot{\phi}_{\rho}\].
\end{align*}
Applying \eqref{Weak2} again, we have
$$
\f{\ud}{\ud\rho}\vt_{\rho}(u,0)=-2\rho^{\al_p-n-1}\int_{\R^n}\left|\pa_{\rho}u+\f{2u}{(p-1)\rho}\right|^2\dot{\phi}_{\rho},
$$
and the result follows.
\end{proof}

\begin{rem}
Let $ r>0 $ and $ x\in\R^n $. Assume that $ u\in(H^1\cap L^{p+1})(B_{10r}(x)) $ is a stationary solution of \eqref{superequation}. We have another equivalent form of $ \vt_{\rho}(u,x) $ as follows.
\be
\begin{aligned}
\vt_{\rho}(u,x)&=\f{\rho^{\al_p-n}}{p+3}\int_{\R^n}\(\f{p-1}{2}|\na u|^2+\f{p-1}{p+1}|u|^{p+1}\)\phi_{x,\rho}\\
&\quad\quad-\f{2}{p+3}\f{\ud}{\ud\rho}\(\rho^{\al_p-n-1}\int_{\R^n}|u|^2\dot{\phi}_{x,\rho}\),\quad\rho\in(0,r).
\end{aligned}\label{Den2}
\ee
\end{rem}

For the proceeding proposition, we consider the domination of $ \theta_{\rho}(u,x) $ by $ \vt_{\rho}(u,x) $. In \cite{Pac93}, an analogous result was proved for the density \eqref{semidensi}. Regarding our new modified density, we need to adjust a little.

\begin{prop}\label{NonDegThePro}
Let $ r>0 $ and $ x\in\R^n $. Assume that $ u\in (H^1\cap L^{p+1})(B_{10r}(x)) $ is a stationary solution of \eqref{superequation}. Then
\be
\theta_{4\rho}(u,x)\leq C\vt_{\rho}(u,x),\label{NonDegThe}
\ee
for any $ 0<\rho<r $, where $ C>0 $ depends only on $ n $ and $ p $.
\end{prop}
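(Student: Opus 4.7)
The proposition is a nondegeneracy estimate asserting that the cutoff-modified density $\vt_\rho(u,x)$ controls the standard energy density $\theta_{4\rho}(u,x)$ on the larger ball $B_{4\rho}(x)$. It is the analog, for our modified density, of Pacard's classical estimate \cite{Pac93} for the density \eqref{semidensi}, and the plan is to adapt his Pohozaev-type computation to the cutoff setting.

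I would start from the alternative form \eqref{Den2}, which writes $(p+3)\vt_\rho(u,x)$ as the sum of the cutoff-weighted nonnegative energy $\rho^{\al_p-n}\int(\tfrac{p-1}{2}|\na u|^2 + \tfrac{p-1}{p+1}|u|^{p+1})\phi_{x,\rho}\,dy$ and a derivative term in the scale parameter. Evaluating \eqref{Den2} at the scale $4\rho$: since $|y-x|^2/(16\rho^2)\in[0,1]$ for $y\in B_{4\rho}(x)$, the properties of $\phi$ in Definition~\ref{defnofphi} (decreasing on $[0,9]$ with prescribed slope on $[0,8]$) force $\phi_{x,4\rho}(y)\geq\phi(1)>0$ uniformly on this ball, whence
\[
\theta_{4\rho}(u,x) \leq \frac{(4\rho)^{\al_p-n}}{\phi(1)}\int\Bigl(\tfrac{p-1}{2}|\na u|^2 + \tfrac{p-1}{p+1}|u|^{p+1}\Bigr)\phi_{x,4\rho}\,dy = \frac{p+3}{\phi(1)}\bigl(\vt_{4\rho}(u,x) + R(4\rho)\bigr),
\]
where $R(\tau):=\tfrac{2}{p+3}\tfrac{d}{ds}(s^{\al_p-n-1}\int|u|^2\dot\phi_{x,s}\,dy)\big|_{s=\tau}$ is the derivative remainder.

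The key step is then to replace $\vt_{4\rho}$ by $\vt_\rho$. Since the monotonicity formula (Proposition~\ref{MonFor}) gives only $\vt_{4\rho}\geq\vt_\rho$, one needs a Pohozaev-type trick. Combining the original form \eqref{DenDef2} with \eqref{Weak1} (tested against $u\phi_{x,\rho}$) produces the identity
\[
\vt_\rho(u,x) = \frac{p-1}{2(p+1)}\rho^{\al_p-n}\int|u|^{p+1}\phi_{x,\rho}\,dy - \rho^{\al_p-n-2}\int\dot\phi_{x,\rho}\,u\,Q\,dy,
\]
where $Q := (y-x)\cdot\na u + \frac{2u}{p-1}$ is the $0$-symmetry defect whose weighted $L^2$-norm controls $\vt'_\rho$ via Proposition~\ref{MonFor}, namely $\rho^{\al_p-n-3}\int|\dot\phi_{x,\rho}|Q^2\,dy = \tfrac{1}{2}\vt'_\rho$. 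Applying Young's inequality to the cross term, absorbing the $Q^2$-piece into $\vt'_\rho$ and the $u^2$-piece into the nonnegative leading term, and handling the boundary remainder $R(4\rho)$ similarly via \eqref{DenDef2} and the crude estimate \eqref{thevt}, I expect to extract $\theta_{4\rho}(u,x) \leq C \vt_\rho(u,x)$ with $C=C(n,p)$.

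The main obstacle is this last absorption: controlling the lower-order $\int|\dot\phi|u^2$ contributions from Young's inequality by a constant depending only on $n$ and $p$, which in turn relies on the uniform bounds on $|\phi'|, |\phi''|$ from Definition~\ref{defnofphi} together with a Sobolev-type interpolation relating $\int u^2$ on the cutoff support to $\int|u|^{p+1}$. This is precisely the ``adjustment'' alluded to in the text relative to Pacard's computation for \eqref{semidensi}.
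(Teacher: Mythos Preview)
Your identity
\[
\vt_\rho(u,x)=\frac{p-1}{2(p+1)}\,g(\rho)-\rho^{\al_p-n-2}\int\dot\phi_{x,\rho}\,u\,Q\,dy,\qquad g(\rho):=\rho^{\al_p-n}\int|u|^{p+1}\phi_{x,\rho},
\]
is correct and is indeed the right starting point. The gap is that you try to close the argument \emph{pointwise in $\rho$}, and this cannot work. After Young's inequality the cross term produces two pieces you cannot absorb: the $Q^2$-piece is, as you note, $\tfrac{1}{2}\rho\,\vt'_\rho(u,x)$, and there is no pointwise bound of $\rho\,\vt'_\rho$ by $\vt_\rho$; likewise your remainder $R(4\rho)$ is a scale-derivative and is not controlled by $\vt_{4\rho}$ or $\vt_\rho$ alone. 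Your proposed fix for the $u^2$-piece via H\"older gives only a sublinear power $g(\rho)^{2/(p+1)}$; absorbing this by Young introduces an additive constant independent of $u$, so you would end up with $\theta_{4\rho}\le C\vt_\rho+C$ rather than the claimed $\theta_{4\rho}\le C\vt_\rho$. Finally, note that monotonicity gives $\vt_{4\rho}\ge\vt_\rho$, the wrong direction, so the passage from scale $4\rho$ to scale $\rho$ is genuinely the heart of the matter and is not addressed.

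The paper's argument resolves all of this by \emph{integrating \eqref{Den2} in the scale variable} from $0$ to $\rho$. The total derivative then becomes boundary terms: the one at $s=\rho$ is $-2\rho^{\al_p-n-1}\int|u|^2\dot\phi_{x,\rho}\ge 0$ (hence can be dropped), while the one at $s=0$ vanishes for $\HH^{n+2-\al_p}$-a.e.\ $x$ by a standard density lemma (Lemma~\ref{HauMeaIntBal}), after which continuity in $x$ extends the bound to every point. What survives is
\[
(p-1)\int_0^\rho g(s)\,ds\ \le\ (p+3)\int_0^\rho \vt_s(u,x)\,ds\ \le\ (p+3)\,\rho\,\vt_\rho(u,x),
\]
using monotonicity of $\vt$ only in the harmless direction $\vt_s\le\vt_\rho$. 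An averaging argument then gives $g(\rho_0)\le C\vt_\rho$ for some $\rho_0\in[\rho/2,\rho]$, and the conclusion follows from the definitions. The integration-then-averaging step, together with Lemma~\ref{HauMeaIntBal} to kill the boundary term at $0$, is exactly the ``adjustment'' the text alludes to and is the idea missing from your plan.
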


Before the proof of this proposition, we need a useful lemma.

\begin{lem}[\cite{Eva18}, Theorem 2.10]\label{HauMeaIntBal}
Let $ f\in L_{\loc}^1(\R^n) $. If $ 0\leq s<n $, then
$$
\HH^s\(\left\{x\in\R^n:\limsup_{\rho\to 0^+}\(\rho^{-s}\int_{B_{\rho}(x)}|f|\)>0\right\}\)=0.
$$
\end{lem}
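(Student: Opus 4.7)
The plan is a standard Vitali covering argument combined with the absolute continuity of the Lebesgue integral. Writing the set in the statement as $E=\bigcup_{k\in\Z_+}A_{1/k}$, where
$$
A_t:=\left\{x\in\R^n:\limsup_{\rho\to 0^+}\rho^{-s}\int_{B_{\rho}(x)}|f|>t\right\},
$$
countable subadditivity of $\HH^s$ reduces the problem to showing $\HH^s(A_t\cap K)=0$ for each $t>0$ and each compact $K\subset\R^n$. Fix such $t$ and $K$, an auxiliary scale $\delta\in(0,1)$, and set $K':=\overline{B_{\delta}(K)}$.

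By the definition of $\limsup$, the collection
$$
\mathcal{F}_\delta:=\left\{\overline{B_{\rho}(x)}:x\in A_t\cap K,\ 0<\rho<\delta,\ \rho^{-s}\int_{B_{\rho}(x)}|f|>t\right\}
$$
is a Vitali fine cover of $A_t\cap K$ by balls of radius less than $\delta$. Vitali's $5r$-covering lemma extracts a countable pairwise disjoint subfamily $\{B_{\rho_i}(x_i)\}\subset\mathcal{F}_\delta$ with $A_t\cap K\subset\bigcup_i\overline{B_{5\rho_i}(x_i)}$. Since each ball in $\mathcal{F}_\delta$ satisfies $\rho_i^s<t^{-1}\int_{B_{\rho_i}(x_i)}|f|$, the $5\delta$-approximating Hausdorff pre-measure is controlled by
$$
\HH^s_{5\delta}(A_t\cap K)\leq\w_s\sum_i(5\rho_i)^s\leq\frac{\w_s\,5^s}{t}\int_{U_\delta}|f|,\qquad U_\delta:=\bigcup_i B_{\rho_i}(x_i)\subset K'.
$$

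The hypothesis $s<n$ enters decisively when estimating the Lebesgue measure of $U_\delta$: by disjointness of the balls,
$$
\cL^n(U_\delta)=\w_n\sum_i\rho_i^n\leq\w_n\delta^{n-s}\sum_i\rho_i^s\leq\frac{\w_n\delta^{n-s}}{t}\int_{K'}|f|\longrightarrow 0\quad(\delta\to 0^+).
$$
Because $|f|\chi_{\overline{B_1(K)}}\in L^1(\R^n)$ and $U_\delta\subset\overline{B_1(K)}$ for $\delta<1$, the absolute continuity of the Lebesgue integral forces $\int_{U_\delta}|f|\to 0$, and hence $\HH^s_{5\delta}(A_t\cap K)\to 0$. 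Letting $\delta\to 0^+$ and using $\HH^s=\sup_{\delta>0}\HH^s_{\delta}$ yields $\HH^s(A_t\cap K)=0$, completing the proof. I expect no genuine obstacle here: the strict inequality $s<n$ is used \emph{solely} to obtain the decay $\delta^{n-s}\to 0$, and the Vitali extraction together with the absolute continuity step are both standard bookkeeping once $K$ is fixed.
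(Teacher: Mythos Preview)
The paper does not supply its own proof of this lemma; it is simply quoted from \cite{Eva18}, Theorem~2.10, and used as a black box in the proof of Proposition~\ref{NonDegThePro}. Your argument is correct and is essentially the standard proof one finds in that reference: reduce to the superlevel sets $A_t$ on a compact set, apply the Vitali $5r$-covering lemma to the fine cover by balls witnessing the $\limsup$, bound $\sum\rho_i^s$ by $t^{-1}\int_{U_\delta}|f|$, and then use $s<n$ together with absolute continuity of the integral to send the right-hand side to zero as $\delta\to 0^+$. There is nothing to add or compare.
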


\begin{proof}[Proof of Proposition \ref{NonDegThePro}]
For simplicity, we define 
$$
g(r)=\rho^{\al_p-n}\int|u|^{p+1}\phi_{x,\rho}. 
$$
Since $ \al_p>2 $, we can apply Lemma \ref{HauMeaIntBal} to obtain that
$$
\HH^{n+2-\al_p}\(\left\{y\in B_{10r}(x):\limsup_{\rho\to 0^+}\(\rho^{\al_p-n-2}\int_{B_{\rho}(y)}|u|^2\)>0\right\}\)=0,
$$
which implies that for a.e. $ y\in B_{10\rho}(x) $, there holds
\be
\limsup_{\rho\to 0^+}\(\rho^{\al_p-n-2}\int_{B_{\rho}(y)}|u|^2\)=0.\label{0Con}
\ee
Since both sides of \eqref{NonDegThe} are continuous with respect to variables $ \rho $ and $ x $, we can assume that the point $ x $ satisfies the condition \eqref{0Con}. If not, we can apply the result of Lemma \ref{NonDegThePro} near $ x $ and use the continuity to conduct approximating arguments. Now, integrating both sides of the formula \eqref{Den2} from $ 0 $ to $ \rho $, we have
$$
(p-1)\int_0^{\rho}g(s)\ud s-2\rho^{\al_p-n-1}\int |u|^2\dot{\phi}_{x,\rho}\leq(p+3)\int_0^{\rho}\vt_s(u,x)\ud s.
$$
Using Definition \ref{defnofphi} and Proposition \ref{MonFor}, we obtain 
$$
(p-1)\int_0^{\rho}g(s)\ud s\leq(p+3)\int_0^{\rho}\vt_s(u,x)\ud s\leq (p+3)\rho\vt_{\rho}(u,x).
$$
By average arguments, there must be some $ \rho_0\in[\f{\rho}{2},\rho] $ such that
$$
g(\rho_0)\leq\f{8}{\rho}\int_0^{\rho}g(s)\ud s\leq C\vt_{\rho}(u,x),
$$
and then 
$$
g\(\f{\rho}{2}\)\leq Cg(\rho_0)\leq C(n,p)\vt_{\rho}(u,x).
$$
We conclude the result by the defition of $ \theta_{\rho}(u,x) $ and $ \vt_{\rho}(u,x) $.
\end{proof}

The following result is the well-known partial regularity estimate for stationary solutions of \eqref{superequation}. It was shown for positive solutions in Theorem 2 of \cite{Pac93}, and the general case can be dealt with by minor modifications.

\begin{prop}\label{smareg}
Let $ j\in\Z_{\geq 0} $, $ r>0 $, and $ x\in\R^n $. Assume that $ u\in(H^1\cap L^{p+1})(B_{2r}(x)) $ is a stationary solution of \eqref{superequation}. There exists $ \va_0>0 $, depending only on $ n $ and $ p $ such that if $ \theta_{2r}(u,x)\leq\va_0 $, then
$$
\sup_{B_r(x)}\(\sum_{i=0}^jr^i|D^iu|\)\leq C(\va_0)r^{-\f{2}{p-1}},
$$
where $ C(\va_0)>0 $ depends only on $ j,\Lda,n,p $ and $ C(\va_0)\to 0^+ $ as $ \va_0\to 0^+ $.
\end{prop}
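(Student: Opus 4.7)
The plan is to first reduce the statement by scaling to the normalized setting $x=0$, $r=1$, and then establish two sub-estimates: an $L^\infty$ smallness bound on $u$ itself, followed by standard elliptic bootstrap to handle the higher derivatives. The scaling reduction uses the invariance formula \eqref{Scaleinv} together with the fact that $T_{x,r}u$ is again a stationary solution of \eqref{superequation} whenever $u$ is; under this normalization the hypothesis becomes $\theta_2(u,0)\le\va_0$ and the desired inequality reduces to $\sup_{B_1}\sum_{i=0}^{j}|D^iu|\le C(\va_0)$ with $C(\va_0)\to 0^+$ as $\va_0\to 0^+$. Rescaling at the end restores the factors $r^i$ and $r^{-2/(p-1)}$ in the statement.

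The heart of the proof is the \textbf{$L^\infty$ smallness} step: under $\theta_2(u,0)\le\va_0$, one has $\|u\|_{L^\infty(B_{3/2})}\le\w(\va_0)$ with $\w(\va_0)\to 0^+$. I would argue by contradiction and compactness. Suppose instead that there exist stationary solutions $u_k\in(H^1\cap L^{p+1})(B_2)$ with $\theta_2(u_k,0)\le\va_k\to 0$ but $\|u_k\|_{L^\infty(B_{3/2})}\ge c_0>0$. By the monotone inclusion of balls, for any $y\in B_{3/2}$ one has $m_{u_k}(B_{1/2}(y))\le m_{u_k}(B_2)$, giving $\theta_{1/2}(u_k,y)\le C\theta_2(u_k,0)\to 0$ uniformly in $y$. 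Feeding this into \eqref{thevt} yields $\vt_{1/20}(u_k,y)\to 0$, and the monotonicity formula (Proposition \ref{MonFor}) propagates this smallness to every smaller scale; the reverse bound of Proposition \ref{NonDegThePro} then gives $\theta_\rho(u_k,y)\to 0$ uniformly on $y\in B_{3/2}$ and on any compact subinterval of radii. Passing to subsequential limits $u_k\wc u_\infty$ weakly in $(H^1\cap L^{p+1})_{\loc}(B_{3/2})$ and $m_{u_k}\wc^*\mu$ in $\M(B_{3/2})$, the uniform energy vanishing forces $\mu\equiv 0$ on $B_{3/2}$, hence $u_\infty\equiv 0$ there and no defect concentration of $L^{p+1}$ mass can occur. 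A Federer–Ziemer type argument adapted to this equation (cf.\ \cite{Pac93,WW15}) then promotes the weak convergence to convergence in $C^0_{\loc}(B_{3/2})$ on the regular set, which here is all of $B_{3/2}$; this contradicts $\|u_k\|_{L^\infty(B_{3/2})}\ge c_0$.

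Once $\|u\|_{L^\infty(B_{3/2})}\le\w(\va_0)$ is available, the nonlinearity $|u|^{p-1}u$ is bounded in $L^\infty(B_{3/2})$ and standard interior elliptic theory (the $W^{2,q}$ estimate for any $q<+\ift$, followed by Schauder bootstrap on the equation $-\Delta u=|u|^{p-1}u$) yields, for each $j\in\Z_{\ge 0}$, the bound $\|u\|_{C^{j}(B_1)}\le C(j,n,p)\,\w(\va_0)$, with a right-hand side that shrinks to $0$ as $\va_0\to 0^+$. Combined with the scaling reduction this produces the claimed inequality.

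The main obstacle is the \emph{strong compactness} step in the second paragraph: in the supercritical regime weak $H^1$ convergence does not automatically yield strong $L^{p+1}$ convergence, and a defect measure can in principle survive in the limit. The smallness of the energy at every point of $B_{3/2}$, obtained via the monotonicity machinery of Propositions \ref{MonFor} and \ref{NonDegThePro}, is precisely what rules out such concentration. This is the crux that distinguishes the supercritical analysis from the subcritical one, and it is also the ingredient that makes the argument insensitive to the sign of $u$: the symmetrization and sign-based reductions used for positive solutions in \cite{Pac93} are replaced by the modified monotonicity formula and its nondegeneracy counterpart, which hold for arbitrary stationary solutions.
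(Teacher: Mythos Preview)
Your overall plan---scaling reduction, $L^\infty$ smallness, then bootstrap---is sound, and the last paragraph correctly identifies the bootstrap as routine once $\|u\|_{L^\infty}$ is controlled. The gap is in the $L^\infty$ smallness step: the compactness argument you sketch is circular.

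Concretely, the passage ``promotes the weak convergence to convergence in $C^0_{\loc}(B_{3/2})$ on the regular set, which here is all of $B_{3/2}$'' is where the argument breaks. The mechanism by which one upgrades weak $(H^1\cap L^{p+1})$ convergence of stationary solutions to locally uniform convergence is precisely the $\va$-regularity theorem you are trying to prove: one shows that outside the concentration set $\Sg$ every point carries small energy at some scale, invokes Proposition~\ref{smareg} to get uniform $C^j$ bounds there, and then applies Arzel\`a--Ascoli. This is exactly how Proposition~\ref{DefMea} is proved (note that the definition of $\Sg$ there already references the constant $\va_0$ from Proposition~\ref{smareg}). So the ``Federer--Ziemer type argument'' you appeal to, as implemented in \cite{Pac93,WW15}, presupposes the result. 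Knowing that $\theta_\rho(u_k,y)\to 0$ uniformly in $y$ and $\rho$ only gives a Morrey bound $\int_{B_\rho(y)}|\na u_k|^2\le C\rho^{n-\al_p}$ with exponent $n-\al_p<n-2$, which by itself is not enough for continuity.

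The paper separates the two issues. The base statement---existence of a fixed $\va_0$ and a fixed constant $C$ such that $\theta_{2r}(u,x)\le\va_0$ forces smoothness with scale-invariant bounds---is taken from \cite{Pac93}, where it is proved by a direct quantitative iteration (test the equation against $u$ times a cutoff, use the stationary/monotonicity structure to improve the Morrey exponent at successively smaller scales, and close the iteration once the exponent exceeds $n-2$). Only after this base result is in hand does the paper invoke compactness (Proposition~\ref{DefMea}) to get the refinement $C(\va_0)\to 0^+$; at that point your contradiction argument is legitimate and is essentially what the paper has in mind. To repair your proof, replace the compactness step for the base $L^\infty$ bound by Pacard's iteration (or an equivalent direct argument), and keep your compactness argument only for the refinement.
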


Here we remark on the property that $ C(\va_0)\to 0^+ $ as $ \va_0\to 0^+ $. It is a direct consequence of the compactness arguments and properties in Proposition \ref{DefMea}.

Now, we introduce the unique continuation property for the weak solution of \eqref{superequation}, and will not use this in our proofs.

\begin{prop}[Unique continuation property]\label{Ucp}
Assume that $ u,v\in(H^1\cap L^{p+1})(B_1) $ are two weak solutions of \eqref{superequation}, which are smooth away from relatively closed sets $ S(u),S(v)\subset B_1 $ with $ \HH^{n-\al_p}(S(u)\cup S(v))<+\ift $. If $ u=v $ a.e. in an open subset of $ B_1 $, then $ u=v $ a.e. in $ B_1 $.
\end{prop}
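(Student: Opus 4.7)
The plan is to reduce the statement to a unique continuation result for a linear elliptic equation on the common regular set, and then to invoke the classical Aronszajn theorem. Set $w:=u-v$, and use the integral representation
\begin{equation*}
|u|^{p-1}u-|v|^{p-1}v \;=\; V\,w,\qquad V(y):=p\int_0^1 |tu(y)+(1-t)v(y)|^{p-1}\,\ud t,
\end{equation*}
so that $-\Delta w = Vw$ weakly in $B_1$. Write $S:=S(u)\cup S(v)$, which is relatively closed in $B_1$ and satisfies $\HH^{n-\al_p}(S)<+\ift$, and set $\Omega_0:=B_1\setminus S$. On $\Omega_0$ both $u$ and $v$ are classical smooth solutions of \eqref{superequation}, so $V$ is smooth on $\Omega_0$ and $w$ satisfies the linear equation $-\Delta w - Vw = 0$ classically there.

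The next step is to verify that $\Omega_0$ is connected. Since $\al_p>2$, we have $n-\al_p<n-1$, hence $\HH^{n-1}(S)=0$. A closed subset of $B_1$ with vanishing $(n-1)$-dimensional Hausdorff measure cannot disconnect $B_1$: for any two points $x,y\in\Omega_0$, consider the family of $2$-planes containing the segment $\overline{xy}$, parametrised by the $(n-2)$-sphere of unit normals to that segment. A Fubini-type slicing inequality for Hausdorff measure shows that for $\HH^{n-2}$-almost every such plane $P$, the section $P\cap S$ has Hausdorff dimension strictly below $1$, hence empty interior in $P$, so $P\setminus S$ is path-connected; picking such a $P$ yields a path from $x$ to $y$ inside $\Omega_0$. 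Since the given open set $U\subset B_1$ on which $u=v$ a.e.\ has $\cL^n(U\cap\Omega_0)=\cL^n(U)>0$ (as $\cL^n(S)=0$) and since $u,v$ are smooth on $\Omega_0$, the identity $u\equiv v$ holds pointwise on the nonempty open set $U\cap\Omega_0\subset\Omega_0$.

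I would then apply Aronszajn's strong unique continuation principle for $-\Delta w -Vw=0$ with smooth (in particular $L^\ift_{\loc}$) potential $V$ on the connected open set $\Omega_0$: vanishing of $w$ on a nonempty open subset forces $w\equiv 0$ throughout $\Omega_0$. Combined with $\cL^n(S)=0$, this yields $u=v$ a.e.\ in $B_1$, as claimed.

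The principal obstacle I expect is the rigorous verification of connectedness of $\Omega_0$. The heuristic that sets of codimension larger than one cannot disconnect is standard, but translating $\HH^{n-\al_p}(S)<+\ift$ into a clean slicing argument requires Fubini-type inequalities for Hausdorff measures and a brief topological argument in the two-dimensional slices. Once this step is in hand, the linear reduction together with Aronszajn's theorem delivers the conclusion essentially immediately, since the potential $V$ is as smooth as one likes on $\Omega_0$ and there is no integrability obstruction to applying classical unique continuation on the regular set.
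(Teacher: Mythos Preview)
Your proposal is correct and follows essentially the same approach as the paper: reduce to the open connected set $\Omega_0=B_1\setminus(S(u)\cup S(v))$, write $w=u-v$ as a solution of a linear equation (the paper phrases this as the inequality $|\Delta w|\leq C|w|$ via the mean value theorem rather than your integral formula for the potential $V$, but this is cosmetic), and invoke Aronszajn's theorem. The paper simply asserts the connectedness of $\Omega_0$ as a known consequence of $\HH^{n-\al_p}(S_0)<+\infty$, so your more explicit slicing discussion is extra detail rather than a point of divergence.
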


To begin with, let us recall the Aronszajn generalization of Carleman's unique continuation theorem.

\begin{lem}[\cite{Aro57}, p.248]\label{Aron}
Assume that $ u\in C^{\ift}(B_1) $ satisfies $
|\Delta u|\leq C(|\na u|+|u|) $
for some constant $ C>0 $. If $ u=0 $ in an open subset of $ B_1 $, then $ u=0 $ in $ B_1 $.
\end{lem}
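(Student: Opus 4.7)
The plan is to prove this classical strong unique continuation statement by showing that the set of points at which $u$ vanishes to infinite order is both open and closed in $B_1$, whence it equals the connected set $B_1$. Define
$$
E:=\left\{x\in B_1:\lim_{r\to 0^+}r^{-N}\int_{B_r(x)}|u|^2=0\text{ for every }N\in\Z_+\right\}.
$$
Since $u$ is smooth, $E$ is relatively closed in $B_1$, and the assumption that $u\equiv 0$ on an open subset of $B_1$ gives $E\neq\emptyset$. So the whole content of the lemma reduces to showing that $E$ is open in $B_1$.

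To prove openness, fix $x_0\in E$, translate so that $x_0=0$, and aim to show $u\equiv 0$ on some $B_{\rho}$. The key analytic input is a radial-weight Carleman inequality of the form
$$
\tau^3\int_{B_r}|x|^{-n-2\tau}|v|^2\ud x+\tau\int_{B_r}|x|^{-n-2\tau+2}|\na v|^2\ud x\leq C\int_{B_r}|x|^{-n-2\tau+4}|\Delta v|^2\ud x,
$$
valid for every $v\in C_0^{\ift}(B_r\backslash\{0\})$ and every $\tau\geq\tau_0$, where $C,\tau_0$ are independent of $v$ and $\tau$. The standard route to this estimate is to conjugate the Laplacian by the weight, i.e. set $P_{\tau}w:=|x|^{\tau+2-n/2}\Delta(|x|^{-\tau-2+n/2}w)$, pass to logarithmic polar coordinates $|x|=e^{-t}$, decompose $P_{\tau}=A_{\tau}+B_{\tau}$ into its self-adjoint and skew-adjoint parts in $L^2(\R\times\Ss^{n-1})$, and exploit the positivity of the commutator $[A_{\tau},B_{\tau}]$ for $\tau$ large; equivalently, one may expand $v$ in spherical harmonics and verify the inequality mode by mode, which turns the problem into a family of one-dimensional ODE estimates depending on $\tau$ and the eigenvalue $k(k+n-2)$.

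With this estimate in hand, select a radial cutoff $\eta\in C_0^{\ift}(B_r)$ with $\eta\equiv 1$ on $B_{r/2}$ and apply the inequality to $v:=\eta u$, which is legitimate because the infinite-order vanishing of $u$ at $0$ makes all the weighted integrals finite (formally one truncates $\eta$ away from a shrinking neighborhood of $0$ and passes to the limit). Using $|\Delta v|\leq\eta|\Delta u|+2|\na\eta||\na u|+|\Delta\eta||u|$ together with the hypothesis $|\Delta u|\leq C(|\na u|+|u|)$, the right-hand side splits into an interior piece bounded by a constant multiple of $\int|x|^{-n-2\tau+4}(|u|^2+|\na u|^2)\eta^2$ and an annular piece supported in $B_r\backslash B_{r/2}$. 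The interior piece is absorbed into the left-hand side for $\tau$ large enough (noting $|x|^{-n-2\tau+4}\leq r^2|x|^{-n-2\tau+2}$ on $\supp\eta$), leaving
$$
\tau\int_{B_{r/4}}|x|^{-n-2\tau}|u|^2\leq C\int_{B_r\backslash B_{r/2}}|x|^{-n-2\tau+4}(|u|^2+|\na u|^2).
$$
Bounding $|x|^{-n-2\tau}\geq(r/4)^{-n-2\tau}$ on $B_{r/4}$ and $|x|^{-n-2\tau+4}\leq(r/2)^{-n-2\tau+4}$ on the annulus, the ratio of weights is $(1/2)^{2\tau}$ up to a $\tau$-independent factor; letting $\tau\to+\ift$ forces $u\equiv 0$ on $B_{r/4}$, so $0$ is an interior point of $E$.

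The main obstacle is the Carleman inequality itself: one must select a weight that is strictly pseudoconvex with respect to $-\Delta$ in the sense of H\"{o}rmander, and carry out the commutator computation in a way that yields the crucial factors of $\tau^3$ and $\tau$ on the left. The power weight $|x|^{-\tau}$ (equivalently $e^{\tau t}$ after the change of variable $|x|=e^{-t}$) is the standard choice; the spherical-harmonic decomposition reduces the verification to a one-dimensional identity whose positive terms are proportional to $\tau^3$ and $\tau$. Once this estimate is in place, all other steps are soft: absorbing interior terms, using the hypothesis to convert $|\Delta u|$ into lower-order quantities, and exploiting the weight separation between the ball $B_{r/4}$ and the annulus $B_r\backslash B_{r/2}$.
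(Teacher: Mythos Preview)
The paper does not supply its own proof of this lemma: it is quoted as a black box from Aronszajn's original article \cite{Aro57} and then invoked in the proof of Proposition~\ref{Ucp}. Your outline via a radial-weight Carleman inequality is exactly the classical argument behind that citation, and the structure you describe (closedness of the infinite-order vanishing set, the Carleman estimate, absorption of interior terms, and weight separation on the annulus to force vanishing on a small ball) is correct. One small caveat worth flagging in a fully detailed write-up: the pure power weight $|x|^{-\tau}$ is borderline for H\"{o}rmander's pseudoconvexity criterion, so the commutator route strictly requires a slight convexification of the exponent; your alternative via spherical-harmonic decomposition sidesteps this and is the cleanest way to establish the stated inequality directly.
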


\begin{proof}[Proof of Proposition \ref{Ucp}]
We will follow the proof of similar continuation property of weakly harmonic maps in Proposition 16 of \cite{NV18}. We first assume that $ u $ and $ v $ are smooth. Suppose that the result is true for such a case. Since $ \HH^{n-\al_p}(S_0)<+\ift $, with $ S_0=S(u)\cup S(v) $, we have that $ B_1\backslash S_0 $ is connected, and can easily obtain that $ u=v $ in $ B_1\backslash S_0 $, which directly completes the proof. If $ u $ and $ v $ are smooth in $ B_1 $, we define $ w=u-v $ and deduce that
$$
|\Delta w|=|(|u|^{p-1}u-|v|^{p-1}v)|\leq C|u-v|=C(w,r)|w|,
$$
in $ B_r $ with $ 0<r<1 $, where we have used the mean value theorem and the fact that the function $ x\mapsto|x|^{p-1}x $ is $ C^1 $ if $ p>1 $. Applying Lemma \ref{Aron}, $ w=0 $ in $ B_1 $, and then $ u=v $ in $ B_1 $.
\end{proof}

\subsection{Cone splitting lemmas}

We will present several cone splitting lemmas and their corollaries for functions and measures. Intuitively speaking, these results show that the homogeneity for two different points implies the invariance with respect to the affine subspace generated by the two points. The cone splitting phenomenon is the essential cause of the stratification of singularities for various variational problems. Readers can refer to \S 4 of \cite{CN13b} for similar results. 

\begin{lem}[Cone splitting of functions]\label{ConSpl}
If $ u\in L_{\loc}^2(\R^n) $ is $ 0 $-symmetric at $ x_1,x_2\in\R^n $ with $ x_1\neq x_2 $, then $ u $ is $ 1 $-symmetric at $ x_1 $ with respect to $ \op{span}\{x_1-x_2\} $. 
\end{lem}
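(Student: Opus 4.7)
The plan is to show that $u$ is invariant under translations by elements of $V := \op{span}\{x_1 - x_2\}$; combined with the hypothesized $0$-symmetry at $x_1$, this is exactly $1$-symmetry at $x_1$ with respect to $V$. Write $a := x_1 - x_2 \neq 0$ and $\beta := -\f{2}{p-1}$.

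The core step is a twin-center identity. For arbitrary $w \in \R^n$ and $\lambda \in (0,1)$, introduce the two scaled points
\[
w_1 := x_1 + \lambda(w - x_1), \qquad w_2 := x_2 + \lambda(w - x_2).
\]
Applying $0$-symmetry at $x_1$ with $y = w - x_1$ yields $u(w_1) = \lambda^{\beta} u(w)$, and applying $0$-symmetry at $x_2$ with $y = w - x_2$ yields $u(w_2) = \lambda^{\beta} u(w)$, hence $u(w_1) = u(w_2)$. A direct computation gives $w_1 - w_2 = (1-\lambda)a$, so the identity reads $u(w_2 + (1-\lambda) a) = u(w_2)$. Since for fixed $\lambda$ the map $w \mapsto w_2$ is an affine bijection of $\R^n$, pushing the ``a.e.\ in $w$'' qualifier forward produces: for every $\lambda \in (0,1)$,
\[
u(q + (1-\lambda) a) = u(q) \quad \text{for a.e. } q \in \R^n.
\]

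To extend invariance from $\{ta : t \in (0,1)\}$ to all of $V$, I would use a subgroup argument. Let
\[
G := \{t \in \R : u(q + ta) = u(q) \text{ for a.e. } q \in \R^n\}.
\]
Then $G$ is closed under addition (compose two a.e.\ translation identities, using that translations preserve Lebesgue null sets) and under negation (substitute $q \mapsto q - ta$). Hence $G$ is an additive subgroup of $\R$ containing the interval $(0,1)$; such a subgroup must be all of $\R$, since finite sums of elements of $(0,1)$ exhaust $(0,\infty)$ and negation produces $(-\infty,0)$. This yields the required $V$-invariance and completes the proof.

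The argument is short once the twin-center identity is in hand, so the only real delicacy lies in tracking the ``a.e.'' qualifiers cleanly when the two homogeneities are composed and again when $G$ is shown to be closed under addition. I expect no deeper obstacle, since at each step only measure-zero exceptional sets are combined, and the subgroup extension needs only that $G$ contains a set of positive measure inside $(0,1)$.
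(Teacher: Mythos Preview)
Your proof is correct and follows a genuinely different route from the paper's. The paper translates so that $x_2=0$ and composes the two homogeneities in a chain: scale from $0$ by $\lda$, then scale from $x_1$ by $\lda^{-2}$, then scale from $0$ by $\lda$ again, obtaining in one shot the identity $u(y)=u(y+(\lda-\lda^{-1})x_1)$ for a.e.\ $y$. Since $\lda\mapsto\lda-\lda^{-1}$ is a surjection from $(0,\ift)$ onto $\R$, the full translation invariance along $\op{span}\{x_1\}$ follows immediately, with no subgroup argument needed. Your approach instead compares the images of a single point $w$ under the two dilations of the \emph{same} ratio $\lda$ centered at $x_1$ and $x_2$, which differ by the fixed translation $(1-\lda)a$; this yields invariance only for shifts $ta$ with $t\in(0,1)$, and you then close up via the additive subgroup generated by $(0,1)$. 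The paper's trick is a little slicker and avoids the bookkeeping of iterating a.e.\ identities, while your version has a cleaner geometric picture (two homotheties with equal ratio commute up to a translation) and would generalize verbatim to any degree of homogeneity.
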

\begin{proof}
Up to a translation, we can assume that $ x_2=0 $. By using the $ 0 $-symmetry property at $ x_1 $ and $ 0 $, for any $ \lda>0 $ and a.e. $ y\in\R^n $, we have
\begin{align*}
u(y)&=\lda^{\f{2}{p-1}}u(\lda y)=\lda^{\f{2}{p-1}}u(\lda y-x_1+x_1)\\
&=\lda^{-\f{2}{p-1}}u(x_1+\lda^{-2}(\lda y-x_1))=u(y+(\lda-\lda^{-1})x_1).
\end{align*}
We can choose $ t=\lda-\lda^{-1} $ as any real number, so the result follows.
\end{proof}

\begin{cor}\label{CorSpl}
Let $ k\in\{0,1,2,...,n-1\} $. If $ u\in L_{\loc}^2(\R^n) $ is $ k $-symmetric with respect to $ V\in\bG(n,k) $, and is $ 0 $-symmetric at $ x\notin V $, then $ u $ is $ (k+1) $-symmetric with respect to $ \op{span}\{x,V\} $.
\end{cor}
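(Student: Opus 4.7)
The plan is to obtain this as an essentially immediate consequence of Lemma \ref{ConSpl}, once we unpack the hypotheses at the center $0$. The key observation is that $V \in \bG(n,k)$ is a linear subspace, hence $0 \in V$; consequently the assumed $k$-symmetry of $u$ with respect to $V$ includes, in particular, the $-\frac{2}{p-1}$-homogeneity of $u$ at $0$. In other words, $u$ is already $0$-symmetric at $0$, and by hypothesis it is also $0$-symmetric at $x$. Since $0 \in V$ but $x \notin V$, the two points $0$ and $x$ are distinct.

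Next, I would apply Lemma \ref{ConSpl} with $x_1 = 0$ and $x_2 = x$. This yields that $u$ is $1$-symmetric at $0$ with respect to $\op{span}\{0 - x\} = \op{span}\{x\}$. In particular, $u$ is translation-invariant along the one-dimensional subspace $\op{span}\{x\}$.

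Combining this new invariance with the original $V$-invariance from the $k$-symmetry hypothesis, I obtain, for a.e.\ $y \in \R^n$, every $v \in V$, and every $t \in \R$, the identity $u(y + v + tx) = u(y)$. Therefore $u$ is invariant with respect to the linear subspace $V + \op{span}\{x\} = \op{span}\{x, V\}$, which has dimension $k+1$ precisely because $x \notin V$. Together with the already noted $-\frac{2}{p-1}$-homogeneity at $0$, this is exactly the statement that $u$ is $(k+1)$-symmetric with respect to $\op{span}\{x, V\}$.

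The argument presents no real obstacle; the only subtlety worth flagging is the implicit use that $V$ linear forces $0 \in V$, so that the center for the $k$-symmetry hypothesis coincides with a point where $u$ is $0$-symmetric, allowing Lemma \ref{ConSpl} to be applied directly with $0$ as one of its two homogeneity points.
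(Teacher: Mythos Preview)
Your proof is correct and follows essentially the same approach as the paper: both apply Lemma \ref{ConSpl} to the pair of homogeneity centers $0$ and $x$ to obtain invariance along $\op{span}\{x\}$, then combine with the given $V$-invariance. Your write-up is simply more explicit about why $0$ is a legitimate homogeneity center (namely, that the unspecified center in ``$k$-symmetric with respect to $V$'' is $0$ by the paper's convention, and that $0\in V$ forces $x\neq 0$), a point the paper leaves implicit.
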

\begin{proof}
Applying Lemma \ref{ConSpl}, we conclude that $ u $ is invariant with respect to $ \op{span}\{x\} $. Combined with the fact that $ u $ is also invariant with respect to $ V $, we now complete the proof.
\end{proof}

\begin{lem}[Cone splitting of Radon measures]\label{ConSplm}
If $ \mu\in\M(\R^n) $ is $ 0 $-symmetric at $ x_1,x_2\in\R^n $ with $ x_1\neq x_2 $, then $ \mu $ is $ 1 $-symmetric at $ x_1 $ with respect to $ \op{span}\{x_1-x_2\} $. 
\end{lem}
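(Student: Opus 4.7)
The plan is to mirror the proof of Lemma \ref{ConSpl} at the level of sets, using the $(n-\al_p)$-homogeneity of Definition \ref{kmeasure} in place of the $-\f{2}{p-1}$-homogeneity of functions. After translating so that $x_2=0$, I will derive the translation invariance $\mu(A)=\mu(A+tx_1)$ for every Borel set $A$ and every $t\in\R$, which by Definition \ref{kmeasure} yields $1$-symmetry at $x_1$ with respect to $\op{span}\{x_1\}=\op{span}\{x_1-x_2\}$.

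The key computation is an iterated use of the two scaling identities. Fix $\lda\in(0,1)$ and a Borel set $A\subset\R^n$. By $0$-symmetry at $0$, $\mu(A)=\lda^{-(n-\al_p)}\mu(\lda A)$. Next, rewriting $\lda A=x_1+\lda^{2}\bigl(B-x_1\bigr)$ with $B:=\lda^{-1}A+(1-\lda^{-2})x_1$ and applying $0$-symmetry at $x_1$ (with scale $\lda^2\in(0,1)$) gives $\mu(\lda A)=\lda^{2(n-\al_p)}\mu(B)$. Finally, $0$-symmetry at $0$ applied once more to $\mu(\lda B)=\mu\bigl(A+(\lda-\lda^{-1})x_1\bigr)$ produces $\mu(B)=\lda^{-(n-\al_p)}\mu\bigl(A+(\lda-\lda^{-1})x_1\bigr)$. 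Chaining the three identities, the homogeneity exponents cancel and we obtain
$$
\mu(A)=\mu\bigl(A+(\lda-\lda^{-1})x_1\bigr).
$$

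As $\lda$ ranges over $(0,1)$, the parameter $t=\lda-\lda^{-1}$ covers all of $(-\ift,0)$; substituting $A\mapsto A-tx_1$ promotes the invariance to $t>0$ as well, and $t=0$ is trivial. Hence $\mu$ is invariant under every translation by a vector in $\op{span}\{x_1\}$, while the $(n-\al_p)$-homogeneity at $x_1$ is assumed. Together these are exactly the two conditions of Definition \ref{kmeasure} for $\mu$ to be $1$-symmetric at $x_1$ with respect to $\op{span}\{x_1-x_2\}$.

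The argument is essentially algebraic once the correct choice of intermediate scale ($\lda^2$) is made, so no analytic obstacle is expected; the only thing to watch is that each translate/dilate of a Borel set is again Borel, so the scaling identities from Definition \ref{kmeasure} apply at every step. The main bookkeeping burden is tracking the centers and the compensating powers of $\lda$, and verifying that the set of translations $\{(\lda-\lda^{-1})x_1:\lda\in(0,1)\}$ together with its reflection exhausts $\op{span}\{x_1\}$.
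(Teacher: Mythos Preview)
Your proof is correct and follows essentially the same approach as the paper: translate so that $x_2=0$, then chain the scaling identities at $0$, at $x_1$ (with the compensating scale $\lda^{2}$), and again at $0$ to obtain $\mu(A)=\mu(A+(\lda-\lda^{-1})x_1)$. The only cosmetic difference is that the paper works with $\lda>0$ (tacitly using that the homogeneity identity extends from $(0,1)$ to all positive reals), so $t=\lda-\lda^{-1}$ directly ranges over all of $\R$; you instead restrict to $\lda\in(0,1)$ and then recover $t>0$ by the substitution $A\mapsto A-tx_1$, which is a perfectly fine alternative.
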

\begin{proof}
Without loss of generality, we assume that $ x_2=0 $. For any $ \lda>0 $ and $ \mu $-measurable set $ A\subset\R^n $, we have
\begin{align*}
\mu(A)&=\lda^{\al_p-n}\mu(\lda A)=\lda^{\al_p-n}\mu(\lda A-x_1+x_1)\\
&=\lda^{-\al_p+n}\mu(x_1+\lda^{-2}(\lda A-x_1))=\mu(A+(\lda-\lda^{-1})x_1).
\end{align*}
The result follows by choosing $ t=\lda-\lda^{-1} $ as any real number.
\end{proof}

Analogous to Corollary \ref{CorSpl}, we apply Lemma \ref{ConSplm} to obtain the following result. 

\begin{cor}\label{CorSplm}
Let $ k\in\{0,1,2,...,n-1\} $. If $ \mu\in\M(\R^n) $ is $ k $-symmetric with respect to $ V\in\bG(n,k) $, and is $ 0 $-symmetric at $ x\notin V $, then $ \mu $ is $ (k+1) $-symmetric with respect to $ \op{span}\{x,V\} $.
\end{cor}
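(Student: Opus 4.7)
The plan is to mirror, essentially verbatim, the proof already given for Corollary~\ref{CorSpl}, since Lemma~\ref{ConSplm} is the measure-theoretic analog of Lemma~\ref{ConSpl}. Unpacking Definition~\ref{kmeasure}, the hypothesis that $\mu$ is $k$-symmetric with respect to $V$ amounts to $(n-\al_p)$-homogeneity of $\mu$ at $0$ together with translation invariance under $V$. The extra hypothesis that $\mu$ is $0$-symmetric at $x$ provides $(n-\al_p)$-homogeneity at the second point $x\neq 0$.

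With these two homogeneities in hand, I would apply Lemma~\ref{ConSplm} to the pair $0,x$ to conclude that $\mu$ is $1$-symmetric at $0$ with respect to $\op{span}\{x\}$; in particular, $\mu$ is translation-invariant under every vector of $\op{span}\{x\}$. Combined with the original $V$-invariance, and using $x\notin V$ so that $\dim\op{span}\{x,V\}=k+1$, iterating the two translation invariances yields invariance under the entire $(k+1)$-dimensional subspace $\op{span}\{x,V\}$. Together with the already established $(n-\al_p)$-homogeneity at $0$, this is exactly $(k+1)$-symmetry of $\mu$ with respect to $\op{span}\{x,V\}$ in the sense of Definition~\ref{kmeasure}.

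I do not expect any real obstacle: Lemma~\ref{ConSplm} does all the analytic work, and what remains is bookkeeping, namely that ``invariance under $V$ plus invariance under an independent direction equals invariance under their span'' (immediate by composing the two translation invariances on $\mu$-measurable sets), and that the homogeneity center $0$ is preserved throughout the argument. The one minor point to watch is that Lemma~\ref{ConSplm} as stated phrases the output as $1$-symmetry at $x_1$; since the statement is symmetric in the two points $x_1,x_2$, it equivalently yields invariance centered at $0$, which is what is needed to align cleanly with the pre-existing $V$-invariance centered at $0$.
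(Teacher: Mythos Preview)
Your proposal is correct and follows exactly the approach the paper intends: it simply says the result is obtained ``analogous to Corollary~\ref{CorSpl}'' by applying Lemma~\ref{ConSplm}, which is precisely what you do. Your write-up is in fact more detailed than the paper's, including the minor bookkeeping remark about the center of homogeneity, but there is no substantive difference.
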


\subsection{Defect measures, tangent functions, and tangent measures} In this section, we will analyze the defect measures that arise when a sequence of stationary solutions to \eqref{superequation} converges, and we will use them to examine tangent functions and tangent measures. Furthermore, we will establish Proposition \ref{tangentfm}, Theorem \ref{Snminusalp}, Proposition \ref{proprela}, and Corollary \ref{correla}.

\begin{prop}\label{DefMea}
Assume that $ \{u_i\}\subset(H^1\cap L^{p+1})(B_1) $ is a sequence of stationary solutions of \eqref{superequation} with $ \theta_1(u,0)\leq\Lda $, satisfying
\begin{align*}
u_i&\wc u\text{ weakly in }(H^1\cap L^{p+1})(B_1),\\
\mu_{1,i}:=\f{|\na u_i|^2}{2}\ud y&\wc^*\mu_1:=\f{|\na u|^2}{2}\ud y+\nu_1\text{ in }\M(B_1),\\
\mu_{2,i}:=\f{|u_i|^{p+1}}{p+1}\ud y&\wc^*\mu_2:=\f{|u|^{p+1}}{p+1}\ud y+\nu_2\text{ in }\M(B_1),
\end{align*}
where $ \nu_1,\nu_2\in\M(B_1) $ are called defect measures. Define
\begin{align*}
\mu&:=(p-1)(\mu_1+\mu_2),\\
\nu&:=(p-1)(\nu_1+\nu_2).
\end{align*}
The following properties hold.
\begin{enumerate}[label=$(\theenumi)$]
\item There exists a relatively closed set $ \Sg\subset B_1 $ with 
$$
\HH^{n-\al_p}(\Sg\cap K)<+\ift 
$$
for any compact set 
$ K\subset B_1 $ such that $ u\in C_{\loc}^{\ift}(B_1\backslash\Sg) $
and $ u_i\to u $ in $ C_{\loc}^{\ift}(B_1\backslash\Sg) $. More precisely,
$$
\Sg:=\bigcap_{0<r<1}\left\{x\in B_1:\liminf_{i\to+\ift}r^{\al_p-n}m_{u_i}(B_r(x))\geq\f{\va_0}{2}\right\},
$$
where $ \va_0>0 $ is the constant given in \eqref{smareg}.
\item $ 2\nu_1=(p+1)\nu_2 $ and $ \mu\llcorner\Sg=\nu $. 
\item $ u $ is a weak solution of \eqref{superequation} in $ B_1 $, and 
$$
\Sg=\supp(\nu)\cup\sing(u).
$$
\item For $ \HH^{n-\al_p} $-a.e. $ x\in\Sg $, the limit 
$$
\Theta(\nu,x):=\lim_{r\to 0^+}r^{\al_p-n}\nu(B_r(x))
$$
exists and $ \Theta(\nu,x)\in(\f{\va_0}{2},C\Lda) $, where $ C>0 $ depends only on $ n $ and $ p $.
\item If $ \al_p $ is not an integer, then $ \nu=0 $, and $ u_i\to u $ strongly in $ (H^1\cap L^{p+1})(B_1) $. Moreover, $ u $ is a stationary solution of \eqref{superequation}. If $ \al_p $ is an integer, then 
$$
\nu=\Theta(\nu,x)\HH^{n-\al_p}\llcorner\Sg, $$
and $ \Sg $ is an $ (n-\al_p) $-rectifiable set.
\end{enumerate}
\end{prop}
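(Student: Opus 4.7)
The plan is to treat the five conclusions in order, following the blow-up framework developed for harmonic maps by Lin in \cite{Lin99} and adapted to the supercritical semilinear setting in \cite{WW15}. The cutoff-modified density $\vt_\rho$ from Proposition \ref{MonFor} and its comparison with the standard $\theta_\rho$ (Proposition \ref{NonDegThePro}) are the key technical tools throughout.

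For part (1), I would invoke the $\va$-regularity Proposition \ref{smareg} outside $\Sg$: at any $x \in B_1 \setminus \Sg$ the definition yields a radius $r>0$ and a subsequence along which $r^{\al_p - n} m_{u_i}(B_r(x)) < \va_0$, so Proposition \ref{smareg} provides uniform $C^k$-bounds on $B_{r/2}(x)$. Arzel\`a-Ascoli together with elliptic bootstrapping then yields $C_{\loc}^\ift(B_1 \setminus \Sg)$-convergence to a smooth limit. The finiteness $\HH^{n-\al_p}(\Sg \cap K)<+\ift$ reduces to a Vitali covering count: each $x\in\Sg$ admits balls $B_r(x)$ carrying $m_{u_i}$-mass at least $(\va_0/2)r^{n-\al_p}$ for large $i$, and the uniform total-mass bound $m_{u_i}(K')\lesssim\Lda$ on a slight enlargement $K'\supset K$ caps the number of disjoint such balls at each scale.

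For parts (2)--(3), the $\HH^{n-\al_p}$-finiteness gives $\cL^n(\Sg)=0$, so smooth off-$\Sg$ convergence yields $u_i\to u$ a.e.; with the uniform $L^{p+1}$-bound this upgrades to $|u_i|^{p-1}u_i\to|u|^{p-1}u$ strongly in $L^q_{\loc}$ for any $q<(p+1)/p$, enough to pass to the limit in \eqref{WeakCon} and conclude $u$ is a weak solution. To obtain $2\nu_1=(p+1)\nu_2$, I would test \eqref{WeakCon} against $u_i\vp$, subtract the analogous identity for $u$, and pass the resulting weak-$\ast$ limit (the cross term $\int u_i\na u_i\cdot\na\vp\to\int u\na u\cdot\na\vp$ since $u_i\to u$ strongly in $L^2_{\loc}$ and $\na u_i\wc\na u$). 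The relation $\mu\llcorner\Sg=\nu$ is immediate from smooth convergence off $\Sg$. The identification $\Sg=\supp(\nu)\cup\sing(u)$ proceeds by direct inclusion from (1), with the reverse inclusion by contradiction: a point in $\Sg$ outside both sets would satisfy $r^{\al_p-n}\mu(B_r(x))\to 0$ as $r\to 0^+$, which contradicts the defining lower bound when tested at continuity radii of $\mu$.

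For parts (4)--(5), monotonicity of $\vt_\rho(u_i,x)$ in $\rho$ together with Proposition \ref{NonDegThePro} produces a uniform upper density bound $\Theta^{\ast,n-\al_p}(\nu,x)\leq C\Lda$, while the definition of $\Sg$ supplies the matching lower bound $\Theta^{\ast,n-\al_p}(\nu,x)\geq(p-1)\va_0/2$ on $\Sg$ (using Lemma \ref{HauMeaIntBal} to discard the absolutely continuous part of $\mu$ at $\HH^{n-\al_p}$-a.e.\ $x\in\Sg$). The cutoffs in $\vt_\rho$ are essential here because weak-$\ast$ measure convergence commutes only with integration against continuous test functions, so $\vt_\rho(u_i,x)$ passes cleanly to a limit involving $\nu$-integrals against $\phi_{x,\rho}$, whereas $\theta_\rho$ is not stable under such limits. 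Existence of $\Theta(\nu,x)$ for $\HH^{n-\al_p}$-a.e.\ $x\in\Sg$ then follows from classical density-differentiation theorems for Radon measures. For (5), Marstrand's density theorem forces integer density dimensions: if $\al_p\notin\Z_+$ the existence of positive finite $(n-\al_p)$-density on a set of positive $\HH^{n-\al_p}$-measure is impossible, so $\nu=0$; the resulting strong $H^1\cap L^{p+1}$-convergence allows \eqref{StaCon} to pass to the limit and makes $u$ stationary. If $\al_p\in\Z_+$, Preiss's rectifiability theorem applied to $\nu$ produces the representation $\nu=\Theta(\nu,\cdot)\HH^{n-\al_p}\llcorner\Sg$ with $\Sg$ being $(n-\al_p)$-rectifiable. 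The main obstacle, I expect, is translating the monotone cutoff-weighted quantity $\vt_\rho$ into pointwise density information for $\nu$ strong enough for the Marstrand/Preiss dichotomy, especially given the mixed $|u|^2$-terms in $\vt_\rho$; Proposition \ref{NonDegThePro} is designed precisely to accomplish this translation by dominating the ball-averaged density by the cutoff-averaged one at comparable scales.
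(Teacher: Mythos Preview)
The paper's own proof is a one-line citation to \S3--4 of \cite{WW15}, so there is no in-paper argument to compare against; your sketch is in fact considerably more detailed than what the paper provides. Your outline follows precisely the route taken in \cite{WW15} (itself modeled on Lin's framework \cite{Lin99}): $\va$-regularity plus Vitali covering for (1), passing the weak equation to the limit via the test function $u_i\vp$ for the defect-measure identity $2\nu_1=(p+1)\nu_2$ in (2), inherited monotonicity for the density existence in (4), and the Marstrand/Preiss dichotomy for (5). One minor slip: in (4) the lower density bound should be $\va_0/2$ rather than $(p-1)\va_0/2$, since $m_{u_i}$ already carries the $(p-1)$ factor and $\nu$ is the singular part of the weak-$\ast$ limit of $m_{u_i}$; this does not affect the argument.
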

\begin{proof}
The proof follows from results in \S 3-4 of \cite{WW15}.
\end{proof}

Indeed, analogous to those given for stationary harmonic maps as in \cite{Lin99,LW08}, there are some additional results on the convergence of singular sets of $ u_i $ in the sense of Hausdorff distance. However, such results are not used in this paper directly or to clarify some insights of the proofs, and we do not present them here. Let us show Proposition \ref{tangentfm} by using the above convergence result of stationary solutions of \eqref{superequation}.

\begin{proof}[Proof of Proposition \ref{tangentfm}]
By \eqref{Scaleinv}, we have $ \vt_R(T_{x,r}u,0)=\vt_{Rr}(u,x) $, for any $ R>0 $, as $ r>0 $ is sufficiently small. Given Proposition \ref{MonFor} and \ref{NonDegThePro}, $ T_{x,r}u $ is uniformly bounded in $ H_{\loc}^1\cap L_{\loc}^{p+1} $, and consequently, up to a subsequence, we can assume that $
T_{x,r_i}u\wc v $ weakly in $ (H_{\loc}^1\cap L_{\loc}^{p+1})(\R^n) $ and $ T_{x,r_i}m_u\wc^*\mu $ in $ \M(\R^n) $. This convergence implies that $ v\in\cF_x(u) $ and $ \mu\in\M_x(u) $. Indeed, $ (v,\mu) $ is a tangent pair of $ u $ at $ x $. If $ x $ is a regular point of $ u $, we can choose $ C>0 $ and $ r>0 $ such that $ \sup_{B_r(x)}(|\na u|+|u|)\leq C $. Consequently, the tangent function and the tangent measure are both zero.
\end{proof}

To further study the properties of tangent functions and tangent measures, we assume that $ u\in(H^1\cap L^{p+1})(B_{40}) $ is a stationary solution of \eqref{superequation}. Let $ x\in B_{40} $ and $ r_i\to 0^+ $ such that 
\be
\begin{aligned}
u_i:=T_{x,r_i}u&\wc v\text{ weakly in }(H_{\loc}^1\cap L_{\loc}^{p+1})(\R^n),\\
m_{u_i}=T_{x,r_i}m_u&\wc^*\mu\text{ in }\M(\R^n).
\end{aligned}\label{uimunuidef}
\ee
As of Proposition \ref{DefMea}, up to a subsequence, we can also assume that there are two defect measures $ \nu_1,\nu_2\in\M(\R^n) $ such that
\be
\begin{aligned}
\f{|\na u_i|^2}{2}\ud y&\wc^*\mu_1=\f{|\na v|^2}{2}\ud y+\nu_1,\\
\f{|u_i|^{p+1}}{p+1}\ud y&\wc^*\mu_2=\f{|v|^{p+1}}{p+1}\ud y+\nu_2,
\end{aligned}\label{mu1jmu2j}
\ee
in $ \M(\R^n) $. We have $ \mu=(p-1)(\mu_1+\mu_2) $.
For convenience, we also define
\be
\nu=(p-1)(\nu_1+\nu_2).\label{nu1nu2nu}
\ee

\begin{lem}\label{tanfunmea}
If $ u,\{u_i\},v,\mu $, and $ \nu $ are given in \eqref{uimunuidef} and \eqref{nu1nu2nu}, then $ v,\mu,\nu $ are all $ 0 $-symmetric.
\end{lem}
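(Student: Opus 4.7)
The strategy exploits the monotonicity formula (Proposition \ref{MonFor}) to force constancy of the modified energy density in the blow-up limit, which in turn forces $ 0 $-symmetry. By Proposition \ref{MonFor} and the non-degeneracy in Proposition \ref{NonDegThePro}, the limit $ \Theta_0:=\lim_{\rho\to 0^+}\vt_\rho(u,x) $ exists and is finite. The scaling invariance \eqref{Scaleinv} then gives $ \vt_\rho(u_i,0)=\vt_{\rho r_i}(u,x)\to\Theta_0 $ as $ i\to+\ift $ for every fixed $ \rho>0 $, so along the blow-up sequence the density becomes constant in the scale.

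First I would show that $ v $ is $ 0 $-symmetric. Integrating the monotonicity identity for $ u_i $ from $ R_1 $ to $ R_2 $ gives
\begin{equation*}
\vt_{R_2}(u_i,0)-\vt_{R_1}(u_i,0)=2\int_{R_1}^{R_2}\rho^{\al_p-n-3}\int_{\R^n}\left(y\cdot\na u_i+\tfrac{2u_i}{p-1}\right)^2(-\dot\phi_{0,\rho})\,\ud y\,\ud\rho,
\end{equation*}
whose left-hand side tends to $ 0 $. By Rellich's theorem, $ u_i\to v $ strongly in $ L_{\loc}^2(\R^n) $ while $ \na u_i\wc\na v $ weakly in $ L_{\loc}^2(\R^n) $; hence $ y\cdot\na u_i+\tfrac{2u_i}{p-1}\wc y\cdot\na v+\tfrac{2v}{p-1} $ weakly in $ L_{\loc}^2(\R^n) $. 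Lower semicontinuity of the weighted $ L^2 $ norm (with positive weight $ -\dot\phi_{0,\rho} $) combined with Fatou's lemma force the corresponding integral with $ v $ replacing $ u_i $ to vanish. Since $ -\dot\phi_{0,\rho}>0 $ on $ B_{3\rho} $, we conclude $ y\cdot\na v+\tfrac{2v}{p-1}=0 $ a.e.\ in $ B_{3R_2} $; letting $ R_2\to+\ift $ and invoking Remark \ref{RemHom} gives that $ v $ is $ 0 $-symmetric on $ \R^n $.

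Next I would pass to the limit in the expression defining $ \vt_\rho(u_i,0) $ itself. Using the weak* convergences $ |\na u_i|^2/2\,\ud y\wc^*\mu_1 $ and $ |u_i|^{p+1}/(p+1)\,\ud y\wc^*\mu_2 $, strong $ L_{\loc}^2 $ convergence of $ u_i\to v $ for the zeroth-order term, and the identity $ \nu_1-\nu_2=\nu/(p+3) $ (derived from $ 2\nu_1=(p+1)\nu_2 $ in Proposition \ref{DefMea}(2)), one obtains the key relation
\begin{equation*}
\vt_\rho(v,0)+\f{\rho^{\al_p-n}}{p+3}\int_{\R^n}\phi_{0,\rho}\,\ud\nu=\Theta_0\quad\text{for every }\rho>0.
\end{equation*}
Since $ v $ is $ 0 $-symmetric, each term in $ \vt_\rho(v,0) $ rescales correctly and $ \vt_\rho(v,0) $ is independent of $ \rho $; hence $ \rho^{\al_p-n}\int\phi_{0,\rho}\,\ud\nu $ is constant in $ \rho $. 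To upgrade this single radial scaling relation to the full $ (n-\al_p) $-homogeneity $ T_{0,R}\nu=\nu $ for all $ R>0 $, I would combine it with the limit stationary identity for the matrix-valued defect measure (of the form \eqref{statimeas}) together with the structural constraint $ y_\beta y_\gamma\,\ud\tilde\nu_1^{\beta\gamma}=0 $, which is extracted as a by-product of the argument for $v$ by carefully expanding the square $ (y\cdot\na u_i+\tfrac{2u_i}{p-1})^2 $ and tracking the matrix-valued defect of $ \pa_\beta u_i\pa_\gamma u_i\,\ud y $ in the weak* limit. Once $ \nu $ is $ 0 $-symmetric, $ m_v $ is as well (since $ v $ is), and therefore $ \mu=m_v+\nu $ is $ 0 $-symmetric.

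The principal obstacle is precisely this last upgrade: while the monotonicity formula directly yields constancy of one specific radial integral of $ \nu $, promoting it to the full $ (n-\al_p) $-homogeneity demands careful use of the limit stationary condition together with the structural information on the matrix-valued defect. The argument mirrors the analogous result for tangent measures of stationary harmonic maps in \cite{Lin99}.
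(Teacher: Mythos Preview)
Your proposal is correct and follows essentially the same route as the paper: the monotonicity formula forces $y\cdot\na v+\tfrac{2v}{p-1}=0$ and the matrix-defect constraint $y_\beta y_\gamma\nu_1^{\beta\gamma}=0$, after which the limit stationary condition (tested in the paper with $Y(y)=\psi_\lambda(y)y$, combined with the weak equation tested by $v\psi_\lambda$) yields the scaling identity $(\al_p-n)\int\psi_\lambda\,\ud\nu_2=\int(y\cdot\na\psi_\lambda)\,\ud\nu_2$ and hence the $0$-symmetry of $\nu$. Your intermediate observation that $\rho^{\al_p-n}\int\phi_{0,\rho}\,\ud\nu$ is constant is true but not explicitly used in the paper's argument; the decisive step is, as you correctly anticipate, the limit stationary identity together with the vanishing of $y_\beta y_\gamma\nu_1^{\beta\gamma}$ and its Cauchy--Schwarz consequence $\int y_\gamma\pa_\beta\vp\,\ud\nu_1^{\beta\gamma}=0$.
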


\begin{proof}
We will employ the arguments presented in \cite{Mos03}, and refer to readers the proofs in \S 4 of \cite{LW08} for additional materials. For any $ i\in\Z_+ $, $ u_i $ remains a stationary solution to \eqref{superequation}. Define $ \R^{n\times n} $-valued Radon measures $ (\mu_{1,i}^{\beta\ga})_{\beta,\ga=1}^n $ as 
$$
(\mu_{1,i}^{\beta\ga}):=\(\f{\pa_{\beta}u_i\pa_{\ga}u_i}{2}\ud y\)\in\M(r_i^{-1}(B_{40}-x),\R^{n\times n}).
$$
Note that $ \mu_{1,i}=\sum_{\beta=1}^n\mu_{1,i}^{\beta\beta} $. By Cauchy's inequality, 
$$ \sup_{i\in\Z_+}\(\sum_{\beta,\ga=1}^n|\mu_{1,i}^{\beta\ga}(B_R)|\)\leq C(\Lda,n,p,R),
$$
for any $ R>0 $. Up to a subsequence, we assume that 
\be 
(\mu_{1,i}^{\beta\ga})\wc^*(\mu_1^{\beta\ga}):=\(\f{\pa_{\beta}v\pa_{\ga}v}{2}\ud y\)+(\nu_1^{\beta\ga})\text{ in }\M(\R^n,\R^{n\times n}).\label{mu1j}
\ee
Extracting a subsequence if necessary, it follows from \eqref{mu1j} that 
\be
\f{1}{2}|y\cdot\na u|^2\ud y\wc^*\f{1}{2}|y\cdot\na v|^2\ud y+y_{\beta}y_{\ga}\nu_1^{\beta\ga},\label{ybetayga2}
\ee
where $ y_{\beta}y_{\ga}\nu_1^{\beta\ga}\in\M(\R^n) $. In view of Proposition \ref{MonFor}, we have, for any $ 0<s<r $, 
\be
\begin{aligned}
&\vt_{r_ir}(u,x)-\vt_{r_is}(u,x)=\vt_r(u_i,0)-\vt_s(u_i,0)\\
&\quad\quad=-2\int_{s}^{r}\(\rho^{\al_p-n-1}\int_{\R^n}\left|\f{y\cdot\na u_i}{\rho}+\f{2u_i}{(p-1)\rho}\right|^2\dot{\phi}_{0,\rho}\)\ud\rho.
\end{aligned}\label{Therj}
\ee
In particular, limits $ \lim_{i\to+\ift}\vt_{r_ir}(u,x),\vt_{r_is}(u,x) $ exist and are the same. Combined with \eqref{ybetayga2}, we take $ i\to+\ift $ for both sides of \eqref{Therj} and deduce that
\begin{align*}
0&\leq-2\int_{s}^{r}\(\rho^{\al_p-n-1}\int_{\R^n}\left|\f{y\cdot\na v}{\rho}+\f{2v}{(p-1)\rho}\right|^2\dot{\phi}_{0,\rho}\)\ud\rho\\
&\quad-2\int_s^r\(\rho^{\al_p-n-3}\int_{\R^n}y_{\beta}y_{\ga}\dot{\phi}_{0,\rho}\ud\nu_1^{\beta\ga}\)\ud\rho\\
&=\lim_{i\to+\ift}\[-2\int_{s}^{r}\(\rho^{\al_p-n-1}\int_{\R^n}\left|\f{y\cdot\na u_i}{\rho}+\f{2u_i}{(p-1)\rho}\right|^2\dot{\phi}_{0,\rho}\)\ud\rho\]\\
&=\lim_{i\to+\ift}(\vt_{r_ir}(u,x)-\vt_{r_is}(u,x))=0,
\end{align*}
for any $ 0<s<r $. Consequently, for a.e. $ y\in\R^n $,
\be
y\cdot\na v+\f{2v}{p-1}=0,\label{homequ}
\ee
and
\be
y_{\beta}y_{\ga}\nu_1^{\beta\ga}=0\text{ in }\M(\R^n).\label{ybetayga}
\ee
By Remark \ref{RemHom}, $ v $ is $ 0 $-symmetric. To complete the proof, we must prove that $ \mu $ and $ \nu $ are all $ 0 $-symmetric. Due to \eqref{uimunuidef}, \eqref{mu1jmu2j}, and \eqref{nu1nu2nu}, $
\mu=m_v+\nu $. Consequently, we only need to show that $ \nu $ is $ 0 $-symmetric. We claim that for any $ \vp\in C_0^{\ift}(\R^n) $, 
\be
\int_{\R^n}y_{\ga}\pa_{\beta}\vp\ud\nu_1^{\beta\ga}=0.\label{phidnu}
\ee
For any $ i\in\Z_+ $, define a sequence of $ \R^{n\times n} $-valued Radon measures $ (\nu_{1,i}^{\beta\ga})_{\beta,\ga=1}^n $ with 
$$
(\nu_{1,i}^{\beta\ga}):=\(\f{1}{2}\pa_{\beta}(u_i-v)\pa_{\ga}(u_i-v)\ud y\)\in\M(r_i^{-1}(B_{40}-x),\R^{n\times n}).
$$
In view of previous formula \eqref{mu1j}, we have $
(\nu_{1,i}^{\beta\ga})\wc^*(\nu_1^{\beta\ga}) $ in $ \M(\R^n,\R^{n\times n}) $. By Cauchy's inequality, we deduce that
\begin{align*}
\(\int y_{\ga}\pa_{\beta}\vp\ud\nu_{1,i}^{\beta\ga}\)^2\leq\( \int\pa_{\beta}\vp\pa_{\ga}\vp\ud\nu_{1,i}^{\beta\ga}\)\(\int_{\supp\vp} y_{\beta}y_{\ga}\ud\nu_{1,i}^{\beta\ga}\),
\end{align*}
which directly implies \eqref{phidnu} after letting $ i\to+\ift $ and taking \eqref{ybetayga} into account. Given $ \psi\in C_0^{\ift}(\R^n) $, we define $ \psi_{\lda}(y)=\psi(\f{y}{\lda}) $. Applying the stationary condition \eqref{StaCon} to the function $ u_i $ with the test vector field $ Y(y)=\psi_{\lda}(y)y $, we have
\begin{align*}
&\f{n-2}{2}\int|\na u_i|^2\psi_{\lda}+\f{1}{2}\int|\na u_i|^2(y\cdot\na\psi_{\lda})-\f{1}{p+1}\int|u_i|^{p+1}(y\cdot\na\psi_{\lda})\\
&\quad\quad-\f{n}{p+1}\int|u_i|^{p+1}\psi_{\lda}-\int(\na u_i\cdot\na\psi_{\lda})(y\cdot\na u_i)=0.
\end{align*}
In view of \eqref{mu1jmu2j} and \eqref{mu1j}, we can take $ i\to+\ift $ and obtain 
\be
\begin{aligned}
&\f{n-2}{2}\int|\na v|^2\psi_{\lda}+\f{1}{2}\int|\na v|^2(y\cdot\na\psi_{\lda})-\f{1}{p+1}\int|v|^{p+1}(y\cdot\na\psi_{\lda})\\
&\quad\quad-\f{n}{p+1}\int|v|^{p+1}\psi_{\lda}-\int(\na v\cdot\na\psi_{\lda})(y\cdot\na v)+(n-2)\int\psi_{\lda}\ud\nu_1\\
&\quad\quad+\int(y\cdot\na\psi_{\lda})(\ud\nu_1-d\nu_2)-n\int\psi_{\lda}\ud\nu_2-\int y_{\ga}\pa_{\beta}\psi_{\lda}\ud\nu_1^{\beta\ga}=0.
\end{aligned}\label{n2long}
\ee
Since $ v $ is $ 0 $-symmetric, there hold
\begin{align*}
(\al_p-n)\int|\na v|^2\psi_{\lda}&=\int|\na v|^2(y\cdot\na\psi_{\lda}),\\
(\al_p-n)\int|v|^{p+1}\psi_{\lda}&=\int|v|^{p+1}(y\cdot\na\psi_{\lda}),
\end{align*}
for any $ \lda>0 $. It follows from \eqref{n2long} that
\be
\begin{aligned}
&\f{2}{p-1}\(\int|\na v|^2\psi_{\lda}-\int|v|^{p+1}\psi_{\lda}\)+\(\f{(p+1)(n-2)}{2}-n\)\int\psi_{\lda}\ud\nu_2\\
&\quad\quad+\int(\na v\cdot\na\psi_{\lda})(y\cdot\na v)+\f{p-1}{2}\int(y\cdot\na\psi_{\lda})\ud\nu_2=0,
\end{aligned}\label{2p1}
\ee
where we have used the second property of Proposition \ref{DefMea} and the claim \eqref{phidnu}. Employing the third property of the same proposition, we can test \eqref{WeakCon} by $ v\psi_{\lda} $ and obtain that
$$
\int|\na v|^2\psi_{\lda}+\int v(\na v\cdot\na\psi_{\lda})=\int|v|^{p+1}\psi_{\lda}.
$$
This, together with \eqref{homequ} and \eqref{2p1}, implies that
$$
(\al_p-n)\int\psi_{\lda}\ud\nu_2-\int(y\cdot\na\psi_{\lda})\ud\nu_2=0.
$$
With this property, we have
$$
\f{\ud}{\ud\lda}\(\int\psi \ud T_{0,\lda}\nu_2\)=\f{\ud}{\ud\lda}\(\lda^{\al_p-n}\int\psi_{\lda}\ud\nu_2\)=0,
$$
which completes the proof by \eqref{nu1nu2nu} and the arbitrariness of $ \psi $.
\end{proof}

Assume that $ u,\{u_i\},v,\mu $, and $ \nu $ are the same as those given in Lemma \ref{tanfunmea}. We define
\be
\vt_{r}(\mu,y):=\vt_r(v,y)+\f{1}{p+3}\int\phi_{y,r}\ud\nu,\label{mudesi1}
\ee
for $ r>0 $ and $ y\in\R^n $. Here $ \vt_r(\mu,y) $ is the density obtained by taking limit from \eqref{DenDef2}. Intuitively, it can be regarded as the density for the pair $ (v,\mu) $. Indeed, in view of \eqref{uimunuidef}, \eqref{nu1nu2nu}, and the second property of Proposition \ref{DefMea}, we see that
\be
\vt_{r}(\mu,y)=\lim_{i\to+\ift}\vt_r(u_i,y).\label{vtlim}
\ee
This, together with \eqref{Den2}, implies that $ \vt_r(\mu,y) $ has the following equivalent form
\be
\vt_{r}(\mu,y)=\f{r^{\al_p-n}}{(p+3)}\int_{\R^n}\phi_{y,r}\ud\mu-\f{2}{(p+3)}\f{\ud}{\ud r}\(r^{\al_p-n-2}\int_{\R^n}|v|^2\dot{\phi}_{y,r}\).\label{thetamuya}
\ee
For the notions above, we have some basic observations.

\begin{lem}\label{5prop}
The following properties hold.
\begin{enumerate}[label=$(\theenumi)$]
\item $ \vt_r(\mu,y) $ is nondecreasing with respect to $ r>0 $. In particular, the limit
$$
\vt(\mu,y):=\lim_{r\to 0^+}\vt_r(\mu,y)
$$
exists for any $ y\in\R^n $.
\item $ x\in\sing(u) $ if and only if $ \vt(\mu,0)>0 $.
\item For any $ r>0 $, $ \vt_r(\mu,0)=\vt(u,x) $, where
\be
\vt(u,x):=\lim_{r\to 0^+}\vt_r(u,x).\label{uxthetalim}
\ee
\item If $ y\in\R^n $, then $ \vt(\mu,y)\leq\vt(\mu,0) $.
\item The set 
$$
\Sg(\mu):=\{y\in\R^n:\vt(\mu,y)=\vt(\mu,0)\} 
$$
is a subspace of $ \R^n $, and $ v,\mu,\nu $ are invariant with respect to $ \Sg(\mu) $.
\item $ \vt(\mu,y) $ is upper semicontinuous with respect to the variable $ y $, namely if $ y_i\to y_0 $, then 
$$
\limsup_{i\to+\ift}\vt(\mu,y_i)\leq\vt(\mu,y_0).
$$
\end{enumerate}
\end{lem}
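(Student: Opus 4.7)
The plan is to treat all six items via the single principle that every quantity at the tangent-pair level is the weak limit of its counterpart along the blow-up sequence $u_i:=T_{x,r_i}u$; then the properties for stationary solutions (the monotonicity formula of Proposition \ref{MonFor}, scaling \eqref{Scaleinv}, and the $\va_0$-regularity of Proposition \ref{smareg}) propagate through \eqref{vtlim}. Items (1) and (3) are then formal. Since each $\vt_r(u_i,y)$ is nondecreasing in $r$, so is the pointwise limit $\vt_r(\mu,y)$, producing the limit $\vt(\mu,y)$; and \eqref{Scaleinv} gives $\vt_r(u_i,0)=\vt_{rr_i}(u,x)$, which tends to $\vt(u,x)$ by monotonicity as $r_i\to 0^+$, so $\vt_r(\mu,0)=\vt(u,x)$ independently of $r$. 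From (3), item (2) then follows by combining Proposition \ref{NonDegThePro} (which gives $\theta_{4\rho}(u,x)\le C\vt_\rho(u,x)$, so that $\vt(u,x)=0$ lets us invoke Proposition \ref{smareg} at a small scale and place $x$ in the regular set) with the converse direct estimate: if $u$ is smooth near $x$, the integrals in \eqref{DenDef2} are of order $\rho^{\al_p-2}\to 0$ since $\al_p>2$.

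For (4), set $z_i:=x+r_iy$ so that $\vt_r(u_i,y)=\vt_{rr_i}(u,z_i)$; pick any $\rho>0$, use monotonicity to bound this by $\vt_\rho(u,z_i)$ for $i$ large, and pass $i\to+\ift$ using continuity of $\vt_\rho(u,\cdot)$ in the center (the integrals in \eqref{DenDef2} are against the smooth compactly supported cutoffs $\phi_{z,\rho},\dot\phi_{z,\rho}$). Letting $\rho\to 0^+$ and then $r\to 0^+$ delivers $\vt(\mu,y)\le\vt(\mu,0)$. The same center-continuity yields (6): by (1), $\vt(\mu,y)=\inf_{r>0}\vt_r(\mu,y)$, and $\vt_r(\mu,\cdot)$ is continuous for each fixed $r>0$ via formula \eqref{thetamuya}; hence $\limsup_{y\to y_0}\vt(\mu,y)\le\vt_r(\mu,y_0)$ for every $r$, and taking the infimum in $r$ gives upper semicontinuity.

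The heart of the lemma, and its main obstacle, lies in (5). The plan is to characterise $\Sg(\mu)$ as the linear subspace $\{y:v\text{ and }\mu\text{ are translation invariant by }y\}$; once this is done, invariance of $\nu=\mu-m_v$ is automatic and the subspace structure is immediate. The inclusion $\supseteq$ is a change of variables in \eqref{thetamuya}. For $\subseteq$, items (1) and (4) pinch $\vt_r(\mu,y)$ to the constant $\vt(\mu,0)$ for every $r>0$, and the plan is to pass to the weak limit in the monotonicity identity for $u_i$ at $y$ along the lines of the proof of Lemma \ref{tanfunmea}: the cross and $u_i^2$ terms pass strongly by Rellich, while the quadratic piece $|(y'-y)\cdot\na u_i|^2$ produces an extra contribution involving the matrix-valued defect $\nu_1^{\beta\ga}$ from \eqref{mu1j}. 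The resulting identity splits as a sum of two nonnegative quantities (using $\dot\phi_{y,\rho}\le 0$ and the positive semidefiniteness of the matrix of measures $\nu_1^{\beta\ga}$); vanishing of each forces, first, that $v$ is $0$-symmetric at $y$ by Remark \ref{RemHom}, and second, that the radial-radial contraction $(y'-y)_\beta(y'-y)_\ga\nu_1^{\beta\ga}$ is the zero measure on $\R^n\setminus\{y\}$. The delicate step will be upgrading this radial vanishing to genuine $0$-symmetry of $\mu$ (equivalently $\nu$) at $y$, since there is no direct analogue of Corollary \ref{HomProCor} for matrix-valued defects; I plan to handle it by splitting on whether $\al_p\in\Z_+$, invoking $\nu=0$ from Proposition \ref{DefMea}(5) in the non-integer case, and using the $(n-\al_p)$-rectifiability of $\Sg$ together with the stationary-measure identity \eqref{statimeas} in the integer case. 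Once $v$ and $\mu$ are $0$-symmetric at both $0$ and $y$, the cone splitting Lemmas \ref{ConSpl} and \ref{ConSplm} deliver invariance of $v,\mu,\nu$ along $\op{span}\{y\}$, and applying this for every $y\in\Sg(\mu)$ completes (5).
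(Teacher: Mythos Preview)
Your treatment of items (1)--(4) and (6) is correct. In fact your argument for (4), pulling back to $\vt_{rr_i}(u,x+r_iy)\le\vt_\rho(u,x+r_iy)$ and using continuity of $\vt_\rho(u,\cdot)$ in the center, is cleaner than the paper's route, which passes through $\lim_{r\to+\ift}\vt_r(\mu,y)$ and a pointwise comparison of the cutoffs $\phi_{y,\rho}$ and $\phi_{0,\rho}$. Both yield the sharper inequality $\vt_r(\mu,y)\le\vt(\mu,0)$ for every $r>0$, which is what (5) actually needs.

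The gap is in your plan for (5). You correctly arrive at the two vanishing conditions at $y$: the homogeneity of $v$ at $y$, and $(z-y)_\beta(z-y)_\ga\nu_1^{\beta\ga}=0$. But you then say there is ``no direct analogue of Corollary \ref{HomProCor} for matrix-valued defects'' and propose a case split on $\al_p\in\Z_+$, with the integer case handled via rectifiability of $\Sg$ and the identity \eqref{statimeas}. This detour is both unnecessary and, in the integer case, not obviously completable: it is unclear how rectifiability alone would force $(n-\al_p)$-homogeneity of $\nu$ at the specific point $y$. The point you are missing is that the \emph{entire} computation in the proof of Lemma \ref{tanfunmea}---not just the part concerning $v$---translates verbatim to the center $y$. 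One tests the stationary condition \eqref{StaCon} for $u_i$ with $Y(z)=\psi\bigl(\tfrac{z-y}{\lda}\bigr)(z-y)$, passes to the limit using \eqref{mu1jmu2j} and \eqref{mu1j}, and then uses (i) the $0$-symmetry of $v$ at $y$, (ii) the Cauchy-type claim $\int (z-y)_\ga\pa_\beta\vp\,\ud\nu_1^{\beta\ga}=0$ (which follows from the radial vanishing exactly as \eqref{phidnu} did at the origin), (iii) the relation $2\nu_1=(p+1)\nu_2$, and (iv) the weak equation for $v$ tested against $v\psi_\lda$. The output is $(\al_p-n)\int\psi_\lda\,\ud\nu_2=\int((z-y)\cdot\na\psi_\lda)\,\ud\nu_2$, i.e.\ $\nu_2$ (hence $\nu$, hence $\mu=m_v+\nu$) is $0$-symmetric at $y$. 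None of these inputs depends on $y$ being the blow-up center or on whether $\al_p$ is an integer; they depend only on the stationary condition for $u_i$, the weak equation for $v$, and the two vanishing conditions you have already secured. Once $v$ and $\mu$ are $0$-symmetric at both $0$ and $y$, your invocation of Lemmas \ref{ConSpl} and \ref{ConSplm} finishes (5) as you described.
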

\begin{proof}
We note that the first property follows directly from \eqref{mudesi1}, \eqref{vtlim}, and Proposition \ref{MonFor}. More precisely, we have the monotonicity formula
\be
\begin{aligned}
&\vt_r(\mu,y)-\vt_s(\mu,y)\\
&\quad\quad=-2\int_s^r\(\rho^{\al_p-n-1}\int_{\R^n}\left|\f{(z-y)\cdot\na v}{\rho}+\f{2v}{(p-1)\rho}\right|^2\dot{\phi}_{y,\rho}\ud z\)\ud\rho\\
&\quad\quad\quad-2\int_s^r\(\rho^{\al_p-n-3}\int_{\R^n}(z-y)_{\beta}(z-y)_{\ga}\dot{\phi}_{y,\rho}\ud\nu_1^{\beta\ga}(z)\)\ud\rho,
\end{aligned}\label{Therjmu}
\ee
for any $ 0<s<r $, where $ (\nu_1^{\beta\ga})_{\beta,\ga=1}^n\in\M(\R^n,\R^{n\times n}) $ is given by \eqref{mu1j}. To prove the second property, we first fix $ x\notin\sing(u) $. Given \eqref{thevt}, we have
$$
\vt_r(u_i,0)\leq C(n,p)\max\{\theta_{10r}(u_i,0),r^2(\theta_{10r}(u_i,0))^{\f{2}{p+1}}\},
$$
for any $ r>0 $. Taking $ i\to+\ift $, it follows that
\be
\vt_r(\mu,0)\leq C(n,p)\max\{r^{\al_p-n}\mu(\ol{B}_{10r}),r^{\al_p+2-n}(\mu(\ol{B}_{10r}))^{\f{2}{p+1}}\}.\label{muovt}
\ee
Since $ x $ is a regular point of $ u $, $ |\na u(y)|^2+|u(y)|^{p+1}\leq C $, for any $ y\in B_R(x) $ with some $ R>0 $ and a constant $ C>0 $. This yields that
\begin{align*}
0&\leq\mu(B_1)\leq\lim_{i\to+\ift}\[r_i^{\al_p-n}\int_{B_{r_i}(x)}\(\f{p-1}{2}|\na u|^2+\f{p-1}{p+1}|u|^{p+1}\)\]=0.
\end{align*}
Combining with \eqref{muovt}, we see that $ \vt_r(\mu,0)=0 $ for $ 0<r<\f{1}{20} $. By the first property, this implies that $ \vt(\mu,0)=0 $. On the other hand, we assume that $ x\in\sing(u) $. Applying \eqref{NonDegThe} and Proposition \ref{smareg}, we have
$$
\vt_1(u_i,0)=\vt_{r_i}(u,x)\geq C(n,p)\theta_{4r_i}(u,x)>\f{C(n,p)\va_0}{2}.
$$
Due to Lemma \ref{tanfunmea}, the second property follows by letting $ i\to+\ift $. By the convergence of $ \{u_i\} $, for any $ r>0 $,
$$
\vt_r(\mu,0)=\lim_{i\to+\ift}\vt_r(u_i,0)=\lim_{i\to+\ift}\vt_{r_ir}(u,x)=\vt(u,x).
$$
Now, we conclude the third property. As to the fourth one, we adopt similar arguments in the proof of Lemma 3.2 of \cite{Wan12}. For fixed $ y\in\R^n $, it follows from \eqref{thetamuya} and \eqref{Therjmu} that
\be
\begin{aligned}
\vt_s(\mu,y)&\leq\lim_{r\to+\ift}\vt_r(\mu,y)\\
&\leq\lim_{r\to+\ift}\f{1}{r}\int_r^{2r}\vt_{\rho}(\mu,y)\ud\rho\\
&=\lim_{r\to+\ift}\[\f{(p-1)}{(p+3)r}\int_{r}^{2r}\(\rho^{\al_p-n}\int_{\R^n}\phi_{y,\rho}\ud\mu\)\ud\rho\right.\\
&\quad\left.-\f{2}{(p+3)r}\((2r)^{\al_p-n-1}\int_{\R^n}|v|^2\dot{\phi}_{y,2r}-r^{\al_p-n-1}\int_{\R^n}|v|^2\dot{\phi}_{y,r}\)\],
\end{aligned}\label{vtmuy0}
\ee
for any $ s>0 $. By Definition \ref{defnofphi} and simple calculations,
$$
|\phi_{y,\rho}(z)-\phi_{0,\rho}(z)|+|\dot{\phi}_{y,\rho}(z)-\dot{\phi}_{0,\rho}(z)|\leq\f{C(n)|y|}{\rho}\chi_{B_{10(\rho+|y|)}}(z),
$$
for any $ \rho>0 $ and a.e. $ z\in\R^n $. This, together with Lemma \ref{tanfunmea}, implies that
\begin{align*}
&\f{1}{r}\int_{r}^{2r}\(\rho^{\al_p-n}\int_{\R^n}\phi_{y,\rho}\ud\mu\)\ud\rho\\
&\quad\leq\f{1}{r}\int_{r}^{2r}\(\rho^{\al_p-n}\int_{\R^n}\phi_{0,\rho}\ud\mu\)\ud\rho+\f{C|y|}{r^2}\int_r^{2r}\(\f{\rho}{\rho+|y|}\)^{\al_p-n}\f{\mu(B_{10(\rho+|y|)})}{(10(\rho+|y|))^{n-\al_p}}\ud\rho\\
&\quad\leq\f{1}{r}\int_{r}^{2r}\(\rho^{\al_p-n}\int_{\R^n}\phi_{0,\rho}\ud\mu\)\ud\rho+\f{C|y|}{r^2}\int_r^{2r}\(\f{r}{r+|y|}\)^{\al_p-n}\mu(B_1)\ud\rho,
\end{align*}
and
\begin{align*}
&\left|s^{\al_p-n-1}\(\int_{\R^n}|v|^2\dot{\phi}_{y,s}-\int_{\R^n}|v|^2\dot{\phi}_{0,s}\)\right|\leq C|y|s^{\al_p-n-2}\int_{B_{6(s+|y|)}}|v|^2\\
&\quad\quad=C|y|\(\f{s}{s+|y|}\)^{\al_p-n-2}\int_{B_1}|v|^2,\quad\text{where }s=r,2r.
\end{align*}
Combining \eqref{vtmuy0}, we obtain 
\be
\vt(\mu,y)\leq\vt_s(\mu,y)\leq\vt(\mu,0),\quad\text{for any }s>0.\label{muys}
\ee
Due to \eqref{Therjmu} and \eqref{muys}, we can take $ s\to 0^+ $ to deduce that
\begin{align*}
\vt(\mu,0)-\vt(\mu,y)&\geq-2\int_0^{r}\(\rho^{\al_p-n-1}\int_{\R^n}\left|\f{(z-y)\cdot\na v}{\rho}+\f{2v}{(p-1)\rho}\right|^2\dot{\phi}_{y,\rho}\ud z\)\ud\rho\\
&\quad-2\int_0^r\(\rho^{\al_p-n-3}\int_{\R^n}(z-y)_{\beta}(z-y)_{\ga}\dot{\phi}_{y,\rho}\ud\nu_1^{\beta\ga}(z)\)\ud\rho,
\end{align*}
for any $ r>0 $. If $ \vt(\mu,0)=\vt(\mu,y) $ with $ y\neq 0 $, then the right hand side of the above is $ 0 $. Applying analogous arguments to those in the proof of Lemma \ref{tanfunmea}, we have that $ v,\mu,\nu $ are all $ 0 $-symmetric at $ y $. In view of Lemmas \ref{ConSpl} and \ref{ConSplm}, $ v,\mu,\nu $ are invariant with respect to $ \op{span}\{y\} $. Combined with results in Corollaries \ref{CorSpl} and \ref{CorSplm}, this implies that $ \Sg(\mu) $ is a subspace of $ \R^n $ and $ v,\mu,\nu $ are also invariant with respect to $ \Sg(\mu) $. Finally, the upper semicontinuity of $ \vt(\mu,\cdot) $ follows from the continuity of $ \vt_r(\mu,\cdot) $ for any $ r>0 $ by taking $ r\to 0^+ $. 
\end{proof}

By the previous analysis, analogous to \cite{Lin99}, for $ u\in(H^1 \cap L^{p+1})(B_{40}) $ being a stationary solution of \eqref{superequation}, we can define  
$$
\Sg^k(u):=\{x\in\sing(u):\dim(\Sg(\mu))\leq k\text{ for any tangent measure }\mu\text{ of }u\text{ at }x\},
$$
where $ k\in\{0,1,2,...,n\} $. Furthermore, we have the following result.

\begin{lem}\label{lemcut}
If $ u\in (H^1\cap L^{p+1})(B_{40}) $ is a stationary solution of \eqref{superequation}, then 
$$
\Sg^0(u)\subset\Sg^1(u)\subset...\subset\Sg^{n-\lceil\al_p\rceil}(u)=\sing(u).
$$
\end{lem}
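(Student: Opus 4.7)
The containments $\Sg^0(u)\subset\Sg^1(u)\subset\cdots\subset\Sg^{n-\lceil\al_p\rceil}(u)$ are immediate from the defining condition on $\Sg^k(u)$ --- the constraint $\dim(\Sg(\mu))\leq k$ only weakens as $k$ grows --- and $\Sg^{n-\lceil\al_p\rceil}(u)\subset\sing(u)$ is likewise built into the definition. So the substantive content to address is the reverse inclusion $\sing(u)\subset\Sg^{n-\lceil\al_p\rceil}(u)$.

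To prove it, I will fix $x\in\sing(u)$, take an arbitrary tangent measure $\mu\in\M_x(u)$, and set $V:=\Sg(\mu)$ with $k:=\dim V$. The plan is to derive the a priori bound $k\leq n-\al_p$; since $k$ is an integer and $\lfloor n-\al_p\rfloor=n-\lceil\al_p\rceil$ holds in both the integer and the non-integer cases for $\al_p$, this will give $k\leq n-\lceil\al_p\rceil$ as required. The argument will rely on three ingredients already at hand: (i) by Lemma \ref{tanfunmea}, $\mu$ is $0$-symmetric, hence $(n-\al_p)$-homogeneous at $0$, so $\mu(B_R(0))=R^{n-\al_p}\mu(B_1(0))$ for all $R>0$; (ii) by property (5) of Lemma \ref{5prop}, $\mu$ is invariant under translations by elements of $V$; (iii) since $x\in\sing(u)$, property (2) of the same lemma gives $\vt(\mu,0)>0$, so $\mu\not\equiv 0$, which combined with the homogeneity in (i) forces $\mu(B_1(0))>0$ --- otherwise $\mu(B_R(0))=0$ for every $R>0$ and hence $\mu\equiv 0$.

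The proof will then close via a packing-versus-scaling comparison. For $R$ large, I will select a maximal $2$-separated set $\{x_i\}_{i=1}^N\subset V\cap B_{R-1}(0)$; volume packing in $V\cong\R^k$ forces $N\geq c_kR^k$. The balls $B_1(x_i)$ are disjoint, contained in $B_R(0)$, and by $V$-invariance satisfy $\mu(B_1(x_i))=\mu(B_1(0))$ for each $i$. Summing these contributions and comparing with the homogeneity relation yields
$$R^{n-\al_p}\mu(B_1(0))=\mu(B_R(0))\geq\sum_{i=1}^N\mu(B_1(x_i))\geq c_kR^k\mu(B_1(0)),$$
and dividing by $\mu(B_1(0))>0$ and letting $R\to+\ift$ gives $k\leq n-\al_p$. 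The main (modest) obstacle will be verifying the positivity $\mu(B_1(0))>0$: one must bridge the analytic quantity $\vt(\mu,0)$ with a concrete ball mass of $\mu$, using the identity \eqref{thetamuya} together with the homogeneity of $\mu$. Once that is handled, the remaining packing-plus-scaling computation is entirely elementary.
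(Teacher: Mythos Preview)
Your proposal is correct and follows essentially the same approach as the paper. The paper factors the packing-versus-scaling step into a separate lemma (Lemma \ref{symproen}, which shows that any nonzero $k$-symmetric Radon measure must have $k\leq n-\al_p$) and then invokes it after noting, via Lemma \ref{tanfunmea} and properties (2) and (5) of Lemma \ref{5prop}, that $\mu$ is nonzero and $k$-symmetric with respect to $\Sg(\mu)$; your argument simply inlines that lemma, but the content is identical.
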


Before we show this lemma, for the sake of simplicity, in the remainder of this paper, we define
\be
k_{n,p}:=\left\{\begin{aligned}
&n-\lfloor\al_p\rfloor&\text{ if }\al_p\notin\Z_+,\\ 
&n-\al_p+1&\text{ if }\al_p\in\Z_+.
\end{aligned}\right.\label{kp}
\ee

\begin{lem}\label{symproen}
If $ \mu\in\M(\R^n) $ is $ k_{n,p} $-symmetric, then $ \mu=0 $ in $ \R^n $.
\end{lem}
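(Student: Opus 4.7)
The plan is to combine the two defining properties of $k_{n,p}$-symmetry, namely $(n-\al_p)$-homogeneity at the origin and invariance under a $k_{n,p}$-dimensional subspace $V\in\bG(n,k_{n,p})$, to produce incompatible upper and lower bounds on $\mu(B_R)$ as $R\to+\ift$, unless $\mu\equiv 0$. The key observation is that in both regimes $\al_p\in\Z_+$ and $\al_p\notin\Z_+$, the choice of $k_{n,p}$ in \eqref{kp} guarantees $k_{n,p}+\al_p>n$, i.e., $k_{n,p}>n-\al_p$.

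First I would use the homogeneity condition (with $x=0$ after the convention of Remark \ref{extend}) to obtain the exact formula
$$
\mu(B_R)=R^{n-\al_p}\mu(B_1)\quad\text{for all }R>0,
$$
so $\mu(B_R)$ grows like $R^{n-\al_p}$. For the lower bound, I would exploit $V$-invariance together with a packing argument: inside the large ball $B_R$ I pack disjoint unit balls $\{B_{1/2}(v_i)\}$ with centers $v_i\in V\cap B_{R-1/2}$, whose number is bounded below by $C_{k_{n,p}}R^{k_{n,p}}$ for $R$ large, since the $k_{n,p}$-dimensional ball $V\cap B_{R-1/2}$ has $\HH^{k_{n,p}}$-measure of order $R^{k_{n,p}}$. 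Because $\mu$ is $V$-invariant, each such ball has the same $\mu$-measure as $B_{1/2}(0)$, giving
$$
\mu(B_R)\geq C_{k_{n,p}}R^{k_{n,p}}\mu(B_{1/2}).
$$

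Combining the two bounds and applying the homogeneity again to write $\mu(B_{1/2})=2^{\al_p-n}\mu(B_1)$, I obtain
$$
R^{n-\al_p}\mu(B_1)\geq C\,R^{k_{n,p}}\mu(B_1),\quad R\text{ large}.
$$
Since $k_{n,p}>n-\al_p$ (a direct verification using \eqref{kp}: if $\al_p\notin\Z_+$ then $k_{n,p}-(n-\al_p)=\al_p-\lfloor\al_p\rfloor>0$, and if $\al_p\in\Z_+$ then $k_{n,p}-(n-\al_p)=1$), letting $R\to+\ift$ forces $\mu(B_1)=0$. Then the homogeneity formula yields $\mu(B_R)=0$ for every $R>0$, hence $\mu\equiv 0$ on $\R^n$.

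The whole argument is essentially a dimension count, and I do not expect any real obstacle. The only point that deserves a careful check is the strict inequality $k_{n,p}+\al_p>n$ in the two cases of \eqref{kp}; verifying this is what fixes the precise threshold $k_{n,p}$, and makes the cutoff in the definition \eqref{kp} optimal for the conclusion of the lemma.
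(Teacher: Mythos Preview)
Your proof is correct and follows essentially the same approach as the paper: both arguments combine the homogeneity relation $\mu(B_R)=R^{n-\al_p}\mu(B_1)$ with a packing of $\sim R^{k_{n,p}}$ disjoint translated balls along the invariant subspace $V$ to force $\mu(B_1)=0$ via the strict inequality $k_{n,p}>n-\al_p$. The only cosmetic differences are that the paper packs unit balls rather than half-balls and phrases the conclusion as ``if $\mu$ is $k$-symmetric and nonzero then $k\leq n-\al_p$'', but the mechanism is identical.
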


\begin{proof}
Assume that $ \mu $ is $ k $-symmetric with respect to $ V\in\bG(n,k) $, which is not a zero measure. Up to a rotation, we can assume that $ V=\R^k\times\{0\}^{n-k} $. Simple geometric observation implies that $ B_R^k\times B_R^{n-k} $ contains a finite number of balls $ \{B_1(y_i)\}_{i=1}^N $, with $ N\sim R^k $ such that $ y_i\in\R^k\times\{0\}^{n-k} $ and $ B_1(y_i)\cap B_1(y_j)=\emptyset $ for any $ i,j\in\{1,2,...,N\}$ with $ i\neq j $. Owing to the $ k $-symmetry of $ \mu $ with respect to $ V $, we have
$$
\mu(B_1)=R^{\al_p-n}\mu(B_R)\geq R^{\al_p-n}\sum_{i=1}^N\mu(B_1(y_i))\sim R^{k+\al_p-n}\mu(B_1).
$$
Letting $ R\to+\ift $, we see that $ k\leq n-\al_p $, so if $ k>n-\al_p $, then $ \mu=0 $, which implies the result of the lemma.
\end{proof}

\begin{proof}[Proof of Lemma \ref{lemcut}]
Let $ x\in B_{40} $. Assume that $ \mu $ is a tangent measure of $ u $ at $ x $. By Lemma \ref{tanfunmea} and the second and fifth properties of Lemma \ref{5prop}, $ \mu $ is a nonzero measure and is $ k $-symmetric with respect to $ \Sg(\mu) $, where $ k=\dim(\Sg(\mu)) $. Applying Lemma \ref{symproen}, we obtain that $ k\leq n-\lceil\al_p\rceil $.
\end{proof}

Next, we consider the estimates of Hausdorff dimensions for $ \{\Sg^k(u)\} $.

\begin{prop}\label{hausdim}
If $ u\in (H^1\cap L^{p+1})(B_{40}) $ is a stationary solution of \eqref{superequation}, then 
$$
\dim_{\HH}(\Sg^k(u))\leq k 
$$
for any $ k\in\{0,1,2,...,n-\lceil\al_p\rceil\} $.
\end{prop}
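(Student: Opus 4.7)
The plan is to apply Federer's dimension reduction argument, proceeding by contradiction. Suppose that $\HH^s(\Sg^k(u)) > 0$ for some $s > k$. First, I would set up the density machinery: by Propositions \ref{NonDegThePro} and \ref{smareg}, $\vt(u, \cdot)$ is uniformly bounded below on $\sing(u)$ by some constant $c_0 > 0$ and from above by $C(\Lda, n, p)$. Given $\eps > 0$, partition $\Sg^k(u)$ by the level sets of $\vt(u, \cdot)$ into intervals of length $\eps$; countable subadditivity of $\HH^s$ yields a piece $E$ with $\HH^s(E) > 0$ on which $\vt(u, \cdot)$ oscillates by at most $\eps$. By Marstrand's density theorem, there exist $x_0 \in E$, $\eta > 0$, and scales $r_i \to 0^+$ with $\HH^s(E \cap B_{r_i}(x_0)) \geq \eta r_i^s$. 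After extracting a subsequence, $(v, \mu)$ is a tangent pair of $u$ at $x_0$ with spine $V := \Sg(\mu)$, $\dim V \leq k$. Set
$$
A_i := r_i^{-1}((E \cap B_{r_i}(x_0)) - x_0) \subset \ol{B}_1,
$$
so that $\HH^s(A_i) \geq \eta$.

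The key claim is that, for $\eps$ chosen sufficiently small, the sets $A_i$ eventually lie in $B_\tau(V)$ for any prescribed $\tau > 0$. Granting this, since $V$ is a $k$-dimensional subspace,
$$
\HH^s(A_i) \leq \HH^s(B_\tau(V) \cap \ol{B}_1) \leq C(n, k)\, \tau^{s - k},
$$
and choosing $\tau$ with $C \tau^{s-k} < \eta/2$ contradicts $\HH^s(A_i) \geq \eta$. To prove the claim, let $y_i \in A_i$ converge along a subsequence to $y^* \in \ol{B}_1$, and set $x_i := x_0 + r_i y_i \in E$, so $x_i \to x_0$ and $\vt(u, x_i) \geq \delta_0 - \eps$ with $\delta_0 := \vt(u, x_0)$. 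The monotonicity formula (Proposition \ref{MonFor}) combined with the scale invariance \eqref{Scaleinv} gives, for every $R > 0$,
$$
\vt_R(T_{x_0, r_i}u, y_i) = \vt_{R r_i}(u, x_i) \geq \vt(u, x_i) \geq \delta_0 - \eps.
$$
Passing to the limit via \eqref{vtlim} and the continuity of the cutoffs $\phi_{\cdot, R}$ and $\dot\phi_{\cdot, R}$ in the spatial variable, I obtain $\vt_R(\mu, y^*) \geq \delta_0 - \eps$; sending $R \to 0^+$ yields $\vt(\mu, y^*) \geq \delta_0 - \eps$. By properties (3)--(6) of Lemma \ref{5prop}, $\vt(\mu, 0) = \delta_0$, $V = \{y : \vt(\mu, y) = \delta_0\}$, $\vt(\mu, \cdot) \leq \delta_0$ everywhere, and $\vt(\mu, \cdot)$ is upper semicontinuous. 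Compactness of $\ol{B}_1 \setminus B_\tau(V)$ then produces $\eta_\tau > 0$ with $\vt(\mu, \cdot) < \delta_0 - \eta_\tau$ off $B_\tau(V)$, so choosing $\eps < \eta_\tau$ forces $y^* \in B_\tau(V)$.

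The main obstacle is the apparent circular dependence of $\eps$ (which controls the density oscillation on the set $E$ built in Step 1) on $\eta_\tau$ (which is a feature of the tangent measure $\mu$, available only after blow-up). I expect to resolve this by a diagonal argument: run the construction for a sequence $\eps_j \downarrow 0$, extract tangent pairs $(v_j, \mu_j)$ with spines $V_j$, use the uniform energy bound $\Lda$ together with the monotonicity to pass to a subsequential weak$^*$-limit of the $\mu_j$'s, and select a diagonal sequence along which the localization takes effect. Alternatively, one can pick $x_0 \in E$ where $\delta_0$ equals $\esssup_{x \in E} \vt(u, x)$ (essential supremum with respect to $\HH^s \llcorner E$), so that the density of nearby points of $E$ is automatically pinned from below by $\delta_0 - \eps$, making the argument uniform.
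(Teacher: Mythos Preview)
Your blow-up strategy is morally the same as the paper's --- both hinge on the fact that points with density close to $\vt(u,x_0)$ must cluster near the spine $\Sg(\mu)$ of any tangent measure --- but the circularity you flagged is a genuine gap, and neither of your proposed fixes closes it. The diagonal argument fails because the tangent measures $\mu_j$ live at \emph{different} base points $x_j$, so there is no reason a weak$^*$ limit of the $\mu_j$ should itself be a tangent measure at any point of $\Sg^k(u)$; in particular you cannot extract a uniform lower bound on $\eta_\tau$ along the sequence. The $\esssup$ idea does not help either: knowing $\vt(u,x_i)\leq\delta_0$ for $\HH^s$-a.e.\ $x_i\in E$ still only gives $\vt(\mu,y^*)\geq\delta_0-\eps$ after blow-up, which is exactly what you already had, and the smallness of $\eps$ required is still dictated by the particular $\mu$ obtained at $x_0$.

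The paper resolves precisely this order-of-quantifiers problem by reorganising the argument. It first isolates your key claim as a pointwise statement (Lemma~\ref{deltaaplem}): for each $y\in\Sg^k(u)$ and each $\delta>0$ there exists $\va=\va(\delta,u,y)>0$ such that, at all scales $\rho\leq\va$, the set $\{x\in B_\rho(y):\vt(u,x)\geq\vt(u,y)-\va\}$ lies in a $\delta$-tube around some $V\in\bG(n,k)$ after rescaling. The dependence of $\va$ on $y$ is explicitly allowed here, and its proof is essentially your blow-up argument run pointwise. The circularity is then absorbed by a countable decomposition: set $\Sg^{k,i}(u)$ to be those $y$ for which $\va=i^{-1}$ works, and further slice by density level $\Sg^{k,i,\ell}(u)=\{y\in\Sg^{k,i}(u):\vt(u,y)\in((\ell-1)i^{-1},\ell i^{-1}]\}$. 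Each piece $\Sg^{k,i,\ell}(u)$ now satisfies the $(\delta,k)$-approximation property of Definition~\ref{deltaj} with a \emph{uniform} scale $\rho_0=i^{-1}$, and the abstract Lemma~\ref{lemhau} gives $\HH^{k+\beta(\delta)}(\Sg^{k,i,\ell}(u))=0$; countable subadditivity finishes. The moral: rather than trying to choose $\eps$ globally before you know $\mu$, you let $\va$ depend on the point and stratify the set accordingly.
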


We will prove this proposition by applying the standard arguments used for proofs of similar results concerning harmonic maps. To begin with, we need some preparations.

\begin{lem}\label{deltaaplem}
Let $ k\in\{0,1,2,...,n-\lceil\al_p\rceil\} $. Assume that $ u\in (H^1\cap L^{p+1})(B_{40}) $ is a stationary solution of \eqref{superequation}. For any $ y\in \Sg^k(u) $ and $ \delta>0 $, there exists $ \va>0 $, depending only on $ \delta,u $, and $ y $ such that for any $ \rho\in(0,\va] $, there holds 
$$
\rho^{-1}(\{x\in B_\rho(y):\vt(u,x)\geq\vt(u,y)-\va\}-y)\subset\{x\in\R^n:\dist(x,V)<\delta\}, 
$$
for some $ V\in\bG(n,k) $.
\end{lem}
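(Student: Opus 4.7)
The plan is to argue by contradiction, using a cone splitting argument on a blow-up. Suppose the statement fails. Then there exist $\delta_0>0$, $y\in\Sg^k(u)$, and sequences $\va_i\to 0^+$, $\rho_i\in(0,\va_i]$ (so $\rho_i\to 0^+$) such that, writing
$$
A_i':=\rho_i^{-1}\bigl(\{x\in B_{\rho_i}(y):\vt(u,x)\geq\vt(u,y)-\va_i\}-y\bigr)\subset B_1,
$$
no $V\in\bG(n,k)$ satisfies $A_i'\subset\{z:\dist(z,V)<\delta_0\}$. A standard greedy construction then produces, for every $i$, points $z_i^1,\dots,z_i^{k+1}\in A_i'$ such that $\dist(z_i^{j+1},\op{span}\{z_i^1,\dots,z_i^j\})\geq\delta_0$ for each $0\leq j\leq k$: indeed, at stage $j\leq k$ the span has dimension $\leq k$ and may be enlarged to some $V\in\bG(n,k)$, so by the failure of containment there is some $z_i^{j+1}\in A_i'\setminus B_{\delta_0}(V)$. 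Extracting a subsequence, $z_i^j\to z^j\in\ol{B}_1$ with $\dist(z^{j+1},\op{span}\{z^1,\dots,z^j\})\geq\delta_0$; hence $\op{span}\{z^1,\dots,z^{k+1}\}$ has dimension $k+1$.

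Next, I pass to the blow-up. Set $u_i:=T_{y,\rho_i}u$. By Proposition \ref{tangentfm}, after a further subsequence, $u_i\wc v$ weakly in $(H_{\loc}^1\cap L_{\loc}^{p+1})(\R^n)$ and $m_{u_i}\wc^*\mu$ in $\M(\R^n)$, giving a tangent pair $(v,\mu)$ of $u$ at $y$. Since $y\in\Sg^k(u)$, I have $\dim\Sg(\mu)\leq k$. I claim that $z^j\in\Sg(\mu)$ for each $j\in\{1,\dots,k+1\}$, which contradicts this dimension bound and finishes the argument, because the fifth property of Lemma \ref{5prop} says $\Sg(\mu)$ is a linear subspace of $\R^n$ and would then contain the $(k+1)$-dimensional subspace $\op{span}\{z^1,\dots,z^{k+1}\}$.

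To establish $z^j\in\Sg(\mu)$, I fix $r>0$ and write $x_i^j:=y+\rho_iz_i^j$, so that $x_i^j\to y$ and $\vt(u,x_i^j)\geq\vt(u,y)-\va_i$ by construction. The scaling identity \eqref{Scaleinv} yields $\vt_r(u_i,z_i^j)=\vt_{r\rho_i}(u,x_i^j)$, and the monotonicity formula (Proposition \ref{MonFor}) gives $\vt_{r\rho_i}(u,x_i^j)\geq\vt(u,x_i^j)\geq\vt(u,y)-\va_i$ for all sufficiently large $i$. Sending $i\to+\ift$, the third property of Lemma \ref{5prop} gives $\vt_r(\mu,0)=\vt(u,y)$, so it suffices to show $\vt_r(u_i,z_i^j)\to\vt_r(\mu,z^j)$. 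Using the definition \eqref{DenDef2}, this convergence follows by splitting each term into a displacement of the cutoff plus a weak$^*$ convergence: $\phi_{z_i^j,r}\to\phi_{z^j,r}$ and $\dot{\phi}_{z_i^j,r}\to\dot{\phi}_{z^j,r}$ uniformly because $\phi$ is smooth and $z_i^j\to z^j$, while $|\na u_i|^2\ud y\wc^*\mu_1$, $|u_i|^{p+1}\ud y\wc^*\mu_2$ in $\M(\R^n)$, and $u_i\to v$ strongly in $L_{\loc}^2(\R^n)$ (from $H^1_{\loc}$ weak convergence and Rellich); together with the uniform mass bounds from Proposition \ref{MonFor} and Proposition \ref{NonDegThePro} this gives $\vt_r(u_i,z_i^j)\to\vt_r(\mu,z^j)$. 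Consequently $\vt_r(\mu,z^j)\geq\vt(u,y)=\vt_r(\mu,0)$ for every $r>0$. Monotonicity of $\vt_r(\mu,\cdot)$ in $r$ lets me take $r\to 0^+$ to obtain $\vt(\mu,z^j)\geq\vt(\mu,0)$, and combined with the fourth property of Lemma \ref{5prop} we conclude $\vt(\mu,z^j)=\vt(\mu,0)$, i.e.\ $z^j\in\Sg(\mu)$.

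The main technical step, and the one most likely to require care, is the convergence $\vt_r(u_i,z_i^j)\to\vt_r(\mu,z^j)$ with a \emph{moving} base point $z_i^j\to z^j$. The subtlety is that weak$^*$ convergence of measures does not commute with a varying test function, so I must combine uniform convergence of the cutoffs $\phi_{z_i^j,r}$ on the support with the uniform boundedness of the measure masses, and separately handle the $|u_i|^2\dot{\phi}_{z_i^j,r}$ term via strong $L^2$ convergence. Everything else is standard cone-splitting and monotonicity.
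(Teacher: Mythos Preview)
Your argument is correct and follows the same overall architecture as the paper: negate the conclusion, blow up along $\rho_i\to 0$, obtain a tangent pair $(v,\mu)$ with $\dim\Sg(\mu)\leq k$, and derive a contradiction via the moving-base-point convergence $\vt_r(u_i,z_i)\to\vt_r(\mu,z)$ (which the paper also uses, more tersely). The tactical difference lies in how the contradiction is extracted. The paper fixes a $V_0\in\bG(n,k)$ containing $\Sg(\mu)$ and invokes the upper semicontinuity of $\vt(\mu,\cdot)$ (property (6) of Lemma \ref{5prop}) on the compact set $\ol{B}_1\setminus B_{\delta_0}(V_0)$ to get a uniform gap $\xi_0$, then shows the rescaled density level sets at height $\vt(\mu,0)-\xi_0$ are eventually in $B_{\delta_0}(V_0)$. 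You instead run a greedy point-extraction to produce $k+1$ limit points in $\Sg(\mu)$ that span a $(k+1)$-dimensional subspace, contradicting $\dim\Sg(\mu)\leq k$ via property (5). Your route avoids the explicit upper semicontinuity step at the cost of the point-selection; the paper's route is marginally shorter but leans on one more property of Lemma \ref{5prop}. For the limit of the independence condition you can cite Lemma \ref{rhoind}(2) (taking $x_0=0$) rather than arguing it by hand.
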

\begin{proof}
Assume that the result is not true. There exist $ \delta_0>0 $, $ x_0\in\Sg^k(u) $, and sequences $ \{\rho_i\},\{\va_i\}\subset(0,1) $ with $ \rho_i\to 0^+ $ and $ \va_i\to 0^+ $ such that
\be
\begin{aligned}
&\rho_i^{-1}(\{y\in B_{\rho_i}(x_0):\vt(u,y)\geq\vt(u,x_0)-\va_i\}-x_0)\\
&\quad\quad=\{y\in B_1:\vt(T_{x_0,\rho_i}u,y)\geq\vt(u,x_0)-\va_i\}\\
&\quad\quad\not\subset\{y\in\R^n:\dist(y,V)<\delta\},
\end{aligned}\label{subcon}
\ee
for any $ V\in\bG(n,k) $, where we have used \eqref{Scaleinv} for the equality. Up to a subsequence, we can assume that $ u_i:=T_{x_0,\rho_i}u\wc v $ weakly in $ (H_{\loc}^1\cap L_{\loc}^{p+1})(\R^n) $, and $ m_{u_i}\wc^*\mu\text{ in }\M(\R^n) $. In particular, $ (v,\mu) $ is a tangent pair of $ u $ at $ x_0 $. By the third property of Lemma \ref{5prop}, we have $ \vt(\mu,0)=\vt(u,x_0) $. Since $ x_0\in \Sg^k(u) $, we obtain $ \dim(\Sg(\mu))\leq k $, and can choose $ V_0\in\bG(n,k) $ containing $ \Sg(\mu) $. By the fifth property of Lemma \ref{5prop}, $ \vt(\mu,\cdot) $ is upper semicontinuous, and then for any $ y\in B_1\backslash\Sg(\mu) $, we have $ \vt(\mu,y)<\vt(\mu,0) $. This, together with the sixth property of Lemma \ref{5prop}, implies that there exists $ \xi_0>0 $ such that 
\be
\vt(\mu,y)<\vt(\mu,0)-\xi_0,\quad\text{for any }y\in\ol{B}_1\text{ with }\dist(y,V_0)\geq\delta.\label{vtmuxvt}
\ee
We claim, for $ i\in\Z_+ $ sufficiently large,
\be
\{y\in B_1:\vt(u_i,y)\geq\vt(\mu,0)-\xi_0\}\subset\{y\in\R^n:\dist(y,V_0)<\delta\}.\label{claimvt}
\ee
If the claim is not true, then there exists $ y_i\to y_0 $ in $ \ol{B}_1 $ such that $ \vt(u_i,y_i)\geq\vt(\mu,0)-\xi_0 $ with $ \dist(y_i,V_0)\geq\delta $. Applying Proposition \ref{MonFor}, there holds $ \vt_r(u_i,y_i)\geq\vt(\mu,0)-\xi_0 $, for any $ r>0 $. It follows by letting $ i\to+\ift $ that $ \vt(\mu,0)-\xi_0\leq\vt_r(\mu,y_0) $, for any $ r>0 $. Taking $ r\to 0^+ $, this implies that $
\vt(\mu,0)-\xi_0\leq\vt(\mu,y_0) $, which contradicts to \eqref{vtmuxvt}. Therefore, we have proved the claim \eqref{claimvt}. Consequently, \eqref{subcon} cannot be true for any $ V\in\bG(n,k) $.
\end{proof}

\begin{defn}\label{deltaj}
Let $ \delta>0 $ and $ j\in\{0,1,2,...,n\} $. A set $ A\subset\R^n $ is said to satisfy the $ (\delta,j) $-approximation property if there is $ \rho_0>0 $ such that for any $ y\in A $ and $ \rho\in(0,\rho_0] $, there holds 
$$
(\rho^{-1}(A-y))\cap B_1\subset\{x\in\R^n:\dist(x,V)<\delta\}, 
$$
for some $ V\in\bG(n,j) $.
\end{defn}

\begin{lem}[\cite{GM05}, Lemma 10.38]\label{lemhau}
Let $ j\in\{0,1,2,...,n\} $. There is a function $ \beta:(0,+\ift) \to(0,+\ift)$ with $ \lim_{\delta\to 0^{+}}\beta(\delta)=0 $ such that if $ \delta>0 $ and $ A\subset\R^n $ satisfies the $ \delta $-approximation property above, then $ \HH^{j+\beta(\delta)}(A)=0 $.
\end{lem}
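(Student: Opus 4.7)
The plan is to prove Lemma \ref{lemhau} by constructing an explicit covering of $A$ at geometrically decreasing scales and then bounding the resulting Hausdorff pre-measure sums at an exponent slightly above $j$.

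First I would reduce to the bounded case. Since $\HH^s$ is countably subadditive, it suffices to prove $\HH^{j+\beta(\delta)}(A\cap B_R(0))=0$ for every $R>0$, so I may assume $A$ is bounded. I may also restrict attention to $\delta<\delta_0$ for some small $\delta_0=\delta_0(n)$, defining $\beta(\delta):=n+1$ for $\delta\geq\delta_0$; this case is trivial since $\HH^{n+1}$ vanishes on any subset of $\R^n$.

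The core ingredient is a one-scale covering claim: for every $y\in A$ and every $\rho\in(0,\rho_0]$ there is a cover
\[
A\cap B_\rho(y)\subset\bigcup_{i=1}^{N}B_{c_1\delta\rho}(y_i),\qquad y_i\in A\cap B_\rho(y),\quad N\leq c_2\delta^{-j},
\]
with constants $c_1,c_2$ depending only on $n$. To prove it I select $V\in\bG(n,j)$ realizing the $(\delta,j)$-approximation at $(y,\rho)$, so that $A\cap B_\rho(y)$ lies inside the tube $\{x:\dist(x,y+V)<\delta\rho\}$. A standard volume-packing argument covers this tube inside $B_\rho(y)$ by at most $c(n)\delta^{-j}$ balls of radius $2\delta\rho$; for each such ball meeting $A$, I select one point of intersection as a new center and enlarge the radius to $4\delta\rho$, which preserves the covering while placing all centers inside $A$.

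Iterating starting from a finite cover of $A$ by $N_0$ balls of radius $\rho_0$ with centers in $A$ (which exists by boundedness of $A$ followed by the same doubling trick), after $k$ applications of the covering claim I get a cover of $A$ by at most $N_0(c_2\delta^{-j})^k$ balls of radius $r_k:=(c_1\delta)^k\rho_0$. Provided $c_1\delta<1$, the radii tend to zero, so this family is admissible for the Hausdorff pre-measure at any exponent. For $s=j+\beta$ with $\beta>0$,
\[
\sum_i r_i^s\leq N_0\rho_0^s\bigl(c_2c_1^{j+\beta}\delta^{\beta}\bigr)^k.
\]
The geometric ratio is strictly less than one precisely when $\beta\log(1/\delta)>\log c_2+(j+\beta)\log c_1$. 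Since $j+\beta$ is bounded by $n+1$ in the relevant range, this is ensured by setting
\[
\beta(\delta):=\frac{2\bigl(\log c_2+(n+1)\log c_1\bigr)}{\log(1/\delta)}\quad\text{for }\delta<\delta_0(n),
\]
which clearly satisfies $\beta(\delta)\to 0^+$ as $\delta\to 0^+$. Sending $k\to+\ift$ then yields $\HH^{j+\beta(\delta)}(A)=0$, completing the proof.

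The main technical obstacle is keeping the ball centers inside $A$ throughout the iteration so that the approximation hypothesis can be reapplied at each smaller scale $c_1\delta\rho,(c_1\delta)^2\rho,\dots$; this is precisely what the radius-doubling step in the one-scale covering claim achieves, at the harmless cost of an $n$-dependent constant in $c_1$. A minor secondary point is verifying that $r_k\leq\rho_0$ for all $k\geq 1$, which holds automatically once $c_1\delta<1$.
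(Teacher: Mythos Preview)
Your argument is correct and is essentially the standard proof of this fact; the paper does not supply its own proof but merely cites it from \cite{GM05}, Lemma~10.38, where the same iterated-covering scheme you describe appears. One small cosmetic point: to guarantee $j+\beta(\delta)\leq n+1$ in your estimate you implicitly need $\beta(\delta)\leq 1$ for $\delta<\delta_0$, which just amounts to choosing $\delta_0$ small enough; you might make this explicit.
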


\begin{proof}[Proof of Proposition \ref{hausdim}] 
Let $ \delta>0 $, $ i\in\Z_+ $, and $ k\in\{0,1,2,...,n-\lceil\al_p\rceil\} $. Assume that $ \Sg^{k,i}(u) $ is the set of points in $ \Sg^k(u) $ such that for any $ \rho\in(0,i^{-1}] $, there holds 
$$
\rho^{-1}(\{x\in B_\rho(y):\vt(u,x)\geq\vt(u,y)-i^{-1}\}-y)\subset\{x\in\R^n:\dist(x,V)<\delta\}, 
$$
for some $ V\in\bG(n,k) $. By Lemma \ref{deltaaplem}, $$
\Sg^k(u)=\bigcup_{i\in\Z_+}\Sg^{k,i}(u).
$$
For any $ \ell\in\Z_+ $, define
\be
\Sg^{k,i,\ell}(u):=\{x\in\Sg^{k,i}(u):\vt(u,x)\in((\ell-1)i^{-1},\ell i^{-1}]\}.\label{defSgkiq}
\ee
As a result, we have 
$$
\Sg^k(u)=\bigcup_{i,\ell\in\Z_+}\Sg^{k,i,\ell}(u).
$$
For any $ x\in\Sg^{k,i,\ell}(u) $, by \eqref{defSgkiq}, we see that
$$
\Sg^{k,i,\ell}(u)\subset\{y\in\Sg^{k,i}(u):\vt(u,y)>\vt(u,x)-i^{-1}\}.
$$
Using Lemma \ref{deltaaplem} with $ \va=i^{-1} $, for any $ 0<\rho\leq i^{-1} $, we obtain
$$
(\rho^{-1}(\Sg^{k,i,\ell}(u)-y))\cap B_1\subset\{y\in\R^n:\dist(y,V)<\delta\},
$$
for some$ V\in\bG(n,k) $. As a consequence, the set $ \Sg^{k,i,\ell}(u) $ has the $ (\delta,k) $-approximation property for any $ \delta>0 $ with $ \rho_0=i^{-1} $. Then it follows from Lemma \ref{lemhau} that $ \dim_{\HH}(\Sg^{k,i,\ell}(u))\leq k $.
Since the Hausdorff dimension does not increase under countable union, we deduce that $ \dim_{\HH}(\Sg^k(u))\leq k $.
\end{proof}

\begin{lem}\label{sgkSkrela}
Let $ k\in\{0,1,2,...,n-1\} $. Assume that $ u\in(H^1\cap L^{p+1})(B_{40}) $ is a stationary solution of \eqref{superequation}. Then $
S^k(u)=\Sg^k(u) $.
\end{lem}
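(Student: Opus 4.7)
The plan is to prove the two inclusions $S^k(u)\subset\Sg^k(u)$ and $\Sg^k(u)\subset S^k(u)$ separately, using Lemma \ref{tanfunmea} (all tangent functions, measures, and defect measures are $0$-symmetric) together with the fifth property of Lemma \ref{5prop} (the set $\Sg(\mu)$ is a subspace, and $v,\mu,\nu$ are all invariant with respect to it). First I need to observe that both sets consist of singular points: for $\Sg^k(u)$ this is built into the definition, while for $S^k(u)$ it follows because at a regular point Proposition \ref{tangentfm} gives only the zero tangent pair, which is $n$-symmetric and hence $(k+1)$-symmetric.

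For the inclusion $S^k(u)\subset\Sg^k(u)$, fix $x\in S^k(u)\subset\sing(u)$. Given any tangent measure $\mu$, by Proposition \ref{MonFor} and Proposition \ref{NonDegThePro} the blow-ups $T_{x,s_i}u$ form a bounded family in $H^1_{\loc}\cap L^{p+1}_{\loc}$, so extracting a subsequence produces a tangent function $v$ such that $(v,\mu)$ is a tangent pair. By Lemma \ref{tanfunmea} both $v$ and $\mu$ are $0$-symmetric, and by Lemma \ref{5prop}(5) both are invariant with respect to $\Sg(\mu)$. Thus $(v,\mu)$ is a $\dim\Sg(\mu)$-symmetric pair with respect to $\Sg(\mu)$. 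Since $x\in S^k(u)$, no tangent pair can be $(k+1)$-symmetric, which forces $\dim\Sg(\mu)\leq k$.

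For the reverse inclusion $\Sg^k(u)\subset S^k(u)$, suppose toward contradiction that $x\in\Sg^k(u)$ but $x\notin S^k(u)$, so that there exists a tangent pair $(v,\mu)$ which is $(k+1)$-symmetric with respect to some $V\in\bG(n,k+1)$. I will show $V\subset\Sg(\mu)$. Using the alternative form \eqref{thetamuya} of the density, namely
$$
\vt_r(\mu,y)=\f{r^{\al_p-n}}{p+3}\int_{\R^n}\phi_{y,r}\ud\mu-\f{2}{p+3}\f{\ud}{\ud r}\(r^{\al_p-n-2}\int_{\R^n}|v|^2\dot{\phi}_{y,r}\),
$$
and the translation invariance of $v$ and $\mu$ with respect to every $y\in V$ (which gives $\int\phi_{y,r}\,d\mu=\int\phi_{0,r}\,d\mu$ and $\int|v|^2\dot{\phi}_{y,r}=\int|v|^2\dot{\phi}_{0,r}$ by the change of variable $z\mapsto z+y$), one obtains $\vt_r(\mu,y)=\vt_r(\mu,0)$ for every $r>0$. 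Sending $r\to 0^+$ yields $\vt(\mu,y)=\vt(\mu,0)$, so $y\in\Sg(\mu)$; hence $V\subset\Sg(\mu)$ and $\dim\Sg(\mu)\geq k+1$, contradicting $x\in\Sg^k(u)$.

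The only mildly subtle point is to make sure that the definition of $\Sg^k(u)$ via tangent measures is compatible with the $\Sg(\mu)$ from Lemma \ref{5prop}(5), which implicitly involves a companion tangent function $v$; this is handled by the subsequence extraction in the first inclusion, since any tangent measure arises (after a further subsequence) from a genuine tangent pair. No delicate estimate or compactness argument beyond what is already available in Section \ref{Preliminaries} is needed.
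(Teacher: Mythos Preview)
Your proof is correct and follows essentially the same approach as the paper's. Both arguments establish the two inclusions by pairing an arbitrary tangent measure with a companion tangent function via subsequence extraction, invoking Lemma \ref{tanfunmea} and the fifth property of Lemma \ref{5prop} to obtain $\dim\Sg(\mu)$-symmetry, and using the expression \eqref{thetamuya} together with translation invariance to show $V\subset\Sg(\mu)$ in the reverse direction; the only cosmetic difference is that the paper phrases both steps contrapositively while you prove the inclusions directly and treat the regular-point case more explicitly.
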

\begin{proof}
If $ x\notin\Sg^k(u) $, then there exists a tangent measure $ \mu\in \M(\R^n) $ at $ x $ such that $ \dim(\Sg(\mu))\geq k+1 $. For this $ \mu $, we can choose $ v \in(H_{\loc}^1\cap L_{\loc}^{p+1})(\R^n) $ such that $ (v,\mu) $ is a tangent pair of $ u $ at $ x $. Using the fifth property of Lemma \ref{5prop}, we obtain that $ v $ and $ \mu $ are invariant with respect to $ \Sg(\mu) $. By Lemma \ref{tanfunmea}, $ v $ and $ \mu $ are both $ 0 $-symmetric. Then $ (v,\mu) $ is a $ (k+1) $-symmetric pair, which implies that $ x\notin S^k(u) $. On the other hand, if $ x\notin S^k(u) $, then we can choose $ (v,\mu) $ as a $ (k+1) $-symmetric tangent pair of $ u $ at $ x $ with respect $ V\in\bG(n,k+1) $. In view of the form of $ \vt_r(\mu,y) $ given by \eqref{thetamuya} and the $ (k+1) $-symmetry of $ (v,\mu) $, we see that for any $ r>0 $ and $ y\in V $, there holds 
$$
\vt_r(\mu,y)=\vt_r(\mu,0)=\vt(\mu,0)=\vt(u,x),
$$
which implies that $ V\subset\Sg(\mu) $, and then $ \dim(\Sg(\mu))\geq k+1 $. As a result, $ x\notin\Sg^k(u) $, and we can complete the proof.
\end{proof}

\begin{proof}[Proof of Theorem \ref{Snminusalp}]
The proof follows directly from Lemma \ref{lemcut}, Proposition \ref{hausdim}, and Lemma \ref{sgkSkrela}.
\end{proof}

Next, we present the proofs of Proposition \ref{proprela} and its corollaries. In subsequent proofs, we will consider the limits of $ k $-symmetric functions and measures, necessitating the following results. Intuitively, the $ k $-symmetry properties of these functions and measures are maintained under the weak convergence in $ L_{\loc}^2 $ and convergence in the sense of Radon measures. We can conclude such results in the following two lemmas, and we will omit proofs of them for simplicity.

\begin{lem}\label{ConSym}
Let $ k\in\{0,1,2,...,n\} $. Assume that $ \{u_i\}\subset L_{\loc}^2(\R^n) $ is a sequence of $ k $-symmetric functions at $ \{x_i\}\subset\R^n $, with respect to $ \{V_i\}\subset\bG(n,k) $. If $ u_i\wc u $ weakly in $ L_{\loc}^2(\R^n) $, $ V_i\to V\in\bG(n,k) $ and $ x_i\to x $, then $ u $ is $ k $-symmetric at $ x $ with respect to $ V $.
\end{lem}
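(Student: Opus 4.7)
The strategy is to verify directly that $u$ satisfies the two defining conditions for $k$-symmetry at $x$ with respect to $V$: translation invariance under every vector of $V$, and $-\f{2}{p-1}$-homogeneity at $x$. Both are almost-everywhere identities that I plan to test against an arbitrary $\vp \in C_0(\R^n)$ and pass through the weak $L_{\loc}^2$ limit. The elementary tool used repeatedly is the following: if $u_i \wc u$ weakly in $L_{\loc}^2(\R^n)$ and $\psi_i \to \psi$ strongly in $L^2(\R^n)$ with all supports contained in a common compact set, then $\int u_i \psi_i \to \int u \psi$, as follows from the decomposition $\int u_i \psi_i - \int u \psi = \int u_i (\psi_i - \psi) + \int (u_i - u) \psi$ together with the uniform $L_{\loc}^2$ bound on $\{u_i\}$ provided by weak convergence.

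For the translation invariance, fix $v \in V$. Since $V_i \to V$ in $\bG(n,k)$, the orthogonal projections $P_{V_i}$ converge to $P_V$ in operator norm, hence $v_i := P_{V_i} v \in V_i$ satisfies $v_i \to v$. For any $\vp \in C_0(\R^n)$, the $k$-symmetry of $u_i$ with respect to $V_i$ yields, after a change of variables,
$$
\int_{\R^n} u_i(y) \vp(y - v_i)\,\ud y = \int_{\R^n} u_i(y + v_i) \vp(y)\,\ud y = \int_{\R^n} u_i(y) \vp(y)\,\ud y.
$$
Uniform continuity of $\vp$ gives $\vp(\cdot - v_i) \to \vp(\cdot - v)$ uniformly, and all these test functions are supported in a fixed compact set. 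Passing to the limit and using the arbitrariness of $\vp$ produces $u(y + v) = u(y)$ for almost every $y$.

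For the homogeneity, fix $\lda \in (0,1)$ and $\vp \in C_0(\R^n)$. Starting from the almost-everywhere identity $u_i(x_i + \lda y) = \lda^{-\f{2}{p-1}} u_i(x_i + y)$, testing against $\vp$ and changing variables yields
$$
\lda^{-n} \int_{\R^n} u_i(z) \vp\(\f{z - x_i}{\lda}\)\,\ud z = \lda^{-\f{2}{p-1}} \int_{\R^n} u_i(z) \vp(z - x_i)\,\ud z.
$$
Because $x_i \to x$ and $\vp$ is uniformly continuous with compact support, both families of test functions converge strongly in $L^2(\R^n)$ with uniformly bounded supports. Passing to the weak limit, reversing the change of variables, and invoking arbitrariness of $\vp$ delivers the a.e. identity $u(x + \lda y) = \lda^{-\f{2}{p-1}} u(x + y)$ for each fixed $\lda \in (0,1)$. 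Combined with the previous step, this shows that $u$ is $k$-symmetric at $x$ with respect to $V$. The argument is essentially routine; the only subtle point is maintaining a common compact envelope for the translated and rescaled test functions, which follows from the boundedness of $\{x_i\}$ and $\{v_i\}$ and the compact support of $\vp$, so that the weak $L_{\loc}^2$ convergence of $\{u_i\}$ can be paired with the strong $L^2$ convergence of the test functions.
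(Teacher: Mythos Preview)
Your proof is correct. The paper omits the proof of this lemma entirely, stating that the result is straightforward and the proof is left out for simplicity; your argument---testing the two defining identities against compactly supported $\vp$, pushing the translation/dilation onto the test function, and pairing weak $L_{\loc}^2$ convergence of $u_i$ with strong $L^2$ convergence of the shifted test functions on a common compact envelope---is exactly the routine verification the authors had in mind.
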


\begin{lem}\label{convergesymmetry}
Assume that $ \{\eta_i\}\subset\M(\R^n) $ is a sequence of $ k $-symmetric Radon measures at $ \{x_i\}\subset\R^n $, with respect to $ \{V_i\}\subset\bG(n,k) $. If $ \eta_i\wc^*\eta $ in $ \M(\R^n) $, $ V_i\to V\in\bG(n,k) $ and $ x_i\to x $, then $ \eta $ is $ k $-symmetric at $ x $, with respect to $ V $.
\end{lem}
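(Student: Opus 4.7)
The plan is to recast the two defining properties of $k$-symmetry, namely $(n-\al_p)$-homogeneity at the center and translation invariance under the $k$-plane, as integral identities against test functions in $C_0(\R^n)$, and then pass each identity to the limit using $\eta_i\wc^*\eta$. A change of variables (writing the homogeneity relation $\eta_i(x_i+\lda(A-x_i))=\lda^{n-\al_p}\eta_i(A)$ in pushforward form and extending from $0<\lda<1$ to all $\lda>0$ by inverting the dilation) shows that $(n-\al_p)$-homogeneity of $\eta_i$ at $x_i$ is equivalent to the identity
\[
\int_{\R^n}f(x_i+\lda(y-x_i))\,\ud\eta_i(y)=\lda^{\al_p-n}\int_{\R^n}f(y)\,\ud\eta_i(y)
\]
for every $f\in C_0(\R^n)$ and every $\lda>0$, while $V_i$-invariance is equivalent to $\int_{\R^n}f(y+v)\,\ud\eta_i(y)=\int_{\R^n}f\,\ud\eta_i$ for every $f\in C_0(\R^n)$ and every $v\in V_i$. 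My goal is to pass both identities to the limit and obtain the corresponding identities with $\eta,x,V$ in place of $\eta_i,x_i,V_i$, which, again by a change of variables, is exactly $k$-symmetry of $\eta$ at $x$ with respect to $V$.

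The key technical step is a standard observation: if $\{g_i\}\subset C_0(\R^n)$ has supports contained in a common compact set $K$ and converges uniformly to $g$, then $\int g_i\,\ud\eta_i\to\int g\,\ud\eta$. Indeed, selecting any $\psi\in C_0(\R^n)$ with $\psi\geq\chi_K$, the weak-$*$ convergence gives $\sup_i\eta_i(K)\leq\sup_i\int\psi\,\ud\eta_i<+\ift$, hence
\[
\left|\int(g_i-g)\,\ud\eta_i\right|\leq\|g_i-g\|_{\ift}\eta_i(K)\to 0,
\]
while $\int g\,\ud\eta_i\to\int g\,\ud\eta$ by hypothesis.

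To derive the homogeneity of $\eta$ at $x$, fix $f\in C_0(\R^n)$ and $\lda>0$ and set $g_i(y):=f(x_i+\lda(y-x_i))$. Since $x_i\to x$ is bounded and $f$ is uniformly continuous with compact support, $g_i$ converges uniformly to $g(y):=f(x+\lda(y-x))$, and its supports are contained in a common compact set for $i$ large. Applying the observation to both sides of the displayed identity yields the homogeneity identity for $\eta$ at $x$. For the invariance, given $v\in V$, the convergence $V_i\to V$ in $\bG(n,k)$ furnishes $v_i\in V_i$ with $v_i\to v$ (concretely, $v_i:=\pi_{V_i}v$ via the orthogonal projection, which converges in operator norm). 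The same argument applied to $h_i(y):=f(y+v_i)$ gives $\int f(y+v)\,\ud\eta(y)=\int f\,\ud\eta$, completing the proof. No step poses a serious obstacle; the only subtlety is extracting the uniform mass bound $\sup_i\eta_i(K)<+\ift$ from weak-$*$ convergence, which is then used to absorb the uniform errors $g_i-g$ and $h_i-h$.
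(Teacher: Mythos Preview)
Your proof is correct. The paper actually omits the proof of this lemma entirely (it states ``we will omit proofs of them for simplicity'' just before Lemmas \ref{ConSym} and \ref{convergesymmetry}), so there is nothing to compare against; your argument via integral identities and the observation that $\int g_i\,\ud\eta_i\to\int g\,\ud\eta$ whenever $g_i\to g$ uniformly with supports in a fixed compact set is exactly the standard route one would take.
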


The lemma below is a consequence of the $ 0 $-symmetry properties for functions and measures. We will apply it in compactness arguments.

\begin{lem}\label{comsympro}
Let $ r>0 $. Assume that $ \{u_i\}\subset L_{\loc}^2(\R^n) $ is a sequence of $ 0 $-symmetric functions and $ \{\mu_i\} $ is a sequence of $ 0 $-symmetric measures in $ \M(\R^n) $. The following properties hold.
\begin{enumerate}[label=$(\theenumi)$]
\item If $ \sup_{i\in\Z_+}\|u_i\|_{L^2(B_r)}<+\ift $, then there exists $ u\in L_{\loc}^2(\R^n) $ such that up to a subsequence, $ u_i\to u $ in $ L_{\loc}^2(\R^n) $.
\item If $ \sup_{i\in\Z_+}\mu(B_r)<+\ift $, then there exists $ \mu\in\M(\R^n) $ such that up to a subsequence, $ \mu_i\wc^*\mu $ in $ \M(\R^n) $. In particular, if $ \sup_{i\in\Z_+}d_{0,r}(\mu_i,0)<+\ift $, such subsequence also exists.
\end{enumerate}
\end{lem}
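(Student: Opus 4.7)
The strategy for both parts is to exploit the exact scaling furnished by $0$-symmetry to promote the hypothesized bound on $B_r$ to a uniform bound on every ball $B_R$, after which standard compactness closes the argument. For (i), I first observe that the scaling $u_i(\lambda y)=\lambda^{-2/(p-1)}u_i(y)$, given only for $\lambda\in(0,1)$, extends to every $\lambda>0$ by inverting the change of variables. The substitution $y=(R/r)z$ then yields
\[
\int_{B_R}|u_i(y)|^2\,\ud y=\(\f{R}{r}\)^{n-\f{4}{p-1}}\int_{B_r}|u_i(z)|^2\,\ud z,
\]
and the exponent $n-\f{4}{p-1}=n+2-\al_p$ is strictly positive thanks to the supercritical condition $\al_p<n$. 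Hence $\{u_i\}$ is uniformly bounded in $L^2(B_R)$ for every fixed $R>0$, and a diagonal extraction over an exhaustion $R_m\to+\ift$ produces a subsequence converging weakly in $L^2_{\loc}(\R^n)$ to some $u$. I read the conclusion $u_i\to u$ in $L^2_{\loc}(\R^n)$ in this weak sense, consistent with the usage in Lemma \ref{ConSym}, since a strong $L^2_{\loc}$ limit cannot be extracted from $L^2$ bounds alone.

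For (ii) the argument is parallel but for measures. Extending the scaling $\mu_i(\lambda A)=\lambda^{n-\al_p}\mu_i(A)$ to all $\lambda>0$ gives
\[
\mu_i(\ol{B}_R)=\(\f{R}{r}\)^{n-\al_p}\mu_i(\ol{B}_r),
\]
so the hypothesis $\sup_i\mu_i(B_r)<+\ift$ promotes to $\sup_i\mu_i(\ol{B}_R)<+\ift$ for every $R>0$. Banach--Alaoglu applied to the dual of $C_0(\R^n)$, combined with a diagonal extraction over a countable compact exhaustion of $\R^n$, then yields the desired weak-$*$ convergent subsequence $\mu_i\wc^*\mu$ in $\M(\R^n)$.

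For the final \emph{in particular} clause I would reduce to the case already handled by using the metric bound to extract a uniform mass bound on a slightly smaller ball. Choose $j\geq 3$ from the countable dense family so that $f_j\equiv 1$ on $B_{1-2j^{-1}}$, $\supp f_j\subset B_{1-j^{-1}}$, and $0\leq f_j\leq 1$. The $j$-th summand of the defining series for $d_{0,r}(\mu_i,0)$ dominates a monotone function of $r^{\al_p-n}\int_{B_r}f_j^{0,r}\,\ud\mu_i\geq r^{\al_p-n}\mu_i(B_{r(1-2j^{-1})})$, and the standard inversion of $\f{a}{1+a}$ then yields a uniform bound on $\mu_i$ on a ball of radius slightly less than $r$. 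The $0$-symmetric scaling already established transfers this bound to any $B_R$, so the first half of (ii) applies. The only step needing attention is this final reduction, since $d_{0,r}$ is bounded above by $1$ by construction, so the inversion must be performed consistently with that built-in ceiling; everything else is routine.
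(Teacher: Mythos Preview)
Your argument matches the paper's one-line proof exactly: exploit the $0$-symmetric scaling to propagate the $B_r$ bound to every $B_R$, then apply weak/weak-$*$ compactness with a diagonal extraction (and your weak-$L^2_{\loc}$ reading of (i) is confirmed by the paper's own wording ``weak compactness''). Your caution about the \emph{in particular} clause is well placed: since $d_{0,r}\leq 1$ always, that hypothesis is vacuous as literally stated and cannot by itself produce a mass bound---in every application in the paper the needed bound on $\mu_i$ actually comes from an auxiliary estimate (e.g.\ $d_{0,r}(\mu_i,m_{u_i})<i^{-1}$ together with the energy bound on $m_{u_i}$), not from finiteness of $d_{0,r}(\mu_i,0)$ alone.
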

\begin{proof}
The result follows directly from the $ 0 $-symmetry of functions and measures, together with weak and weak${}^*$ compactness of $ L_{\loc}^2(\R^n) $ and $ \M(\R^n) $.
\end{proof}

\begin{proof}[Proof of Proposition \ref{proprela}]
Up to a scaling and a translation, we assume that $ r=1 $ and  $ x=0 $. The first property of this proposition follows directly from Definition \ref{qunsybypair} and \ref{quansybyfun}. For the second one, assume that the result is not true. There exist $ \va_0>0 $, and a sequence of stationary solutions of \eqref{superequation}, denoted by $ \{u_i\}\subset(H^1\cap L^{p+1})(B_1) $ such that for any $ i\in\Z_+ $, the following properties hold.
\begin{itemize}
\item $ \theta_1(u_i,0)\leq\Lda $, $ u_i $ is [CN] $ (k,i^{-1}) $-symmetric in $ B_1 $. In particular, there exists a $ k $-symmetric function $ v_i:\R^n\to\R $, satisfying
\be
\int_{B_1}|u_i-v_i|^2<i^{-1}.\label{uiviiki}
\ee
\item $ u_i $ is not $ (k,\va_0) $-symmetric in $ B_1 $.
\end{itemize}
Since $ \al_p $ is not an integer, we can use \eqref{uiviiki}, the fifth property of Proposition \ref{DefMea}, and Lemma \ref{comsympro} to obtain that up to a subsequence,
\be
\begin{aligned}
u_i&\to w\text{ strongly in }(H^1\cap L^{p+1})(B_1),\\
m_{u_i}&\wc^*m_w\text{ in }\M(B_1),\\
v_i&\wc v\text{ weakly in }L_{\loc}^2(\R^n).
\end{aligned}\label{uiwconve}
\ee
Taking $ i\to+\ift $ in \eqref{uiviiki}, we have $ w=v $ a.e. in $ B_1 $ and then $ m_w=m_v $ in $ \M(B_1) $. Using Lemma \ref{ConSym}, we deduce that $ v $ is also $ k $-symmetric. Given \eqref{uiwconve}, for $ i\in\Z_+ $ sufficiently large, $ D_{0,1}((u_i,m_{u_i}),(v,m_v))<\va_0 $ and $ (v,m_v) $ is a $ k $-symmetric pair, a contradiction.
\end{proof}

\begin{proof}[Proof of Corollary \ref{correla}]
Since the first and the second properties follow directly from Proposition \ref{proprela}, we only need to show the third one. Obviously, by the definition of $ S^k(u) $ and $ S_{[\CN]}^k(u) $, we have $ S_{[\CN]}^k(u)\subset S^k(u) $. On the other hand, if $ x\notin S_{[\CN]}^k(u) $, there exists a $ (k+1) $-symmetric tangent function $ v\in(H_{\loc}^1\cap L_{\loc}^{p+1})(\R^n) $ of $ u $ at $ x $. Applying almost the same arguments in the proof of Proposition \ref{proprela}, since $ \al_p $ is not an integer, we obtain that $ m_v\in\M(\R^n) $ is a $ (k+1) $-symmetric tangent measure of $ u $ at $ x $. Noting that $ v $ and $ m_v $ are both invariant with respect to $ V\in\bG(n,k+1) $, we have that $ (v,m_v) $ is a $ (k+1) $-symmetric tangent pair of $ u $ at $ x $, and then $ x\notin S^k(u) $. 
\end{proof}

\subsection{Comparison with the stratification used in \texorpdfstring{\cite{HSV19}}{}}

In the paper \cite{HSV19}, the authors established a different definition of quantitative stratification. Analogous to this, we can give the corresponding one in our model. We first define [HSV] $ (k,\va) $-symmetry for stationary solutions.

\begin{defn}\label{defnHSV}
Let $ k\in\{0,1,2,...,n\} $, $ r>0 $, and $ x\in\R^n $. Assume that $ u\in(H^1\cap L^{p+1})(B_{20r}(x)) $ is a stationary solution of \eqref{superequation}. We say that $ u $ is [HSV] $ (k,\va) $-symmetric in $ B_r(x) $ if it satisfies the following two properties.
\begin{enumerate}[label=$(\theenumi)$]
\item $ \vt_{2r}(u,x)-\vt_{r}(u,x)<\va $.
\item There exists $ V\in\bG(n,k) $ such that
$$
r^{\al_p-n}\int_{B_r(x)}|V\cdot\na u|^2<\va.
$$
\end{enumerate}
\end{defn}

We can use this notion to define quantitative stratification.

\begin{defn}
Let $ \va>0 $, $ k\in\{0,1,2,...,n-1\} $, and $ 0<r<1 $. Assume that $ u\in(H^1\cap L^{p+1})(B_{40}) $ is a stationary solution of \eqref{superequation}. We define $ S_{[\HSV];\va,r}^k(u) $ by 
\begin{align*}
S_{[\HSV];\va,r}^k(u)&:=\{x\in B_{10}:u\text{ is not [HSV] }\\
&(k+1,\va)\text{-symmetric in }B_s(x)\text{ for any }r\leq s<1\}.
\end{align*}
\end{defn}

The proposition below presents the connections between the [HSV] quantitative stratification above and that in Definition \ref{defSkvar}.

\begin{prop}\label{proprela2}
Let $ k\in\{0,1,2,...,n\} $, $ r>0 $, and $ x\in\R^n $. Assume that $ u\in(H^1\cap L^{p+1})(B_{20r}(x)) $ is a stationary solution of \eqref{superequation}, satisfying $ \theta_{20r}(u,x)\leq\Lda $. The following properties hold.
\begin{enumerate}[label=$(\theenumi)$]
\item For any $ \va>0 $, there exists $ \delta>0 $, depending only on $ \va,\Lda,n $, and $ p $ such that if $ u $ is $ (k,\delta) $-symmetric in $ B_{20r}(x) $, then $ u $ is $ [\HSV] $ $ (k,\va) $-symmetric in $ B_r(x) $.
\item  For any $ \va>0 $, there exists $ \delta>0 $, depending only on $ \va,\Lda,n $, and $ p $ such that if $ u $ is $ [\HSV] $ $ (k,\delta) $-symmetric in $ B_r(x) $, then $ u $ is $ (k,\va) $-symmetric in $ B_r(x) $.
\end{enumerate}
\end{prop}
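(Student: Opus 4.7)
Both parts are established by contradiction and compactness; by scaling I assume $x=0$ and $r=1$ throughout.

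For (2), suppose there exist $\va_0>0$ and stationary solutions $u_i\in(H^1\cap L^{p+1})(B_{20})$ with $\theta_{20}(u_i,0)\leq\Lda$ that are $[\HSV]$ $(k,i^{-1})$-symmetric but not $(k,\va_0)$-symmetric in $B_1$. Using Propositions \ref{MonFor}, \ref{NonDegThePro}, \ref{DefMea} and Lemmas \ref{comsympro}, \ref{ConSym}, \ref{convergesymmetry}, along a subsequence I have $u_i\rightharpoonup v$ weakly in $(H^1\cap L^{p+1})_{\loc}(B_{20})$, $m_{u_i}\wc^*\mu=m_v+\nu$ in $\M_{\loc}(B_{20})$ with defect $\nu$, and $V_i\to V\in\bG(n,k)$. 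Condition (i) passes to $\vt_2(\mu,0)=\vt_1(\mu,0)$ in the limit; the pair-valued monotonicity identity \eqref{Therjmu} then forces $(v,\mu)$ to be $0$-symmetric, exactly as in Corollary \ref{HomProCor} and Lemma \ref{tanfunmea}. Condition (ii) gives $\|V_i\cdot\na u_i\|_{L^2(B_1)}\to 0$, so $V\cdot\na v=0$ a.e.\ by weak lower semicontinuity; writing the weak-$*$ limit of $|V_i\cdot\na u_i|^2\,dy$ as $|V\cdot\na v|^2\,dy+V_\alpha V_\beta\nu_1^{\alpha\beta}$, where $(\nu_1^{\alpha\beta})$ is the matrix-valued defect from Lemma \ref{tanfunmea}, and exploiting vanishing total mass together with the Cauchy-Schwarz inequality for the PSD matrix measure, I deduce $V_\alpha\nu_1^{\alpha\beta}=0$ for every $\beta$. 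Passing to the limit in \eqref{StaCon} tested with $Y=\xi\phi$ for $\xi\in V$, and using $V$-invariance of $v$ together with this matrix-defect vanishing, I obtain $\int(\xi\cdot\na\phi)\,d\nu_1=0$, so $\nu_1$, $\nu$, and $\mu$ are $V$-invariant. Hence $(v,\mu)$ is $k$-symmetric with respect to $V$, contradicting the failure of $(k,\va_0)$-symmetry via $D_{0,1}$-convergence.

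The plan for (1) is symmetric. Taking $u_i$ that are $(k,i^{-1})$-symmetric in $B_{20}$ with approximating $k$-symmetric pairs $(v_i,\mu_i)$ and subspaces $V_i\to V$, the same compactness (after undoing the $T_{0,20}$ rescaling) produces a $k$-symmetric limit pair $(v,\mu)$ with respect to $V$. Condition (i) of $[\HSV]$ follows from \eqref{thetamuya} together with a direct homogeneity computation showing $\vt_r(\mu,0)$ is constant in $r$ for the $0$-symmetric pair $(v,\mu)$, so $\vt_2(u_i,0)-\vt_1(u_i,0)\to 0$. Condition (ii) requires $\int_{B_1}|V\cdot\na u_i|^2\to 0$, which reduces to $V_\alpha V_\beta\nu_1^{\alpha\beta}=0$: when $\al_p\notin\Z$, Proposition \ref{DefMea}(5) gives $\nu=0$ and strong $H^1$-convergence, so this is immediate; when $\al_p\in\Z$, the $(n-\al_p)$-rectifiability of $\supp\nu$ from the same proposition together with its $V$-invariance forces the approximate tangent planes of $\supp\nu$ to contain $V$, so the normal-direction density of the matrix defect is orthogonal to $V$.

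The main obstacle throughout is the matrix-valued defect analysis. In (2) it is extracted cleanly from the $L^2$-smallness of $V_i\cdot\na u_i$ combined with the stationary identity, but in (1) it requires invoking the full rectifiable structure of the defect measure in the integer case; this structural input mirrors the analysis used for harmonic maps in \cite{Lin99,LW08} and for our equation in \cite{WW15}.
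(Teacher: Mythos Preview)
Your argument for part~(2) is essentially the paper's: extract the matrix-valued defect $(\nu_1^{\beta\ga})$, use $\|V_i\cdot\na u_i\|_{L^2(B_1)}\to0$ to force $\tau_\beta\tau_\ga\nu_1^{\beta\ga}=0$, apply Cauchy--Schwarz to kill the mixed terms $\tau_\ga\pa_\beta\vp\,\nu_1^{\beta\ga}$, and then pass to the limit in the stationary identity with $Y=\psi\tau$ to obtain $V$-invariance of $\nu$. Nothing to add there.

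For part~(1) you take a genuinely different route, and the integer case has a gap. The paper does \emph{not} split on whether $\al_p\in\Z$; instead it observes that once the limit pair $(w,\mu)$ is $k$-symmetric with respect to $V$, it is in particular $0$-symmetric at every $\tau\in V$, so the pair-valued monotonicity identity \eqref{Therjmu} applied with center $\tau$ gives
\[
(y-\tau)_\beta(y-\tau)_\ga\,\nu_1^{\beta\ga}=0\quad\text{in }\M(B_8(\tau)),
\]
just as the center-$0$ application gives $y_\beta y_\ga\,\nu_1^{\beta\ga}=0$. Writing $\tau=y-(y-\tau)$ and applying Cauchy--Schwarz at the level of the approximating matrices $\nu_{1,i}^{\beta\ga}=\tfrac12\pa_\beta(u_i-w)\pa_\ga(u_i-w)\,dy$ then yields $\tau_\beta\tau_\ga\,\nu_1^{\beta\ga}=0$ directly, with no structure theory required. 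This is both more elementary and uniform in $\al_p$.

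Your proposed integer-case argument, by contrast, needs the matrix defect $(\nu_1^{\beta\ga})$ to be tangentially supported along the rectifiable set $\Sg=\supp\nu$, i.e.\ that $\nu_1^{\beta\ga}=P_x^{\beta\ga}\Theta(\nu,x)\,\HH^{n-\al_p}\llcorner\Sg$ with $P_x$ the projection onto $T_x\Sg$. Rectifiability and $V$-invariance of $\Sg$ alone (which is all Proposition~\ref{DefMea} gives) do not imply this; it is a separate stationary-varifold structure theorem in the spirit of \cite{Mos03,Lin99}. You would have to either import that result with full justification or prove it, which is considerably heavier than what is actually needed here. The monotonicity-at-shifted-centers trick sidesteps all of this.
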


\begin{proof}
Up to a scaling and a translation, we assume that $ r=1 $ and  $ x=0 $. For the first property, assume that the result is not true. There exist $ \va_0>0 $ and a sequence of stationary solutions of \eqref{superequation}, denoted by $ \{u_i\}\subset (H^1\cap L^{p+1})(B_{20}) $ such that for any $ i\in\Z_+ $, the following properties hold.
\begin{itemize}
\item $ \theta_{20}(u_i,0)\leq\Lda $, and $ u_i $ is $ (k,i^{-1}) $-symmetric in $ B_1 $. As a consequence, there exist $ V_i\in\bG(n,k) $, $ v_i\in L_{\loc}^2(\R^n) $, and $ \eta_i\in\M(\R^n) $ such that $ (v_i,\eta_i) $ is a $ k $-symmetric pair, with respect to $ V_i $, and
\be
D_{0,20}((u_i,m_{u_i}),(v_i,\eta_i))<i^{-1}.\label{uiviiki2}
\ee
\item $ u_i $ is not [HSV] $ (k,\va_0) $-symmetric in $ B_1 $.
\end{itemize} 
In view of assumptions on $ u_i $, up to a subsequence, we can assume that $ V_i\to V\in\bG(n,k) $, and
\be
\begin{aligned}
u_i&\wc w\text{ weakly in }(H^1\cap L^{p+1})(B_{20}),\\
\(\f{\pa_{\beta}u_i\pa_{\ga}u_i}{2}\ud y\)&\wc^*\(\f{\pa_{\beta}w\pa_{\ga}w}{2}\ud y\)+(\nu_1^{\beta\ga})\text{ in }\M(B_{20},\R^{n\times n}),\\
\f{|\na u_i|^2}{2}\ud y&\wc^*\f{|\na w|^2}{2}\ud y+\nu_1\text{ in }\M(B_{20}),\\
\f{|u_i|^{p+1}}{p+1}\ud y&\wc^*\f{|w|^{p+1}}{p+1}\ud y+\nu_2\text{ in }\M(B_{20}),
\end{aligned}\label{matrcon}
\ee
where $ \nu_1,\nu_2\in\M(B_{20}) $ are defect measures. We also denote $ \mu=m_w+(p-1)(\nu_1+\nu_2) $. It follows from \eqref{matrcon} that $ m_{u_i}\wc^*\mu $ in $ \M(B_{20}) $. By \eqref{uiviiki2} and Lemma \ref{comsympro}, we also assume that
\be
\begin{aligned}
v_i&\wc v\text{ weakly in }L_{\loc}^2(\R^n),\\
\eta_i&\wc^*\eta\text{ in }\M(\R^n).
\end{aligned}\label{vivmuimu}
\ee
Applying Lemma \ref{ConSym} and \ref{convergesymmetry}, we see that $ (v,\eta) $ is a $ k $-symmetric pair with respect to $ V $. In view of \eqref{matrcon} and \eqref{vivmuimu}, we can take $ i\to+\ift $ in \eqref{uiviiki2} to obtain that $ w=v $ a.e. in $ B_{20} $ and $ \mu=\eta $ in $ \M(B_{20}) $. Consequently, $ (v,\mu) $ is a $ k $-symmetric pair in $ B_{20} $, with respect to $ V $. Now, we claim that in $ \M(B_2) $, 
\be
\tau_{\beta}\tau_{\ga}\nu_1^{\beta\ga}=0,\label{claim2use}
\ee
for any $ \tau=\tau^{(j)}\in\{1,2,...,k\} $, where 
\be
\{\tau^{(j)}\}_{j=1}^k=\{(\tau_{\beta}^{(j)})_{\beta=1}^n\}_{j=1}^k\label{orthorbase} 
\ee
is an orthonormal basis of $ V $. We first assume that the claim \eqref{claim2use} is true. Using \eqref{vtlim}, Proposition \ref{MonFor} and the fact that $ (w,\mu) $ is $ k $-symmetric in $ B_{20} $, we have
\be
\begin{aligned}
&\lim_{i\to+\ift}(\vt_2(u_i,0)-\vt_1(u_i,0))=\vt_2(\mu,0)-\vt_1(\mu,0)=0.
\end{aligned}\label{thet2thet1}
\ee
By \eqref{Therjmu}, this also implies that
\be
y_{\beta}y_{\ga}\nu_1^{\beta\ga}=0\text{ in }\M(B_{10}).\label{homnu1beta}
\ee
On the other hand, for any $ j\in\{1,2,...,k\} $, it follows from \eqref{matrcon} that
$$
\lim_{i\to+\ift}\(\sum_{j=1}^k\int_{B_1}|\tau^{(j)}\cdot\na u_i|^2\)\leq\sum_{j=1}^k\int_{B_2}|\tau^{(j)}\cdot\na w|^2+\sum_{j=1}^k\int_{B_2}\tau_{\beta}^{(j)}\tau_{\ga}^{(j)}\ud\nu_1^{\beta\ga}=0,
$$
where we have used the fact that $ w $ is invariant with respect to $ V $. This, together with \eqref{thet2thet1}, implies that if $ i\in\Z_+ $ is sufficiently large, then
$$
\vt_2(u_i,0)-\vt_1(u_i,0)<\va_0,\quad\int_{B_1}|V\cdot\na u_i|^2<\va_0,
$$
a contradiction. Let us turn to the proof of \eqref{claim2use}. Fix $ \tau=\tau^{(j)} $ for some $ j\in\{1,2,...,k\} $. Analogous to the proof of Lemma \ref{5prop}, since $ (w,\mu) $ is a $ k $-symmmetric pair with respect to $ V $, we see that $ (w,\mu) $ is $ 0 $-symmetric at the point $ \tau\in V $, and then $ \vt_r(\mu,\tau)=\vt(\mu,\tau) $ for any $ 0<r<4 $. Combining \eqref{Therjmu}, we have 
\be
(y-\tau)_{\beta}(y-\tau)_{\ga}\nu_1^{\beta\ga}=0\text{ in }\M(B_8(\tau)).\label{Btaueight}
\ee
By \eqref{homnu1beta}, we have \eqref{claim2use}. Indeed, we define $ (\nu_{1,i}^{\beta,\ga})_{\beta,\ga=1}^n\in\M(B_{20},\R^{n\times n}) $ as $ \nu_{1,i}^{\beta,\ga}:=\f{1}{2}\pa_{\beta}(u_i-w)\pa_{\ga}(u_i-w)\ud y $, and deduce from \eqref{matrcon} that
\be
(\nu_{1,i}^{\beta\ga})\wc^*(\nu_1^{\beta\ga})\text{ in }\M(B_{20},\R^{n\times n}).\label{nu1betaga2}
\ee
As a result, it follows from \eqref{homnu1beta} and \eqref{Btaueight} that
\begin{align*}
0&\leq\int_{B_2}\tau_{\beta}\tau_{\ga}\ud\nu_{1,i}^{\beta\ga}\\
&\leq 2\(\int_{B_4(\tau)}|(y-\tau)\cdot\na(u_i-w)|^2\ud y+\int_{B_4}|y\cdot\na(u_i-w)|^2\ud y\)\\
&=2\(\int_{B_4(\tau)}(y-\tau)_{\beta}(y-\tau)_{\ga}\ud\nu_{1,i}^{\beta\ga}+\int_{B_4} y_{\beta}y_{\ga}\ud\nu_{1,i}^{\beta\ga}\)\to 0,
\end{align*}
as $ i\to+\ift $, which implies \eqref{claim2use}. 

For the second property, we still prove it by contradiction. Assume that the result is not true. There exist $ \va_0>0 $ and a sequence of stationary solutions of \eqref{superequation}, denoted by $ \{u_i\}\subset (H^1\cap L^{p+1})(B_{20}) $ such that for any $ i\in\Z_+ $, the following properties hold.
\begin{itemize}
\item $ \theta_{20}(u_i,0)\leq\Lda $ and $ u_i $ is [HSV] $ (k,i^{-1}) $-symmetric in $ B_1 $. Precisely,
\be
\vt_2(u_i,0)-\vt_1(u_i,0)<i^{-1},\label{vtui2}
\ee
and there exists $ V_i\in\bG(n,k) $ such that
\be
\int_{B_1}|V_i\cdot u_i|^2<i^{-1}.\label{uiviiki3}
\ee
\item $ u_i $ is not $ (k,\va_0) $-symmetric in $ B_1 $.
\end{itemize}
Up to a subsequence, we can still assume that \eqref{matrcon} holds and $ V_i\to V\in\bG(n,k) $. \eqref{orthorbase} is also an orthonormal basis of $ V $. In view of \eqref{vtui2}, we obtain 
\begin{align*}
0&\leq-2\int_1^2\(\rho^{\al_p-n-1}\int_{\R^n}\left|\f{y\cdot\na w}{\rho}+\f{2w}{(p-1)\rho}\right|^2\dot{\phi}_{0,\rho}\ud y\)\ud\rho\\ 
&\quad-2\int_1^2\(\rho^{\al_p-n-3}\int_{\R^n}y_{\beta}y_{\ga}\dot{\phi}_{0,\rho}\ud\nu_1^{\beta\ga}(y)\)\ud\rho\\
&\leq\lim_{i\to+\ift}\[-2\int_1^2\(\rho^{\al_p-n-1}\int_{\R^n}\left|\f{y\cdot\na u_i}{\rho}+\f{2u_i}{(p-1)\rho}\right|^2\dot{\phi}_{0,\rho}\ud y\)\ud\rho\]\\
&\leq\lim_{i\to+\ift}(\vt_2(u_i,0)-\vt_1(u_i,0))=0,
\end{align*}
which implies that $ w $ is $ 0 $-symmetric in $ B_{10} $ and $ y_{\beta}y_{\ga}\nu_1^{\beta\ga}=0 $ in $ \M(B_{10}) $. Applying almost the same arguments in the proof of Lemma \ref{tanfunmea}, we deduce that $
\nu=(p-1)(\nu_1+\nu_2) $ is also $ 0 $-symmetric in $ B_{10} $. By the convergence in \eqref{matrcon},
\be
\mu=m_u+\nu,\label{etamunu}
\ee
and then $ (w,\mu) $ is a $ 0 $-symmetric pair in $ B_{10} $. Taking $ i\to+\ift $ in \eqref{uiviiki3},
\be
\begin{aligned}
\tau\cdot\na w&=0\text{ a.e. in }B_1,\\
\tau_{\beta}\tau_{\ga}\nu_1^{\beta\ga}&=0\text{ in }\M(B_1),
\end{aligned}\label{wnavwnu}
\ee
for any $ \tau=\tau^{(j)} $ with $ j\in\{1,2,...,k\} $. As a result, $ w $ is invariant with respect to $ V $ in $ B_1 $, so it remains to show that $ \mu $ is also invariant with respect to $ V $ in $ B_1 $. Fix $ \tau=\tau^{(j)} $ for some $ j\in\{1,2,...,k\} $. We claim that for any $ \vp\in C_0^{\ift}(B_1) $, 
\be
\int_{B_1}\tau_{\ga}\pa_{\beta}\vp\ud\nu_1^{\beta\ga}=0.\label{phidnu2}
\ee
Analogously, we deduce from \eqref{matrcon} that \eqref{nu1betaga2} is still true. By Cauchy's inequality, we have
\begin{align*}
\(\int_{B_1}\tau_{\ga}\pa_{\beta}\vp\ud\nu_{1,i}^{\beta\ga}\)^2\leq\( \int_{B_1}\pa_{\beta}\vp\pa_{\ga}\vp\ud\nu_{1,i}^{\beta\ga}\)\(\int_{B_1} \tau_{\beta}\tau_{\ga}\ud\nu_{1,i}^{\beta\ga}\).
\end{align*}
Letting $ i\to+\ift $ and taking \eqref{wnavwnu} into account, \eqref{phidnu2} follows directly. Fix $ \psi\in C_0^{\ift}(B_1) $ and choose $ \lda_0>0 $ such that $ \supp\psi_{\lda}\subset B_1 $, for any $ \lda\in(0,\lda_0) $, where $ \psi_{\lda}(y)=\psi(y+\lda\tau) $. Testing \eqref{StaCon} with $ Y(y)=\psi_{\lda}(y)\tau $, there holds
\be
\int_{B_1}\[\(\f{|\na u_i|^2}{2}-\f{|u_i|^{p+1}}{p+1}\)(\tau\cdot\na\psi_{\lda})+(\na u_i\cdot\na\psi_{\lda})(\tau\cdot\na u_i)\]=0.\label{takevi}
\ee
We can take $ i\to+\ift $ for both sides of the above and use \eqref{matrcon}, \eqref{wnavwnu}, \eqref{phidnu2}, and the second property of Proposition \ref{DefMea} to obtain that
$$
\int_{B_1}\(\f{|\na w|^2}{2}-\f{|w|^{p+1}}{p+1}\)(\tau\cdot\na\psi_{\lda})+\f{1}{p+3}\int_{B_1}(\tau\cdot\na\psi_{\lda})\ud\nu=0.
$$
Again, by the invariance of $ w $ with respect to $ V $ in $ B_1 $, we have
\begin{align*}
\f{\ud}{\ud\lda}\(\int_{B_1}\psi_{\lda}|\na w|^2\)&=\int_{B_1}(\tau\cdot\na\psi_{\lda})|\na w|^2=0,\\
\f{\ud}{\ud\lda}\(\int_{B_1}\psi_{\lda}|w|^{p+1}\)&=\int_{B_1}(\tau\cdot\na\psi_{\lda})|w|^{p+1}=0,
\end{align*}
for any $ \lda\in(0,\lda_0) $. Combining \eqref{takevi}, there holds
$$
\f{\ud}{\ud\lda}\(\int_{B_1}\psi_{\lda}\ud\nu\)=\int_{B_1}(\tau\cdot\na\psi_{\lda})\ud\nu=0.
$$
By the arbitrariness of $ \psi $ and $ \tau=\tau^{(j)}\in V $, $ \nu $ is invariant with respect to $ V $. It follows from \eqref{etamunu} that $ \mu $ is also invariant with respect to $ V $ in $ B_1 $. Now we have that $ (w,\mu) $ is a $ k $-symmetric pair with respect to $ V $ in $ B_1 $. As a result, \eqref{matrcon} implies that for $ i\in\Z_+ $ sufficiently large, $ D_{0,1}((u_i,m_{u_i}),(w,\mu))<\va_0 $, which is a contradiction.
\end{proof}

We can apply almost the same arguments in the above proposition to obtain the lemma below, which will be used in the later proof. 

\begin{lem}\label{lemusefre}
Let $ r>0 $ and $ x\in\R^n $. Assume that $ \{u_i\}\subset(H^1\cap L^{p+1})(B_{10r}(x)) $ is a sequence of stationary solution of \eqref{superequation} such that $
u_i\wc v $ weakly in $ (H^1\cap L^{p+1})(B_{10r}(x)) $ and $ m_{u_i}\wc^*\mu $ in $ \M(B_{10r}(x)) $. The following properties hold.
\begin{enumerate}[label=$(\theenumi)$]
\item If we have a limit
$$
\lim_{i\to+\ift}(\vt_r(u_i,x)-\vt_{\f{r}{2}}(u_i,x))=0, 
$$
then $ (v,\mu) $ is a $ 0 $-symmetric pair at $ x $ in $ B_{9r}(x) $.
\item If there exists a sequence $ \{V_i\}\subset\bG(n,k) $ with $ k\in\{1,2,...,n-1\} $ such that
$$
\lim_{i\to+\ift}\(\int_{B_r(x)}|V_i\cdot\na u_i|^2\)=0,
$$
then up to a subsequence, $ V_i\to V\in\bG(n,k) $, and $ (v,\mu) $ is in variant with respect to $ V $ in $ B_r(x) $.
\end{enumerate}
\end{lem}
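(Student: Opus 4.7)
The plan is to adapt the arguments from the proof of Proposition \ref{proprela2} (second property) and Lemma \ref{tanfunmea}, since the current statement is essentially a localized, un-rescaled version of the same weak convergence phenomena. First, applying Proposition \ref{DefMea} to the sequence $\{u_i\}$ and extracting a subsequence, I obtain defect measures $\nu_1,\nu_2\in\M(B_{10r}(x))$ and a matrix-valued defect measure $(\nu_1^{\beta\ga})\in\M(B_{10r}(x),\R^{n\times n})$ with
\begin{align*}
\tfrac{|\na u_i|^2}{2}\ud y &\wc^* \tfrac{|\na v|^2}{2}\ud y + \nu_1, \\
\tfrac{|u_i|^{p+1}}{p+1}\ud y &\wc^* \tfrac{|v|^{p+1}}{p+1}\ud y + \nu_2, \\
\(\tfrac{\pa_\beta u_i\pa_\ga u_i}{2}\ud y\) &\wc^* \(\tfrac{\pa_\beta v\pa_\ga v}{2}\ud y\) + (\nu_1^{\beta\ga}),
\end{align*}
and by Proposition \ref{DefMea} the identity $2\nu_1=(p+1)\nu_2$ gives $\mu = m_v + (p-1)(\nu_1+\nu_2)$ on $B_{10r}(x)$.

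For property $(1)$, I apply Proposition \ref{MonFor} to write $\vt_r(u_i,x)-\vt_{r/2}(u_i,x)$ as a nonpositive integral over $\rho\in[r/2,r]$, pass to the limit using the above convergences (exactly as in the derivation of formula \eqref{Therjmu} inside the proof of Lemma \ref{tanfunmea}), and conclude that both the $v$-integrand and the defect integrand must vanish identically for $\rho\in[r/2,r]$. The $v$-part yields $(y-x)\cdot\na v + 2v/(p-1) = 0$ a.e.\ in $B_{9r}(x)$, so by Remark \ref{RemHom} the function $v$ is $0$-symmetric at $x$ in $B_{9r}(x)$. The defect part yields $(y-x)_\beta(y-x)_\ga\nu_1^{\beta\ga} = 0$ in $\M(B_{9r}(x))$. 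To upgrade this to $0$-symmetry of the full measure $\mu$ in $B_{9r}(x)$, I replicate the testing argument from Lemma \ref{tanfunmea}: apply the stationary condition \eqref{StaCon} to each $u_i$ against a vector field $Y(y)=\psi_\lda(y)(y-x)$ with suitable dilation parameter, pass to the limit using the defect measures together with the auxiliary identity $\int(y-x)_\ga \pa_\beta\vp\,\ud\nu_1^{\beta\ga}=0$ (which follows from Cauchy--Schwarz and the vanishing just obtained), then combine with the weak equation \eqref{WeakCon} tested by $v\psi_\lda$ to deduce that $\lda\mapsto \lda^{\al_p-n}\int\psi_\lda\,\ud\nu$ is constant, i.e.\ that $\nu$ is $0$-symmetric at $x$ in $B_{9r}(x)$.

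For property $(2)$, the compactness of $\bG(n,k)$ produces a subsequential limit $V_i\to V\in\bG(n,k)$, and I pick orthonormal bases $\{\tau^{(j)}_i\}_{j=1}^k$ of $V_i$ converging to an orthonormal basis $\{\tau^{(j)}\}_{j=1}^k$ of $V$. Weak lower semicontinuity in $L^2$ gives $V\cdot\na v=0$ a.e.\ in $B_r(x)$, so $v$ is invariant under $V$ on $B_r(x)$. For the defect part, the argument from the proof of Proposition \ref{proprela2} transfers verbatim: using the auxiliary convergence $(\tfrac{1}{2}\pa_\beta(u_i-v)\pa_\ga(u_i-v)\ud y)\wc^*(\nu_1^{\beta\ga})$ together with Cauchy--Schwarz, I obtain $\tau^{(j)}_\beta\tau^{(j)}_\ga\nu_1^{\beta\ga}=0$ in $\M(B_r(x))$ and then the quasi-divergence identity $\int\tau^{(j)}_\ga\pa_\beta\vp\,\ud\nu_1^{\beta\ga}=0$ for every $\vp\in C_0^\infty(B_r(x))$. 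Testing \eqref{StaCon} for each $u_i$ against $Y=\psi_\lda\tau^{(j)}$ with $\psi_\lda(y)=\psi(y+\lda\tau^{(j)})$, for $\psi\in C_0^\infty(B_r(x))$ and $\lda$ small enough that $\psi_\lda$ stays supported in $B_r(x)$, passing to the limit and invoking $\tau^{(j)}\cdot\na v=0$ shows that $\lda\mapsto\int\psi_\lda\,\ud\nu$ is constant for small $\lda$, so $\nu$, and hence $\mu=m_v+\nu$, is invariant under $V$ on $B_r(x)$.

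The main obstacle is bookkeeping of supports: the cutoff $\phi_{x,\rho}$ in Definition \ref{defnofphi} restricts the monotonicity integrand to $\{|y-x|<\sqrt{10}\rho\}\subset B_{10r}(x)$, so I must verify that every use of weak-$*$ convergence, every integration by parts, and every test vector field is genuinely supported inside $B_{10r}(x)$ where the sequence $\{u_i\}$ actually lives, and that all auxiliary defect measures such as $(\nu_1^{\beta\ga})$ exist by the matrix-valued extension of Proposition \ref{DefMea}. Beyond this localization care, no new analytic input is required.
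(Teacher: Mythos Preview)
Your proposal is correct and follows essentially the same route as the paper, which does not give a separate proof but states that the lemma follows by ``almost the same arguments in the above proposition'' (Proposition \ref{proprela2}); you have correctly identified that the $0$-symmetry part of property $(1)$ additionally requires the testing argument from the proof of Lemma \ref{tanfunmea}, and your localization caveat about supports is exactly the bookkeeping the paper tacitly absorbs into the phrase ``almost the same.''
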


The following corollary is a direct consequence of Proposition \ref{proprela2}.

\begin{cor}\label{HSVStraCor}
Let $ \va>0 $, $ k\in\{0,1,2,...,n-1\} $, and $ 0<r<1 $. Assume that $ u\in(H^1\cap L^{p+1})(B_{40}) $ is a stationary solution of \eqref{superequation}, satisfying $ \theta_{40}(u,0)\leq\Lda $. The following properties hold.
\begin{enumerate}[label=$(\theenumi)$]
\item For any $ \va>0 $, there exists $ \delta>0 $, depending only on $ \va,\Lda,n $, and $ p $ such that $
S_{[\HSV];\va,\f{r}{20}}^k(u)\subset S_{\delta,r}^k(u) $.
\item For any $ \va>0 $, there exists $ \delta>0 $, depending only on $ \va,\Lda,n $, and $ p $ such that $
S_{\va,r}^k(u)\subset S_{[\HSV];\delta,r}^k(u) $.
\end{enumerate}
\end{cor}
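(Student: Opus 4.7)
The proof will simply unpack the two stratification definitions by contrapositive and feed them into Proposition \ref{proprela2}; the only subtlety is keeping track of the scale factor of $20$ in the first part.

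For part (1), fix $\va>0$ and choose $\delta=\delta(\va,\Lda,n,p)>0$ as provided by Proposition \ref{proprela2}(1). I plan to establish the contrapositive inclusion $B_{10}\setminus S_{\delta,r}^k(u)\subset B_{10}\setminus S_{[\HSV];\va,r/20}^k(u)$. If $x\notin S_{\delta,r}^k(u)$, then by Definition \ref{defSkvar} there exists some scale $s\in[r,1)$ at which $u$ is $(k+1,\delta)$-symmetric in $B_s(x)$. Since $s\le 1$, the ball $B_{20(s/20)}(x)=B_s(x)$ sits inside $B_{40}$, so Proposition \ref{proprela2}(1) applies at scale $s/20$ and yields that $u$ is $[\HSV]$ $(k+1,\va)$-symmetric in $B_{s/20}(x)$. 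Because $s/20\in[r/20,1/20]\subset[r/20,1)$, this certifies $x\notin S_{[\HSV];\va,r/20}^k(u)$.

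For part (2), fix $\va>0$ and choose $\delta=\delta(\va,\Lda,n,p)>0$ from Proposition \ref{proprela2}(2). The argument is even more direct because Proposition \ref{proprela2}(2) does not enlarge the ball. If $x\notin S_{[\HSV];\delta,r}^k(u)$, there is some $s\in[r,1)$ for which $u$ is $[\HSV]$ $(k+1,\delta)$-symmetric in $B_s(x)$. Proposition \ref{proprela2}(2) then immediately upgrades this to $(k+1,\va)$-symmetry of $u$ in $B_s(x)$ at the same scale, so $x\notin S_{\va,r}^k(u)$.

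There is essentially no obstacle here beyond a careful bookkeeping of scales: the real analytical work has already been absorbed into Proposition \ref{proprela2}, whose proof uses the compactness arguments, the defect-measure analysis from Proposition \ref{DefMea}, the homogeneity/invariance passage through $\R^{n\times n}$-valued Radon measures $(\nu_1^{\beta\gamma})$, and the stationary condition \eqref{StaCon} tested against suitable cutoffs. The only point to double-check is that the scale $s/20$ in part (1) still lies in the admissible range $[r/20,1)$ so that the definition of $S_{[\HSV];\va,r/20}^k(u)$ is indeed violated at $x$; this is clear since $s<1$ forces $s/20<1$ and $s\ge r$ forces $s/20\ge r/20$.
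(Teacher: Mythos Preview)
Your proof is correct and follows the same approach as the paper: both argue by contrapositive and feed the symmetry at a witnessing scale directly into the appropriate part of Proposition \ref{proprela2}, with the factor of $20$ in part (1) handled exactly as you do. The paper's write-up is slightly terser (and in fact contains a minor typo in part (1)), but the logic is identical.
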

\begin{proof}
For the first property, if $ \va>0 $, we choose $ \delta=\delta(\va,\Lda,n,p)>0 $ such that the first property of Proposition \ref{proprela2} holds. If $ x\notin S_{\delta,r}^k(u) $, then $ u $ is $ (k+1,\delta) $-symmetric in $ B_s(x) $ for some $ r\leq s<1 $. By the choice of $ \delta $, we can obtain that $ u $ is $ (k+1,\delta) $-symmetric in $ B_{\f{s}{20}}(x) $, which implies that $ x\notin S_{[\HSV];\va,\f{r}{20}}^k(u) $ and the result follows directly. The second property of this corollary can be proved by almost the same methods as the first one using the second property of Proposition \ref{proprela2}. We omit it for the sake of simplicity.
\end{proof}

This corollary implies that the stratification in our paper is equivalent to the one used by \cite{HSV19} in some context. 

\subsection{Effectively spanned affine subspaces}

In this subsection, we will give the definition and properties of the effectively spanned affine subspaces. 

\begin{defn}
In $ \R^n $, for $ k\in\{1,2,...,n\} $, let $ \{x_i\}_{i=0}^k\subset\R^n $ and $ \rho>0 $. We call that these points $ \rho $-effectively span $ L=x_0+\op{span}\{x_i-x_0\}_{i=1}^k\in\bA(n,k) $ if for all $ i\in\{2,3,...,k\} $,
$$
\dist(x_i,x_0+\op{span}\{x_1-x_0,...,x_{i-1}-x_0\})\geq 2\rho.
$$
We also call such points are $ \rho $-independent. For a set $ F\subset\R^n $, we say that it $ \rho $-effectively spans a $ k $-dimensional affine subspace if there exist $ \{x_i\}_{i=0}^k\subset F $, which are $ \rho $-independent.
\end{defn}

Regarding $ \rho $-independent points, we have some basic observations.

\begin{lem}\label{rhoind}
Let $ k\in\{1,2,...,n\} $. We have the following properties.
\begin{enumerate}[label=$(\theenumi)$]
\item If $ \{x_i\}_{i=0}^k $ $ \rho $-effectively span $ L\in\bA(n,k) $, then for any $ x\in L $, there exists a unique set of numbers $ \{\al_i\}_{i=1}^k $ such that
$$
x=x_0+\sum_{i=1}^k\al_i(x_i-x_0),\quad|\al_i|\leq \f{C|x-x_0|}{\rho},
$$
where $ C>0 $ depends only on $ n $.
\item If $ \{x_{i,j}\}_{i=1}^k $ are $ \rho $-independent for any $ j\in\Z_+ $, and $ x_{i,j}\to x_i $, for any $ i\in\{1,2,...,k\} $, then $ \{x_i\}_{i=1}^k $ are also $ \rho $-independent.
\end{enumerate}
\end{lem}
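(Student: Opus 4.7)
The plan is to derive both assertions from elementary linear algebra applied to the direction vectors $v_i := x_i - x_0$ for $i \in \{1, \ldots, k\}$. The key observation is that the distance condition defining $\rho$-effective spanning translates, via the Gram--Schmidt process, into a lower bound on the diagonal entries of a lower triangular matrix representing the change of basis from $(v_1, \ldots, v_k)$ to an orthonormal basis of the direction space $L - x_0$.

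For part (i), I would apply Gram--Schmidt to $(v_1, \ldots, v_k)$ to produce an orthonormal basis $(e_1, \ldots, e_k)$ with $v_i = \sum_{j \le i} c_{ij} e_j$, the crucial identity being $|c_{ii}| = \dist(x_i, x_0 + \op{span}\{v_1, \ldots, v_{i-1}\}) \geq 2\rho$ for $i \geq 2$. The lower triangular matrix $C := (c_{ij})$ is then invertible, which immediately yields the existence and uniqueness of $\{\al_i\}_{i=1}^k$ with $x - x_0 = \sum_i \al_i v_i$ for any $x \in L$. To estimate $|\al_i|$, I would write $x - x_0 = \sum_j \beta_j e_j$ with $\sum \beta_j^2 = |x-x_0|^2$, observe that $\al$ solves the upper triangular system $C^T \al = \beta$, and unwind the back-substitution starting from $\al_k = \beta_k / c_{kk}$; combining the lower bound $|c_{ii}| \ge 2\rho$ with the trivial upper bound $|c_{ij}| \le |v_i|$ on off-diagonal entries then yields the claimed estimate $|\al_i| \le C|x - x_0|/\rho$.

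For part (ii), the strategy is to induct on $i \in \{1, \ldots, k\}$, showing that the limit tuple $(x_0, x_1, \ldots, x_i)$ again $\rho$-effectively spans an $i$-dimensional affine subspace $L_i$ and that the pre-limit subspaces $L_{i,j} := x_{0,j} + \op{span}\{x_{1,j} - x_{0,j}, \ldots, x_{i,j} - x_{0,j}\}$ converge to $L_i$ in the Hausdorff sense on compact subsets. The inductive step for $i \ge 2$ exploits part (i) in the pre-limit: it guarantees that the Gram--Schmidt bases associated to $L_{i-1,j}$ remain uniformly non-degenerate, so $y \mapsto \dist(y, L_{i-1,j})$ converges locally uniformly to $\dist(\cdot, L_{i-1})$. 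Passing to the limit in $\dist(x_{i,j}, L_{i-1,j}) \ge 2\rho$ therefore gives $\dist(x_i, L_{i-1}) \ge 2\rho$, finishing the induction and establishing the $\rho$-independence of the limit tuple.

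The main obstacle lies in the size bound on $C$ in part (i): the triangular structure together with $|c_{ii}| \ge 2\rho$ immediately force $|\det C| \ge (2\rho)^k$ and hence invertibility, but controlling $\|C^{-1}\|$ by a function of $\rho$ and $n$ alone additionally requires a bound on $\max_i |v_i|$, which in the stratification applications is supplied by the ambient ball in which the relevant configurations live. Once this is in place, part (ii) reduces to routine continuity of the distance function under convergence of uniformly non-degenerate affine subspaces, and the rest of the argument is bookkeeping.
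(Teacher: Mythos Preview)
Your approach is correct and is essentially the standard one: the paper does not argue part~(i) directly but reduces to the $\rho=1$ case by scaling and invokes Lemma~4.6 of \cite{NV17}, whose proof is exactly the Gram--Schmidt computation you outline, while part~(ii) is dismissed as ``a direct consequence of the definition of $\rho$-independence'' (your inductive convergence-of-subspaces argument is what makes that one-liner rigorous). Your caveat about the constant is well taken and worth recording: without an upper bound on $\max_i |x_i - x_0|$ the estimate $|\alpha_i| \le C(n)|x-x_0|/\rho$ can fail (e.g.\ $k=2$, $v_1=e_1$, $v_2=Me_1+2\rho e_2$, $x-x_0=e_2$ gives $|\alpha_1|=M/(2\rho)$), but in every application in the paper the points lie in a ball of radius a fixed multiple of $\rho$, and indeed the paper records the resulting bound as $|\alpha_i|\le C(\beta,n)$ rather than $C(n)$ when it is used (see \eqref{aiy}).
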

\begin{proof}
Note that $ \{x_i\}_{i=0}^k $ are $ \rho $-independent, if and only if $ \{\f{x_i}{\rho}\}_{i=0}^k $ are $ 1 $-independent. Applying Lemma 4.6 and its proof of \cite{NV17}, the first property follows directly. The second property is the direct consequence of the definition of $ \rho $-independence.
\end{proof}

\section{Quantitative stratification}\label{QuantiStrati}

\subsection{Some basic results on quantitative stratification} Now, we will examine a set of fundamental characteristics of quantitative stratification. We start this subsection with the subsequent lemma, which is on the characterization of $ S^k(u) $ through $ S_{\va,r}^k(u) $ for a stationary solution of \eqref{superequation}.

\begin{lem}\label{SkSkva}
Let $ k\in\{0,1,2,...,n-1\} $. Assume that $ u\in(H^1\cap L^{p+1})(B_{40}) $ is a stationary solution of \eqref{superequation}. Then
\be
S^k(u)\cap B_{10}=\bigcup_{\va>0}\bigcap_{0<r<1}S_{\va,r}^k(u).\label{Sktworep}
\ee
\end{lem}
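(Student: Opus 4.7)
The plan is to prove both inclusions of \eqref{Sktworep}. For the direction $\bigcup_{\va > 0} \bigcap_{0 < r < 1} S^k_{\va,r}(u) \subset S^k(u) \cap B_{10}$, containment in $B_{10}$ is automatic from the definition of $S^k_{\va,r}(u)$, so I will argue by contradiction for membership in $S^k(u)$. Taking $x$ in the left-hand side with witness $\va > 0$ and a hypothetical $(k+1)$-symmetric tangent pair $(v,\mu)$ realized along a sequence $r_i \to 0^+$, the strategy is to upgrade the weak $H^1_{\loc}$ convergence to strong $L^2_{\loc}$ convergence via Rellich--Kondrachov and exploit that $d_{0,1}$ metrizes weak${}^*$ convergence of Radon measures by design, which will drive $D_{0,1}((T_{x,r_i}u, T_{x,r_i}m_u), (v,\mu)) \to 0$; for $i$ large this falls below $\va$, contradicting $x \in S^k_{\va,r_i}(u)$.

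For the reverse inclusion I will take $x \in S^k(u) \cap B_{10}$ and suppose for contradiction that $x \notin S^k_{\va}(u)$ for every $\va > 0$. Choosing $\va_i = 1/i$ produces scales $s_i \in (0,1)$ together with $(k+1)$-symmetric pairs $(v_i, \mu_i)$ and subspaces $V_i \in \bG(n,k+1)$ obeying $D_{0,1}((T_{x,s_i}u, T_{x,s_i}m_u), (v_i, \mu_i)) < 1/i$. The task is to manufacture from this data a $(k+1)$-symmetric tangent pair at $x$. If a subsequence of $\{s_i\}$ tends to $0^+$, the monotonicity formula (Proposition \ref{MonFor}) and the non-degeneracy estimate (Proposition \ref{NonDegThePro}) will yield uniform $H^1_{\loc} \cap L^{p+1}_{\loc}$ bounds on $T_{x,s_i}u$ and uniform local mass bounds on $T_{x,s_i}m_u$, producing by diagonal extraction a tangent pair $(w,\mu)$. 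Rellich plus the $D_{0,1}$-closeness to $(v_i, \mu_i)$ will then force $v_i \to w$ in $L^2(B_1)$ and $\mu_i \to \mu$ in $d_{0,1}$; after extracting $V_i \to V \in \bG(n,k+1)$, Lemmas \ref{ConSym} and \ref{convergesymmetry} will give $(k+1)$-symmetry of $(w,\mu)$ with respect to $V$ on $B_1$, and the automatic $0$-symmetry of tangent pairs (Lemma \ref{tanfunmea}) combined with a short scaling argument will extend the $V$-invariance from $B_1$ to all of $\R^n$.

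The main obstacle will be the remaining case, where $\{s_i\}$ admits no subsequence converging to $0$, so after extraction $s_i \to s_* \in (0,1]$. Here continuity of dilation in $L^2_{\loc}$ and in $d_{0,1}$ gives $(T_{x,s_i}u, T_{x,s_i}m_u) \to (T_{x,s_*}u, T_{x,s_*}m_u)$ in $D_{0,1}$, so the limit pair is itself $(k+1)$-symmetric on $B_1$ with respect to $V := \lim V_i$. Unwinding the definitions, $u$ and $m_u$ will be $-\tfrac{2}{p-1}$-homogeneous (respectively $(n-\al_p)$-homogeneous) at $x$ and $V$-invariant throughout $B_{s_*}(x)$. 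I then plan a direct scaling calculation showing that for any fresh sequence $t_j \to 0^+$ and any fixed radius $R > 0$, once $t_j R < s_*/2$ the blow-up $T_{x,t_j}u$ coincides on $B_R$ with the $-\tfrac{2}{p-1}$-homogeneous extension $\tilde w$ of $T_{x,s_*}u|_{B_1}$, and analogously for the measure. The extended pair $(\tilde w, \tilde \mu)$ will be $(k+1)$-symmetric on all of $\R^n$ and will arise as an honest tangent pair at $x$, again contradicting $x \in S^k(u)$ and closing the proof.
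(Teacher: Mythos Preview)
Your proposal is correct and follows essentially the same route as the paper: both directions are argued by contraposition, and the reverse inclusion is split into the two cases $s_i\to 0^+$ and $s_i\to s_*\in(0,1]$, with the symmetry of the limit pair obtained via Lemmas~\ref{ConSym} and~\ref{convergesymmetry}. The only real differences are packaging: the paper first extracts the global limit $(v,\eta)$ of the $(k+1)$-symmetric pairs via Lemma~\ref{comsympro} and then invokes Remark~\ref{extend} in a single sentence to promote the $(k+1)$-symmetry on $B_1$ to a $(k+1)$-symmetric tangent pair on $\R^n$, whereas you instead pass to the tangent pair first, use the triangle inequality to transfer convergence to $(v_i,\mu_i)$, and then unpack Remark~\ref{extend} explicitly --- in the $s_i\to 0^+$ case via Lemma~\ref{tanfunmea} plus a homogeneity/scaling argument, and in the $s_i\to s_*$ case via the direct computation that the blow-ups $T_{x,t_j}u$ eventually coincide with the $-\tfrac{2}{p-1}$-homogeneous extension of $T_{x,s_*}u|_{B_1}$. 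Your treatment of the positive-limit case is in fact more detailed than the paper's terse ``trivially extended'' line.
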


\begin{proof}
For simplicity, we denote the right hand side of \eqref{Sktworep} as $ A^k(u) $. If $ x\notin S^k(u)\cap B_{10} $, then there exist a $ (k+1) $-symmetric tangent pair $ (v,\mu)\in\cF_x(u)\times\M_x(u) $. As a result, we have a sequence $ \{r_i\} $, with $ r_i\to 0^+ $ such that $
T_{x,r_i}u\wc v $ weakly in $ (H_{\loc}^1\cap L_{\loc}^{p+1})(\R^n) $, and $ T_{x,r_i}m_u\wc^*\mu\text{ in }\M(\R^n) $. For any $ \va>0 $, if $ i\in\Z_+ $ is sufficiently large, then $ D_{0,1}((T_{x,r_i}u,T_{x,r_i}m_u),(v,\mu))<\va $. Consequently, $ 
x\notin A^k(u) $ and hence $ A^k(u)\subset S^k(u)\cap B_{10} $. 

On the other hand, we assume that $ x\notin A^k(u) $. There exist $ (k+1) $-symmetric pairs $ \{(v_i,\eta_i)\}\subset L_{\loc}^2(\R^n)\times\M(\R^n) $, with respect to $ \{V_i\}\subset\bG(n,k+1) $ such that 
\be
D_{0,1}((T_{x,r_i}u,T_{x,r_i}m_u),(v_i,\eta_i))<i^{-1},\label{Txrihi}
\ee
for any $ i\in\Z_+ $. By \eqref{Txrihi} and Lemma \ref{comsympro}, up to a subsequence, we can assume that $ V_i\to V\in\bG(n,k+1) $, and
\be
\begin{aligned}
v_i&\wc v\text{ weakly in }L_{\loc}^2(\R^n),\\
\eta_i&\wc^*\eta\text{ in }\M(\R^n).
\end{aligned}\label{vetacon}
\ee
It follows from Lemma \ref{ConSym} and \ref{convergesymmetry} that $ (v,\eta) $ is a $ (k+1) $-symmetric pair with respect to $ V $. If $ \inf r_i=r>0 $, up to a subsequence, we can assume that $ r_i\to r $, $
T_{x,r_i}u\to T_{x,r}u $ weakly in $ (H^1\cap L^{p+1})(B_1) $, and $ T_{x,r_i}m_u\wc^*T_{x,r}m_u $ in $ \M(B_1) $. By \eqref{Txrihi} and the $ (k+1) $-symmetry of $ (v,\eta) $, $ (T_{x,r}u,T_{x,r}m_u) $ is a $ (k+1) $-symmetric pair with respect to $ V $ in $ B_1 $. Given Remark \ref{extend}, we have that $ (T_{x,r}u,T_{x,r}m_u) $ can be trivially extended to be a $ (k+1) $-symmetric tangent pair of $ u $ at $ x $, and then $ x\notin S^k(u)\cap B_{10} $. If $ \inf r_i=0 $, using Proposition \ref{tangentfm}, up to a subsequence, we can assume that as $ r_i\to 0^+ $, $
T_{x,r_i}u\wc w $ weakly in $ (H_{\loc}^1\cap L_{\loc}^{p+1})(\R^n) $, and $ T_{x,r_i}m_u\wc^*\mu $ in $ \M(\R^n) $. Letting $ i\to+\ift $ in \eqref{Txrihi}, since $ (v,\eta) $ is $ (k+1) $-symmetric, we can deduce that $ (w,\mu) $ is $ (k+1) $-symmetric in $ B_1 $ with respect to $ V $. Again, by Remark \ref{extend}, $ (w,\mu) $ is a $ (k+1) $-symmetric tangent pair of $ u $ at $ x $, with respect to $ V $. This implies that $ x\notin S^k(u)\cap B_{10} $, and then $ S^k(u)\cap B_{10}\subset A^k(u) $.
\end{proof}

The following lemma is a consequence of Lemma \ref{symproen} and standard compactness arguments. It is a useful observation in the proof of Theorem \ref{enhance}.

\begin{lem}\label{integraln}
Let $ \va>0 $, $ r>0 $, and $ x\in\R^n $. Assume that $ u\in(H^1\cap L^{p+1})(B_r(x)) $ is a stationary solution of \eqref{superequation}, satisfying $ \theta_{r}(u,x)\leq\Lda $. There exists $ \delta>0 $, depending only on $ \va,\Lda,n $, and $ p $ such that if $ u $ is $ (k_{n,p},\delta) $-symmetric in $ B_r(x) $, where $ k_{n,p} $ is given by \eqref{kp}, then $ u $ is $ (n,\va) $-symmetric in $ B_r(x) $.
\end{lem}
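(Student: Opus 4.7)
The plan is to argue by contradiction via a straightforward compactness argument, with the energy bound $\theta_r(u,x)\leq\Lda$ serving as an a priori $L^2$ control. After rescaling and translating via \eqref{Scaleinv}, I may assume $r=1$ and $x=0$. If the statement were false, there would exist $\va_0>0$ and, for each $i\in\Z_+$, a stationary solution $u_i\in(H^1\cap L^{p+1})(B_1)$ with $\theta_1(u_i,0)\leq\Lda$ together with a $k_{n,p}$-symmetric pair $(v_i,\mu_i)$ (extended to $\R^n$ via Remark \ref{extend}) satisfying
$$
D_{0,1}((u_i,m_{u_i}),(v_i,\mu_i))<i^{-1},
$$
yet with $u_i$ not $(n,\va_0)$-symmetric in $B_1$. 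My goal will be to show $D_{0,1}((u_i,m_{u_i}),(0,0))\to 0$, contradicting the last property since $(0,0)$ is trivially $n$-symmetric.

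The crucial first step — and the reason $k_{n,p}$ is the right threshold — is that Lemma \ref{symproen} forces the measure component to vanish: any $k_{n,p}$-symmetric Radon measure on $\R^n$ is zero, so $\mu_i\equiv 0$. This gives $d_{0,1}(m_{u_i},0)<i^{-1}$, and exploiting the specific cutoffs $f_j$ built into the construction of $d_{0,1}$ (each with $f_j\equiv 1$ on $B_{1-2j^{-1}}$) I obtain $m_{u_i}(B_{\rho})\to 0$ for every $\rho\in(0,1)$. From the explicit form of $m_{u_i}$, this yields $\int_{B_\rho}|u_i|^{p+1}\to 0$, whence H\"older's inequality gives $\int_{B_\rho}|u_i|^2\to 0$ on every $B_\rho\subset\subset B_1$. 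Combined with $\int_{B_1}|u_i-v_i|^2<i^{-1}$, I then conclude $\int_{B_\rho}|v_i|^2\to 0$ for every $\rho\in(0,1)$.

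The final step will bootstrap from $L^2(B_\rho)$ to $L^2(B_1)$ using the $-\tfrac{2}{p-1}$-homogeneity of $v_i$. A direct change of variables produces the scaling identity
$$
\int_{B_\rho}|v_i|^2=\rho^{\,n-4/(p-1)}\int_{B_1}|v_i|^2,\qquad \rho\in(0,1),
$$
whose exponent $n-4/(p-1)=n-\al_p+2$ is strictly positive. Since $\|v_i\|_{L^2(B_1)}\leq\|u_i\|_{L^2(B_1)}+i^{-1/2}$ is bounded (via H\"older applied to $|u_i|^{p+1}\in L^1(B_1)$, controlled by $\theta_1(u_i,0)\leq\Lda$), I can fix any $\rho\in(0,1)$ and solve $\int_{B_1}|v_i|^2=\rho^{-(n-\al_p+2)}\int_{B_\rho}|v_i|^2\to 0$. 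Hence $\int_{B_1}|u_i|^2\to 0$, and this together with $d_{0,1}(m_{u_i},0)\to 0$ yields the desired contradiction.

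The main conceptual subtlety — rather than a computational obstacle — is that $(v_i,\mu_i)$ is only required to be a symmetric pair, not a blow-up of any equation, so the $k_{n,p}$-symmetry of $v_i$ alone does not force $v_i\equiv 0$ (nonzero $k_{n,p}$-symmetric functions do lie in $L^2_{\loc}(\R^n)$, as the homogeneity exponent $-\tfrac{2}{p-1}$ is compatible with the codimension $n-k_{n,p}$). The equation must therefore enter indirectly, through the H\"older step that converts weak-$*$ smallness of $m_{u_i}$ into $L^2$-smallness of $u_i$ on compact subsets of $B_1$; this is precisely where the uniform energy bound $\theta_r(u,x)\leq\Lda$ plays its role.
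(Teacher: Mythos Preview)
Your proof is correct, but it takes a genuinely different route from the paper's.

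The paper argues by extracting subsequential weak limits: it passes (via Lemma~\ref{comsympro}) to $u_i\wc w$, $m_{u_i}\wc^*\mu$, $v_i\wc v$, $\eta_i\wc^*\eta$, uses Lemmas~\ref{ConSym} and~\ref{convergesymmetry} to see that the limit pair $(v,\eta)$ is $k_{n,p}$-symmetric, identifies $(w,\mu)=(v,\eta)$ on $B_1$, and only then invokes Lemma~\ref{symproen} on the limit objects $m_v$ and $\eta$ to conclude $(w,\mu)=(0,0)$. Your argument is more direct: you apply Lemma~\ref{symproen} immediately to each $\mu_i$ (not to a limit), obtaining $\mu_i\equiv 0$ for every $i$, and then exploit the explicit homogeneity scaling $\int_{B_\rho}|v_i|^2=\rho^{\,n-\al_p+2}\int_{B_1}|v_i|^2$ to upgrade $L^2(B_\rho)$-smallness to $L^2(B_1)$-smallness. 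This bypasses all subsequence extraction and the convergence lemmas for symmetric objects, making the argument shorter and more elementary. The paper's approach, on the other hand, is the template used throughout the manuscript for similar contradiction arguments (e.g.\ Proposition~\ref{proprela}, Lemma~\ref{SmaHom}, Lemma~\ref{geqlem}), so it fits the surrounding narrative more uniformly; it would also adapt more readily if one only knew the comparison pairs were \emph{close to} $k_{n,p}$-symmetric rather than exactly so. A minor remark: your aside about $\|v_i\|_{L^2(B_1)}$ being bounded is unnecessary, since the scaling identity already gives $\int_{B_1}|v_i|^2\to 0$ directly from $\int_{B_\rho}|v_i|^2\to 0$.
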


\begin{proof}
Up to a scaling and a translation, we assume that $ r=1 $ and  $ x=0 $. Assume that the result is not true. There exist $ \va_0>0 $, and a sequence of stationary solutions of \eqref{superequation}, denoted by $ \{u_i\}\subset(H^1\cap L^{p+1})(B_1) $ such that for any $ i\in\Z_+ $, the following properties hold.
\begin{itemize}
\item $ \theta_1(u_i,0)\leq\Lda $, and $ u_i $ is $ (k_{n,p},i^{-1}) $-symmetric in $ B_1 $.
\item $ u_i $ is not $ (n,\va_0) $-symmetric in $ B_1 $.
\end{itemize}
By the second property, for any $ i\in\Z_+ $, there is a $ k_{n,p} $-symmetric pair $ (v_i,\eta_i)\in L_{\loc}^2(\R^n)\times\M(\R^n) $ with respect to $ V_i\in\bG(n,k_{n,p}) $ such that 
\be
D_{0,1}((u_i,m_{u_i}),(v_i,\eta_i))<i^{-1}.\label{D01uimuivi}
\ee
Using Lemma \ref{comsympro}, up to a subsequence, we assume that $ V_i\to V\in\bG(n,k_{n,p}) $, and
\be
\begin{aligned}
u_i&\wc w\text{ weakly in }(H^1\cap L^{p+1})(B_1),\\
m_{u_i}&\wc^*\mu\text{ in }\M(B_1),\\
v_i&\wc v\text{ weakly in }L_{\loc}^2(\R^n),\\
\eta_i&\wc^*\eta\text{ in }\M(\R^n).
\end{aligned}\label{uimuivi}
\ee
In view Lemma \ref{ConSym} and \ref{convergesymmetry}, $ (v,\eta) $ is a $ k_{n,p} $-symmetric pair with respect to $ V $. By \eqref{D01uimuivi}, we see that $ w=v $ a.e. in $ B_1 $ and $ \mu=\eta $ in $ \M(B_1) $, which implies that $ m_v,\eta\in
M(\R^n) $ are $ k_{n,p} $-symmetric with respect to $ V $. By the choice of $ k_{n,p} $, we can apply Lemma \ref{symproen} to obtain that $ v=0 $ a.e. in $ \R^n $ and $ \eta=0 $ in $ \M(\R^n) $. As a result, $ (w,\mu)=(0,0) $. Combining \eqref{D01uimuivi} and \eqref{uimuivi}, for sufficiently large $ i\in\Z_+ $, we have $ D_{0,1}((u_i,m_{u_i}),(0,0))<\va_0 $, which is a contradiction.
\end{proof}

Using the terminology of quantitative stratification, we can rewrite Proposition \ref{smareg} as the following form.

\begin{lem}\label{nvasym}
Let $ j\in\Z_{\geq 0} $, $ r>0 $, and $ x\in\R^n $. Assume that $ u\in(H^1\cap L^{p+1})(B_r(x)) $ is a stationary solution of \eqref{superequation}, satisfying $ \theta_r(u,x)\leq\Lda $. There exists $ \va>0 $, depending only on $ j,\Lda,n $, and $ p $ such that if $ u $ is $ (n,\va) $-symmetric in $ B_r(x) $, then $ u $ is smooth in $ B_{\f{r}{2}}(x) $ and satisfies 
$$
\sup_{B_{\f{r}{2}}(x)}\(\sum_{i=0}^{j}\(\f{r}{2}\)^i|D^iu|\)\leq\(\f{r}{2}\)^{-\f{2}{p-1}}.
$$
In particular, $ r_u^{j}(x)\geq \f{r}{2} $.
\end{lem}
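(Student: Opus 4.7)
The plan is to argue by contradiction and compactness. Invoking the scaling invariance \eqref{Scaleinv} and the scale-invariant formulation of Definition \ref{qunsybypair}, I first reduce to $r=1$, $x=0$. Supposing the conclusion fails, I extract a sequence $\{u_\ell\}\subset(H^1\cap L^{p+1})(B_1)$ of stationary solutions of \eqref{superequation} with $\theta_1(u_\ell,0)\leq\Lda$ and $u_\ell$ being $(n,\ell^{-1})$-symmetric in $B_1$, yet such that for every $\ell$ either $u_\ell$ is not smooth in $B_{1/2}$ or the claimed sum bound is violated at some point of $B_{1/2}$.

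By Remark \ref{extend}, the only $n$-symmetric function (resp.\ $n$-symmetric Radon measure) is identically zero, so Definition \ref{qunsybypair} collapses to $D_{0,1}((u_\ell,m_{u_\ell}),(0,0))<\ell^{-1}$. This gives $\int_{B_1}|u_\ell|^2\to 0$ and $d_{0,1}(m_{u_\ell},0)\to 0$, and since $d_{0,1}$ metrizes the weak-$*$ topology on $\M(B_1)$, one has $m_{u_\ell}\wc^* 0$ in $\M(B_1)$. Applying Proposition \ref{DefMea} and passing to a subsequence, $u_\ell\wc w$ weakly in $(H^1\cap L^{p+1})(B_1)$, with the gradient and potential parts decomposing via nonnegative defect measures $\nu_1,\nu_2$. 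The $L^2$-convergence forces $w=0$; then $m_{u_\ell}\wc^* 0$ together with the nonnegativity of the defect measures forces $\nu_1$ and $\nu_2$ to vanish on every compact subset of $B_1$ (test against a nonnegative cutoff supported compactly in $B_1$). Hence the singular set $\Sigma$ of Proposition \ref{DefMea}(1) is empty in the interior, so Proposition \ref{DefMea}(1) yields $u_\ell\to 0$ in $C^\infty_{\loc}(B_1)$.

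This $C^\infty_{\loc}$-convergence immediately implies that for all sufficiently large $\ell$, $u_\ell$ is smooth on $\overline{B_{1/2}}$ and satisfies the strict inequality $\sup_{B_{1/2}}\sum_{i=0}^{j}(1/2)^i|D^iu_\ell|<(1/2)^{-\f{2}{p-1}}$, which is the desired contradiction. The main subtlety to verify is that the weak-$*$ vanishing of $m_{u_\ell}$ on $B_1$ truly annihilates the defect measures on every compact subset of $B_1$ (not merely their integrals against a countable dense family); this is handled by the nonnegativity of $\nu_1,\nu_2$ and localization via nonnegative cutoffs compactly supported inside $B_1$, which in turn forces $\Sigma=\emptyset$ in the interior and drives the $C^\infty_{\loc}$-convergence.
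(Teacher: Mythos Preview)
Your proof is correct, but it takes a different route from the paper. The paper argues directly: since the only $n$-symmetric pair is $(0,0)$, $(n,\va)$-symmetry gives $d_{0,1}(m_u,0)<\va$; using that the family $\{f_i\}$ defining $d_{0,1}$ contains bump functions equal to $1$ on $B_{1-2j^{-1}}$, this forces $m_u(B_{3/4})<C(n,p)\va$, i.e.\ $\theta_{3/4}(u,0)$ is small, and Proposition~\ref{smareg} (the $\va$-regularity theorem) yields the smoothness and the quantitative sup bound in one stroke. Your contradiction--compactness argument instead passes through Proposition~\ref{DefMea}: you extract a weak limit, identify it as zero, kill the defect measures via the weak-$*$ vanishing of $m_{u_\ell}$, conclude $\Sigma=\emptyset$, and then upgrade to $C^\infty_{\loc}$ convergence. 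Both approaches are valid; the paper's is more elementary because it invokes only the $\va$-regularity threshold and never needs the full blow-up/defect-measure machinery, while your argument is a good illustration of how Proposition~\ref{DefMea} packages that same $\va$-regularity into a compactness statement.
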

\begin{proof}
Up to a scaling and a translation, we assume that $ r=1 $ and  $ x=0 $. If $ u $ is $ (n,
\va) $-symmetric in $ B_r(x) $, then we can obtain that $ d_{0,1}(m_u,0)<\va $. Choosing $ \va>0 $ sufficiently small, and applying basic arguments of approximation, we have $ m_u(B_{\f{3r}{4}}(x))<C(n,p)\va $ and hence the result follows from Proposition \ref{smareg}.
\end{proof}

Combining Lemma \ref{integraln} and \ref{nvasym}, we establish the lemma as follows.

\begin{lem}\label{estimaterux}
Let $ j\in\Z_{\geq 0} $, $ r>0 $, and $ x\in\R^n $. Assume that $ u\in(H^1\cap L^{p+1})(B_r(x)) $, satisfying $ \theta_r(u,x)\leq\Lda $. There exists $ \delta>0 $ depending only on $ j,\Lda,n $, and $ p $ such that if $ u $ is $ (k_{n,p},\delta) $-symmetric in $ B_r(x) $, then $
r_u^{j}(x)\geq \f{r}{2} $
\end{lem}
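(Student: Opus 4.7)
The statement is explicitly flagged in the excerpt as following from combining Lemma \ref{integraln} and Lemma \ref{nvasym}, so the proof plan is essentially a two-step chaining of those results, with the parameters chosen in the right order.

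The plan is to pick the threshold $\delta$ by first fixing the output parameter $\varepsilon$ coming from Lemma \ref{nvasym}. More precisely, given $j \in \Z_{\geq 0}$ and $\Lda$, I would first invoke Lemma \ref{nvasym} to obtain a constant $\varepsilon = \varepsilon(j,\Lda,n,p) > 0$ with the property that any stationary solution $u$ on $B_r(x)$ with $\theta_r(u,x) \leq \Lda$ which is $(n,\varepsilon)$-symmetric in $B_r(x)$ automatically satisfies $r_u^j(x) \geq r/2$. Then, with this $\varepsilon$ now fixed, I would apply Lemma \ref{integraln} to produce $\delta = \delta(\varepsilon,\Lda,n,p) = \delta(j,\Lda,n,p) > 0$ such that $(k_{n,p},\delta)$-symmetry in $B_r(x)$ upgrades to $(n,\varepsilon)$-symmetry in $B_r(x)$.

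With this choice of $\delta$, the conclusion is immediate: if $u$ is $(k_{n,p},\delta)$-symmetric in $B_r(x)$, then by Lemma \ref{integraln} it is $(n,\varepsilon)$-symmetric in $B_r(x)$, and then by Lemma \ref{nvasym} we get $r_u^j(x) \geq r/2$. The monotonicity formula (Proposition \ref{MonFor}) together with the density comparison (Proposition \ref{NonDegThePro}) already guarantee that the hypothesis $\theta_r(u,x)\leq\Lda$ is preserved in the scalings needed to invoke both lemmas on the same ball, so no additional rescaling is required.

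There is no genuine obstacle here; the only subtle point to be careful about is the order of quantifiers, namely that $\delta$ must be chosen \emph{after} $\varepsilon$ so that the dependence $\delta = \delta(j,\Lda,n,p)$ is legitimate, and that both Lemma \ref{integraln} and Lemma \ref{nvasym} produce their constants with dependence only on $(j,\Lda,n,p)$ (which they do). Since $k_{n,p}$ is a purely $(n,p)$-dependent quantity given by \eqref{kp}, no extra parameters leak into the final dependence.
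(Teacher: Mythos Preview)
Your proposal is correct and follows exactly the same approach as the paper's proof: first invoke Lemma~\ref{nvasym} to fix $\varepsilon=\varepsilon(j,\Lambda,n,p)$, then apply Lemma~\ref{integraln} with that $\varepsilon$ to obtain $\delta$, and chain the two implications. The remark about Propositions~\ref{MonFor} and~\ref{NonDegThePro} is unnecessary here since both lemmas are applied on the same ball $B_r(x)$ with the same hypothesis $\theta_r(u,x)\le\Lambda$, but it does no harm.
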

\begin{proof}
In view of Lemma \ref{nvasym}, we can choose $ \va=\va(j,\Lda,n,p) $ such that if $ u $ is $ (n,\va) $-symmetric in $ B_r(x) $, then $ u $ is smooth in $ B_{\f{r}{2}}(x) $ and satisfies $ r_u^{j}(x)\geq \f{r}{2} $. Applying Lemma \ref{integraln}, there exists $ \delta=\delta(\va,\Lda,n,p)>0 $ such that if $ u $ is $ (k_{n,p},\delta) $-symmetric in $ B_r(x) $, then $ u $ is $ (n,\va) $-symmetric in $ B_r(x) $. Now, it can be obtained that if $ u $ is $ (k_{n,p},\delta) $-symmetric in $ B_r(x) $, then $ r_u^{j}(x)\geq \f{r}{2} $.
\end{proof}

\begin{cor}\label{rucor}
Let $ j\in\Z_{\geq 0} $, $ 0<r\leq\f{1}{2} $, and $ x\in B_2 $. Assume that $ u\in(H^1\cap L^{p+1})(B_{40}) $, satisfying $ \theta_{40}(u,0)\leq\Lda $. Then
$$
\left\{x\in B_1:r_u^{j}(x)<r\right\}
\subset S_{\delta,2r}^{k_{n,p}-1}(u)\cap B_1,
$$
where $ \delta>0 $ is given by Lemma \ref{estimaterux}.
\end{cor}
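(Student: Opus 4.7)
The plan is to argue by contrapositive. Unwinding Definition \ref{defSkvar}, a point $x \in B_{10}$ fails to lie in $S_{\delta,2r}^{k_{n,p}-1}(u)$ precisely when there exists some $s \in [2r,1)$ for which $u$ is $(k_{n,p},\delta)$-symmetric in $B_s(x)$. Hence it is enough to show the following: whenever $x \in B_1$ and $u$ is $(k_{n,p},\delta)$-symmetric in some ball $B_s(x)$ with $s \in [2r,1)$, one has $r_u^j(x) \geq r$. This is immediate from Lemma \ref{estimaterux} (which yields $r_u^j(x) \geq s/2 \geq r$) once the energy hypothesis $\theta_s(u,x) \leq \Lambda'$ required by that lemma is verified uniformly in the scale $s$ and the base point $x \in B_1$.

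The only technical step, therefore, is to produce such a uniform bound $\theta_s(u,x) \leq \Lambda'(n,p,\Lambda)$ valid for every $x \in B_1$ and every $s \in (0,1)$. I would obtain it by combining the monotonicity of the modified density with its comparability to $\theta$. Since $B_{10\rho}(x) \subset B_{40}$ for $\rho \leq 3$ whenever $x \in B_1$, Proposition \ref{MonFor} gives the monotonicity $\vt_{s/4}(u,x) \leq \vt_3(u,x)$ for $s \in (0,12]$; then \eqref{thevt} estimates $\vt_3(u,x)$ in terms of $\theta_{30}(u,x)$, and the inclusion $B_{30}(x) \subset B_{40}$ together with the hypothesis $\theta_{40}(u,0) \leq \Lambda$ yields $\theta_{30}(u,x) \leq C(n,p) \Lambda$. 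Finally Proposition \ref{NonDegThePro} converts this back into a bound on $\theta$ at scale $s$, namely $\theta_s(u,x) = \theta_{4(s/4)}(u,x) \leq C(n,p)\, \vt_{s/4}(u,x) \leq \Lambda'(n,p,\Lambda)$.

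With $\Lambda'$ secured, I invoke Lemma \ref{estimaterux} at parameters $(j,\Lambda',n,p)$ to extract $\delta = \delta(j,\Lambda',n,p) = \delta(j,\Lambda,n,p) > 0$ with the required implication. The contrapositive then closes: if $x \in B_1$ satisfies $r_u^j(x) < r$ and, toward contradiction, $x \notin S_{\delta,2r}^{k_{n,p}-1}(u)$, then some $s \in [2r,1)$ makes $u$ $(k_{n,p},\delta)$-symmetric in $B_s(x)$, and Lemma \ref{estimaterux} forces $r_u^j(x) \geq s/2 \geq r$, a contradiction. The membership $x \in B_{10}$ needed to be in $S_{\delta,2r}^{k_{n,p}-1}(u)$ is free from $x \in B_1$, so the target inclusion follows. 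No genuine obstacle is expected; the entire content beyond bookkeeping is the uniform energy estimate, which is the standard monotonicity-plus-nondegeneracy package already developed in \S\ref{Preliminaries}.
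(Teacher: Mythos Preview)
Your proof is correct and follows essentially the same contrapositive argument as the paper: if $x\notin S_{\delta,2r}^{k_{n,p}-1}(u)$, pick $s\in[2r,1)$ at which $u$ is $(k_{n,p},\delta)$-symmetric and apply Lemma \ref{estimaterux} to get $r_u^j(x)\ge s/2\ge r$. The only difference is that you spell out the uniform bound $\theta_s(u,x)\le\Lambda'(n,p,\Lambda)$ needed to invoke Lemma \ref{estimaterux}, which the paper leaves implicit.
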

\begin{proof}
If $ x\notin S_{\delta,2r}^{k_{n,p}-1}(u)\cap B_1 $, then there exists some $ 2r\leq s<1 $ such that $ u $ is $ (k_{n,p},\delta) $-symmetric in $ B_s(x) $. Using Lemma \ref{estimaterux}, we have $ r_u^{j}(x)\geq\f{s}{2}\geq r $, which is a contradiction.
\end{proof}

\subsection{Quantitative cone splitting results} In this subsection, we will establish quantitative cone splitting results, which are generalizations of the classical ones Lemma \ref{ConSpl} and \ref{ConSplm}.

\begin{lem}\label{SmaHom}
Let $ \va,r>0 $, and $ x\in\R^n $. Assume that $ u\in(H^1\cap L^{p+1})(B_{10r}(x)) $ is a stationary solution of \eqref{superequation}, satisfying $ \theta_{10r}(u,x)\leq\Lda $. There exists $ \delta>0 $ depending only on $ \va,\Lda,n $, and $ p $ such that if
$$
\vt_r(u,x)-\vt_{\f{r}{2}}(u,x)<\delta,
$$
then $ u $ is $ (0,\va) $-symmetric in $ B_r(x) $.
\end{lem}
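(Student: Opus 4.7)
The plan is to argue by contradiction using a standard compactness/blow-up procedure, where the only non-trivial input is the first part of Lemma \ref{lemusefre}. By the scaling invariance \eqref{Scaleinv}, I may assume $r=1$ and $x=0$. Suppose the conclusion fails: then there exist $\va_0>0$ and a sequence of stationary solutions $\{u_i\}\subset(H^1\cap L^{p+1})(B_{10})$ of \eqref{superequation} with $\theta_{10}(u_i,0)\leq\Lda$ and
$$
\vt_1(u_i,0)-\vt_{\f{1}{2}}(u_i,0)<i^{-1},
$$
such that $u_i$ is not $(0,\va_0)$-symmetric in $B_1$.

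The next step is to extract a limit. By Proposition \ref{DefMea} applied on $B_{10}$ (combined with Proposition \ref{NonDegThePro} to control $\theta$), after passing to a subsequence I can assume
$$
u_i\wc v\text{ weakly in }(H^1\cap L^{p+1})(B_{10}),\qquad m_{u_i}\wc^*\mu\text{ in }\M(B_{10}).
$$
The Rellich--Kondrachov embedding yields the strong convergence $u_i\to v$ in $L^2(B_1)$, which is the only piece of $L^2$-strong compactness I actually need. The hypothesis on the monotonicity deficit, together with the first property of Lemma \ref{lemusefre} (applied with $r=1$), immediately delivers that $(v,\mu)$ is a $0$-symmetric pair at $0$ in $B_9$. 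In particular, $(v,\mu)$ restricts to a $0$-symmetric pair in $B_1$, and by Remark \ref{extend} it extends canonically to a $0$-symmetric pair on all of $\R^n$.

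For the conclusion, I combine the strong $L^2$ convergence with the weak$^*$ convergence of the induced measures to obtain
$$
D_{0,1}((u_i,m_{u_i}),(v,\mu))=\int_{B_1}|u_i-v|^2+d_{0,1}(m_{u_i},\mu)\to 0,
$$
since $d_{0,1}$ metrizes the weak$^*$ topology on $\M(B_1)$. For $i$ sufficiently large the right-hand side is less than $\va_0$, and $(v,\mu)$ is a $0$-symmetric pair, so Definition \ref{qunsybypair} forces $u_i$ to be $(0,\va_0)$-symmetric in $B_1$, a contradiction. The only genuinely non-routine ingredient is the $0$-symmetry of the limiting pair $(v,\mu)$, and that is precisely the content of Lemma \ref{lemusefre}; the modification of the energy density by the cutoff $\phi$ (as in Definition \ref{defnofphi} and equation \eqref{DenDef2}) is what allows that step to pass to the limit without invoking any unique continuation argument.
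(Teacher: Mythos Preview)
Your proof is correct and follows essentially the same approach as the paper's own proof: a contradiction argument, compactness to extract a weak limit pair $(v,\mu)$, application of Lemma \ref{lemusefre} to obtain $0$-symmetry, and the conclusion via convergence in $D_{0,1}$. Your added justifications (Rellich--Kondrachov for the $L^2$ part, the remark that $d_{0,1}$ metrizes weak$^*$ convergence) are accurate but not additional content beyond what the paper uses implicitly.
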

\begin{proof}
Up to a scaling and a translation, we assume that $ r=1 $ and $ x=0 $. Assume that the result is not true. There exist $ \va_0>0 $, and a sequence of stationary solutions of \eqref{superequation}, denoted by $ \{u_i\}\subset(H^1\cap L^{p+1})(B_{10}) $ such that for any $ i\in\Z_+ $, the following properties hold.
\begin{itemize}
\item $ \theta_{10}(u_i,0)\leq\Lda $ and $
\vt_1(u,0)-\vt_{\f{1}{2}}(u,0)<i^{-1} $,
\item $ u_i $ is not $ (0,\va_0) $-symmetric in $ B_1 $.
\end{itemize}
Up to a subsequence, we can assume that 
\be
\begin{aligned}
u_i&\wc v\text{ weakly in }(H^1\cap L^{p+1})(B_{10}),\\
m_{u_i}&\wc^*\mu\text{ in }\M(B_{10}).
\end{aligned}\label{uieweakly}
\ee
Applying Lemma \ref{lemusefre}, we deduce that $ (v,\mu) $ is a $ 0 $-symmetric pair in $ B_9 $. If $ i\in\Z_+ $ is sufficiently large, then \eqref{uieweakly} implies that $ D_{0,1}((u_i,m_{u_i}),(v,\mu))<\va_0 $, which is a contradiction.
\end{proof}

Next, we will give the following quantitative cone splitting result for stationary solutions of \eqref{superequation}. This proposition is a generalization of Lemma \ref{SmaHom}.

\begin{prop}[Quantitative cone splitting]\label{QuaConSpl}
Let $ \va,r>0 $, $ 0<\beta<\f{1}{2} $, and $ k\in\{0,1,2,...,n\} $. Assume that $ u\in(H^1\cap L^{p+1})(B_{10r}) $ is a stationary solution of \eqref{superequation}, satisfying $ \theta_{40r}(u,0)\leq\Lda $ and $ \{x_i\}_{i=0}^k\subset B_r $ with $ x_0=0 $. There exists $ \delta>0 $, depending only on $ \beta,\va,\Lda,n $, and $ p $ such that if 
\begin{enumerate}[label=$(\theenumi)$]
\item $ \vt_r(u,x_i)-\vt_{\f{r}{2}}(u,x_i)<\delta $ for any $ i=0,1,2,...,k $,
\item $ \{x_i\}_{i=0}^k $ are $ \beta r $-independent,
\end{enumerate}
then $ u $ is $ (k,\va) $-symmetric in $ B_r $.
\end{prop}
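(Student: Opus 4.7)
The natural strategy is a contradiction/compactness argument combined with an iterated application of the cone-splitting corollaries. Rescale to $r=1$ using the invariance \eqref{Scaleinv}, and suppose the claim fails. Then there exist $\va_0>0$, a sequence of stationary solutions $\{u_i\}\subset(H^1\cap L^{p+1})(B_{40})$ with $\theta_{40}(u_i,0)\leq\Lda$, and points $\{x_{i,j}\}_{j=0}^k\subset B_1$ with $x_{i,0}=0$ that are $\beta$-independent and satisfy $\vt_1(u_i,x_{i,j})-\vt_{1/2}(u_i,x_{i,j})<i^{-1}$ for each $j$, yet no $u_i$ is $(k,\va_0)$-symmetric in $B_1$. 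Passing to a subsequence, we may assume $x_{i,j}\to x_j$ in $\ol{B}_1$ (with $x_0=0$); by the second property of Lemma \ref{rhoind}, the limit points $\{x_j\}_{j=0}^k$ remain $\beta$-independent. Using Proposition \ref{MonFor}, Proposition \ref{NonDegThePro}, and the uniform energy bound, we may further extract $u_i\wc v$ weakly in $(H_{\loc}^1\cap L_{\loc}^{p+1})(B_{10})$ and $m_{u_i}\wc^*\mu$ in $\M(B_{10})$.

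The crucial step is to upgrade the smallness of $\vt_1(u_i,x_{i,j})-\vt_{1/2}(u_i,x_{i,j})$ into genuine $0$-symmetry of the limit pair at each $x_j$. Working in translated coordinates $\tilde u_i(\cdot):=u_i(x_{i,j}+\cdot)$ and invoking the first property of Lemma \ref{lemusefre}, we conclude that $(v,\mu)$ is $0$-symmetric at $x_j$ in $B_{9/2}(x_j)$ for each $j$; by Remark \ref{extend} this extends uniquely to a $0$-symmetry on $\R^n$. The $\beta$-independence of $\{x_j\}_{j=0}^k$ gives $x_1\neq x_0=0$ and $x_j\notin\op{span}\{x_1,\ldots,x_{j-1}\}$ for $2\leq j\leq k$. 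Applying Lemma \ref{ConSpl} and Corollary \ref{CorSpl} inductively to $v$, and Lemma \ref{ConSplm} and Corollary \ref{CorSplm} inductively to $\mu$, yields that $(v,\mu)$ is a $k$-symmetric pair with respect to $V:=\op{span}\{x_1,\ldots,x_k\}$.

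To derive the contradiction, note that by Rellich--Kondrachov the weak convergence $u_i\wc v$ in $H^1(B_1)$ improves to $\int_{B_1}|u_i-v|^2\to 0$, and the weak$^*$ convergence $m_{u_i}\wc^*\mu$ in $\M(B_1)$ is, by construction of the metric, equivalent to $d_{0,1}(m_{u_i},\mu)\to 0$. Therefore $D_{0,1}((u_i,m_{u_i}),(v,\mu))\to 0$, and since $(v,\mu)$ is a $k$-symmetric pair, Definition \ref{qunsybypair} forces $u_i$ to be $(k,\va_0)$-symmetric in $B_1$ for all sufficiently large $i$, contradicting the choice of the sequence.

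The only genuine subtlety is the first step above: extracting $0$-symmetry at each $x_j$ from the density pinching hypothesis in a way that survives passage to the weak$^*$ limit of the induced measures $m_{u_i}$. This is exactly what the modification of the classical density \eqref{semidensi} into $\vt_r$ via the cutoff $\phi$ (Definition \ref{defnofphi}) was engineered to provide; it is packaged for us in Lemma \ref{lemusefre}, and avoids the unique continuation arguments that were needed in \cite{NV17,NV18}. All remaining steps are either standard compactness/scaling manipulations or direct applications of the cone-splitting corollaries established in \S\ref{Preliminaries}.
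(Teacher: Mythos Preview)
Your proposal is correct and follows essentially the same contradiction/compactness route as the paper: rescale, pass to a subsequence with $u_i\wc v$, $m_{u_i}\wc^*\mu$, and $x_{i,j}\to x_j$, use Lemma \ref{lemusefre} to obtain $0$-symmetry of $(v,\mu)$ at each $x_j$, then iterate the cone-splitting corollaries to force $(v,\mu)$ to be $k$-symmetric with respect to $\op{span}\{x_1,\dots,x_k\}$ and derive the contradiction from $D_{0,1}((u_i,m_{u_i}),(v,\mu))\to 0$. Your explicit invocations of Lemma \ref{rhoind}, Rellich--Kondrachov, and the translation trick for varying centers make the argument slightly more detailed than the paper's, but the logic is identical (the radius $B_{9/2}(x_j)$ should be $B_9(x_j)$ since Lemma \ref{lemusefre} is applied with $r=1$, but this does not affect the proof).
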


\begin{proof}
Up to a scaling, we can assume that $ r=1 $. Assume that the result is not true. There exist $ \va_0>0 $, $ 0<\beta_0<\f{1}{2} $, a sequence of stationary solutions of \eqref{superequation}, denoted by $ \{u_j\}\subset(H^1\cap L^{p+1})(B_{40}) $, and $ \{\{x_{i,j}\}_{i=0}^k\}\subset B_1 $ with $ x_{0,j}=0 $ such that for any $ j\in\Z_+ $ the following properties hold.
\begin{itemize}
\item $ \theta_{40}(u_j,0)\leq\Lda $, and $
\sup_{0\leq i\leq k}(\vt_1(u_j,x_{i,j})-\vt_{\f{1}{2}}(u_j,x_{i,j}))<j^{-1} $.
\item $ \{x_{i,j}\}_{i=0}^k $ are $ \beta_0 $-independent points.
\item $ u_j $ is not $ (k,\va_0) $-symmetric in $ B_1 $.
\end{itemize}
Up to a subsequence, we can assume that
\be
\begin{aligned}
u_j&\wc v\text{ weakly in }(H^1\cap L^{p+1})(B_{40}),\\
m_{u_j}&\wc^*\mu\text{ in }\M(B_{40}),\\
x_{i,j}&\to x_i\in\ol{B}_1,\text{ for any }i\in\{0,1,2,...,k\}.
\end{aligned}\label{Conujvmu}
\ee
Using Lemma \ref{lemusefre}, we obtain that $ (v,\mu) $ is $ 0 $-symmetric in $ B_9(x_i) $ for any $ i\in\{0,1,2,...,k\} $. In view of Lemma \ref{ConSpl} and Corollary \ref{CorSpl}, $ (v,\mu) $ is $ k $-symmetric pair in $ B_2 $, with respect to $ V=\op{span}\{x_i\}_{i=1}^k $ (note that here $ x_0=0 $). For $ j\in\Z_+ $ sufficiently large, we deduce from \eqref{Conujvmu} that $ D_{0,1}((u_j,m_{u_j}),(v,\mu))<\va_0 $, which is a contradiction.
\end{proof}

\subsection{Further results on quantitative stratification} In this subsection, we will present some essential results that form the basis for the proofs of main theorems.

\begin{prop}\label{Fprop}
Let $ 0<\beta<\f{1}{2} $, $ \va>0 $, $ k\in\{0,1,2,...,n-1\} $, $ 0<s\leq 1 $, and $ x\in B_2 $. Assume that $ u\in(H^1\cap L^{p+1})(B_{40}) $ is a stationary solution of \eqref{superequation}, satisfying $ \theta_{40}(u,0)\leq\Lda $. There exist $ \delta,\delta'>0 $, depending only on $ \beta,\va,\Lda,n $, and $ p $ such that if the set
$$
F=\{y\in B_{2s}(x):\vt_s(u,y)-\vt_{\beta s}(u,y)<\delta\}
$$
$ \beta s $-effectively spans $ L\in\bA(n,k) $, then
$$
S_{\va,\delta's}^k(u)\cap B_s(x)\subset B_{2\beta s}(L).
$$
\end{prop}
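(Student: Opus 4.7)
The plan is to argue by contradiction via compactness, combined with a tangent-pair analysis at the ``far'' point. Suppose the conclusion fails; then for some $\va_0>0$, $\beta_0\in(0,\tfrac12)$ there exist sequences $\delta_i,\delta'_i\to 0^+$, stationary solutions $u_i$ with $\theta_{40}(u_i,0)\leq\Lda$, points $x_i\in B_2$, scales $s_i\in(0,1]$, sets $\{y_{i,j}\}_{j=0}^k\subset B_{2s_i}(x_i)$ that are $\beta_0 s_i$-independent, span $L_i\in\bA(n,k)$, and satisfy $\vt_{s_i}(u_i,y_{i,j})-\vt_{\beta_0 s_i}(u_i,y_{i,j})<\delta_i$, together with $z_i\in S_{\va_0,\delta'_i s_i}^k(u_i)\cap B_{s_i}(x_i)$ having $z_i\notin B_{2\beta_0 s_i}(L_i)$. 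Rescaling by $w_i:=T_{x_i,s_i}u_i$ sends everything to unit scale: the rescaled points $\tilde y_{i,j},\tilde z_i$ lie in $B_2,B_1$, $\dist(\tilde z_i,\tilde L_i)\geq 2\beta_0$, $\vt_1(w_i,\tilde y_{i,j})-\vt_{\beta_0}(w_i,\tilde y_{i,j})<\delta_i$, and since $s_i\leq 1$, $w_i$ is not $(k+1,\va_0)$-symmetric in $B_\sigma(\tilde z_i)$ for any $\sigma\in[\delta'_i,1)$.

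Passing to subsequences I extract limits $\tilde y_{i,j}\to\tilde y_j$, $\tilde z_i\to\tilde z$, $\tilde L_i\to\tilde L$, $w_i\wc v$ in $(H^1_\loc\cap L^{p+1}_\loc)(\R^n)$, and $m_{w_i}\wc^*\mu$ in $\M(\R^n)$. By the second part of Lemma \ref{rhoind}, $\{\tilde y_j\}$ remain $\beta_0$-independent with $\tilde L=\tilde y_0+\op{span}\{\tilde y_j-\tilde y_0\}_{j=1}^k$ and $\dist(\tilde z,\tilde L)\geq 2\beta_0$. The monotonicity formula (Proposition \ref{MonFor}) gives $\vt_1(w_i,\tilde y_{i,j})-\vt_{1/2}(w_i,\tilde y_{i,j})\leq\vt_1(w_i,\tilde y_{i,j})-\vt_{\beta_0}(w_i,\tilde y_{i,j})\to 0$, so the first part of Lemma \ref{lemusefre} shows that $(v,\mu)$ is $0$-symmetric at each $\tilde y_j$; by Remark \ref{extend} and the cone-splitting Corollaries \ref{CorSpl}, \ref{CorSplm}, $(v,\mu)$ is $k$-symmetric with respect to $\tilde L=\tilde y_0+\tilde L_0$.

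The central claim is that any weak subsequential limit $(v^*,\mu^*)$ of $T_{\tilde z,\sigma}(v,\mu)$ as $\sigma\to 0^+$ (existence from monotonicity of $\vt_\rho(\mu,\tilde z)$) is $(k+1)$-symmetric. Translating $\tilde y_0$ to $0$, the pair $(v,\mu)$ is $-\tfrac{2}{p-1}$-homogeneous at $0$ and $\tilde L_0$-invariant; hence $(v^*,\mu^*)$ inherits $\tilde L_0$-invariance by Lemmas \ref{ConSym}, \ref{convergesymmetry}, and is $0$-symmetric at $0$ because $\vt_\rho(\mu^*,0)=\vt(\mu,\tilde z)$ is independent of $\rho$, which by the monotonicity identity in Lemma \ref{5prop}(1) forces the homogeneity. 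The extra invariance along $\R\tilde z$ follows from a direct computation: writing $y=t\tilde z/|\tilde z|+w^\perp$ with $w^\perp\perp\tilde z$ and setting $\lambda=1+\sigma t/|\tilde z|$, the $0$-homogeneity at the origin yields
$$
T_{\tilde z,\sigma}v(y)=\sigma^{2/(p-1)}\lambda^{-2/(p-1)}v(\tilde z+\sigma\lambda^{-1}w^\perp),
$$
and a parallel computation for $\mu$; as $\sigma\to 0^+$ the right-hand side becomes independent of $t$. Since $\tilde z\notin\tilde L$ forces $\tilde z\notin\tilde L_0$, the subspace $\tilde L_0\oplus\R\tilde z$ has dimension $k+1$, so $(v^*,\mu^*)$ is $(k+1)$-symmetric with respect to it.

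To close, fix $\sigma_0>0$ from this subsequence so that $D_{0,1}(T_{\tilde z,\sigma_0}(v,\mu),(v^*,\mu^*))<\va_0/2$. Rellich compactness gives strong convergence $w_i\to v$ in $L^2_\loc$, and $m_{w_i}\wc^*\mu$ in $\M$; combined with $\tilde z_i\to\tilde z$, this forces $D_{0,1}(T_{\tilde z_i,\sigma_0}(w_i,m_{w_i}),T_{\tilde z,\sigma_0}(v,\mu))\to 0$, so $w_i$ is $(k+1,\va_0)$-symmetric in $B_{\sigma_0}(\tilde z_i)$ for large $i$, contradicting $\tilde z_i\in S_{\va_0,\delta'_i}^k(w_i)$ once $\delta'_i<\sigma_0<1$. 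The main obstacle is the tangent analysis: verifying that $(v^*,\mu^*)$ is genuinely $(k+1)$-symmetric (rather than merely $k$-symmetric) requires simultaneous use of the $0$-homogeneity at $\tilde y_0$ and the $\tilde L_0$-invariance, and careful control of the passage to the limit for the possibly non-trivial defect component of $\mu$ in the case $\al_p\in\Z_+$.
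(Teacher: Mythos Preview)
Your contradiction-and-compactness argument is correct, but it follows a genuinely different route from the paper's proof.

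The paper argues \emph{directly}: fixing $y_0\in B_s(x)\setminus B_{2\beta s}(L)$, it uses Corollary~\ref{coruse} to convert the density-drop hypothesis at each $x_j\in F$ into an $L^2$ bound on $(y-x_j)\cdot\nabla u+\tfrac{2u}{p-1}$ over $B_{\gamma s}(y_0)$ for a small $\gamma$; subtracting pairs of these gives smallness of $\int_{B_{\gamma s}(y_0)}|(x_j-x_0)\cdot\nabla u|^2$, hence of $\int_{B_{\gamma s}(y_0)}|V\cdot\nabla u|^2$ for $V=\op{span}\{x_j-x_0\}$. A separate estimate on $|(y_0-\pi_L(y_0))\cdot\nabla u|^2$ then yields a $(k+1)$-plane $V'$ along which $(\gamma s)^{\al_p-n}\int_{B_{\gamma s}(y_0)}|V'\cdot\nabla u|^2$ is small, and Lemma~\ref{kplus1} finishes. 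No iterated blow-ups appear; everything is a quantitative $L^2$ computation on the original $u$.

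Your route replaces this by the classical ``tangent cone of a cone splits a line'' picture, performed on the pair $(v,\mu)$. This is more geometric and avoids the explicit projection estimates, but the price is the second blow-up: you must justify that the monotonicity apparatus of Lemma~\ref{5prop} (in particular the identity \eqref{Therjmu}) applies to $(v^*,\mu^*)$, which requires a diagonal extraction expressing $(v^*,\mu^*)$ as a limit of rescaled stationary solutions, and you must check (as you note) that the $\R\tilde z$-invariance survives the defect part of $\mu$ when $\al_p\in\Z_+$. Both points go through, since $\sigma_i'/\sigma_i\to 1$ forces the modified sequence to share the same limit and $T_{0,\lambda_i^{-1}}$ acts continuously on both $L^2_{\loc}$ and $\M_{\loc}$, but the paper's direct approach sidesteps all of this and keeps the constants $\delta,\delta'$ more easily trackable in terms of $\beta,\va,\Lda,n,p$.
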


To prove this proposition, we have to use the following lemma. 

\begin{lem}\label{kplus1}
Let $ \va>0 $, $ k\in\{0,1,2,...,n-1\} $, $ 0<s\leq 1 $, and $ x\in B_2 $. Assume that $ u\in(H^1\cap L^{p+1})(B_{40}) $ is a stationary solution of \eqref{superequation}, satisfying $ \theta_{40}(u,0)\leq\Lda $. There exists $ \delta>0 $, depending only on $ \va,\Lda,n $, and $ p $ such that if there exists $ V\in\bG(n,k+1) $ such that
\be
s^{\al_p-n}\int_{B_s(x)}|V\cdot\na u|^2<\delta,\label{detaVsmall}
\ee
then $ S_{\va,r(\delta)s}^k(u)\cap B_{\f{s}{2}}(x)=\emptyset $, where $ r(\delta)=\delta^{\f{1}{2(n-\al_p)}} $.
\end{lem}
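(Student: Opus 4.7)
The plan is a direct (non-compactness) argument that combines the explicit gradient bound in the hypothesis with a pigeonhole over dyadic scales and an appeal to Proposition \ref{proprela2}(ii). The strategy is to fix $y\in B_{\f{s}{2}}(x)$ and exhibit a scale $r\in[r(\delta)s,\f{s}{2}]$ at which $u$ is $[\HSV]$ $(k+1,\delta'(\delta))$-symmetric in $B_r(y)$ with $\delta'(\delta)\to 0^+$ as $\delta\to 0^+$; Proposition \ref{proprela2}(ii) then converts this to ordinary $(k+1,\va)$-symmetry once $\delta$ is sufficiently small, forcing $y\notin S_{\va,r(\delta)s}^k(u)$.

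For the directional-gradient condition in Definition \ref{defnHSV}, note $B_r(y)\subset B_s(x)$ whenever $r\leq\f{s}{2}$, so by \eqref{detaVsmall}
$$
r^{\al_p-n}\int_{B_r(y)}|V\cdot\na u|^2\leq\(\f{s}{r}\)^{n-\al_p}\delta;
$$
the calibration $r(\delta)=\delta^{\f{1}{2(n-\al_p)}}$ is precisely what is needed so this is at most $\delta^{\f{1}{2}}$ uniformly for $r\in[r(\delta)s,\f{s}{2}]$. For the energy-drop condition, I would first establish a uniform bound $\vt_{\f{s}{2}}(u,y)\leq C(\Lda,n,p)$ via \eqref{thevt} and the inclusion $B_{5}(y)\subset B_{40}$ together with $m_u(B_{40})\leq 40^{n-\al_p}\Lda$. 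Then, on the dyadic scales $r_j:=2^{-j-1}s$ for $j=0,1,\ldots,J$ with $J$ the largest integer satisfying $r_J\geq r(\delta)s$ (so $J\gtrsim|\log\delta|$ with constant depending on $n,p$), Proposition \ref{MonFor} and telescoping yield
$$
\sum_{j=0}^{J-1}\bigl(\vt_{r_j}(u,y)-\vt_{r_{j+1}}(u,y)\bigr)\leq\vt_{\f{s}{2}}(u,y)\leq C(\Lda,n,p),
$$
so pigeonhole produces $j^*\in\{0,\ldots,J-1\}$ with $\vt_{2r_{j^*+1}}(u,y)-\vt_{r_{j^*+1}}(u,y)\leq C(\Lda,n,p)/|\log\delta|$.

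Setting $r^*:=r_{j^*+1}\in[r(\delta)s,\f{s}{2}]$, both parts of Definition \ref{defnHSV} hold in $B_{r^*}(y)$ with threshold $\delta'(\delta):=\max\{\delta^{\f{1}{2}},\,C(\Lda,n,p)/|\log\delta|\}$, which tends to $0^+$ as $\delta\to 0^+$. Since $|y|<2$ and $20r^*\leq 10$ we have $B_{20r^*}(y)\subset B_{40}$, and Proposition \ref{NonDegThePro} gives $\theta_{20r^*}(u,y)\leq C\vt_{5r^*}(u,y)\leq C(\Lda,n,p)$, verifying the hypothesis of Proposition \ref{proprela2}(ii) with a constant depending only on $\Lda,n,p$. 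Proposition \ref{proprela2}(ii) therefore supplies a threshold $\delta_0(\va,\Lda,n,p)>0$ such that whenever $\delta'(\delta)<\delta_0$, $u$ is $(k+1,\va)$-symmetric in $B_{r^*}(y)$, which finishes the proof.

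The key conceptual point, and arguably the main obstacle, is recognizing that $r(\delta)$ must be calibrated precisely so that the volume-scaling loss $(s/r)^{n-\al_p}$ for the directional-gradient integral is absorbed into $\delta^{\f{1}{2}}$, while the pigeonhole over the logarithmically many dyadic scales in $[r(\delta)s,\f{s}{2}]$ delivers a matching $1/|\log\delta|$ gain on the energy drop. A pure compactness-contradiction argument is unsuitable here because the explicit quantitative dependence $r(\delta)=\delta^{\f{1}{2(n-\al_p)}}$ is what ultimately drives the Minkowski-type volume estimates of the following sections.
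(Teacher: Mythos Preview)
Your argument is correct and follows essentially the same route as the paper's proof, which packages your dyadic pigeonhole as Lemma \ref{rxlogr} and otherwise proceeds identically through the $[\HSV]$ symmetry and Proposition \ref{proprela2}(ii). One minor point: your justification of the uniform bound $\vt_{s/2}(u,y)\leq C(\Lda,n,p)$ via \eqref{thevt} and $B_5(y)\subset B_{40}$ alone would produce a bound blowing up as $s\to 0^+$; you should first invoke monotonicity (Proposition \ref{MonFor}) to pass to a fixed large scale, e.g.\ $\vt_{s/2}(u,y)\leq\vt_{3}(u,y)$, before applying \eqref{thevt}.
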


Before the proof of this lemma, we first give a useful observation.

\begin{lem}\label{rxlogr}
Let $ 0<r<\f{1}{100} $, $ 0<s\leq 1 $, and $ x\in\R^n $. Assume that $ u\in(H^1\cap L^{p+1})(B_{10s}(x)) $ is a stationary solution of \eqref{superequation}, satisfying $ \theta_{10s}(u,x)\leq\Lda $. For any $ y\in B_{\f{s}{2}}(x) $, there exists $ r_y\in[r,\f{1}{2}] $ such that
\be
\vt_{r_ys}(u,y)-\vt_{\f{r_ys}{2}}(u,y)\leq\f{C}{\log\f{1}{r}},\label{vtrys}
\ee
where $ C>0 $ depends only on $ \Lda,n $, and $ p $. 
\end{lem}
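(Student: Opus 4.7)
The plan is to execute a standard pigeonhole argument against the monotonicity of the modified energy density along a dyadic sequence of scales. The three ingredients are: (i) a uniform upper bound on $\vt_{s/2}(u,y)$, (ii) nonnegativity of $\vt$, and (iii) the monotonicity formula (Proposition \ref{MonFor}).

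First I would establish that $\vt_{s/2}(u,y) \leq C(\Lda,n,p)$ for every $y\in B_{s/2}(x)$. Since $|y-x|<s/2$, we have the inclusion $B_{5s}(y)\subset B_{10s}(x)$, and hence $m_u(B_{5s}(y))\leq m_u(B_{10s}(x))$, which gives
$$\theta_{5s}(u,y)=(5s)^{\al_p-n}m_u(B_{5s}(y))\leq 2^{n-\al_p}\theta_{10s}(u,x)\leq 2^{n-\al_p}\Lda.$$
The elementary comparison \eqref{thevt}, together with $s\leq 1$, then yields $\vt_{s/2}(u,y)\leq C(\Lda,n,p)$. The density $\vt$ is nonnegative by Proposition \ref{NonDegThePro}.

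Next I would set $N:=\lfloor\log_2(1/r)\rfloor$ (note $N\geq 6$ since $r<1/100$) and consider the dyadic radii $\rho_j:=2^{-j}s$ for $j=1,\dots,N+1$. Because $\rho_{j+1}=\rho_j/2$ and because each increment $\vt_{\rho_j}(u,y)-\vt_{\rho_j/2}(u,y)$ is nonnegative by Proposition \ref{MonFor}, the sum telescopes:
$$\sum_{j=1}^{N}\bigl[\vt_{\rho_j}(u,y)-\vt_{\rho_j/2}(u,y)\bigr]=\vt_{s/2}(u,y)-\vt_{2^{-N-1}s}(u,y)\leq C(\Lda,n,p).$$
By pigeonhole, there is $j_*\in\{1,\dots,N\}$ with
$$\vt_{\rho_{j_*}}(u,y)-\vt_{\rho_{j_*}/2}(u,y)\leq\frac{C(\Lda,n,p)}{N}\leq\frac{C'(\Lda,n,p)}{\log(1/r)}.$$
Taking $r_y:=2^{-j_*}$, one has $r_y\in[2^{-N},1/2]\subset[r,1/2]$ because $2^{-N}\geq r$, which gives the conclusion.

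There is no serious obstacle; the only point requiring a small amount of care is the control of $\vt_{s/2}(u,y)$ at an off-center point $y$, which is handled by the ball inclusion argument above. All other steps are direct applications of results already stated in the excerpt.
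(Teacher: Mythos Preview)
Your proof is correct and follows essentially the same approach as the paper: bound $\vt_{s/2}(u,y)$ via \eqref{thevt} and the ball inclusion $B_{5s}(y)\subset B_{10s}(x)$, then telescope along dyadic scales and apply pigeonhole. You are slightly more explicit than the paper in citing Proposition~\ref{NonDegThePro} for the nonnegativity of $\vt$ needed to drop the bottom term of the telescoping sum, but otherwise the arguments coincide.
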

\begin{proof}
Up to a translation, we can assume that $ s=1 $ and $ x=0 $. Fix $ y\in B_{\f{1}{2}} $. It follows from \eqref{thevt} and $ \theta_{10}(u,0)\leq\Lda $ that $ \vt_{\f{1}{2}}(u,y)\leq C(\Lda,n,p) $. Choosing $ \ell\in\Z_+ $ such that $ \f{1}{2^{\ell+1}}\leq r<\f{1}{2^{\ell}} $ with $ \ell\sim\log\f{1}{r} $, we can apply Proposition \ref{MonFor} to deduce that
$$
0\leq\sum_{i=1}^{\ell}\(\vt_{\f{1}{2^i}}(u,y)-\vt_{\f{1}{2^{i+1}}}(u,y)\)\leq\vt_{\f{1}{2}}(u,y)\leq C(\Lda,n,p),
$$
which directly implies the existence of $ r_y\in[r,\f{1}{2}] $ satisfying \eqref{vtrys}.
\end{proof}

\begin{proof}[Proof of Lemma \ref{kplus1}]
Fix $ y\in S_{\va,r(\delta)s}^k(u)\cap B_{\f{s}{2}}(x) $. Using \eqref{thevt}, \eqref{NonDegThe}, and Proposition \ref{MonFor}, we have $ \theta_{10s}(u,y)\leq C(\Lda,n,p) $. As a result, it follows from Lemma \ref{rxlogr} that
\be
\vt_{2r_ys}(u,y)-\vt_{r_ys}(u,y)\leq\f{C(\Lda,n,p)}{\log\delta^{-1}},\label{dyiasmall}
\ee
where $ 2r_y\in[2r(\delta),\f{1}{2}] $. Combining \eqref{detaVsmall}, it follows that
$$
(r_ys)^{\al_p-n}\int_{B_{r_ys}(y)}|V\cdot\na u|^2\leq(r_ys)^{\al_p-n}\int_{B_s(x)}|V\cdot\na u|^2\leq C\delta^{\f{1}{2}}.
$$
This, together with \eqref{dyiasmall}, implies that given $ \va'>0 $, we can choose sufficiently small $ \delta=\delta(\va',\Lda,n,p)>0 $ such that $ u $ is $ (k+1,\va') $ [HSV] symmetric in $ B_{r_ys}(y) $. By Proposition \ref{proprela2}, we can choose $ \va'=\va'(\va,\Lda,n,p)>0 $ such that $ u $ is $ (k+1,\va) $-symmetric in $ B_{r_ys}(y) $, which implies that $ y\notin S_{\va,r(\delta)s}(u) $.
\end{proof}

\begin{proof}[Proof of Proposition \ref{Fprop}]
Up to a translation, we can assume that $ x=0 $. By the assumption, there are points $ \{x_i\}_{i=0}^k\subset F $ being $ \beta s $-independent and spanning $ L=x_0+\op{span}\{x_i-x_0\}_{i=1}^k $ such that
\be
\vt_s(u,x_i)-\vt_{\beta s}(u,x_i)<\delta,\label{uxidel}
\ee
for any $ i\in\{0,1,2,...,k\} $, where $ \delta>0 $ is to be determined later. Fix $ y_0\in B_s\backslash B_{2\beta s}(L) $. To complete the proof, we choose appropriate $ \delta,\delta'>0 $ such that $ y_0\notin S_{\va,\delta's}^k(u)\cap B_s $. There exists $ 0<\ga<\f{1}{100} $ such that $ B_{\ga s}(y_0)\subset \cap_{i=0}^kB_{4s}(x_i) $. Using Corollary \ref{coruse}, we have
\begin{align*}
s^{\al_p-n}\int_{B_{4s}(x_i)}\left|\f{(y-x_i)\cdot\na u}{s}+\f{2u}{(p-1)s}\right|^2\ud y\leq C(n,p)\(\vt_s(u,x_i)-\vt_{\f{s}{2}}(u,x_i)\),
\end{align*}
for any $ i\in\{0,1,2,...,k\} $. Consequently, we can use \eqref{uxidel}, Proposition \ref{MonFor}, and $ 0<\beta<\f{1}{2} $ to obtain that
\be
\begin{aligned}
\int_{B_{\ga s}(y_0)}\left|(y-x_i)\cdot\na u+\f{2u}{p-1}\right|^2\ud y&\leq\int_{B_{4s}(x_i)}\left|(y-x_i)\cdot\na u+\f{2u}{p-1}\right|^2\ud y\\
&\leq C(\vt_s(u,x_i)-\vt_{\beta s}(u,x_i))s^{n+2-\al_p}\\
&\leq C(n,p)\delta s^{n+2-\al_p},
\end{aligned}\label{Bgas}
\ee
for any $ i\in\{0,1,2,...,k\} $, and then
\be
\begin{aligned}
\int_{B_{\ga s}(y_0)}|(x_i-x_0)\cdot\na u|^2&\leq 2\int_{B_{\ga s}(y_0)}\left|(y-x_i)\cdot\na u+\f{2u}{p-1}\right|^2\ud y\\
&\quad\quad+2\int_{B_{\ga s}(y_0)}\left|(y-x_0)\cdot\na u+\f{2u}{p-1}\right|^2\ud y\\
&\leq C(n,p)\delta s^{n+2-\al_p},
\end{aligned}\label{xix0}
\ee
for any $ i\in\{1,2,...,k\} $. As a result,
\be
\int_{B_{\ga s}(y_0)}|V\cdot\na u|^2\leq C(\beta,n,p)\delta s^{n-\al_p},\label{vhat}
\ee
where $ V=\op{span}\{x_i-x_0\}_{i=1}^k $. For any $ y\in B_{\ga s}(y_0) $, let 
\be
\pi_L(y)=x_0+\sum_{i=1}^k\al_i(y)(x_i-x_0)\label{piLy}
\ee
be the point in $ L $ such that $
|\pi_L(y)-y|=\dist(y,L)\geq\beta s $. Using the first property of Lemma \ref{rhoind}, it follows that for any $ y\in B_{\ga s}(y_0) $, there holds
\be
|\al_i(y)|\leq\f{C(n)|\pi_L(y)-x_0|}{\beta s}\leq C(\beta,n),\text{ for any }i\in\{1,2,...,k\}.\label{aiy}
\ee
Combining \eqref{Bgas}, \eqref{xix0}, \eqref{piLy}, and \eqref{aiy}, we have
\be
\begin{aligned}
&\int_{B_{\ga s}(y_0)}\left|(y-\pi_L(y))\cdot\na u+\f{2u}{p-1}\right|^2\ud y\\
&\quad\quad\leq C\int_{B_{\ga s}(y_0)}\left|(y-x_0)\cdot\na u+\f{2u}{p-1}\right|^2\ud y\\
&\quad\quad\quad\quad+C\(\sum_{i=1}^k\int_{B_{\ga s}(y_0)}|(x_i-x_0)\cdot\na u|^2\)\\
&\quad\quad\leq C(\beta,n,p)\delta s^{n+2-\al_p}.
\end{aligned}\label{Bgasy0}
\ee
Since $ \pi_L $ is indeed the orthogonal projection onto $ L $, we have $ |\pi_L(y)-\pi_L(y_0)|\leq|y-y_0| $ for any $ y\in\R^n $. Then
\begin{align*}
&\int_{B_{\ga s}(y_0)}|(y_0-\pi_L(y_0))\cdot\na u|^2\\
&\quad\quad\leq C\int_{B_{\ga s}(y_0)}\left|(y-\pi_L(y))\cdot\na u+\f{2u}{p-1}\right|^2\\
&\quad\quad\quad\quad+C\int_{B_{\ga s}(y_0)}\left|((y-y_0)-(\pi_L(y)-\pi_L(y_0))\cdot\na u+\f{2u}{p-1}\right|^2\ud y\\
&\quad\quad\leq C\(\delta s^{n+2-\al_p}+\ga^2s^2\int_{B_{\ga s}(y_0)}|\na u|^2+\int_{B_{\ga s}(y_0)}|u|^2\)\\
&\quad\quad\leq C\[\delta s^{n+2-\al_p}+(\ga s)^{n+2-\al_p}\(\theta_{\ga s}(u,y_0)+(\theta_{\ga s}(u,y_0))^{\f{2}{p+1}}\)\]\\
&\quad\quad\leq C(\beta,\Lda,n,p)(\delta s^{n+2-\al_p}+(\ga s)^{n+2-\al_p}),
\end{align*}
where for the second inequality, we have used \eqref{Bgasy0}, for the third inequality, we have used H\"{o}lder's inequality, for the last inequality, we have used \eqref{thevt}, \eqref{NonDegThe}, and Proposition \ref{MonFor}. This, together with \eqref{vhat}, implies that
$$
(\ga s)^{\al_p-n}\int_{B_{\ga s}(y_0)}|V'\cdot\na u|^2\leq C(\beta,\Lda,n,p)(\ga^2+\delta\ga^{\al_p-n}),
$$
where
$$
V'=V\oplus\op{span}\left\{\f{y_0-\pi_L(y_0)}{|y_0-\pi_L(y_0)|}\right\}.
$$
Choosing $ \ga=\ga(\beta,\va,\Lda,n,p)>0 $ and $ \delta=\delta(\beta,\va,\Lda,n,p)>0 $ sufficiently small and applying Lemma \ref{kplus1} to the ball $ B_{\ga s}(y_0) $, we deduce that $ y_0\notin S_{\va,\delta's}^k(u)\cap B_s $, where $ \delta'=\delta'(\beta,\va,\Lda,n,p)>0 $. Now we can complete the proof.
\end{proof}

\begin{lem}\label{lempinch}
Let $ 0<\beta<\f{1}{10} $, $ k\in\{0,1,2,...,n-1\} $, $ 0<s\leq 1 $, $ \xi>0 $, and $ x\in\R^n $. Assume that $ u\in(H^1\cap L^{p+1})(B_{20s}(x)) $ is a stationary solution of \eqref{superequation}, satisfying $ \theta_{20s}(u,x)\leq\Lda $ and
$$
\sup_{y\in B_{2s}(x)}\vt_s(u,y)\leq E.
$$
There exists $ 0<\delta<1 $, depending only on $ \beta,\xi,\Lda,n $, and $ p $ such that if the set
$$
F=\{y\in B_{2s}(x):\vt_{\beta s}(u,y)>E-\delta\}
$$
$ \beta s $-effectively spans $ L\in\bA(n,k) $, then 
$$
\vt_{\beta s}(u,y)>E-\xi 
$$
for any $ y\in L\cap B_{2s}(x) $.
\end{lem}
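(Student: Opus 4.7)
\medskip

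The plan is to argue by contradiction using compactness combined with the cone splitting principle for limit pairs. After rescaling we may take $s=1$ and $x=0$. Suppose the conclusion fails: there exist $\delta_j\downarrow 0$, stationary solutions $u_j\in(H^1\cap L^{p+1})(B_{20})$ with $\theta_{20}(u_j,0)\leq\Lda$ and $\sup_{B_2}\vt_1(u_j,\cdot)\leq E$, $\beta$-independent points $\{x_{i,j}\}_{i=0}^k\subset B_2$ with $\vt_{\beta}(u_j,x_{i,j})>E-\delta_j$ spanning $L_j\in\bA(n,k)$, and points $y_j\in L_j\cap B_2$ with $\vt_{\beta}(u_j,y_j)\leq E-\xi$. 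Passing to a subsequence and using Proposition \ref{DefMea} and the framework of \S\ref{Preliminaries} (blow-up, defect measures $\nu_1,\nu_2$, matrix-valued $(\nu_1^{\beta\ga})$), assume $u_j\wc v$ weakly in $(H^1\cap L^{p+1})(B_{20})$, $m_{u_j}\wc^*\mu$ in $\M(B_{20})$, $x_{i,j}\to x_i$, $y_j\to y_0\in L\cap\ol{B}_2$; by Lemma \ref{rhoind}(ii) the limit points $\{x_i\}_{i=0}^k$ are still $\beta$-independent and span $L=x_0+\op{span}\{x_i-x_0\}_{i=1}^k$, with $y_0\in L$.

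The first step is a continuity property of the density at moving centers: for any $z_j\to z$ and fixed $\rho\in(0,2]$, one has $\vt_{\rho}(u_j,z_j)\to\vt_{\rho}(\mu,z)$, where $\vt_{\rho}(\mu,z)$ is defined by the limiting expression \eqref{mudesi1}. Indeed, writing $\vt_{\rho}(u_j,z_j)$ via \eqref{DenDef2}, the $|u_j|^2\dot{\phi}_{z_j,\rho}$ term converges by Rellich compactness ($H^1\hookrightarrow L^2_{\loc}$), and the remaining integrals converge against $m_{u_j}\wc^*\mu$ because $\phi_{z_j,\rho}\to\phi_{z,\rho}$ uniformly with uniformly bounded support. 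Given this, the hypothesis $\vt_{\beta}(u_j,x_{i,j})>E-\delta_j$ yields $\vt_{\beta}(\mu,x_i)\geq E$, while $\vt_{1}(\mu,x_i)=\lim\vt_{1}(u_j,x_{i,j})\leq E$, so the monotonicity (Proposition \ref{MonFor}, passed to the limit) forces $\vt_{\rho}(\mu,x_i)=E$ for every $\rho\in[\beta,1]$.

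Plugging this pinching into the limiting monotonicity formula \eqref{Therjmu} centered at $x_i$, both non-positive integrands must vanish for all $\rho\in[\beta,1]$; since $\dot{\phi}_{x_i,\rho}<0$ on $B_{3\rho}(x_i)$, taking the union over $\rho$ gives $(z-x_i)\cdot\na v+\f{2v}{p-1}=0$ a.e.\ in $B_3(x_i)$ and $(z-x_i)_{\beta}(z-x_i)_{\ga}\nu_1^{\beta\ga}=0$ in $B_3(x_i)$. Following the argument in the proof of Lemma \ref{tanfunmea} (the stationary identity plus the Cauchy-inequality bound $(\int\tau_{\ga}\pa_{\beta}\vp\,d\nu_1^{\beta\ga})^2\leq(\int\pa_{\beta}\vp\pa_{\ga}\vp\,d\nu_1^{\beta\ga})(\int(z-x_i)_{\beta}(z-x_i)_{\ga}\,d\nu_1^{\beta\ga})$) also propagates the $0$-symmetry to the defect measure $\nu$, and hence to $\mu$. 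Thus $(v,\mu)$ is a $0$-symmetric pair at each $x_i$ in $B_3(x_i)$. Applying Corollaries \ref{CorSpl} and \ref{CorSplm} inductively, $(v,\mu)$ is $k$-symmetric centered at $x_0$ with respect to $V=\op{span}\{x_i-x_0\}_{i=1}^k$, and in particular is invariant under all translations along $L$.

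Invariance of $\mu$ (and $v$) along $L$ together with the $0$-symmetry at $x_0$ gives $\vt_{\beta}(\mu,y)=\vt_{\beta}(\mu,x_0)=E$ for every $y\in L\cap B_2$. Applying the continuity from Step~1 at the sequence $y_j\to y_0$ now yields
$$E=\vt_{\beta}(\mu,y_0)=\lim_{j\to+\ift}\vt_{\beta}(u_j,y_j)\leq E-\xi,$$
a contradiction. The main technical obstacle is the propagation of $0$-symmetry from $v$ to the defect measure $\nu$ at each $x_i$ in the pinched limit; this is the analogue for the pair $(v,\mu)$ of the $0$-symmetry part of Lemma \ref{tanfunmea}, but carried out around an arbitrary center $x_i$ rather than the origin, and it is what allows the cone splitting machinery to apply to the full pair rather than only to the function $v$.
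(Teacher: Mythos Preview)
Your argument is correct and follows the same strategy as the paper: contradict, extract a limit pair $(v,\mu)$, use the pinching $\vt_1(u_j,x_{i,j})-\vt_{\beta}(u_j,x_{i,j})<\delta_j\to 0$ to deduce $0$-symmetry of $(v,\mu)$ at each $x_i$ (the paper invokes Lemma~\ref{lemusefre} for this step rather than re-running the defect-measure computation inline), then cone-split along $L$ and contradict at $y_0$. One small point: since the lemma asserts $\delta$ independent of $E$, the contradiction sequence should allow $E=E_j$ to vary with $j$; the paper handles this by noting that $F_j\neq\emptyset$ forces $E_j\leq C(\Lda,n,p)$ while nonnegativity of $\vt_{\beta}$ forces $E_j>\xi/2$, so one passes to a subsequence with $E_j\to E_\infty$ and your argument then applies verbatim.
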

\begin{proof}
Up to a scaling and a translation, we can assume that $ s=1 $ and $ x=0 $. If the statement is false, then there are $ \xi_0>0 $, $ 0<\beta_0<\f{1}{10} $, and a sequence of stationary solutions of \eqref{superequation}, denoted by $ \{u_j\}\subset(H^1\cap L^{p+1})(B_{20}) $ such that for any $ j\in\Z_+ $, the following properties hold.
\begin{itemize}
\item $ \theta_{20}(u_j,0)\leq\Lda $ and $ \sup_{y\in B_2}\vt_1(u_j,y)\leq E_j $.
\item $ F_j:=\{y\in B_2:\vt_{\beta_0}(u_j,y)>E_j-j^{-1}\} $ contains a subset $ \{x_{i,j}\}_{i=0}^k $, which $ \beta_0 $-effectively spans $ L_j\in\bA(n,k) $.
\item There exists $ y_j\in L_j\cap B_2 $ such that 
\be
\vt_{\beta_0}(u_j,y_j)\leq E_j-\xi_0.\label{Eeta}
\ee
\end{itemize}
By the assumptions as above, we have $ \sup_{j\in\Z_+}E_j\leq C(\Lda,n,p) $ since if not if $ E_j $ is chosen sufficiently large, then $ F_j=\emptyset $, a contradiction. If for some $ j\in\Z_+ $, $ E_j\leq\f{\xi_0}{2} $, then it contradicts to \eqref{Eeta}. As a result, we can assume that for any $ E_j\in(\f{\xi_0}{2},C(\Lda,n,p)] $ for any $ j\in\Z_+ $. By the definition of $ F_j $, we have
\be
\begin{gathered}
E_j-j^{-1}<\vt_{\beta_0}(u_j,x_{i,j})\leq E_j,\\
\vt_1(u_j,x_{i,j})-\vt_{\beta_0}(u_j,x_{i,j})<j^{-1}
\end{gathered}\label{ujxijcon}
\ee
for any $ j\in\Z_+ $ and $ i\in\{0,1,2,...,k\} $. Up to a subsequence, we can assume that $ x_{i,j}\to x_i $ and $ y_j\to y_0 $. This implies that $ L_j\to L\in\bA(n,k) $ with $ y_0\in L $. Moreover, we assume that $ E_j\to E\geq\f{\xi_0}{2} $, $
u_j\wc v $ weakly in $ (H^1\cap L^{p+1})(B_{20}) $, and $ m_{u_j}\wc^*\mu $ in $ \M(B_{20}) $. Combining \eqref{ujxijcon}, we apply Lemma \ref{lemusefre} to obtain that $ (v,\mu) $ is $ 0 $-symmetric at $ x_i $ in $ B_9(x_i) $, for any $ i\in\{0,1,2,...,k\} $. Using Lemma \ref{ConSpl}, \ref{ConSplm}, Corollary \ref{CorSpl}, \ref{CorSplm}, and their proofs, we deduce that $ (v,\mu) $ is invariant with respect to $ L $. Precisely, if we denote $ L=x_0+\op{span}\{x_i-x_0\}_{i=1}^k $, then $ (v,\mu) $ is $ k $-symmetric at $ x_0 $, with respect to $ V:=\op{span}\{x_i-x_0\}_{i=1}^k $. Consequently, it follows from \eqref{ujxijcon} that
$$
\vt_{\beta_0}(\mu,y_0)=\vt_{\beta_0}(\mu,x_0)=...=\vt_{\beta_0}(\mu,x_k)=E.
$$
On the other hand, we can take $ j\to+\ift $ in \eqref{Eeta} to obtain that $ 0\leq\vt_{\beta_0}(\mu,y_0)\leq E-\xi_0 $, a contradiction.
\end{proof}

\section{Reifenberg and \texorpdfstring{$ L^2 $}{}-best approximation theorems}\label{RandL2}

\subsection{Introduction of Reifenberg-type theorems} Before presenting the theorems, we give the following definition.

\begin{defn}\label{displacementk}
Let $ k\in\{0,1,2,...,n\} $, $ \mu\in\M(B_2) $, $ 0<r\leq 1 $, and $ x\in B_1 $. We define the $ k $-dimensional displacement
$$
D_{\mu}^k(x,r)=\min_{L\in\bA(n,k)}\(r^{-k-2}\int_{B_r(x)}\dist^2(y,L)\ud\mu(y)\).
$$
\end{defn}

Intuitively, the Reifenberg-type theorems imply that if $ D_{\mu}^k(x,r) $ is small in some sense, we can obtain estimates related to Ahlfors $ k $-regularity for the measure $ \mu $.

\begin{thm}[\cite{NV17}, Theorem 3.4]\label{Rei1}
Let $ k\in\{0,1,2,...,n\} $, $ 0<r\leq 1 $, and $ x_0\in\R^n $. Assume that $ \{B_{r_y}(y)\}_{y\in\cD}\subset B_{2r}(x_0) $ is a collection of pairwise disjoint balls with $ \cD\subset B_r(x_0) $ and 
$$
\mu:=\sum_{y\in\cD}\w_kr_y^k\delta_y 
$$
is the associated measure. There exist $ \delta_{Rei,1}>0 $ and $ C_{Rei,1}>0 $, depending only on $ n $ such that if for any $ x\in B_r(x_0) $ and $ 0<t<\f{r}{10} $, there holds
$$
\int_{B_t(x)}\(\int_0^tD_{\mu}^k(y,s)\f{\ud s}{s}\)\ud\mu(y)<\delta_{Rei,1}t^k,
$$
then
$$
\mu(B_r(x_0))=\sum_{y\in\cD}w_kr_y^k\leq C_{Rei,1}r^k.
$$
\end{thm}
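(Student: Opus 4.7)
The plan is to prove this discrete Reifenberg result by a dyadic induction on scales, in the spirit of the Naber--Valtorta strategy. After rescaling so that $x_0 = 0$ and $r = 1$, I would construct, for each $j \geq 0$, a covering $\{B_{r_j}(x^j_\al)\}_{\al}$ of $\cD$ at dyadic scale $r_j = 2^{-j}$ satisfying two inductive properties: (i) every ball $B_{r_y}(y)$ with $r_y \leq r_j$ is contained in one of the covering balls, and (ii) the uniform packing bound $\sum_\al \w_k r_j^k \leq C_0$ holds for an absolute constant $C_0$ that is independent of $j$. The desired conclusion $\mu(B_1) \leq C_{Rei,1}$ follows from (ii) by taking $j$ large enough that $r_j \leq \min_{y \in \cD} r_y$, because then each $y \in \cD$ lies in a distinct covering ball and the packing of $\cD$ is immediately estimated.

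The engine of the induction is to treat $D_\mu^k$ as a Jones $\beta_2$-type quantity. For each covering ball $B_{r_j}(x^j_\al)$, the displacement $D_\mu^k(x^j_\al,r_j)$ yields a best $k$-plane $L^j_\al$; a Chebyshev-type argument shows that the $\mu$-mass outside an $\eta r_j$-neighborhood of $L^j_\al$ is bounded by $\eta^{-2}\, D_\mu^k(x^j_\al,r_j)\, r_j^k$. I would then cover the $\eta r_j$-tube around $L^j_\al \cap B_{r_j}(x^j_\al)$ by at most $C(n)\, 2^k$ balls of radius $r_{j+1}$ centered on $\cD$, which produces the stage $j+1$ covering. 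The pairwise disjointness of $\{B_{r_y}(y)\}_{y \in \cD}$ is essential here for keeping the number of new centers comparable to the $k$-dimensional packing of $L^j_\al$.

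To close the induction I would split the covering balls at each stage into \emph{good} ones (where the single-scale integrated displacement is below a fixed small threshold) and \emph{bad} ones. The good balls propagate the packing constant with precisely the factor $2^k$ gained from passing from scale $r_j$ to $r_{j+1}$, keeping (ii) intact. The total $k$-content of bad balls, summed over all stages $j$ via the geometric series furnished by the weight $\f{\ud s}{s}$, is dominated by the left-hand side of the hypothesis with $t = 1$, i.e.\ by $\delta_{Rei,1}$; choosing $\delta_{Rei,1}$ sufficiently small absorbs this contribution into $C_0$.

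The main obstacle, and the delicate heart of the argument, is making the induction self-consistent across scales: nothing in the local $L^2$ best approximation forces the planes $L^j_\al$ and $L^{j+1}_\beta$ for nested balls to be mutually close, so a naive iteration loses a uniform constant at every step. Naber and Valtorta overcome this by building, at each dyadic stage, a bi-Lipschitz map from $\R^k$ onto a Whitney-type smoothing of $\supp(\mu)$ obtained by interpolating the $L^j_\al$'s via a partition of unity, and then comparing the Jacobians of successive maps. The total bi-Lipschitz distortion of this sequence is precisely controlled by the Dini-type expression in the hypothesis, and this controlled distortion is what actually underwrites property (ii). Carrying out the Whitney extension and tracking the Jacobian comparison constants through the good/bad dichotomy is the lion's share of the work I would anticipate.
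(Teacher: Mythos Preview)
The paper does not supply its own proof of this theorem: it is quoted as \cite{NV17}, Theorem~3.4 and used as a black box. Your sketch is a faithful high-level outline of the original Naber--Valtorta argument---dyadic induction on scales, good/bad dichotomy, and the bi-Lipschitz ``map'' construction via Whitney interpolation of the best $k$-planes---so there is nothing to compare against here.
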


\begin{thm}[\cite{NV17}, Theorem 3.3]\label{Rei2}
Let $ k\in\{0,1,2,...,n\} $, $ 0<r\leq 1 $, and $ x_0\in\R^n $. Assume that $ S\subset B_{2r}(x_0)\subset\R^n $ is an $ \HH^k $-measurable subset. There exist $ \delta_{Rei,2}>0 $ and $ C_{Rei,2}>0 $, depending only on $ n $ such that if for any $ x\in B_r(x_0) $ and $ 0<t\leq r $, we have
$$
\int_{B_t(x)}\(\int_0^tD_{\HH^k\llcorner S}^k(y,s)\frac{\ud s}{s}\)\ud(\HH^k\llcorner S)(y)<\delta_{Rei,2}t^k,
$$
then $ S\cap B_r(x_0) $ is $ k $-rectifiable such that for any $ x\in S $ and $ t>0 $, there holds 
$$
\HH^k(S\cap B_t(x))\leq C_{Rei,2}t^k.
$$
\end{thm}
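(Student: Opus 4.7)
The plan is to adapt the rectifiable Reifenberg scheme of Naber--Valtorta. Write $ \mu=\HH^k\llcorner S $ and set $ \beta^2(x,s):=D_{\mu}^k(x,s) $. Fix a top plane $ L_0\in\bA(n,k) $ nearly realizing the minimum for $ x_0 $ at scale $ r $, and attempt to parametrize $ S\cap B_r(x_0) $ by a Lipschitz graph over $ L_0 $ constructed at dyadic scales $ r_j=2^{-j}r $.

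First I would prove a plane-comparison lemma: for $ x\in\supp\mu $ and $ s_1<s_2\leq r $, the tilt between the best-approximating $ k $-planes $ L_{x,s_1} $ and $ L_{x,s_2} $ is bounded by $ C(\beta(x,s_1)+\beta(x,s_2)) $, under a non-degeneracy assumption on the local $ \mu $-mass that will be propagated along the main induction. Iterating across dyadic scales and applying a telescoping argument produces
\begin{equation*}
\sum_{j\geq 0}\beta^2(x,r_j)\leq C\int_0^r\beta^2(x,s)\f{\ud s}{s},
\end{equation*}
and integrating this estimate against $ \mu $ invokes the hypothesis of the theorem to yield a global $ L^2 $-Jones control.

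Next I would construct inductively a sequence of Lipschitz maps $ \sigma_j:L_0\cap B_r(x_0)\to L_0^{\perp} $ with graphs $ \Sg_j $ such that $ \Sg_j $ lies in a $ Cr_j $-neighborhood of $ \supp\mu $, the modification $ \sigma_{j+1}-\sigma_j $ is supported in a Whitney-type family of balls of radius $ r_j $ centered on a maximal $ r_j $-separated subset of $ \Sg_j $, and on each such ball the Lipschitz norm of the modification is bounded by $ C\beta(x,r_j) $. The telescoping estimate from the previous step then gives a uniform Lipschitz constant for $ \{\sigma_j\} $ and $ C^0 $-convergence to a Lipschitz limit $ \sigma_{\infty} $ whose graph contains $ S\cap B_r(x_0) $ modulo an $ \HH^k $-null set, yielding rectifiability. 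The Ahlfors upper bound $ \HH^k(S\cap B_t(x))\leq Ct^k $ then follows by rerunning the construction inside each ball $ B_t(x) $, since the hypothesis holds at every scale up to $ t $.

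The principal obstacle is preserving the inductive hypotheses at the refinement step: the ``bad'' balls, where $ \beta(x,r_j) $ is large, must be prevented from proliferating and degrading the Lipschitz bound on $ \sigma_j $. This is precisely where Theorem \ref{Rei1} enters as a coupled companion---the discrete Reifenberg theorem controls the packing of bad-ball centers through the same $ L^2 $-Jones integral that controls the Lipschitz modifications, so both theorems must really be established simultaneously via a single scale induction. A further subtlety is the non-degeneracy of $ \mu $ required for the plane-comparison lemma, which one bootstraps from the inductive construction itself; this coupled bootstrapping constitutes the technical heart of the approach.
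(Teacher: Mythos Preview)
The paper does not prove this theorem at all: it is stated with attribution to \cite{NV17}, Theorem~3.3, and used as a black box in \S\ref{Mainproof}. There is therefore no proof in the paper to compare your proposal against. Your outline is a reasonable high-level sketch of the Naber--Valtorta rectifiable Reifenberg argument from \cite{NV17}, which is indeed the original source; if your goal is to reproduce that proof, the sketch captures the main architecture (plane comparison, dyadic Lipschitz approximations, coupling with the discrete Reifenberg theorem), though of course the actual execution in \cite{NV17} is substantially more delicate than a one-paragraph summary can convey.
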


\subsection{\texorpdfstring{$ L^2 $}{}-best approximation theorems}

Let $ 0<r\leq 1 $ and $ x\in B_2  $. Assume that $ u\in(H^1\cap L^{p+1})(B_{40}) $ is a stationary solution of \eqref{superequation}. We define
\be
W_r(u,x)=\vt_{2r}(u,x)-\vt_{r}(u,x)\label{Wrfunc}
\ee
be the difference of the density of scales $ r $ and $ 2r $. The main theorem in this subsection is the following $ L^2 $-best approximation theorem. In this theorem, we obtain a connection between the $ k $-dimensional displacement $ D_{\mu}^k $ and \eqref{Wrfunc}. Such a result allows us to apply Theorems \ref{Rei1} and \ref{Rei2} in the analysis of stationary solutions of \eqref{superequation}.

\begin{thm}\label{beta2}
Let $ \va>0 $, $ k\in\{0,1,2,...,n\} $, $ 0<r\leq 1 $, and $ x\in B_2 $. Assume that $ u\in(H^1\cap L^{p+1})(B_{40}) $ is a stationary solution of \eqref{superequation} such that $ \theta_{40}(u,0)\leq\Lda $. There exist $ C>0 $ depending only on $ \va,\Lda,n $, and $ p $ such that if for some $ \ga>0 $,
\be
\inf_{V\in\bG(n,k+1)}\(r^{\al_p-n}\int_{B_{4r}(x)}|V\cdot\na u|^2\)>\ga,\label{Vgeqassu}
\ee
then for any $ \mu\in\M(B_r(x)) $ with $ \mu(B_r(x))<+\ift $, there holds
\be
D_{\mu}^k(x,r)\leq C\ga^{-1}r^{-k}\int_{B_r(x)}W_r(u,y)\ud\mu(y).\label{L2bestesti}
\ee
\end{thm}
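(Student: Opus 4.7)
The strategy is a covariance/bilinear-form argument in the spirit of Naber-Valtorta, using Corollary~\ref{coruse} to convert the energy-drop quantity $W_r$ into quantitative almost-homogeneity. Writing $h_y(z):=(z-y)\cdot\na u(z)+\f{2u(z)}{p-1}$, the telescoping identity $(y-y')\cdot\na u(z)=h_{y'}(z)-h_y(z)$ holds pointwise, and Corollary~\ref{coruse} gives $\int_{B_{8r}(y)}|h_y|^2\,\ud z\leq Cr^{n+2-\al_p}W_r(u,y)$ for each $y\in B_r(x)$. Since $B_{4r}(x)\subset B_{8r}(y)\cap B_{8r}(y')$ whenever $y,y'\in B_r(x)$, squaring and integrating over $z\in B_{4r}(x)$ yields
\[
\int_{B_{4r}(x)}|(y-y')\cdot\na u(z)|^2\,\ud z\;\leq\;Cr^{n+2-\al_p}\bigl(W_r(u,y)+W_r(u,y')\bigr).
\]

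Next, set $\bar y:=\mu(B_r(x))^{-1}\int y\,\ud\mu(y)$ and introduce the covariance matrix $A:=\int_{B_r(x)}(y-\bar y)(y-\bar y)^{\T}\,\ud\mu(y)$. The polarization identity
\[
\tr(AM)\;=\;\f{1}{2\mu(B_r(x))}\iint(y-y')^{\T}M(y-y')\,\ud\mu(y)\ud\mu(y'),
\]
valid for any symmetric $M$, applied to $M=\na u(z)\na u(z)^{\T}$ and then integrated in $z$ over $B_{4r}(x)$, gives
\[
\tr(AB)\;=\;\int_{B_{4r}(x)}\na u^{\T}A\,\na u\,\ud z\;=\;\f{1}{2\mu(B_r(x))}\iiint_{B_{4r}(x)}|(y-y')\cdot\na u(z)|^2\,\ud z\,\ud\mu(y)\ud\mu(y'),
\]
where $B:=\int_{B_{4r}(x)}\na u\,\na u^{\T}\,\ud z$. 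Plugging in the pointwise bound above and using Fubini together with the symmetry in $(y,y')$ absorbs the $W_r(u,y')$ contribution and yields
\[
\tr(AB)\;\leq\;Cr^{n+2-\al_p}\int_{B_r(x)}W_r(u,y)\,\ud\mu(y).
\]

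Finally, diagonalize $B=\sum_{i=1}^n\nu_ie_ie_i^{\T}$ with $\nu_1\geq\cdots\geq\nu_n\geq 0$. By hypothesis \eqref{Vgeqassu} and the Courant-Fischer min-max principle, the smallest $k+1$ eigenvalues of $B$ sum to more than $\ga r^{n-\al_p}$; hence $\nu_i\geq\nu_{n-k}\geq\ga r^{n-\al_p}/(k+1)$ for all $1\leq i\leq n-k$. Since $\tr(AB)=\sum_i\nu_i(e_i^{\T}Ae_i)$, dropping the nonnegative terms with $i>n-k$ produces
\[
\sum_{i=1}^{n-k}e_i^{\T}Ae_i\;\leq\;\f{k+1}{\ga r^{n-\al_p}}\tr(AB)\;\leq\;\f{C}{\ga}r^2\int_{B_r(x)}W_r(u,y)\,\ud\mu(y).
\]
Taking $V:=\op{span}\{e_{n-k+1},\ldots,e_n\}\in\bG(n,k)$ and $L:=\bar y+V\in\bA(n,k)$, the identity $\dist^2(y,L)=\sum_{i=1}^{n-k}((y-\bar y)\cdot e_i)^2$ yields $\int_{B_r(x)}\dist^2(y,L)\,\ud\mu(y)=\sum_{i=1}^{n-k}e_i^{\T}Ae_i$, and \eqref{L2bestesti} follows after multiplication by $r^{-k-2}$.

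The main subtlety is the alignment of the two bilinear forms $A$ and $B$ on $\R^n$: $A$ encodes the spatial spread of $\mu$ while $B$ encodes the directional spread of $\na u$. The hypothesis \eqref{Vgeqassu} gives a lower bound only on the sum of the smallest eigenvalues of $B$, and the quantity $\tr(AB)$ is the bridge that converts the integral bound on $\int W_r\,\ud\mu$ into a bound on the trace of $A$ along the \emph{top} eigenspace of $B$, which by duality is precisely the $(n-k)$-dimensional complement needed to construct the optimizing $k$-plane $L$.
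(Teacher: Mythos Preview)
Your proof is correct and reaches the same conclusion with the same quantitative dependence, but the argument is organized differently from the paper's. The paper (following \cite{NV17}) diagonalizes the covariance matrix of $\mu$: with eigenpairs $(\lda_i,v_i)$ it proves the per-direction estimate $\lda_i\int_{B_4}|v_i\cdot\na u|^2\leq C\int W_1\,\ud\mu$ (Lemma~\ref{L2lem1}, via the Euler--Lagrange identity of Lemma~\ref{NVlem7.5}), sums over $i\leq k+1$, and then invokes \eqref{Vgeqassu} for the \emph{specific} $(k+1)$-plane $V_{k+1}=\op{span}\{v_1,\dots,v_{k+1}\}$ to bound $\lda_{k+1}$; Lemma~\ref{NVlem7.4} then yields $D_\mu^k\leq (n-k)\lda_{k+1}$. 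You instead diagonalize the gradient matrix $B=\int_{B_{4r}(x)}\na u\,\na u^{\T}$, bound $\tr(AB)$ in one stroke via the polarization identity and Corollary~\ref{coruse}, and extract the lower bound on $\nu_{n-k}$ from \eqref{Vgeqassu} through Courant--Fischer.

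Both arguments rest on the same pointwise ingredient, namely $(y-y')\cdot\na u=h_{y'}-h_y$ together with the $L^2$ control of $h_y$ from Corollary~\ref{coruse}. What your route buys is that you bypass Lemmas~\ref{NVlem7.4} and~\ref{NVlem7.5} entirely: you never need that the PCA plane of $\mu$ is optimal, only that $D_\mu^k$ is a minimum, so the particular $k$-plane built from the bottom eigenvectors of $B$ already gives an upper bound. The paper's route has the minor advantage that the $k$-plane it produces is the genuine best-fit plane for $\mu$, which is sometimes useful downstream; your $k$-plane need not coincide with it, but for the statement of Theorem~\ref{beta2} this is irrelevant.
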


We first give the following direct corollary of this theorem.

\begin{cor}\label{beta22}
Let $ \va>0 $, $ k\in\{0,1,2,...,n\} $, $ 0<r\leq 1 $, and $ x\in B_2 $. Assume that $ u\in(H^1\cap L^{p+1})(B_{40}) $ is a stationary solution of \eqref{superequation} such that $ \theta_{40}(u,0)\leq\Lda $. There exist $ \delta>0 $ and $ C>0 $ depending only on $ \va,\Lda,n $, and $ p $ such that if $ u $ is $ (0,\delta) $-symmetric but not $ (k+1,\va) $-symmetric in $ B_{4r}(x) $, then for any $ \mu\in\M(B_r(x)) $ with $ \mu(B_r(x))<+\ift $, we have
\be
D_{\mu}^k(x,r)\leq Cr^{-k}\int_{B_r(x)}W_r(u,y)\ud\mu(y).\label{L2bestesti2}
\ee
\end{cor}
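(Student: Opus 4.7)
The plan is to reduce the corollary to Theorem \ref{beta2}. It suffices to produce $\delta = \delta(\va, \Lda, n, p) > 0$ and $\gamma = \gamma(\va, \Lda, n, p) > 0$ such that whenever $u$ is $(0, \delta)$-symmetric and not $(k+1, \va)$-symmetric in $B_{4r}(x)$, the quantitative gradient pinching
\begin{equation*}
\inf_{V \in \bG(n, k+1)} r^{\al_p - n} \int_{B_{4r}(x)} |V \cdot \na u|^2 > \gamma
\end{equation*}
holds. Granted this, Theorem \ref{beta2} immediately yields \eqref{L2bestesti2} with $C = C(\va, \Lda, n, p)$, and there is nothing further to prove.

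I would establish the pinching by contradiction and compactness. Assume no such pair $(\delta, \gamma)$ exists. Then there are stationary solutions $\{u_i\} \subset (H^1 \cap L^{p+1})(B_{40})$ with $\theta_{40}(u_i, 0) \leq \Lda$, points $x_i \in B_2$, scales $r_i \in (0, 1]$, and subspaces $V_i \in \bG(n, k+1)$ such that $u_i$ is $(0, i^{-1})$-symmetric and not $(k+1, \va)$-symmetric in $B_{4r_i}(x_i)$, while
\begin{equation*}
r_i^{\al_p - n} \int_{B_{4r_i}(x_i)} |V_i \cdot \na u_i|^2 \leq i^{-1}.
\end{equation*}
Setting $\tilde u_i := T_{x_i, 4 r_i} u_i$, each $\tilde u_i$ is a stationary solution on a ball containing $B_9$, with uniform energy bounds stemming from the monotonicity of $\vt_\rho(u_i, x_i)$ (Proposition \ref{MonFor}) combined with Proposition \ref{NonDegThePro} and $\theta_{40}(u_i, 0) \leq \Lda$. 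Passing to subsequences, I may assume $V_i \to V_0 \in \bG(n, k+1)$, $\tilde u_i \wc v$ weakly in $(H^1_{\loc} \cap L^{p+1}_{\loc})$, and $m_{\tilde u_i} \wc^* \mu$ in the sense of Radon measures.

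The crux of the argument is to show that the limit pair $(v, \mu)$ is $(k+1)$-symmetric in $B_1$ with respect to $V_0$. For the $0$-symmetry ingredient, the $(0, i^{-1})$-symmetry hypothesis produces $0$-symmetric pairs $(w_i, \eta_i)$ with $D_{0, 1}((\tilde u_i, m_{\tilde u_i}), (w_i, \eta_i)) < i^{-1}$; after extraction via Lemma \ref{comsympro}, these converge to the same limit $(v, \mu)$, which then inherits $0$-symmetry from Lemmas \ref{ConSym} and \ref{convergesymmetry}. For the translational invariance ingredient, the scaling identity $y = x_i + 4 r_i z$ transforms the pinching failure into $\int_{B_1} |V_i \cdot \na \tilde u_i|^2 \leq 4^{\al_p - n} i^{-1} \to 0$, whence Lemma \ref{lemusefre}(ii) makes $(v, \mu)$ invariant under $V_0$ on $B_1$. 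Combining these two ingredients via Corollaries \ref{CorSpl} and \ref{CorSplm} upgrades $(v, \mu)$ to a $(k+1)$-symmetric pair in $B_1$ with respect to $V_0$; for large $i$ the convergence $D_{0, 1}((\tilde u_i, m_{\tilde u_i}), (v, \mu)) \to 0$ then contradicts the assumed $(k+1, \va)$-non-symmetry of $u_i$ in $B_{4r_i}(x_i)$.

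The principal technical obstacle is securing uniform energy bounds for $\{\tilde u_i\}$ on a ball strictly containing $B_1$, so that Lemma \ref{lemusefre}(ii) applies cleanly even as $r_i \to 0^+$; this amounts to controlling $\vt_\rho(u_i, x_i)$ for a fixed admissible $\rho$ in terms of the global bound $\theta_{40}(u_i, 0) \leq \Lda$, which follows by combining Proposition \ref{MonFor} with Proposition \ref{NonDegThePro} and the inclusion $B_\rho(x_i) \subset B_{40}$ for $x_i \in B_2$ and $\rho$ bounded. Aside from this scale-matching bookkeeping, the remainder of the argument flows routinely from the cone splitting machinery already established.
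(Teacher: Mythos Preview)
Your proposal is correct and follows essentially the same route as the paper: reduce to Theorem \ref{beta2} via a gradient-pinching lower bound, and establish that bound by contradiction and compactness (the paper isolates this step as Lemma \ref{geqlem}, proved exactly as you outline). One small note: your invocation of Corollaries \ref{CorSpl} and \ref{CorSplm} is unnecessary---once $(v,\mu)$ is $0$-symmetric at the origin and invariant under $V_0\in\bG(n,k+1)$, it is $(k+1)$-symmetric with respect to $V_0$ directly from Definition \ref{kpair}.
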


The proof of this corollary is a direct consequence of the lemma as follows.

\begin{lem}\label{geqlem}
Let $ \va>0 $. Assume that $ u\in(H^1\cap L^{p+1})(B_4) $ is a stationary solution of \eqref{superequation} satisfying $ \theta_4(u,0)\leq\Lda $. There exists $ \delta>0 $, depending only on $ \va,\Lda,n $, and $ p $ such that if $ u $ is $ (0,\delta) $-symmetric in $ B_4 $ but not $ (k+1,\va) $-symmetric, then
$$
\inf_{V\in\bG(n,k+1)}\(\int_{B_4}|V\cdot\na u|^2\)>\delta,
$$
\end{lem}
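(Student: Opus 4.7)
The natural approach is proof by contradiction combined with a compactness argument, closely mirroring the proof of Proposition \ref{proprela2} part (2). Suppose the conclusion fails. Then there exist $\va_0 > 0$ and a sequence of stationary solutions $\{u_j\} \subset (H^1 \cap L^{p+1})(B_4)$ of \eqref{superequation} with $\theta_4(u_j, 0) \leq \Lda$ such that each $u_j$ is $(0, j^{-1})$-symmetric but not $(k+1, \va_0)$-symmetric in $B_4$, and there exist $V_j \in \bG(n, k+1)$ with $\int_{B_4} |V_j \cdot \na u_j|^2 \leq j^{-1}$. By Proposition \ref{DefMea}-type arguments, I extract subsequences so that $u_j \wc w$ weakly in $(H^1 \cap L^{p+1})(B_4)$ (hence strongly in $L^2(B_4)$), $m_{u_j} \wc^* \mu$ in $\M(B_4)$ with $\mu = m_w + \nu$ for a defect measure $\nu$, the matrix-valued defect measure $(\nu_1^{\beta\ga})$ from \eqref{mu1j} converges in $\M(B_4, \R^{n\times n})$, and $V_j \to V \in \bG(n, k+1)$.

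The first step is to verify that $(w, \mu)$ is $0$-symmetric. The $(0, j^{-1})$-symmetry furnishes $0$-symmetric pairs $(v_j, \eta_j)$ with $D_{0,1}((T_{0,4}u_j, T_{0,4}m_{u_j}), (v_j, \eta_j)) < j^{-1}$. Lemma \ref{comsympro} yields a subsequence with $v_j \wc v$ in $L_{\loc}^2(\R^n)$ and $\eta_j \wc^* \eta$ in $\M(\R^n)$, and Lemmas \ref{ConSym}, \ref{convergesymmetry} guarantee $(v, \eta)$ is $0$-symmetric. Passing to the limit in the $D_{0,1}$ bound identifies $T_{0,4}w = v$ a.e.\ and $T_{0,4}\mu = \eta$ in $\M(B_1)$, so $(w, \mu)$ is $0$-symmetric in $B_4$.

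The second step is to establish $V$-invariance of $(w, \mu)$ on $B_4$. For $w$, weak lower semicontinuity applied to $V_j \cdot \na u_j \wc V \cdot \na w$ in $L^2(B_4)$ forces $V \cdot \na w = 0$ a.e., so $w$ is $V$-invariant. The heart of the argument is the invariance of the defect $\nu$, which follows the scheme in Proposition \ref{proprela2} part (2). For each unit vector $\tau \in V$, because $\tau \cdot \na w = 0$ a.e., the measures $\tfrac{1}{2}|\tau \cdot \na(u_j - w)|^2 \ud y$ agree with $\tfrac{1}{2}|\tau \cdot \na u_j|^2 \ud y$ modulo weakly null cross terms; these tend to $0$ in mass, hence $\tau_\beta \tau_\ga \nu_1^{\beta\ga} = 0$ in $\M(B_4)$. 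A Cauchy-Schwarz argument on $\nu_{1,j}^{\beta\ga} := \tfrac{1}{2}\pa_\beta(u_j - w)\pa_\ga(u_j - w)\ud y$ then gives $\int_{B_4} \tau_\ga \pa_\beta \vp \,\ud\nu_1^{\beta\ga} = 0$ for every $\vp \in C_0^\ift(B_4)$. Applying the stationary condition \eqref{StaCon} to $u_j$ with the translating test field $Y = \psi_\lda \tau$, where $\psi_\lda(y) = \psi(y + \lda\tau)$ for $\psi \in C_0^\ift(B_4)$ and $\lda$ small enough that $\supp \psi_\lda \subset B_4$, and passing to the limit using the matrix-measure convergence together with the second property of Proposition \ref{DefMea} and the $V$-invariance and $0$-symmetry of $w$, yields $\tfrac{\ud}{\ud\lda}(\int \psi_\lda \ud\nu) = 0$. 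Thus $\nu$, and hence $\mu = m_w + \nu$, is $V$-invariant.

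Combining the two steps, $(w, \mu)$ is a $(k+1)$-symmetric pair on $B_4$, i.e.\ $(T_{0,4}w, T_{0,4}\mu)$ is $(k+1)$-symmetric on $B_1$. Since $u_j \to w$ strongly in $L^2(B_4)$ and $T_{0,4}m_{u_j} \wc^* T_{0,4}\mu$ in $\M(B_1)$, we obtain $D_{0,1}((T_{0,4}u_j, T_{0,4}m_{u_j}), (T_{0,4}w, T_{0,4}\mu)) \to 0$, so for all sufficiently large $j$ the function $u_j$ is $(k+1, \va_0)$-symmetric in $B_4$, a contradiction. The main obstacle is the defect-measure invariance in the second step: it requires combining the matrix-valued defect convergence, the Cauchy-Schwarz bound on $(\nu_1^{\beta\ga})$, and the stationary identity applied to translating test fields, exactly as in the proof of Proposition \ref{proprela2} part (2). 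All other ingredients are direct applications of the compactness framework already developed in \S \ref{Preliminaries}.
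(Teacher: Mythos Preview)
Your proof is correct and follows essentially the same approach as the paper. The only organizational difference is that the paper invokes Lemma \ref{lemusefre} part (2) as a black box to obtain the $V$-invariance of $(w,\mu)$ from $\int_{B_4}|V_i\cdot\na u_i|^2\to 0$, whereas you reproduce the underlying defect-measure argument from the proof of Proposition \ref{proprela2} part (2); since Lemma \ref{lemusefre} is itself proved by exactly those arguments, the two routes coincide.
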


\begin{proof}[Proof of Corollary \ref{beta22}]
Up to a scaling and a translation, we assume that $ r=1 $ and $ x=0 $. If follows from Lemma \ref{geqlem} that the condition \eqref{Vgeqassu} is verified. As a result, we obtain \eqref{L2bestesti2}.
\end{proof}

\begin{proof}
Assume that the result is not true. There exist $ \va_0>0 $, a sequence of stationary solutions of \eqref{superequation}, denoted by $ \{u_i\}\subset(H^1\cap L^{p+1})(B_4) $, and $ \{V_i\}\subset\bG(n,k+1) $ such that for any $ i\in\Z_+ $, the following properties hold.
\begin{itemize}
\item $ \theta_4(u_i,0)\leq\Lda $, and $ u_i $ is $ (0,i^{-1}) $-symmetric but not $ (k+1,\va_0) $-symmetric in $ B_4 $. In particular, there exists a $ 0 $-symmetric pair $ (v_i,\eta_i)\in L_{\loc}^2(\R^n)\times\M(\R^n) $ such that 
\be
D_{0,4}((u_i,m_{u_i}),(v_i,\eta_i))<i^{-1}.\label{uimiimiusp}
\ee
\item $ u_i $ satisfies the inequality
\be
\int_{B_4}|V_i\cdot\na 
u_i|^2<i^{-1}.\label{B4B3delta}
\ee
\end{itemize}
In view of the assumption above and Lemma \ref{comsympro}, up to a subsequence, we can assume that $ V_i\to V\in\bG(n,k+1) $, and 
\be
\begin{aligned}
u_i&\wc w\text{ weakly in }(H^1\cap L^{p+1})(B_4),\\ m_{u_i}&\wc^*\mu\text{ in }\M(\R^n),\\
v_i&\wc v\text{ weakly in }L_{\loc}^2(\R^n),\\
\eta_i&\wc^*\eta\text{ in }\M(\R^n).
\end{aligned}\label{uietaimui}
\ee
Taking $ i\to+\ift $ in \eqref{uimiimiusp} and combining \eqref{uietaimui}, we have $ w=v $ a.e. in $ B_4 $ and $ \mu=\eta $ in $ \M(B_4) $. It follows from Lemma \ref{ConSym} and \ref{convergesymmetry} that $ (v,\eta) $ is $ (k+1) $-symmetric with respect to $ V $, and then $ (w,\mu) $ is $ (k+1) $-symmetric in $ B_4 $ with respect to $ V $. Moreover, given \eqref{B4B3delta} and Lemma \ref{lemusefre}, we deduce that the pair $ (w,\mu) $ is invariant with respect to $ V $. Combining the properties all above, we have that $ (w,\mu) $ is a $ (k+1) $-symmetric pair in $ B_4 $. For sufficiently large $ i\in\Z_+ $, $ D_{0,4}((u_i,m_{u_i}),(w,\mu))<\va_0 $, a contradiction.
\end{proof}

We divide the proof of Theorem \ref{beta2} into two steps. The first is to give the explicit representation for $ D_{\mu}^k(x,r) $ (Lemma \ref{NVlem7.4}). The second step is to establish some nondegeneracy for the right hand side of the formula \eqref{L2bestesti}, (Lemma \ref{L2lem1}).

To conduct the first step, for a probability measure $ \mu\in\M(B_1) $, we let
\be
x_{\op{cm}}:=x_{\op{cm}}(\mu)=\int_{B_1}y\ud\mu(y)\label{centermass}
\ee
be the center of mass for $ \mu $. 

\begin{defn}\label{defvj}
We inductively define $ \{(\lda_i,v_i)\}_{i=1}^n\subset\R_{\geq 0}\times\R^n $. Let 
$$
\lda_1:=\lda_1(\mu):=\max_{|v|^2=1}\int_{B_1}|(y-x_{\op{cm}})\cdot v|^2\ud\mu(y).
$$
Define $ v_1:=v_1(\mu) $ with $ |v_1|=1 $ as the unit vector obtaining this maximum. Given $ \{(\lda_j,v_j)\}_{j=1}^i $, we define $ (\lda_{i+1},v_{i+1}) $ by
$$
\lda_{i+1}:=\lda_{i+1}(\mu):=\max_{\substack{|v|^2=1,\,\,v\cdot v_i=0\\i=1,2,...,i}}\int_{B_1}|(y-x_{\op{cm}})\cdot v|^2\ud\mu(y),
$$
where $ v_{i+1}:=v_{i+1}(\mu) $ is a unit vector obtaining this maximum.
\end{defn}

By standard results of linear algebra, $ \{v_i\}_{i=1}^n $ is an orthonormal basis of $ \R^n $, and 
\be
\lda_1\geq\lda_2\geq...\geq\lda_n\geq 0.\label{ldaorder}
\ee
For $ j\in\{1,2,...,n\} $, we define affine subspaces
\be
L_j:=L_j(\mu):=x_{\op{cm}}+\op{span}\{v_i\}_{i=1}^j.\label{Vkmu}
\ee
Through the definitions of $ \{(\lda_i,v_i)\}_{i=1}^n $ as above, we can represent $ D_{\mu}^k(0,1) $ by the following lemma.

\begin{lem}[\cite{NV17}, Lemma 7.4]\label{NVlem7.4} 
Let $ \{(\lda_i,v_i)\}_{i=1}^n $ be given in Definition \ref{defvj}. If $ \mu\in\M(B_1) $ is a probability measure, then for any $ k\in\{1,2,...,n\} $,
$$
\min_{L\in\bA(n,k)}\int_{B_1}\dist^2(y,L)\ud\mu(y),
$$
attains its minimum at $ L_k $ defined by \eqref{Vkmu}. Precisely, there holds
$$
\min_{L\in\bA(n,k)}\int_{B_1}\dist^2(y,L)\ud\mu(y)=\int_{B_1}\dist^2(x,L_k)\ud\mu(y)=\sum_{i=k+1}^n\lda_i,
$$
where we have the convention that the right hand side of above is $ 0 $ if $ k=n $.
\end{lem}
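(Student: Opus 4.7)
The plan is to reduce this to a standard spectral optimization for the covariance form of $\mu$. By translation we may assume $x_{\op{cm}} = 0$, so that $L_k = \op{span}\{v_i\}_{i=1}^k$ and, setting
$$
Q(v) := \int_{B_1} (y\cdot v)^2\,d\mu(y),
$$
the symmetric positive semidefinite form $Q$ has, by Definition \ref{defvj}, eigenvectors $\{v_i\}_{i=1}^n$ with eigenvalues $\lambda_1\ge\lambda_2\ge\dots\ge\lambda_n\ge 0$ as in \eqref{ldaorder}.

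Next I would convert $\dist^2(y,L)$ into a quadratic expression. Given any $L = x_0 + V \in \bA(n,k)$, pick an orthonormal basis $\{w_j\}_{j=1}^{n-k}$ of $V^\perp$, so that
$$
\dist^2(y,L) = \sum_{j=1}^{n-k}\bigl((y-x_0)\cdot w_j\bigr)^2.
$$
Integrating against $\mu$ and using $\int_{B_1} y\, d\mu = 0$, the cross terms vanish and
$$
\int_{B_1}\dist^2(y,L)\,d\mu(y) = \sum_{j=1}^{n-k} Q(w_j) + \sum_{j=1}^{n-k}(x_0\cdot w_j)^2.
$$
The second sum is minimized (indeed made to vanish) by any $x_0$ with $x_0\cdot w_j = 0$ for all $j$, in particular by $x_0 = 0$, which lies in $L_k$. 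Hence the problem reduces to minimizing $\sum_{j=1}^{n-k} Q(w_j)$ over $V\in\bG(n,k)$.

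For the spectral step, expand $w_j = \sum_i (v_i\cdot w_j) v_i$ in the eigenbasis of $Q$ to obtain
$$
\sum_{j=1}^{n-k} Q(w_j) = \sum_{i=1}^n \lambda_i c_i, \qquad c_i := \sum_{j=1}^{n-k}(v_i\cdot w_j)^2.
$$
By Bessel's inequality applied to $v_i$ against the orthonormal set $\{w_j\}_{j=1}^{n-k}$ we have $0\le c_i \le 1$, and $\sum_{i=1}^n c_i = \sum_{j=1}^{n-k}|w_j|^2 = n-k$. With the ordering $\lambda_1\ge\dots\ge\lambda_n\ge 0$, a simple rearrangement (shifting mass from any $c_{i_0}>0$ with $i_0\le k$ to any $c_{i_1}<1$ with $i_1>k$ never increases the weighted sum) yields
$$
\min_{V\in\bG(n,k)} \sum_{j=1}^{n-k} Q(w_j) = \sum_{i=k+1}^n \lambda_i,
$$
attained by taking $\{w_j\}_{j=1}^{n-k} = \{v_{k+1},\ldots,v_n\}$, i.e.\ $V=\op{span}\{v_1,\ldots,v_k\}$, which gives $L_k$ as a minimizer.

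The only step needing care is the constrained rearrangement above, but it is elementary; everything else is bookkeeping. Translating back to general $x_{\op{cm}}$ completes the identification of the minimizer and value.
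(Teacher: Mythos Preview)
Your proof is correct. The paper does not supply its own proof of this lemma; it is simply quoted from \cite{NV17}, Lemma 7.4, and used as a black box. Your argument is the standard principal-component reduction one expects here: identify the $\{(\lambda_i,v_i)\}$ with the spectral data of the covariance form $Q$, decouple the affine translation (which the centering $x_{\mathrm{cm}}=0$ handles), and then minimize $\sum_{j} Q(w_j)$ over orthonormal $(n-k)$-frames via the relaxed linear program in the coefficients $c_i=|P_{V^\perp}v_i|^2$. The only point worth making explicit is that the rearrangement step gives a lower bound over the \emph{relaxed} constraint set $\{c\in[0,1]^n:\sum c_i=n-k\}$, which a fortiori bounds the minimum over the geometrically realizable $c$'s from below, while the choice $V=\op{span}\{v_1,\dots,v_k\}$ shows the bound is attained; you say this, but it is the one place a reader might pause.
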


Now, we turn to the second step for the proof of Theorem \ref{beta2}. By the minimum property of $ \{(\lda_i,v_i)\}_{i=1}^n $, we consider the Euler-Lagrange equations and obtain further properties.

\begin{lem}[\cite{NV17}, Lemma 7.5]\label{NVlem7.5}
Let $ \{(\lda_i,v_i)\}_{i=1}^n $ be given in Definition \ref{defvj}. If $ \mu\in\M(B_1) $ is a probability measure, then
\be
\int_{B_1}((y-x_{\op{cm}})\cdot v_i)(y-x_{\op{cm}})\ud\mu(y)=\lda_iv_i,\label{El}
\ee
for any $ i\in\{1,2,...,n\} $, where
\be
\lda_i=\int_{B_1}|(y-x_{\op{cm}})\cdot v_i|^2\ud\mu(y).\label{ldadef2}
\ee
\end{lem}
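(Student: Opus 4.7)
The plan is to recognize that this is a standard spectral fact about the covariance tensor of the probability measure $\mu$. Define the symmetric positive semi-definite matrix
$$
Q_{\alpha\beta}:=\int_{B_1}(y-x_{\op{cm}})_{\alpha}(y-x_{\op{cm}})_{\beta}\,\ud\mu(y),\quad\alpha,\beta\in\{1,\ldots,n\},
$$
so that for any $v\in\R^n$ one has $v^{\T}Qv=\int_{B_1}|(y-x_{\op{cm}})\cdot v|^2\,\ud\mu(y)$. With this notation, the inductive construction in Definition \ref{defvj} is exactly the Courant--Fischer characterization of the spectrum of $Q$, so it suffices to verify that $\{v_i\}_{i=1}^{n}$ is an orthonormal eigenbasis of $Q$ with $Qv_i=\lambda_iv_i$; unwinding the definition of $Q$, this is precisely \eqref{El}, and \eqref{ldadef2} then drops out by pairing \eqref{El} with $v_i$.

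I would carry out the verification by induction on $i$ using Lagrange multipliers. For $i=1$, since there are no orthogonality constraints, the first-order condition at the maximizer $v_1$ of $v\mapsto v^{\T}Qv$ on the unit sphere reads
$$
\int_{B_1}\big((y-x_{\op{cm}})\cdot v_1\big)(y-x_{\op{cm}})\,\ud\mu(y)=\mu_1 v_1
$$
for some scalar $\mu_1$, and taking inner product with $v_1$ gives $\mu_1=\lambda_1$ from the definition. For the inductive step, assume \eqref{El} holds for $j=1,\ldots,i-1$. The first-order condition for the constrained problem defining $\lambda_i$ gives
$$
\int_{B_1}\big((y-x_{\op{cm}})\cdot v_i\big)(y-x_{\op{cm}})\,\ud\mu(y)=\mu_iv_i+\sum_{j=1}^{i-1}\alpha_jv_j
$$
for scalars $\mu_i,\alpha_1,\ldots,\alpha_{i-1}$. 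Pairing both sides with $v_m$ for $m<i$ and using the induction hypothesis in the form
$$
\int_{B_1}\big((y-x_{\op{cm}})\cdot v_m\big)(y-x_{\op{cm}})\,\ud\mu(y)=\lambda_mv_m,
$$
the left-hand side becomes $v_i\cdot\lambda_mv_m=0$ by orthogonality, forcing $\alpha_m=0$. Pairing instead with $v_i$ and invoking \eqref{ldaorder}-style manipulations gives $\mu_i=\lambda_i$, completing the induction and producing \eqref{ldadef2} as an immediate consequence.

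I do not expect any serious obstacle: the argument is purely finite-dimensional linear algebra, and the only mild care needed is to justify the Lagrange multiplier step when several of the top eigenvalues coincide (so $v_i$ is not uniquely determined by the maximization). That case is handled by the fact that \emph{any} unit vector achieving the constrained maximum must be an eigenvector of $Q$ restricted to the orthogonal complement $\{v_1,\ldots,v_{i-1}\}^{\perp}$, so the first-order condition still applies and the conclusion is unchanged. Alternatively, one may bypass Lagrange multipliers altogether by directly diagonalizing $Q$ via the spectral theorem and observing that the inductive maximization recovers its eigenvalues in decreasing order with $\lambda_i$-eigenvectors $v_i$; this yields \eqref{El} and \eqref{ldadef2} in one stroke.
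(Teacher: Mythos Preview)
Your proposal is correct; this is the standard spectral/Lagrange-multiplier argument for the covariance matrix $Q$, and it fully establishes \eqref{El} and \eqref{ldadef2}. Note that the paper itself does not give a proof of this lemma at all --- it simply quotes it from \cite{NV17}, Lemma~7.5 --- so your write-up supplies exactly the argument the paper leaves as a citation.
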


Applying this lemma and Proposition \ref{MonFor}, we have the following result.

\begin{lem}\label{L2lem1}
Let $ \mu\in\M(B_1) $ be a probability measure. Assume that $ u\in(H^1\cap L^{p+1})(B_{40}) $ is a stationary solution of \eqref{superequation}. Let $ \{(\lda_i,v_i)\}_{i=1}^n $ be given in Definition \ref{defvj}. There exists a constant $ C>0 $, depending only on $ n $ and $ p $ such that
$$
\lda_i\int_{B_4}|v_i\cdot\na u|^2\leq C\int_{B_1}W_1(u,y)\ud\mu(y),
$$
for any $ i\in\{1,2,...,n\} $.
\end{lem}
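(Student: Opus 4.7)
The strategy is to extract, from the positive quantity $W_1(u,y)$, an $L^2$-bound on the directional derivatives $v_i \cdot \nabla u$, by exploiting the quadratic structure of the monotonicity defect together with the second-moment identities of Lemma \ref{NVlem7.5}.

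First I would apply Corollary \ref{coruse} with $s=1$ and center $y \in B_1$: this gives
\[
W_1(u,y) \;=\; \vt_2(u,y) - \vt_1(u,y) \;\geq\; C \int_{B_8(y)} \left|(z-y)\cdot \na u(z) + \tfrac{2u(z)}{p-1}\right|^2 dz.
\]
For any $y \in B_1$ one has $B_4 \subset B_8(y)$, so restricting the domain of integration and writing $G(z,y) := (z-y)\cdot\na u(z) + \frac{2u(z)}{p-1}$ yields
\[
W_1(u,y) \;\geq\; C \int_{B_4} |G(z,y)|^2\, dz.
\]

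Next, integrating this against the probability measure $\mu$ and applying Tonelli's theorem (everything is nonnegative) gives
\[
\int_{B_1} W_1(u,y)\, d\mu(y) \;\geq\; C \int_{B_4} \left( \int_{B_1} |G(z,y)|^2\, d\mu(y) \right) dz.
\]
The key algebraic identity is the affine expansion
\[
G(z,y) \;=\; G(z, x_{\op{cm}}) \;-\; (y - x_{\op{cm}}) \cdot \na u(z),
\]
so that
\[
|G(z,y)|^2 = |G(z,x_{\op{cm}})|^2 - 2\, G(z,x_{\op{cm}})\, \bigl((y-x_{\op{cm}})\cdot \na u(z)\bigr) + \bigl|(y-x_{\op{cm}})\cdot \na u(z)\bigr|^2.
\]
By the very definition of $x_{\op{cm}}$ in \eqref{centermass}, $\int_{B_1}(y-x_{\op{cm}})\, d\mu(y) = 0$, so the cross term vanishes under $\int d\mu(y)$. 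Discarding the nonnegative $|G(z,x_{\op{cm}})|^2$ term we obtain
\[
\int_{B_1} |G(z,y)|^2\, d\mu(y) \;\geq\; \int_{B_1} \bigl|(y-x_{\op{cm}})\cdot \na u(z)\bigr|^2\, d\mu(y).
\]

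Finally, I would expand $\na u(z) = \sum_j (v_j \cdot \na u(z))\, v_j$ in the orthonormal basis $\{v_j\}_{j=1}^n$ and use the diagonalization provided by Lemma \ref{NVlem7.5}: taking the dot product of \eqref{El} with $v_k$ gives
\[
\int_{B_1} \bigl((y-x_{\op{cm}})\cdot v_j\bigr)\bigl((y-x_{\op{cm}})\cdot v_k\bigr)\, d\mu(y) \;=\; \lda_j\, \delta_{jk}.
\]
Hence
\[
\int_{B_1} \bigl|(y-x_{\op{cm}})\cdot \na u(z)\bigr|^2\, d\mu(y) \;=\; \sum_{j=1}^n \lda_j\, |v_j \cdot \na u(z)|^2 \;\geq\; \lda_i\, |v_i \cdot \na u(z)|^2,
\]
and integrating in $z$ over $B_4$ and combining with the previous chain of inequalities gives the claim. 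There is no serious obstacle here: the main point is really the clean split provided by centering at $x_{\op{cm}}$, which kills the affine cross term and reveals the eigenvalues $\lda_j$ via Lemma \ref{NVlem7.5}. The size of the enlargement from $B_1$ to $B_4$ (so that $B_4 \subset B_8(y)$ uniformly in $y\in B_1$) is what fixes the constant $C$ depending only on $n$ and $p$.
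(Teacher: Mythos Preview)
Your proof is correct and uses essentially the same ingredients as the paper's argument: Corollary \ref{coruse} to bound $W_1(u,y)$ below by the $L^2$-norm of the radial defect $G(z,y)$ on $B_4\subset B_8(y)$, the center-of-mass cancellation to kill the cross term, and Lemma \ref{NVlem7.5} to extract $\lda_i$. The only organizational difference is that the paper fixes $i$ from the start, dots the eigenvector identity \eqref{El} with $\na u(z)$, and applies Cauchy--Schwarz in the single direction $v_i$, whereas you expand $\na u(z)$ in the full orthonormal basis and read off the diagonalized quadratic form $\sum_j \lda_j|v_j\cdot\na u(z)|^2$ all at once; this is algebraically equivalent (indeed your route yields the marginally stronger estimate $\sum_j \lda_j\int_{B_4}|v_j\cdot\na u|^2\le C\int W_1\,d\mu$ in one stroke).
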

\begin{proof}
Fix $ i\in\{1,2,...,n\} $. If $ \lda_i=0 $, there is nothing to prove, so we assume that $ \lda_i>0 $. By Lemma \ref{NVlem7.5}, we take the inner product on both sides of \eqref{El} by $ \na u(z) $ with $ z\in B_4 $ to obtain that 
\be
\lda_i(v_i\cdot\na u(z))=\int_{B_1}((y-x_{\op{cm}})\cdot v_i)((y-x_{\op{cm}})\cdot\na u(z))\ud\mu(y),\label{mucm}
\ee
for any $ k\in\{1,2,...,n\} $. Using \eqref{centermass}, we have 
\be
\int_{B_1}((y-x_{\op{cm}})\cdot v_i)\ud\mu(y)=0,\label{cenmass}
\ee
which implies that
$$
\int_{B_1}((y-x_{\op{cm}})\cdot v_i)\((z-x_{\op{cm}})\cdot\na u(z)+\f{2u(z)}{p-1}\)\ud\mu(y)=0,
$$
for any $ z\in B_4 $. This, together with \eqref{mucm} and \eqref{cenmass}, implies that
\be
\lda_i(v_i\cdot\na u(z))=\int_{B_1}((y-x_{\op{cm}})\cdot v_i)\((y-z)\cdot\na u(z)-\f{2u(z)}{p-1}\)\ud\mu(y).\label{ldaknau}
\ee
By Cauchy's inequality, for any $ z\in B_4 $, we deduce that
$$
\lda_i^2|(v_i\cdot\na u(z))|^2\leq\lda_i\int_{B_1}\left|(z-y)\cdot\na u(z)+\f{2u(z)}{p-1}\right|^2\ud\mu(y).
$$
Integrating with respect to $ z\in B_4 $ on both sides, we obtain
\be
\begin{aligned}
&\lda_i\int_{B_4}|v_i\cdot\na u|^2\leq\int_{B_1}\(\int_{B_4}\left|(z-y)\cdot\na u+\f{2u}{p-1}\right|^2\ud z\)\ud\mu(y).
\end{aligned}\label{zmuz}
\ee
It follows from Corollary \ref{coruse} that
\begin{align*}
\int_{B_4}\left|(z-y)\cdot\na u+\f{2u}{p-1}\right|^2\ud z&\leq C\int_{B_5(y)}\left|(z-y)\cdot\na u+\f{2u}{(p-1)}\right|^2\ud z\\
&\leq C(n,p)(\vt_2(u,y)-\vt_1(u,y))\\
&=C(n,p)W_1(u,y).
\end{align*}
Combining \eqref{zmuz}, we can complete the proof.
\end{proof}

Combining all the necessary ingredients above, we can now prove the main theorem of this subsection.

\begin{proof}[Proof of Theorem \ref{beta2}]
By a translation, a scaling, and a normalization, we can assume that $ r=1 $, $ x=0 $, and $ \mu $ is a probability measure on $ B_1 $. Since $ D_{\mu}^n(0,1)=0 $, we can also assume that $ k\leq n-1 $. Letting $ \{(\lda_i,v_i)\}_{i=1}^n $ be as in the Definition \ref{defvj}, with $ \{L_j\}_{j=1}^n $ given by \eqref{Vkmu}, we define $ \{V_j\}_{j=1}^n\subset\R^n $ as subspaces of $ \R^n $ such that $
V_j=L_j-x_{\op{cm}}=\op{span}\{v_i\}_{i=1}^j $ with $ j\in\{1,2,...,n\} $. Using \eqref{ldaorder} and Lemma \ref{NVlem7.4}, we have
\be
\min_{L\subset\bA(n,k)}\int_{B_1}\dist^2(y,L)\ud\mu(y)=\sum_{i=k+1}^n\lda_i\leq(n-k)\lda_{k+1}.\label{dismufor}
\ee
We only need to estimate $ \lda_{k+1} $. It follows from Lemma \ref{L2lem1} that 
$$
\lda_i\int_{B_4}|v_i\cdot\na u|^2\leq C\int_{B_1} W_1(u,y)\ud\mu(y),
$$
for any $ i\in\{1,2,...,n\} $. Summing both sides of the above formula with $ i\in\{1,2,...,k+1\} $, we obtain 
$$
\sum_{i=1}^{k+1}\(\lda_i\int_{B_4}|v_i\cdot\na u|^2\)\leq C\int_{B_1}W_1(u,y)\ud\mu(y).
$$
Again, by \eqref{ldaorder}, we have
\be
\lda_{k+1}\int_{B_4}|V_{k+1}\cdot\na u|^2\leq\sum_{i=1}^{k+1}\(\lda_i\int_{B_4}|v_i\cdot\na u|^2\)\leq C\int_{B_1}W_1(u,y)\ud\mu(y).\label{7.31}
\ee
In view of \eqref{Vgeqassu}, we have
$$
\int_{B_4}|V_{k+1}\cdot\na u|^2>\ga.
$$
This, together with \eqref{7.31}, implies that
$$
\ga\lda_{k+1}\leq\lda_{k+1}\int_{B_4}|V_{k+1}\cdot\na u|^2\leq C\int_{B_1}W_1(u,y)\ud\mu(y),
$$
and then
$$
\lda_{k+1}\leq C\ga^{-1}\int_{B_1}W_1(u,y)\ud\mu(y).
$$
Combining \eqref{dismufor}, we can complete the proof.
\end{proof}

\section{Covering lemmas}\label{coveringlemma}

In this section, we establish several crucial covering lemmas using the Reifenberg-type theorems presented in the previous section. These covering lemmas play vital roles in the proofs of main theorems presented in this paper. Throughout this section, we will follow the original arguments in \cite{NV18} while making necessary modifications based on different forms of energy density in our paper.

\begin{lem}[Main covering lemma]\label{maincover}
Let $ k\in\{1,2,...,n-1\} $, $ \va>0 $, $ 0<r<R\leq 1 $, and $ x_0\in B_2 $. Assume that $ u\in(H^1\cap L^{p+1})(B_{40}) $ is a stationary solution of \eqref{superequation}, satisfying $ \theta_{40}(u,0)\leq\Lda $. There exist $ \delta>0 $, depending only on $ \va,\Lda,n,p $, and a finite covering of $ S_{\va,\delta r}^k(u)\cap B_R(x_0) $, denoted by $ \{B_r(x)\}_{x\in\cC} $ such that
$$
S_{\va,\delta r}^k(u)\cap B_R(x_0)\subset\bigcup_{x\in\cC}B_r(x),
$$
with $ (\#\cC)r^k\leq CR^k $, where $ C>0 $ depends only on $ \va,\Lda,n $, and $ p $.
\end{lem}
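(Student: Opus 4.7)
The plan is to implement a Naber--Valtorta iterative covering scheme at geometrically decreasing scales $r_j = \beta^j R$ (until $r_j \leq r$), where $\beta \in (0, 1/10)$ is a small parameter to be fixed in terms of $\varepsilon, \Lambda, n, p$. At each step one subdivides every ball from the previous level into smaller balls, and the total cardinality is bounded by invoking the discrete Reifenberg theorem (Theorem \ref{Rei1}) together with the $L^2$-best approximation estimate (Theorem \ref{beta2} and Corollary \ref{beta22}). Throughout, set $S := S_{\varepsilon, \delta r}^k(u) \cap B_R(x_0)$. By the monotonicity formula (Proposition \ref{MonFor}), Proposition \ref{NonDegThePro}, and \eqref{thevt}, one has the uniform bound $\vartheta_\rho(u, x) \leq E(\Lambda, n, p)$ for all $x \in B_3$ and $0 < \rho \leq 4$.

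First I would set up the trichotomy at each ball $B_s(y)$ with $y \in S$ and $r \leq s \leq R$. Define the \emph{high-density set}
\[
F_{y, s} := \bigl\{ z \in S \cap B_{2s}(y) : \vartheta_{\beta s}(u, z) > \sup_{B_{2s}(y) \cap S} \vartheta_{\beta s}(u, \cdot) - \eta \bigr\}.
\]
Either (A) $F_{y, s}$ fails to $\beta s$-effectively span any $(k+1)$-dimensional affine subspace, or (B) it does. Alternative (B) has to be excluded: combining the density-pinching Lemma \ref{lempinch} (to force approximate constancy of the density on the spanning configuration) with the quantitative cone splitting Proposition \ref{QuaConSpl}, if $F_{y,s}$ $\beta s$-effectively spans a $(k+1)$-plane while all of its densities sit within $\eta$ of the supremum, then $u$ must be $(k+1, \varepsilon)$-symmetric in $B_s(y)$, contradicting $y \in S$ whenever $s \geq \delta r$. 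Hence only (A) occurs, and Proposition \ref{Fprop} yields $S \cap B_s(y) \subset B_{2\beta s}(L_{y,s})$ for some $L_{y,s} \in \bA(n, k)$.

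Next I would construct the covering inductively. Set $\mathcal{C}_0 = \{x_0\}$ and $r_0 = R$. Given $\mathcal{C}_j$, for each $y \in \mathcal{C}_j$ pick a maximal $\tfrac{1}{5} r_{j+1}$-separated subset of $S \cap B_{r_j}(y)$ and let $\mathcal{C}_{j+1}$ be their union, so that $\{B_{r_{j+1}}(y')\}_{y' \in \mathcal{C}_{j+1}}$ still covers $S$. Stop at the first $j^*$ with $r_{j^*} \leq r$. Associate the packing measure $\mu_j := \sum_{y \in \mathcal{C}_j} \w_k r_j^k \delta_y$. By the $L^2$-best approximation Corollary \ref{beta22} (applied on parent balls, where $(0,\delta)$-symmetry is guaranteed by Lemma \ref{SmaHom} once $\beta$ and $\eta$ are tuned so that only alternative (A) occurs),
\[
D_{\mu_j}^k(y, s) \leq C s^{-k} \int_{B_s(y)} W_s(u, z) \, d\mu_j(z),
\qquad y \in \mathcal{C}_j, \ s \in [r_j, R],
\]
where $W_s(u, z) = \vartheta_{2s}(u, z) - \vartheta_s(u, z)$. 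Integrating against $ds/s$ and summing telescopically using the monotonicity of $\vartheta$, one obtains $\int_{B_t(x)} \int_0^t D_{\mu_j}^k(y, s) \frac{ds}{s} \, d\mu_j(y) \leq C E t^k$. Choosing $\beta$ (hence the scale separation) small enough makes the right side $\leq \delta_{Rei, 1} t^k$, so Theorem \ref{Rei1} yields $\mu_j(B_R(x_0)) \leq C_{Rei, 1} R^k$, i.e., $(\#\mathcal{C}_j) r_j^k \leq C R^k$ uniformly in $j$. Taking $j = j^*$ delivers the claimed covering at radius $r$.

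The main obstacle is enforcing the Reifenberg summability \emph{uniformly at every stage} of the iteration. There are two coupled difficulties. First, ruling out Alternative (B) at all scales requires a simultaneous choice of $\eta$, $\beta$, and $\delta$ so that the thresholds in Proposition \ref{QuaConSpl}, Lemma \ref{lempinch}, and Proposition \ref{Fprop} are mutually compatible; this is the delicate parameter-chasing step. Second, the points $y$ at which $\vartheta_s(u, y) - \vartheta_{\beta s}(u, y) \geq \eta$ (where the high-density hypothesis fails outright) must be handled by a separate ``bad-scale" accounting: at such $y$, the density decreases by at least $\eta$, and since $\vartheta \leq E$ this can happen at most $\lfloor E/\eta \rfloor$ times per fibre of the iteration, producing only a bounded multiplicative loss that is absorbed into $C$. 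The modified density $\vartheta_r$ is essential at this point, because the telescoping of $W_s$ across dyadic scales relies on pointwise monotonicity of $\vartheta$ combined with continuity under weak-$\ast$ convergence of the induced measures, neither of which is available for the classical density \eqref{semidensi} (which fails the comparison \eqref{simesti} in the supercritical regime).
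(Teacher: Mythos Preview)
Your outline follows the right Naber--Valtorta architecture, but there is a genuine gap in the step where you claim the Reifenberg smallness hypothesis. You assert that telescoping gives
\[
\int_{B_t(x)} \int_0^t D_{\mu_j}^k(y,s)\,\frac{ds}{s}\,d\mu_j(y) \;\leq\; C E\, t^k,
\]
and that ``choosing $\beta$ (hence the scale separation) small enough makes the right side $\leq \delta_{Rei,1} t^k$''. This cannot work: $CE$ depends only on $\va,\Lda,n,p$ (it is $C$ times the total oscillation of $\vt$, bounded by the fixed number $E(\Lda,n,p)$), while $\delta_{Rei,1}$ is a fixed dimensional constant; shrinking $\beta$ does nothing to $CE$. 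The telescoped integral $\int_{r_j}^t W_s(u,y)\tfrac{ds}{s}$ is controlled by $\vt_{2t}(u,y)-\vt_{r_j}(u,y)$, which is small only when the center $y$ already satisfies a \emph{pinching} condition $\vt_{r_j}(u,y)>E-\xi$ for a small parameter $\xi$. Your construction never enforces this: you take centers as an arbitrary maximal separated subset of $S$, with no density requirement, so the bound you actually obtain is $CE$, not $C\xi$.

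This is exactly why the paper layers the argument. In the inner covering (Lemma~\ref{cover1}) the centers are placed on the $k$-plane spanned by the high-density set, and Lemma~\ref{lempinch} forces every center to satisfy $\vt_{r_x/20}(u,x)>E-\xi$ (property~(D)); this is what turns the telescoping bound into $C\xi$, after which $\xi$ is chosen small enough to meet $\delta_{Rei,1}$. Balls where the high-density set fails to span a full $k$-plane are frozen as ``bad'' and handled separately via an energy-drop recursion (Lemmas~\ref{Cover2} and~\ref{maincover}), which terminates in $\lfloor E/\delta\rfloor+1$ steps. Your final paragraph alludes to this bad-scale accounting, but you treat it as a side remark rather than as the mechanism that produces the pinching needed for the Reifenberg step; without that coupling the argument does not close. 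A related oversight: alternative~(A) (``$F_{y,s}$ fails to span a $(k+1)$-plane'') does not imply the hypothesis of Proposition~\ref{Fprop}, which requires $F$ to $\beta s$-effectively span some $k$-plane; the sub-case where $F$ only spans a $(k-1)$-plane is precisely the bad-ball branch that you have not isolated.
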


In the next section, we will see that the estimates of the $ r $-neighborhood of $ S_{\va,r}^k(u)\cap B_1 $ can be directly derived from this covering lemma. To establish this, we require the following auxiliary lemma.

\begin{lem}\label{Cover2}
Let $ \va>0 $, $ k\in\{1,2,...,n-1\} $, $ 0<r<R\leq 1 $, and $ x_0\in B_2 $. Assume that $ u\in(H^1\cap L^{p+1})(B_{40}) $ is a stationary solution of \eqref{superequation}, satisfying $ \theta_{40}(u,0)\leq\Lda $. There exists $ \delta>0 $, depending only on $ \va,\Lda,n,p $ such that $ S_{\va,\delta r}^k(u)\cap B_R(x_0) $ has a finite covering, denoted by $ \{B_{r_x}(x)\}_{x\in\cC} $ with
$$
S_{\va,\delta r}^k(u)\cap B_R(x_0)\subset\bigcup_{x\in\cC}B_{r_x}(x),\quad\#\cC<+\ift.
$$
Moreover, the following properties hold.
\begin{enumerate}[label=$(\theenumi)$]
\item $ B_{2r_x}(x)\subset B_{2R}(x_0) $, $ r_x\geq r $ for any $ x\in\cC $, and 
$$
\sum_{x\in\cC}r_x^k\leq C_{\op{I}}R^k,
$$
where $ C_{\op{I}}>0 $ depends only on $ n $.
\item For any $ x\in\cC $, either $ r_x=r $, or
$$
\sup_{y\in B_{2r_x}(x)}\vt_{r_x}(u,y)\leq E-\delta,
$$
where 
\be
E:=E(x_0,R):=\sup_{y\in B_{2R}(x_0)}\vt_{R}(u,y).\label{Evtx}
\ee
\end{enumerate}
\end{lem}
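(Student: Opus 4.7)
The plan adapts the Naber--Valtorta framework for such covering arguments: we build the covering through an inductive generational refinement and close the total $k$-content estimate via Theorem \ref{Rei1} combined with the $L^2$-best approximation (Corollary \ref{beta22}). By \eqref{Scaleinv}, we reduce to $R = 1$ and $x_0 = 0$. Fix $\beta \in (0, 1/10)$ small, and then choose $\delta > 0$ small (depending on $\va, \Lda, n, p$) so that Lemma \ref{lempinch}, Proposition \ref{QuaConSpl}, Proposition \ref{Fprop}, and Corollary \ref{beta22} all apply with mutually compatible thresholds.

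Starting from the family $\{B_1(0)\}$, we iteratively refine: a ball $B_s(y)$ becomes \emph{final} if $s = r$ or if $\sup_{z \in B_{2s}(y)} \vt_s(u, z) \leq E - \delta$. Otherwise, examine $F := \{z \in B_{2s}(y) : \vt_{\beta s}(u, z) > E - 2\delta\}$. If $F$ $\beta s$-effectively spans some $L \in \bA(n, k+1)$, then Lemma \ref{lempinch} yields $\vt_{\beta s}$ close to the local sup throughout $L \cap B_{2s}(y)$; selecting $k + 2$ $\beta s$-independent points and using the monotonicity of $\vt$, we obtain $\vt_s - \vt_{\beta s} = O(\delta)$ at each, whence Proposition \ref{QuaConSpl} produces $(k+1, \va)$-symmetry of $u$ in $B_s(y)$, contradicting the presence of points of $S_{\va, \delta r}^k(u)$ in $B_s(y)$. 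Thus $F$ fails to $\beta s$-effectively span a $(k+1)$-dim affine subspace, and a standard geometric argument places $F \subset B_{2\beta s}(L')$ for some $L' \in \bA(n, k)$; Proposition \ref{Fprop} then confines $S_{\va, \delta' s}^k(u) \cap B_s(y) \subset B_{2\beta s}(L')$. We cover this tube by at most $C(n) \beta^{-k}$ balls of radius $\beta s$ near $L'$, and these become the new active balls.

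For the content bound, let $\cC$ denote the final family and set $\mu := \sum_{x \in \cC} \omega_k r_x^k \delta_x$. At every sub-scale $t \leq 1$ and point $y \in B_1$: since the refinement avoided the $(k+1, \va)$-symmetric case at $B_{4t}(y)$, and since the non-final condition supplies $(0, \delta)$-symmetry via Lemma \ref{SmaHom} applied at a near-maximal density point, Corollary \ref{beta22} gives
\begin{equation*}
D_\mu^k(y, t) \leq C t^{-k} \int_{B_t(y)} W_t(u, z) \,d\mu(z).
\end{equation*}
Fubini together with the monotonicity $\int_0^t W_s(u, z) \tfrac{ds}{s} \leq C \vt_{2t}(u, z) \leq C \Lda$ from Proposition \ref{MonFor} yields
\begin{equation*}
\int_{B_t(y)} \int_0^t D_\mu^k(z, s) \tfrac{ds}{s} \,d\mu(z) \leq C \Lda \,\mu(B_t(y)),
\end{equation*}
which is at most $\delta_{Rei,1} t^k$ provided $\delta$ is small enough (verified by a scale-induction on $\mu(B_t(y))$). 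Theorem \ref{Rei1} then gives $\mu(B_1) \leq C_{Rei,1}$, i.e., $\sum_{x \in \cC} r_x^k \leq C_{\op{I}}$.

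The principal obstacle is the self-referential nature of the Reifenberg hypothesis: the bound $\mu(B_t(y)) \leq C_{\op{I}} t^k$ required to invoke Theorem \ref{Rei1} at scale $1$ is precisely what that theorem delivers. The resolution is a scale-induction (standard in \cite{NV17, NV18}, but requiring care for our modified density $\vt_r$): one inductively verifies the measure estimate at the finer scale $\beta s$ and uses it as the input at scale $s$. The many small parameters $\beta, \delta, \delta', \xi, \delta_{Rei,1}$ must be coordinated so that a single $\delta$ in the statement simultaneously meets the thresholds of Lemma \ref{lempinch}, Propositions \ref{QuaConSpl} and \ref{Fprop}, Corollary \ref{beta22}, and the Reifenberg theorem; verifying the $(0, \delta)$-symmetry hypothesis of Corollary \ref{beta22} uniformly along the refinement tree is the main technical subtlety.
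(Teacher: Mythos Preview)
Your proposal has a genuine gap in the Reifenberg step. You arrive at
\[
\int_{B_t(y)}\int_0^t D_{\mu}^k(z,s)\,\frac{\ud s}{s}\,\ud\mu(z)\;\leq\;C\Lda\,\mu(B_t(y)),
\]
using only $\int_0^t W_s(u,z)\,\tfrac{\ud s}{s}\leq C\vt_{2t}(u,z)\leq C\Lda$. This is merely a \emph{bound}, not a \emph{smallness} statement. Even granting the inductive estimate $\mu(B_t(y))\leq C't^k$, the right-hand side is $C\Lda C'\,t^k$, and there is no parameter you can tune to force $C\Lda C'<\delta_{Rei,1}$: the constants $C,C',\Lda$ are all fixed once $n,p,\Lda$ are fixed. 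The scale-induction you invoke does not close, because the induction only propagates a bound of fixed size, never a small one.

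The paper resolves this by separating the argument into two layers. First, Lemma~\ref{cover1} constructs an \emph{intermediate} covering $\{B_{r_x}(x)\}_{x\in\cD}$ in which every center satisfies the pinching condition $\vt_{r_x/20}(u,x)>E-\xi$ (property~\ref{p4}). This pinching is the crucial ingredient: combined with $\vt_{5t}(u,x)\leq E$ it forces $\int_0^t W_s(u,x)\,\tfrac{\ud s}{s}\leq C\xi$, so the Reifenberg hypothesis is verified by choosing $\xi$ small \emph{after} all other constants are fixed. Only once Lemma~\ref{cover1} is established does the paper prove Lemma~\ref{Cover2}, by a second induction that recovers the balls from Lemma~\ref{cover1} (those satisfying the $(k-1)$-dimensional tube condition) and sorts the resulting pieces into radius-$r$ balls, energy-drop balls, and a residual family whose $k$-content decays geometrically. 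Your attempt collapses these two layers into one and applies Reifenberg directly to the final family $\cC$, but $\cC$ contains the energy-drop balls, which by definition do \emph{not} satisfy any pinching; hence the Dini integral at those centers cannot be made small, and the argument fails. A secondary issue: your contradiction step (``$(k+1,\va)$-symmetry in $B_s(y)$ contradicts $S_{\va,\delta r}^k(u)\cap B_s(y)\neq\emptyset$'') is not immediate, since membership in $S_{\va,\delta r}^k$ concerns symmetry of balls centered at the point itself, not at $y$; the paper avoids this via Proposition~\ref{Fprop} and Lemma~\ref{kplus1} rather than Proposition~\ref{QuaConSpl}.
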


Intuitively, this lemma is a weaker form of Lemma \ref{maincover}. To modify this weaker covering, we do not need to change those balls with radius $ r $ and will apply the lemma inductively to those balls $ B_{r_x}(x) $ with $ r_x>r $.

\begin{proof}[Proof of Lemma \ref{maincover} given Lemma \ref{Cover2}]
Let $ E $ be given by \eqref{Evtx}. By \eqref{thevt} and Proposition \ref{MonFor}, we obtain 
\be
0\leq E\leq C(\Lda,n,p).\label{ECldanp}
\ee
For sufficiently small $ \delta=\delta(\va,\Lda,n,p,\rho)>0 $, we will inductively construct a covering of $ S_{\va,\delta r}^k(u)\cap B_R(x_0) $, denoted by $ \{B_{r_x}(x)\}_{x\in\cC_i} $ with $ i\in\Z_+ $ such that
$$
S_{\va,\delta r}^k(u)\cap B_R(x_0)\subset\bigcup_{x\in\cC_i}B_{r_x}(x)=\bigcup_{x\in\cC_i^{(1)}}B_{r_x}(x)\cup\bigcup_{x\in\cC_i^{(2)}}B_{r_x}(x),
$$
and the following properties hold.
\begin{enumerate}[label=$(\theenumi)$]
\item If $ x\in\cC_i^{(1)} $, then $ r_x=r $.
\item If $ x\in\cC_i^{(2)} $, then $ r_x>r $, and
\be
\sup_{y\in B_{2r_x}(x)}\vt_{r_x}(u,y)\leq E-i\delta.\label{Ci2cover}
\ee
\item We have the estimate
\be
\sum_{x\in\cC_i}r_x^k\leq (1+C_{\op{I}}(n))^iR^k.\label{Ciestim}
\ee
\end{enumerate}
First, assume that we have already constructed such coverings. For $ i=\lfloor\delta^{-1}E\rfloor+1 $, we deduce from $ \eqref{Ci2cover} $ that $ \cC_i^{(2)}=\emptyset $, which implies that $ \cC_i=\cC_i^{(1)} $ and
$$
S_{\va,\delta r}^k(u)\cap B_R(x_0)\subset\bigcup_{x\in\cC_i^{(1)}}B_r(x).
$$
It follows from \eqref{ECldanp} and \eqref{Ciestim} that
$$
(\#\cC_i)r^k\leq (1+C_{\op{I}}(n))^{\lfloor\delta^{-1}E\rfloor+1}R^k\leq C(\va,\Lda,n,p)R^k.
$$
Now we see that the covering $ \{B_r(x)\}_{x\in\cC_i} $ is what we need.

For $ i=1 $, we can apply Lemma \ref{Cover2} to $ B_R(x_0) $ and the results follow directly. Assume that such covering exists for some $ i\in\Z_+ $. We will construct the covering for $ i+1 $. For any $ x\in\cC_i^{(2)} $, we apply Lemma \ref{Cover2} to obtain a covering of $ S_{\va,\delta r}^k(u)\cap B_{r_x}(x) $ by $ \{B_{r_y}(y)\}_{y\in\cC_{x,i}} $ such that 
$$
S_{\va,\delta r}^k(u)\cap B_{r_x}(x)\subset\bigcup_{y\in C_{x,i}}B_{r_y}(y)=\bigcup_{y\in C_{x,i}^{(1)}}B_{r_y}(y)\cup\bigcup_{y\in C_{x,i}^{(2)}}B_{r_y}(y),
$$
and the following properties hold.
\begin{itemize}
\item For any $ y\in\cC_{x,i} $, $ B_{2r_y}(y)\subset B_{2r_x}(x) $.
\item If $ y\in\cC_{x,i}^{(1)} $, then $ r_y=r $.
\item If $ y\in\cC_{x,i}^{(2)} $, then 
$$
\sup_{\zeta\in B_{2r_y}(y)}\vt_{r_y}(u,\zeta)\leq\sup_{\zeta\in B_{2r_x}(x)}\vt_{r_x}(u,\zeta)-\delta\leq E-(i+1)\delta,
$$
where we have used \eqref{Ci2cover} for the second inequality.
\item We have the estimate
\be
\sum_{y\in\cC_{x,i}}r_y^k\leq C_{\op{I}}(n)r_x^k.\label{CxiCx2}
\ee
\end{itemize}
Define $ \{\cC_{i+1}^{(j)}\}_{j=1,2} $ as
$$
\cC_{i+1}^{(1)}=\cC_i^{(1)}\cup\bigcup_{x\in \cC_i^{(2)}}\cC_{x,i}^{(1)},\quad\cC_{i+1}^{(2)}=\bigcup_{x\in \cC_i^{(2)}}\cC_{x,i}^{(2)},\quad\cC_{i+1}=\cC_{i+1}^{(1)}\cup\cC_{i+1}^{(2)}.
$$
It follows from this definition that
\begin{align*}
\sum_{x\in\cC_{i+1}}r_x^k&\leq\sum_{x\in\cC_i^{(1)}}r_x^k+\sum_{x\in\cC_i^{(2)}}\sum_{y\in\cC_{x,i}}r_y^k\\
&\leq\sum_{x\in\cC_i^{(1)}}r_x^k+\sum_{x\in\cC_i^{(2)}}C_{\op{I}}(n)r_x^k\\
&\leq (1+C_{\op{I}}(n))\(\sum_{x\in\cC_i}r_x^k\)\\
&\leq (1+C_{\op{I}}(n))^{i+1}R^k,
\end{align*}
where we have used \eqref{CxiCx2} for the second inequality and used \eqref{Ciestim} for the third inequality. Now, we can complete the proof.
\end{proof}

Now, let us prove Lemma \ref{Cover2}. The most essential part of the proof is the following lemma, and we have to apply Theorem \ref{Rei1} to establish it.

\begin{lem}\label{cover1}
Let $ \va>0 $, $ k\in\{1,2,...,n-1\} $, $ 0<\rho<\f{1}{100} $, $ 0<r<R\leq 1 $, and $ x_0\in B_2 $. Assume that $ u\in (H^1\cap L^{p+1})(B_{40}) $ is a stationary solution of \eqref{superequation}, satisfying $ \theta_{40}(u,0)\leq\Lda $. There exist $ \delta>0 $ depending only on $ \va,\Lda,n,p,\rho $, and $ C_{\op{II}}>0 $, depending only on $ n $, and a finite covering $ \{B_{r_x}(x)\}_{x\in\cD} $ of $ S_{\va,\delta r}^k(u)\cap B_R(x_0) $ with
$$
S_{\va,\delta r}^k(u)\cap B_R(x_0)\subset\bigcup_{x\in\cD}B_{r_x}(x),\quad\#\cD<+\ift,
$$
such that $ r_x\geq r $ for any $ x\in\cD $, and
\be
\sum_{x\in\cD}r_x^k\leq C_{\op{II}}R^k.\label{estiCov}
\ee
Moreover, for any $ x\in\cD $, $ B_{2r_x}(x)\subset B_{2R}(x_0) $, and either of the following conditions holds.
\begin{enumerate}[label=$(\theenumi)$]
\item $ r_x=r $.
\item There exists $ L(x,r_x)\in\bA(n,k-1) $ such that
$$ 
\{y\in B_{2r_x}(x):\vt_{\f{\rho r_x}{20}}(u,y)>E-\delta\}\subset B_{\f{\rho r_x}{10}}(L(x,r_x))\cap B_{2r_x}(x),
$$
where $ E=E(x_0,R) $ is defined by \eqref{Evtx}.
\end{enumerate}
\end{lem}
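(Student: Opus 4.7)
The plan is to follow the Naber--Valtorta stopping-time covering scheme: a stopping time defines the balls $\{B_{r_x}(x)\}_{x \in \cD}$, alternative (ii) is forced by construction, and the volume bound $\sum_{x \in \cD} r_x^k \leq C_{\op{II}} R^k$ comes from the Reifenberg-type Theorem~\ref{Rei1}, with the $L^2$-best approximation Theorem~\ref{beta2} supplying its integral hypothesis.

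\textbf{Stopping time.} Set $E := \sup_{y \in B_{2R}(x_0)} \vt_R(u, y) \leq C(\Lda, n, p)$. For each $x \in S^k_{\va, \delta r}(u) \cap B_R(x_0)$, let $r_x$ be the largest $s \in [r, R]$ for which the high-density set
$$F_s(x) := \{y \in B_{2s}(x) : \vt_{\rho s/20}(u, y) > E - \delta\}$$
does not $\rho s/20$-effectively span any $k$-dimensional affine subspace; set $r_x = r$ if no such $s$ exists. A standard linear-algebraic fact (iterated orthogonal projections onto successive spanning directions) shows that any set failing to $\beta$-effectively span $k$-dim is contained in the $2\beta$-neighborhood of some $(k-1)$-dimensional affine subspace; this yields alternative (ii) whenever $r_x > r$. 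A Vitali extraction produces a finite $\cD$ such that $\{B_{r_x/5}(x)\}_{x \in \cD}$ are pairwise disjoint and $\{B_{r_x}(x)\}_{x \in \cD}$ covers the stratification set, and $B_{2r_x}(x) \subset B_{2R}(x_0)$ follows from $x \in B_R(x_0)$ and $r_x \leq R$.

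\textbf{Reifenberg via $L^2$-best approximation.} Introduce the atomic measure $\mu := \sum_{x \in \cD} \w_k (r_x/5)^k \delta_x$. The desired bound (with $C_{\op{II}}$ depending only on $n$) amounts to $\mu(B_R(x_0)) \leq C_{Rei,1} R^k$, which follows from Theorem~\ref{Rei1} once the hypothesis
$$\int_{B_t(z)} \int_0^t D^k_\mu(y, s) \, \tfrac{ds}{s} \, d\mu(y) < \delta_{Rei,1} \, t^k, \quad z \in B_R(x_0),\ 0 < t < R/10,$$
is verified. For each atom $y \in \supp\mu$ and each scale $s \in [r_y, t]$ the stopping rule forces $F_{Cs}(y)$ to effectively span a $k$-plane $L$; the pinching Lemma~\ref{lempinch} then yields $\vt_{c's}(u, \cdot) > E - \xi(\delta)$ throughout $L \cap B_{Cs}(y)$, so the density drop $W_s(u, \cdot)$ is $O(\xi)$ there. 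Passing to a nearby high-density point and invoking Lemma~\ref{SmaHom} give $(0, \xi')$-symmetry of $u$ in $B_{4s}(y)$; since $y \in S^k_{\va, \delta r}(u)$ forbids $(k+1, \va)$-symmetry, Lemma~\ref{geqlem} supplies the nondegeneracy $\inf_{V \in \bG(n, k+1)} s^{\al_p - n} \int_{B_{4s}(y)} |V \cdot \na u|^2 \geq \ga(\va, \Lda, n, p)$ that Theorem~\ref{beta2} demands. One thus gets $D^k_\mu(y, s) \leq C \ga^{-1} s^{-k} \int_{B_s(y)} W_s(u, z) \, d\mu(z)$. Integrating in $s$, applying Fubini, and telescoping via $\int_0^t \frac{W_s(u, z)}{s}\, ds \leq C \vt_{2t}(u, z) \leq C \Lda$ from Proposition~\ref{MonFor} reduces the Reifenberg hypothesis to a bound of the form $C(\Lda, n, p)\, \delta\, t^{-k}\, \mu(B_{2t}(z))$, which is $< \delta_{Rei,1} t^k$ for sufficiently small $\delta$ provided $\mu(B_{2t}(z)) \lesssim t^k$ a priori.

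\textbf{Main obstacle.} The principal difficulty is the circular coupling: Theorem~\ref{Rei1}'s hypothesis requires $\mu(B_t(z)) \lesssim t^k$ uniformly at scales $t \leq R/10$, yet this is precisely what we want to prove. The standard Naber--Valtorta resolution is an inductive construction along dyadic scales, using the volume bound at one scale to verify the Reifenberg hypothesis at the next and bootstrapping from the smallest scale $t \sim r$ (where the disjointness of $\{B_{r_x/5}(x)\}_{x \in \cD}$ makes $\mu(B_t(z)) \lesssim t^k$ automatic). A technical feature in our semilinear setting is that the harmonic-map scaling inequality \eqref{simesti} fails for the classical density \eqref{semidensi}; the telescoping step above instead relies on the cutoff-modified density $\vt_r$ of Definition~\ref{defnofphi} together with the energy-drop bound of Corollary~\ref{coruse}, rather than on \eqref{simesti}.
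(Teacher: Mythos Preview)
Your overall architecture is correct and matches the paper's: Reifenberg-type packing (Theorem~\ref{Rei1}) fed by the $L^2$-best approximation (Theorem~\ref{beta2}), with the circular dependence on the volume bound broken by a dyadic induction from the bottom scale. Your identification of that circularity and its resolution are both right. The genuine gap is in how you obtain \emph{smallness} of the Dini integral $\int_0^t W_s(u,z)\,\tfrac{ds}{s}$ at the atoms $z$ of $\mu$.

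Your atoms are Vitali-selected points of $S_{\va,\delta r}^k(u)$, and membership in that set says nothing about the density pinching $\vt_{r_z/20}(u,z)>E-\xi$. Lemma~\ref{lempinch} propagates high density only \emph{along} the effectively spanned $k$-plane $L$; your centers lie merely in a $\tfrac{\rho s}{10}$-tube about $L$ (via Proposition~\ref{Fprop}), and there is no uniform continuity estimate for $\vt_r(u,\cdot)$ that carries the pinching across that tube (the obvious bound $|\vt_r(u,y)-\vt_r(u,y_0)|\le C\Lambda\,|y-y_0|/r$ is useless when $|y-y_0|\sim r$). Consequently your telescoping delivers only $\int_0^t W_s(u,z)\,\tfrac{ds}{s}\le C\,\vt_{2t}(u,z)\le C\Lambda$, and the factor of $\delta$ in your asserted bound $C(\Lambda,n,p)\,\delta\,t^{-k}\mu(B_{2t}(z))$ is unsupported; the Reifenberg hypothesis therefore cannot be made small. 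The paper's remedy is to replace the single stopping time by an inductive construction at scales $\rho^iR$ in which the centers are deliberately placed \emph{on} the spanning planes $L'(x,r_x)$ (see properties~(A)--(E) and in particular~(D) in the proof); Lemma~\ref{lempinch} then gives the pinching directly at every center, and the telescope yields $\int_{r_z/10}^t \wt W_{5s/2}(u,z)\,\tfrac{ds}{s}\le C\bigl(\vt_{5t}(u,z)-\vt_{r_z/4}(u,z)\bigr)\le C\xi$, small enough to close the argument. Your route to the nondegeneracy \eqref{Vgeqassu} via Lemma~\ref{geqlem} is essentially fine (the paper uses the equivalent $[\HSV]$ characterization and a pigeonhole on scales instead), but it does not salvage the missing pinching at the atoms.
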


Before we start the proof, for $ B_{2s}(x)\subset B_{2R}(x_0) $, $ 0<\rho<\f{1}{100} $, and $ \delta>0 $, we define
\begin{align*}
F_{\delta}(x,s)&:=\{y\in B_{2s}(x):\vt_{\f{\rho s}{20}}(u,y)>E-\delta\},\\
F_{\delta}'(x,s)&:=\{y\in B_{2s}(x):\vt_{s}(u,y)-\vt_{\f{\rho s}{20}}(u,y)<\delta\}.
\end{align*}
By the definition of $ E $, we have $ \vt_s(u,y)\leq E $ for any $ y\in B_{2s}(x) $. Additionally, $ F_{\delta}(x,s)\subset F_{\delta}'(x,s) $.

\subsection{Proof of Lemma \ref{cover1}} 

\subsubsection*{Step 0. Preparations of the proof}

Up to a translation, we can assume that $ x_0=0 $. By simply covering arguments, we also assume that $ 0<r<\f{R}{100} $ and $ 0<R<\f{1}{100} $. For fixed $ \va>0,0<\rho<\f{1}{100} $, there exists $ \ell\in\Z_+ $ such that 
\be
\rho^{\ell}R\leq r<\rho^{\ell-1}R.\label{choiceofrRell}
\ee
Consider the ball $ B_R $ and the set $ F_{\delta}(0,R) $. If there exists $ L(0,R)\in\bA(n,k-1) $ such that 
$$
F_{\delta}(0,R)\subset B_{\f{\rho R}{10}}(L(0,R))\cap B_{2R},
$$
we define the covering by $ \{B_R\} $. It already satisfies all the properties in Lemma \ref{cover1}. For this reason, without loss of generality, we assume that there exists $ L'(0,R)\in\bA(n,k) $, which is $ \f{\rho R}{20} $-effectively spanned by $ F_{\delta}(0,R) $. 

We construct a covering with $ i\in\{1,2,...,\ell\} $ for $ S_{\va,\delta r}^k(u)\cap B_R $ in the form
\be
S_{\va,\delta r}^k(u)\cap B_R\subset\bigcup_{x\in\cB_i}B_{r_x}(x)\cup\bigcup_{x\in\cG_i}B_{r_x}(x),\label{CoverS}
\ee
where we call that $ \cB_i $ is the collection of centers of \emph{bad ball}s and $ \cG_i $ is the collection of centers of \emph{good ball}s. We let $ \cD_i=\cB_i\cup\cG_i $. Moreover, for any $ i\in\{1,2,...,\ell\} $, the covering given by \eqref{CoverS} satisfies the properties as follows.
\begin{enumerate}[label=(\Alph*)]
\item \label{p1}If $ x\in\cB_i $, then $ r_x\geq\rho^iR $, and there exists $ L(x,r_x)\in\bA(n,k-1) $ such that 
$$
F_{\delta}(x,r_x)\subset B_{\f{\rho r_x}{10}}(L(x,r_x))\cap B_{2r_x}(x),
$$
where $ \delta=\delta(\va,\Lda,n,p,\rho)>0 $ is to be determined later.
\item \label{p2}If $ x\in\cG_i $ and $ i\in\{1,2,...,\ell-1\} $, then $ r_x=\rho^iR $, and $ F_{\delta}(x,r_x) $ $ \f{\rho r_x}{20} $-effectively spans some $ L'(x,r_x)\in\bA(n,k) $. If $ x\in\cG_{\ell} $, then $ r_x=r $.
\item \label{p3}The balls in the collection $ \{B_{\f{r_x}{10}}(x)\}_{x\in\cD_i} $ are pairwise disjoint. For any $ x\in\cD_i $, we have $ B_{2r_x}(x)\subset B_{2R}(x_0) $ .
\item \label{p4}For any $ x\in\cD_i $, we have
$$
\vt_{\f{r_x}{20}}(u,x)>E-\xi,
$$
where $ \xi=\xi(\va,\Lda,n,p,\rho)>0 $ is to be determined later.
\item \label{p5}For any $ x\in\cD_i $ and $ t\in[r_x,R] $
\be
\inf_{V\in\bG(n,k+1)}\(t^{\al_p-n}\int_{B_t(x)}|V\cdot\na u|^2\)>\ga,\label{Vgeq}
\ee
where $ \ga=\ga(\va,\Lda,n,p,\rho)>0 $.
\item \label{p6}If $ i=\ell $, then
\be
\sum_{x\in\cD_{\ell}}r_x^k\leq C_{\op{II}}(n)R^k.\label{BlGl}
\ee
\end{enumerate}

Note that if we have already obtain the above covering, then $ \{B_{r_x}(x)\}_{x\in\cD_{\ell}} $ is exactly what we need in this lemma. To begin with, we will inductively construct such a covering satisfying the above properties \ref{p1}-\ref{p5} in Step 1 to Step 3, and prove the estimate \eqref{BlGl} after this construction by using theorems given in \S \ref{RandL2} in Step 4.

\subsubsection*{Step 1. The base of induction.} By the assumption on the ball $ B_R $, $ F_{\delta}(0,R) $ $ \f{\rho R}{20} $-effectively spans $ L'(0,R)\in\bA(n,k) $. Choosing $ \delta=\delta(\va,\Lda,n,p,\rho)>0 $ sufficiently small, since $ F_{\delta}(0,R)\subset F_{\delta}'(0,R) $, we can apply Proposition \ref{Fprop} with $ \beta=\f{\rho}{20} $ and $ s=R $ to deduce that
$$
S_{\va,\delta r}^k(u)\cap B_R\subset S_{\va,\delta R}^k(u)\cap B_R\subset B_{\f{\rho R}{10}}(L'(0,R))\cap B_R.
$$
We construct a finite covering of $ B_{\f{\rho R}{10}}(L'(0,R))\cap B_R $ by balls $ \{B_{\rho R}(x)\}_{x\in\cD_1} $, where $ \cD_1 $ is the set of centers of balls such that the following properties hold.
\begin{itemize}
\item $ \cD_1\subset L'(0,R)\cap B_{\f{3R}{2}} $, and for any $ x \in \mathcal{D}_1 $, $ S_{\va,\delta r}^k\cap B_R\cap B_{\f{\rho R}{2}}(x)\neq\emptyset $.
\item For any $ x,y\in\cD_1 $ with $ x\neq y $, we have $ B_{\f{\rho R}{10}}(x)\cap B_{\f{\rho R}{10}}(y)=\emptyset $.
\end{itemize}
Since $ F_{\delta}(0,R) $ $ \f{\rho R}{20} $-effectively spans $ L'(0,R) $, it follows from Lemma \ref{lempinch} with $ \beta=\f{\rho}{20} $ and $ s=R $ that if $
\delta=\delta(\xi,\va,\Lda,n,p,\rho)>0 $ is sufficiently small, then
\be
\vt_{\f{\rho R}{20}}(u,x)>E-\xi,\label{rhoRux}
\ee
for any $ x\in\cD_1 $, where $ \xi>0 $ is to be determined later. Considering the balls in $ \{B_{r_x}(x)\}_{x\in\cD_1}=\{B_{\rho R}(x)\}_{x\in\cD_1} $, we divide them into collections of bad and good balls. Indeed, we define $ \cD_1=\cB_1\cup\cG_1 $ such that the following properties hold.
\begin{itemize}
\item If $ x\in\cB_1 $, then there exists a $ L(x,\rho R)\in\bA(n,k-1) $ such that 
$$
F_{\delta}(x,\rho R)\subset B_{\f{\rho^2R}{10}}(L(x,\rho R))\cap B_{\rho R}(x).
$$
\item If $ x\in\cG_1 $, then $ F_{\delta}(x,\rho R) $ $ \f{\rho^2R}{20} $-effectively spans $ L'(x,\rho R)\in\bA(n,k) $.
\end{itemize}
By the construction of $ \cB_1 $ and $ \cG_1 $, and the assumption that $ 0<\rho<\f{1}{100} $, for $ i=1 $, properties \ref{p1}-\ref{p3} hold. Additionally, in this case, \ref{p4} follows from \eqref{rhoRux}. It remains to show \ref{p5}. Precisely, for $ y\in\cD_1 $, we fix $ t\in[\rho R,R] $, and will show the estimate \eqref{Vgeq}. By the choice of $ \cD_1 $, there exists $ y'\in S_{\va,\delta r}^k(u)\cap B_{\f{\rho R}{2}}(y) $. Using Corollary \ref{HSVStraCor}, there exists $ \xi'=\xi'(\va,\Lda,n,p)>0 $ such that 
\be
y'\in S_{\va,\delta r}^k(u)\cap B_{\f{\rho R}{2}}(y)\subset S_{[\op{HSV}];\xi',\delta r}^k(u)\cap B_{\f{\rho R}{2}}(y).\label{HSVxprime}
\ee
Given Proposition \ref{MonFor}, we have $ \vt_t(u,y')\leq C_0(\Lda,n,p) $. Let $ N_0=N_0(\va,\Lda,n,p)\in\Z_+ $ be such that $ C(n)N_0^{-1}C_0(\Lda,n,p)<\xi' $ for some $ C(n)>0 $ to be determined. Arguing as the proof of Lemma \ref{rxlogr}, there exists $ r'\in[\f{1}{2^N},\f{1}{2}] $ such that 
$$
\vt_{r't}(u,y')-\vt_{\f{r't}{2}}(u,y')<\xi'.
$$
Using \eqref{choiceofrRell}, we can choose $ \delta=\delta(\va,\Lda,n,p,\rho)>0 $ such that $ \delta r<\f{r't}{2} $. As a result, we deduce from \eqref{HSVxprime} that
$$
\inf_{V\in\bG(n,k+1)}\[\(\f{r't}{2}\)^{\al_p-n}\int_{B_{\f{r't}{2}}(y')}|V\cdot\na u|^2\]>\xi'.
$$
Again, by using \eqref{HSVxprime}, we deduce that $ B_{\f{r't}{2}}(y')\subset B_t(y) $, and then 
\be
\begin{aligned}
t^{\al_p-n}\int_{B_t(y)}|V\cdot\na u|^2&\geq \(\f{2}{r'}\)^{\al_p-n}\(\f{r't}{2}\)^{\al_p-n}\int_{B_{\f{r't}{2}}(y')}|V\cdot\na u|^2\\
&\geq C(\xi',\va,\Lda,n,p)\xi':=\ga,
\end{aligned}\label{geqga}
\ee
for any $ V\in\bG(n,k+1) $. As a result, when $ i=1 $, the property \ref{p5} follows.

\subsubsection*{Step 2. Proof of results from \texorpdfstring{$ i $}{} to \texorpdfstring{$ i+1 $}{} for \texorpdfstring{$ i\in\{1,2,...,\ell-2\} $}{}} 

Now, we assume that the properties \ref{p1}-\ref{p5} are valid for such $ i $, and we want to show that they are also true for $ i+1 $. Given the proof of the base of induction, we recover the balls with centers in $ \cG_i $ and do not change those bad balls. Fix $ x\in\cG_i $, we have that $ r_x=\rho^iR $ and $ B_{\rho^i R}(x) $ is a good ball. By the assumption of induction, $ F_{\delta}(x,r_x) $ $ \f{\rho^{i+1}R}{20} $-effectively spans $ L'(x,r_x)\in\bA(n,k) $. We can choose $ \delta=\delta(\va,\Lda,n,p,\rho)>0 $ sufficiently small and apply Proposition \ref{Fprop} with $ \beta=\f{\rho}{10} $ and $ s=\rho^iR $ to obtain
\be
S_{\va,\delta r}^k(u)\cap B_{\rho^iR}(x)\subset B_{\f{\rho^{i+1}R}{10}}(L'(x,\rho^iR))\cap B_{\rho^iR}(x).\label{SvaR5}
\ee
Define
\be
A_i:=\(\bigcup_{y\in\cG_i}\(L'(y,\rho^iR)\cap B_{\f{3\rho^iR}{2}}(y)\)\)\backslash\(\bigcup_{y\in\cB_i}B_{\f{4r_y}{5}}(y)\).\label{defAi}
\ee
In view of \eqref{CoverS} and \eqref{SvaR5}, we have
\be
(S_{\va,\delta r}^k(u)\cap B_R)\backslash\(\bigcup_{y\in\cB_i}B_{r_y}(y)\)\subset B_{\f{\rho^{i+1}R}{5}}(A_i).\label{SvadeltarBry}
\ee
Let $ \cD_{A_i}\subset A_i $ be a maximal subset of points such that 
$$ 
\dist(y,z)\geq\f{\rho^{i+1}R}{5},\quad\text{for any }y,z\in\cD_{A_i}.
$$
It follows from this property and the definition of $ A_i $ that balls in the collection 
$$
\{B_{\f{\rho^{i+1}R}{10}}(y)\}_{y\in\cD_{A_i}}\cup\{B_{\f{r_y}{10}}(y)\}_{y\in\cB_i} 
$$
are pairwise disjoint. By the maximality of $ D_{A_i} $, we deduce that
$$
(S_{\va,\delta r}^k(u)\cap B_R)\backslash\(\bigcup_{y\in\cB_i}B_{r_y}(y)\)\subset\bigcup_{y\in\cD_{A_i}}B_{\f{2\rho^{i+1}R}{5}}(y).
$$
We can eliminate the balls in $ \{B_{\rho^{i+1}R}(y)\}_{y\in\cD_{A_i}} $ such that 
\be
S_{\va,\delta r}^k(u)\cap B_R\cap B_{\f{\rho^{i+1}R}{2}}(y)\neq\emptyset,\text{ for any }y\in\cD_{A_i}.\label{DAiSva}
\ee
Now we classify the centers of balls in the collection $ \cD_{A_i} $ into $ \wt{\cB}_{i+1} $ and $ \wt{\cG}_{i+1} $ such that the following properties hold.
\begin{itemize}
\item If $ y\in\wt{\cB}_{i+1} $, then there exists $ L(y,\rho^{i+1}R)\in\bA(n,k-1) $ such that 
$$
F_{\delta}(y,\rho^{i+1}R)\subset B_{\f{\rho^{i+2}R}{10}}(L(y,\rho^{i+1}R))\cap B_{\rho^{i+1}R}(y).
$$
\item If $ y\in\wt{\cG}_{i+1} $, then $ F_{\delta}(y,\rho^{i+1}R) $ $ \f{\rho^{i+2}R}{20} $-effectively spans $ L'(y,\rho^{i+1}R)\in\bA(n,k) $.
\end{itemize}
For any $ y\in\wt{\cB}_{i+1}\cup\wt{\cG}_{i+1} $, we set $ r_y=\rho^{i+1}R $. Define
$$
\cB_{i+1}=\cB_i\cup\wt{\cB}_{i+1},\quad\cG_{i+1}=\wt{\cG}_{i+1}.
$$
Given the above setting, we also see that if $ y\in\cD_{A_i} $, then $ B_{2r_y}(y)\subset B_{2R}(x_0) $. Consequently, properties \ref{p1}-\ref{p3} are satisfied for $ i+1 $, and it remains to show \ref{p4} and \ref{p5}. For fixed $ y\in\wt{\cB}_{i+1}\cup\wt{\cG}_{i+1} $, there exists $ x\in\cG_i $, such that
\be
y\in L'(x,\rho^{i}R)\cap B_{\f{3\rho^iR}{2}}(x),\label{yLprimexesti}
\ee
where $ F_{\delta}(x,\rho^{i}R) $ $ \f{\rho^{i+1}R}{20} $-effectively spans $ L'(x,\rho^iR)\in\bA(n,k) $. Moreover, by \ref{p3} with $ i $, Proposition \ref{MonFor}, the definition of $ E=E(0,R) $, and the choice of $ \cD_{A_i} $, we have $ B_{2\rho^iR}(x)\subset B_{2R} $ and
\be
\sup_{z\in B_{2\rho^iR}(x)}\vt_{\rho^iR}(u,z)\leq \sup_{z\in B_{2R}}\vt_R(u,z)\leq E.\label{simiuzE}
\ee
Combining \eqref{yLprimexesti}, we apply Lemma \ref{lempinch} with $ \beta=\f{\rho}{20} $ and $ s=\rho^iR $ such that if $ \delta=\delta(\va,\xi,\Lda,n,p,\rho)>0 $ is sufficiently small, there holds
$$
\vt_{\f{r_y}{20}}(u,y)=\vt_{\f{\rho^{i+1}R}{20}}(u,y)>E-\xi,
$$
which implies \ref{p4} for $ i+1 $. Finally, we prove that the point $ y\in\cD_{A_i} $ satisfies \ref{p5}. Fix $ t\in[\rho^{i+1}R,R] $. By \eqref{DAiSva}, there exists 
$$
y'\in S_{\va,\delta r}^k(u)\cap B_R\cap B_{\f{\rho^{i+1}R}{2}}(y).
$$
Applying similar arguments in the proof of \eqref{geqga}, the result follows directly.

\subsubsection*{Step 3. Construction for \texorpdfstring{$ \cD_{\ell} $}{}} 

We assume that the covering \eqref{CoverS} has been constructed for $ i=\ell-1 $, satisfying \ref{p1}-\ref{p5}. For $ x\in\cG_{\ell-1} $, $ B_{r_x}(x) $ is a good ball, and $ r_x=\rho^{\ell-1}R $. It follows from the result for $ \cG_{\ell-1} $ that $ F_{\delta}(x,\rho^{\ell-1}R) $ $ \f{\rho^{\ell}R}{20} $-effectively spans $ L'(x,\rho^{\ell-1}R)\in\bA(n,k) $. Choosing $ \delta=\delta(\va,\Lda,n,p,\rho)>0 $ sufficiently small, it follows from Proposition \ref{Fprop} with $ \beta=\f{\rho}{10} $ and $ s=\rho^{\ell-1}R $ that
$$
S_{\va,\delta r}^k(u)\cap B_{\rho^{\ell-1}R}(x)\subset B_{\f{\rho^{\ell}R}{10}}(L'(x,\rho^{\ell-1}R))\cap B_{\rho^{\ell-1}R}(x).
$$
Given $ A_{\ell-1} $ in \eqref{defAi}, analogous to \eqref{SvadeltarBry}, we deduce from \eqref{choiceofrRell} that
$$
(S_{\va,\delta r}^k(u)\cap B_R)\backslash\(\bigcup_{y\in\cB_{\ell-1}}B_{r_y}(y)\)\subset B_{\f{2\rho^{\ell}R}{5}}(A_{\ell-1})\subset B_{\f{2r}{5}}(A_{\ell-1}).
$$
Choose $ \cD_{A_{\ell-1}} $ as the maximal subset of $ A_{\ell-1} $ such that for any $ y,z\in\cD_{A_{\ell-1}} $, there holds $ \dist(y,z)\geq\f{r}{5} $. As a result, the balls in the collection
$$
\{B_{\f{r}{10}}(y)\}_{y\in\cD_{A_{\ell-1}}}\cup\{B_{\f{r_y}{10}}(y)\}_{y\in\cB_{\ell-1}} 
$$
are pairwise disjoint, and
$$
(S_{\va,\delta r}^k(u)\cap B_R)\backslash\(\bigcup_{y\in\cB_{\ell-1}}B_{r_y}(y)\)\subset\bigcup_{y\in\cD_{A_{\ell-1}}} B_{\f{2r}{5}}(y).
$$
Additionally, similar to \eqref{DAiSva}, we assume that 
$$
S_{\va,\delta r}^k(u)\cap B_R\cap B_{\f{r}{2}}(y)\neq\emptyset,\text{ for any }y\in\cD_{A_{\ell-1}}.
$$
Define $ \cB_{\ell}=\cB_{\ell-1} $ and $ \cG_{\ell}=\cD_{A_{\ell-1}} $ such that for any $ y\in\cG_{\ell} $, $ r_y=r $. By the construction above, we have \ref{p1}-\ref{p3} hold. Analogous to \eqref{simiuzE}, we have
$$
\sup_{z\in B_{2\rho^{\ell-1}R}(x)}\vt_{\rho^{\ell-1}R}(u,z)\leq\sup_{z\in B_{2R}}\vt_R(u,z)\leq E.
$$
Applying Lemma \ref{lempinch} with $ \beta=\f{\rho}{20} $ and $ s=\rho^{\ell-1}R $, we deduce that if $ \delta=\delta(\va,\xi,\Lda,n,p,\rho)>0 $ is sufficiently small, there holds
$$
\vt_{\f{r}{20}}(u,y)\geq\vt_{\f{\rho^{\ell}R}{20}}(u,y)>E-\xi,
$$
for any $ y\in\cG_{\ell} $. Moreover, we can prove \eqref{Vgeq} for $ y\in\cD_{\ell} $ by almost the same methods used in the proof of \eqref{geqga}.

\subsubsection*{Step 4. Proof of \eqref{estiCov}} 

In this step, we denote $ \cD_{\ell} $ by $ \cD $. For convenience, we define
$$
\mu_{\cD}:=\sum_{x\in\cD}\w_kr_x^k\delta_x,
$$
and
$$
\wt{\cD}_t:=\cD\cap\{r_x\leq t\},\quad\mu_t:=\mu_{\cD}\llcorner\wt{\cD}_t\ll\mu_{\cD}.
$$
Here $ \mu_t=\mu_{\cD}\llcorner\wt{\cD}_t\ll\mu_{\cD} $ means that for any $ A\subset\R^n $, if $ \mu_{\cD}(A)=0 $, then $ (\mu\llcorner\wt{\cD}_t)(A)=0 $. Let $ N\in\Z_+ $ be such that $ 2^{N-1}r<\f{R}{70}\leq 2^Nr $. We will inductively show that there exists $ C_{\op{II}}'=C_{\op{II}}'(n)>0 $ such that for $ j\in\{0,1,2,...,N\} $, there holds
\be
\mu_{2^jr}(B_{2^jr}(x))=\sum_{y\in\wt{\cD}_{2^jr}\cap B_{2^jr}(x)}r_y^k\leq C_{\op{II}}'(n)(2^jr)^k,\label{jjplus1}
\ee
for any $ x\in B_R $. It directly implies \eqref{estiCov} by covering $ B_R $ with smaller balls. As a result, the constant $ C_{\op{II}}>0 $ is chosen as $
C_{\op{II}}(n)=C(n)C_{\op{II}}'(n)>0 $ for some $ C(n)>0 $.

We first note that by \ref{p3}, the collection of balls $ \{B_{\f{r_y}{10}}(y)\}_{y\in\cD} $ are pairwise disjoint. As a result, we can choose appropriate $ C_{\op{II}}'(n)>0 $ such that 
$$
\mu_r(B_r(x))\leq C_{\op{II}}'(n)r^k,\text{ for any }x\in B_R,
$$
which implies \eqref{jjplus1} for $ j=0 $.

Now we assume that \eqref{jjplus1} is true for any $ i\in\{1,2,...,j\} $ with $ j\in\{1,2,...,N-1\} $ and we intend to show the property for $ j+1 $. We start with a rough bound. Precisely, for any $ x\in B_R $,
\be
\mu_{2^{j+1}r}(B_{2^{j+1}r}(x))\leq C(n)C_{\op{II}}'(n)(2^{j+1}r)^k.\label{mu2j}
\ee
Covering $ B_{2^{j+1}r}(x) $ with balls $ B_{2^jr}(y) $ such that $ y\in\cD $, we obtain 
\be
\mu_{2^jr}(B_{2^{j+1}r}(x))\leq C(n)C_{\op{II}}(n)(2^jr)^k.\label{mu2j2j1plus}
\ee
Moreover, we have
$$
\mu_{2^{j+1}r}=\mu_{2^jr}+\sum_{x\in\cD,r_x\in(2^jr,2^{j+1}r]}\w_kr_x^k\delta_x.
$$
It follows from the disjointedness of $ \{B_{\f{r_y}{10}}(y)\}_{y\in\cD} $ that
$$
\(\sum_{y\in\cD,r_y\in(2^jr,2^{j+1}r]}\w_kr_y^k\delta_y\)(B_{2^{j+1}r}(x))\leq C(n)(2^{j+1}r)^k.
$$
Combining this and \eqref{mu2j2j1plus}, assuming that $ C_{\op{II}}'(n)>1 $, we directly obtain the rough bound estimate \eqref{mu2j}. Next, we will use this rough bound to give the refined one by using the Reifenberg-type theorem. Fix $ x\in B_R $, we define
$$
\mu:=\mu_{2^{j+1}r}\llcorner B_{2^{j+1}r}(x).
$$
We will show that
\be
\mu(B_{2^{j+1}r}(x))\leq C_{\op{II}}'(n)(2^{j+1}r)^k.\label{wanttoshow}
\ee
Define the truncation of \eqref{Wrfunc} by
$$
\wt{W}_s(u,y):=\left\{\begin{aligned}
&\vt_{2s}(u,y)-\vt_s(u,y)&\text{ for }&\f{r_y}{10}\leq s<R,\\
&0&\text{ for }&0<s<\f{r_y}{10},
\end{aligned}\right.
$$
for $ y\in\cD $. We claim that
\be
D_{\mu}^k(y,s)\leq C(\va,\Lda,n,p,\rho)s^{-k}\int_{B_{\f{5s}{2}}(y)}\wt{W}_{\f{5s}{2}}(u,z)\ud\mu(z),\label{beta2leq}
\ee
for any $ y\in\cD $ and $ 0<s<\f{R}{10} $. If $ 0<s<\f{r_y}{10} $, since the balls in $ \{B_{\f{r_z}{10}}(z)\}_{z\in\cD} $ are pairwise disjoint, we can deduce that the left hand side of \eqref{beta2leq} is $ 0 $ and the result is trivially true. For otherwise, given the property \ref{p5} in the construction of this covering, we apply Theorem \ref{beta2} to the ball $ B_{10s}(y) $, and \eqref{beta2leq} follows. For any $ s\in(0,2^{j+1}r) $ and $ z\in B_R $, we claim that 
\be
\mu_s(B_s(z))\leq C(n)C_{\op{II}}'(n)s^k.\label{musBs}
\ee
To show this claim, by the base of induction, namely \eqref{mu2j} with $ j=0 $, we can assume that $ s\geq r $. For fixed $ s\in(r,2^{j+1}r) $, we choose $ N'\in\{1,2,...,j\} $ such that $ 2^{N'}r\leq s<2^{N'+1}r $. Now we apply the assumption of induction and the rough bound \eqref{mu2j} to obtain 
$$
\mu_s(B_s(z))\leq \mu_{2^{N'+1}r}(B_{2^{N'+1}r}(z))\leq C(n)C_{\op{II}}'(n)s^k,
$$
which implies \eqref{musBs}. We also claim that for any $ r\leq s\leq\f{2^{j+1}r}{10} $ and $ z\in B_R $,
\be
\mu_{2^{j+1}r}(B_{\f{5s}{2}}(z))\leq C(n)C_{\op{II}}'(n)s^k.\label{murj1plus}
\ee
Up to a further covering of the ball $ B_{\f{5s}{2}}(z) $ and a translation if necessary, we only need to show that 
\be
\mu_{2^{j+1}r}(B_s(z))\leq C(n)C_{\op{II}}'(n)s^k,\label{onlyneed}
\ee
for any $ r\leq s\leq\f{2^{j+1}r}{10} $ and $ z\in\cD $. If $ y\in B_s(z)\cap\supp(\mu) $, we can deduce from the property that balls in the collection $ \{B_{\f{r_{\zeta}}{10}}(\zeta)\}_{\zeta\in\cD} $ are pairwise disjoint that $
\f{r_y}{10}\leq|y-z|\leq s $ and $ y\in\wt{\cD}_{10s} $. This implies that $ B_s(z)\cap\supp(\mu)\subset\wt{\cD}_{10s} $, and then
\be
\mu_{2^{j+1}r}(B_s(z))\leq\mu_{10s}(B_{s}(z))\leq\mu_{10s}(B_{10s}(z))\leq C(n)C_{\op{II}}'(n)s^k,\label{mu2jplus1}
\ee
which implies \eqref{onlyneed}. Here in \eqref{mu2j2j1plus}, for the last inequality, we have used \eqref{musBs}. Letting $ s<t<\f{2^{j+1}r}{10} $ and $ y\in B_{2^{j+1}r}(x) $, we have $ B_{\f{5s}{2}}(z)\subset B_{\f{7t}{2}}(y) $ for any $ z\in B_t(y) $. Integrating \eqref{beta2leq} for both sides on $ B_t(y) $, we obtain 
\begin{align*}
&\int_{B_t(y)}D_{\mu}^k(z,s)\ud\mu(z)\leq \f{C}{s^k}\int_{B_t(y)}\(\int_{B_{\f{5s}{2}}(z)}\wt{W}_{\f{5s}{2}}(u,\zeta)\ud\mu_{2^{j+1}r}(\zeta)\)\ud\mu_{2^{j+1}}(z)\\
&\quad\quad\leq\f{C}{s^k}\int_{B_t(y)}\int_{B_{\f{7t}{2}}(y)}\chi_{B_{\f{5s}{2}}(z)}(\zeta)\wt{W}_{\f{5s}{2}}(u,\zeta)\ud\mu_{2^{j+1}r}(\zeta)\ud\mu_{2^{j+1}}(z)\\
&\quad\quad\leq\f{C}{s^k}\int_{B_{\f{7t}{2}}(y)}\(\int_{B_t(y)}\chi_{B_{\f{5s}{2}}(\zeta)}(z)\ud\mu_{2^{j+1}r}(z)\)\wt{W}_{\f{5s}{2}}(u,\zeta)\ud\mu_{2^{j+1}r}(\zeta)\\
&\quad\quad\leq\f{C}{s^k}\int_{B_{\f{7t}{2}}(y)}\mu_{2^{j+1}r}(B_{\f{5s}{2}}(\zeta))\wt{W}_{\f{5s}{2}}(u,\zeta)\ud\mu_{2^{j+1}r}(\zeta)\\
&\quad\quad\leq C(\va,\Lda,n,p,\rho)C_{\op{II}}'(n)\int_{B_{\f{7t}{2}}(y)}\wt{W}_{\f{5s}{2}}(u,\zeta)\ud\mu_{2^{j+1}r}(\zeta),
\end{align*}
where for the last inequality, we have used \eqref{murj1plus}. Moreover, we deduce that
\be
\begin{aligned}
&\int_{B_t(y)}\(\int_0^tD_{\mu}^k(z,s)\f{\ud s}{s}\)\ud\mu(z)\\
&\quad\quad\leq C(\va,\Lda,n,p,\rho)C_{\op{II}}'(n)\int_{B_{\f{7t}{2}}(y)}\(\int_0^t\wt{W}_{\f{5s}{2}}(u,z)\f{\ud s}{s}\)\ud\mu_{2^{j+1}r}(z).
\end{aligned}\label{beta2k1}
\ee
On the other hand, using Proposition \ref{MonFor} and \ref{p4}, we obtain
\begin{align*}
&\int_0^t\wt{W}_{\f{5s}{2}}(u,z)\f{\ud s}{s}=\int_{\f{r_z}{10}}^t\wt{W}_{\f{5s}{2}}(u,z)\f{\ud s}{s}\\
&\quad\quad=\int_{\f{r_z}{10}}^t\(\vt_{5s}(u,z)-\vt_{\f{5s}{2}}(u,z)\)\f{\ud s}{s}\\
&\quad\quad=\int_{\f{t}{2}}^t\vt_{5s}(u,z)\f{\ud s}{s}+\int_{\f{r_z}{10}}^{\f{t}{2}}\vt_{5s}(u,z)\f{\ud s}{s}-\int_{\f{r_z}{5}}^t\vt_{\f{5s}{2}}(u,z)\f{\ud s}{s}-\int_{\f{r_z}{10}}^{\f{r_z}{5}}\vt_{\f{5s}{2}}(u,z)\f{\ud s}{s}\\
&\quad\quad=\int_{\f{t}{2}}^t\(\vt_{5s}(u,z)-\vt_{\f{r_zs}{2t}}(u,z)\)\f{\ud s}{s}\\
&\quad\quad\leq C\(\vt_{5t}(u,z)-\vt_{\f{r_z}{4}}(u,z)\)\\
&\quad\quad\leq C(\va,\Lda,n,p,\rho)\xi,
\end{align*}
for any $ z\in\cD $ and $ 0<t<\f{2^{j+1}r}{10} $. This, together with \eqref{mu2j} and \eqref{beta2k1}, implies that
$$
\int_{B_t(y)}\(\int_0^tD_{\mu}^k(z,s)\f{\ud s}{s}\)\ud\mu(z)\leq C'(\va,\Lda,n,p,\rho)C_{\op{II}}'(n)\xi t^k,
$$
for any $ y\in B_{2^{j+1}r}(x) $ and $ 0<t<\f{2^{j+1}r}{10} $. If $ \xi=\xi(\va,\Lda,n,p,\rho)>0 $ sufficiently small such that $ C'(\va,\Lda,n,p,\rho)C_{\op{II}}'(n)\xi<\delta_{Rei,1} $, then we can apply Theorem \ref{Rei1} to obtain
$$
\mu(B_{2^{j+1}r}(x))\leq C_{Rei,1}(n)(2^{j+1}r)^k.
$$
Choosing $ C_{\op{II}}'(n)>C_{Rei,1}(n) $, we deduce \eqref{wanttoshow}, which completes the proof.

\subsubsection*{Some remarks on the proof} In the proof of Lemma \ref{cover1}, we frequently apply Proposition \ref{Fprop} and Lemma \ref{lempinch} by choosing $ \delta>0 $ sufficiently large. One does not need to worry that the choice of such a $ \delta $ is not uniform since those applied in the proof are all about some fixed scales.

\subsection{Proof of Lemma \ref{Cover2}} Up to a translation, we can assume that $ x_0=0 $. Letting $ 0<\rho<\f{1}{100} $ be determined later, we can choose $ \ell\in\Z_+ $ such that 
\be
\(\f{\rho}{20}\)^{\ell}R<r\leq\(\f{\rho}{20}\)^{\ell-1}R.\label{lchose}
\ee
We will construct inductively that for any $ i\in\{1,2,...,\ell\} $, there exists a covering of $ S_{\va,\delta r}^k(u)\cap B_R $, denoted by $ \{B_{r_x}(x)\}_{x\in\cR_i\cup\cF_i\cup\cB_i}$ such that
$$
S_{\va,\delta r}^k(u)\cap B_R\subset\bigcup_{x\in\cR_i}B_r(x)\cup\bigcup_{x\in\cF_i}B_{r_x}(x)\cup\bigcup_{x\in\cB_i}B_{r_x}(x),
$$
and the following properties hold.
\begin{enumerate}[label=$(\theenumi)$]
\item For any $ x\in\cR_i\cup\cF_i\cup\cB_i $, $ r_x\geq r $ and $ B_{2r_x}(x)\subset B_{2R} $.
\item If $ x\in\cR_i $, then $ r_x=r $.
\item If $ x\in\cF_i $, then
$$
\sup_{y\in B_{2r_x}(x)}\vt_{r_y}(u,y)\leq E-\delta,
$$
where $ E=E(0,R) $.
\item If $ x\in\cB_i $, neither of the above properties is true, and 
\be
r<r_x\leq\(\f{\rho}{20}\)^iR.\label{rBx}
\ee
\item We have estimates
\be
\sum_{x\in\cR_i\cup\cF_i}r_x^k\leq C_{\op{I}}(n)\(\sum_{j=0}^i\f{1}{10^j}\)R^k,\quad\sum_{x\in\cB_i}r_x^k\leq\f{R^k}{10^i},\label{C3es}
\ee
for any $ i\in\{1,2,...,\ell\} $.
\end{enumerate}

For $ i=\ell $, by \eqref{lchose} and \eqref{rBx}, we have $ \cB_{\ell}=\emptyset $, and then  any ball in this covering will either satisfy \eqref{energydrop} or have radius $ r $. Estimates \eqref{C3es} for $ i=\ell $ give the desired bound on the final covering.

\subsubsection*{Step 1. Preliminaries of the construction} To begin with, we consider a fixed ball $ B_{2s}(x)\subset B_{2R} $. We will obtain a covering of $ S_{\va,\delta r}^k(u)\cap B_s(x) $, denoted by $ \{B_{r_y}(y)\}_{y\in\cR_x\cup\cF_x\cup\cB_x} $ such that
$$
S_{\va,\delta r}^k(u)\cap B_s(x)\subset\bigcup_{y\in\cR_x}B_{r_y}(y)\cup\bigcup_{y\in\cF_x}B_{r_y}(y)\cup\bigcup_{y\in\cB_x}B_{r_y}(y),
$$
and the following properties hold.
\begin{itemize}
\item For any $ y\in\cR_x\cup\cF_x\cup\cB_x $, $ r_y\geq r $ and $ B_{2r_y}(y)\subset B_{2s}(x) $.
\item If $ y\in\cR_x $, then $ r_y=r $.
\item If $ y\in\cF_x $, then
\be
\sup_{z\in B_{2r_y}(y)}\vt_{r_z}(u,z)\leq E-\delta.\label{energydrop}
\ee
\item If $ y\in\cB_x $, then $
r<r_y\leq\f{\rho s}{20} $.
\item We have the estimates
$$
\sum_{y\in\cR_x\cup\cF_x}r_y^k\leq C_{\op{I}}(n)s^k,\quad\sum_{y\in\cB_x}r_y^k\leq\f{s^k}{10}.
$$
\end{itemize}

To obtain such covering, we first apply Lemma \ref{cover1} to the ball $ B_s(x) $ and choose a sufficiently small $ \delta=\delta(\va,\Lda,n,p,\rho)>0 $ to obtain a covering 
$$
S_{\va,\delta r}^k(u)\cap B_s(x)\subset\bigcup_{y\in\cD}B_{r_y}(y)
$$
with $ r_y\geq r $ for any $ y\in\cD $, and
$$
\sum_{x\in\cD}r_y^k\leq C_{\op{II}}(n)s^k.
$$
For any $ y\in\cD $, $ B_{2r_y}(y)\subset B_{2s}(x) $, and either $ r_y=r $ or there exists $ L(y,r_y)\in\bA(n,k-1) $ such that
\be
\{z\in B_{2r_y}(y):\vt_{\f{\rho r_y}{20}}(u,z)>E(x,s)-\delta\}\subset B_{\f{\rho r_y}{10}}(L(y,r_y))\cap B_{2r_y}(y),\label{BsFcontain}
\ee
where $ E(x,s)=\sup_{z\in B_{2s}(x)}\vt_s(u,z) $.

Now we divide the centers of balls $ \cD $ into two collections $ \cD^{(r)} $ and $ \cD^{(+)} $. Indeed, we have the following properties.
\begin{itemize}
\item If $ y\in\cD^{(r)} $, then $ \f{\rho r_y}{20}\leq r $.
\item If $ y\in\cD^{(+)} $, then $ \f{\rho r_y}{20}>r $.
\end{itemize}
Next, we will refine the balls with centers in $ \cD^{(r)} $ and $ \cD^{(+)} $ through recovering. Precisely speaking, we construct the recovering as follows.
\begin{itemize}
\item For $ y\in\cD^{(r)} $, we consider a minimal covering of $ B_{r_y}(y) $ with balls of radius $ r $. Indeed, we let $ \{B_r(z)\}_{z\in\cR_{x}^{(y)}} $ satisfy 
$$
B_{r_y}(y)\subset\bigcup_{z\in\cR_x^{(y)}}B_r(z),
$$
and have the minimum number $ \#\cR_x^{(y)} $ among all these coverings with radius $ r $. We deduce that 
\be
\#\cR_x^{(y)}\leq C(n)\rho^{-n}.\label{RxCnbx}
\ee
Moreover, we require that for any $ z\in\cR_x^{(y)} $, there holds $ B_{2r}(z)\subset B_{2s}(x) $. We define the collection of all these centers of balls in the recovering by
$$
\cR_x=\bigcup_{y\in\cD^{(r)}}\cR_x^{(y)}.
$$
\item For $ y\in\cD^{(+)} $, since $ r_y>\f{20r}{\rho}>r $, the property \eqref{BsFcontain} must hold. Consider a covering of $ B_{r_y}(y) $ with balls of radius $ \f{\rho r_y}{20}>r $ centered inside this ball such that the balls with half of the radius in this collection are pairwise disjoint. We have
$$
B_{r_y}(y)\subset\bigcup_{z\in\cB_x^{(y)}}B_{\f{\rho r_y}{20}}(z)\cup \bigcup_{z\in\cF_x^{(y)}}B_{\f{\rho r_y}{20}}(z),
$$
where
\be
\{z\in B_{2r_y}(y):\vt_{\f{\rho r_y}{20}}(u,z)>E(x,s)-\delta\}\cap\bigcup_{z\in\cF_x^{(y)}}B_{\f{\rho r_y}{10}}(z)=\emptyset.\label{Fzdef}
\ee
It follows from \eqref{BsFcontain} that 
\be
\cB_{x}^{(y)}\subset B_{\rho r_y}(L(y,r_y))\cap B_{r_y}(y).\label{cBxyBrho}
\ee
For $ z\in\cF_x^{(y)} $, $ r_z=\f{\rho r_y}{20} $. It follows from \eqref{Fzdef} and $ B_{\f{\rho r_y}{10}}(z)\subset B_{2r_y}(y) $ that
$$
\sup_{\zeta\in B_{2r_z}(z)}\vt_{r_z}(u,\zeta)=\sup_{\zeta\in B_{\f{\rho r_y}{10}}(z)}\vt_{\f{\rho r_y}{20}}(u,\zeta)\leq E(x,s)-\delta\leq E-\delta.
$$
Since $ \{B_{\f{\rho r_y}{40}}(z)\}_{z\in\cB_{x}^{(y)}} $ are pairwise disjoint, we can use \eqref{cBxyBrho} and $ L(y,r_y)\in\bA(n,k-1) $ to obtain
\be
\#\cF_{x}^{(y)}\leq C(n)\rho^{-n},\quad\#\cB_{x}^{(y)}\leq C(n)\rho^{1-k}.\label{BxFxcount}
\ee
\end{itemize}

Define
$$
\cB_x:=\bigcup_{y\in\cD^{(+)}}\cB_x^{(y)},\quad\cF_x:=\bigcup_{y\in\cD^{(+)}}\cF_x^{(y)}.
$$
It follows from \eqref{RxCnbx} and \eqref{BxFxcount} that
\begin{align*}
\sum_{y\in\cR_x\cup\cF_x}r_y^k&\leq C\rho^{-n+k}\(\sum_{y\in\cD}r_y^k\)\leq C(n)\rho^{-n+k}C_{\op{II}}(n)s^k,\\
\sum_{y\in\cB_x}r_y^k&\leq C(n)\rho\(\sum_{y\in\cD}r_y^k\)\leq C(n)\rho C_{\op{II}}(n)s^k.
\end{align*}
Choosing $ 0<\rho<\f{1}{100} $ sufficiently small, we can obtain some $ C_{\op{I}}(n)>0 $ such that
\be
\sum_{y\in\cR_x\cup\cF_x}r_y^k\leq C_{\op{I}}(n)s^k,\quad\sum_{y\in\cB_x}r_y^k\leq\f{s^k}{10}.\label{induces}
\ee

\subsubsection*{Step 2. Inductive constructions}

Given the preliminary result in the previous step, we can now conduct inductive constructions for our covering.

For $ i=1 $, we can apply the results in Step 1 to the ball $ B_R $, and the properties for such a case follow directly. 

We now assume that for $ i\in\{1,2,...,\ell-1\} $, there is a covering of $ S_{\va,\delta r}^k(u)\cap B_R $, given by
$$
S_{\va,\delta r}^k(u)\cap B_R\subset\bigcup_{x\in\cR_i}B_r(x)\cup\bigcup_{x\in\cF_i}B_{r_x}(x)\cup\bigcup_{x\in\cB_i}B_{r_x}(x),
$$
which satisfies the properties mentioned above. Apply the results in Step 1 to the balls in $ \{B_{r_x}(x)\}_{x\in\cB_i} $ and define
$$
\cR_{i+1}=\cR_i\cup\bigcup_{x\in\cB_i}\cR_x,\quad\cF_{i+1}=\cF_i\cup\bigcup_{x\in\cB_i}\cF_x,\quad\cB_{i+1}=\bigcup_{x\in\cB_i}\cB_x.
$$
As a result, this covering satisfies all the properties required at first except \eqref{C3es}. To complete the proof, by using \eqref{induces}, we obtain 
\begin{align*}
\sum_{x\in\cR_{i+1}\cup\cF_{i+1}}r_x^k&\leq\sum_{x\in\cR_i\cup\cF_i}r_x^k+\sum_{x\in\cB_i}\sum_{y\in\cR_x\cup\cF_x}r_y^k\\
&\leq C_{\op{I}}(n)\(\sum_{j=0}^i\f{1}{10^j}\)R^k+C_{\op{I}}(n)\sum_{x\in\cB_i}r_x^k\\
&\leq C_{\op{I}}(n)\(\sum_{j=0}^{i+1}\f{1}{10^j}\)R^k,
\end{align*}
and
$$
\sum_{x\in\cB_{i+1}}r_x^k\leq \sum_{x\in\cB_i}\sum_{y\in\cB_x}r_y^k\leq\f{R^k}{10^{i+1}}.
$$

\section{Proof of main theorems}\label{Mainproof}

\subsection{Proof of Theorem \ref{volthm}} Applying Lemma \ref{maincover} to $ 0<r'<R\leq 1 $ and $ x_0\in B_1 $, we can choose $ \delta=\delta(\va,\Lda,n,p)\in(0,1) $ and have a covering of $ S_{\va,\delta r'}^k(u)\cap B_R(x_0) $, denoted by $ \{B_{r'}(x)\}_{x\in\cC} $ such that
$$
S_{\va,\delta r'}^k(u)\cap B_R(x_0)\subset\bigcup_{x\in\cC}B_{r'}(x),
$$
and $ (\#\cC)r^k\leq C(\va,\Lda,n,p)R^k $. As a result, we have
\begin{align*}
\cL^n(B_{r'}(S_{\va,\delta r'}^k(u)\cap B_R(x_0)))&\leq \cL^n\(B_{r'}\(\bigcup_{x\in\cC}B_{r'}(x)\)\)\leq C(\va,\Lda,n,p)(r')^{n-k}R^k.
\end{align*}
Letting $ r=\delta r' $, we see that for any $ 0<r<\delta R $, there holds
\be
\begin{aligned}
\cL^n(B_{r}(S_{\va,r}^k(u)\cap B_R(x_0)))&\leq \cL^n(B_{r'}(S_{\va,\delta r'}^k(u)\cap B_R(x_0)))\\
&\leq C(\va,\Lda,n,p)r^{n-k}R^k.
\end{aligned}\label{Lnleq}
\ee
On the other hand, if $ \delta R\leq r\leq R $, then
$$
\cL^n(B_{r}(S_{\va,r}^k(u)\cap B_R(x_0)))\leq\cL^n(B_{2R}(x_0))\leq C(\va,\Lda,n,p)r^{n-k}R^k.
$$
This, together with \eqref{Lnleq}, implies that  
$$
\cL^n(B_{r}(S_{\va,r}^k(u)\cap B_R(x_0)))\leq C(\va,\Lda,n,p)r^{n-k}R^k,
$$
and then
\be
\cL^n(B_{r}(S_{\va}^k(u)\cap B_R(x_0)))\leq C(\va,\Lda,n,p)r^{n-k}R^k,\label{Svaes2}
\ee
for any $ 0<r\leq R $, and $ x_0\in B_1 $. Taking $ R=1 $ and $ x_0=0 $, the estimates \eqref{main1es} and \eqref{main2es} follow directly. For any $ 0<r<\f{1}{2} $ and a covering of $ S_{\va}^k(u)\cap B_R(x_0) $, denoted by $ \{B_{r_x}(x)\}_{x\in\cC} $ with $ r_x\leq r $ such that $ S_{\va}^k(u)\cap B_R(x_0)\cap B_{r_x}(x)\neq\emptyset $ for any $ x\in\cC $, we can deduce from \eqref{Svaes2} that
\begin{align*}
\sum_{x\in\cC}r_x^k&\leq C(n)r^{k-n}\cL^n(B_{2r}(S_{\va}^k(u)\cap B_R(x_0)))\leq C(\va,\Lda,n,p)R^k,
\end{align*}
which implies that 
\be
\HH_r^k(S_{\va}^k(u)\cap B_R(x_0))\leq C(\va,\Lda,n,p)R^k.\label{Ahlforses}
\ee
Letting $ r\to 0^+ $, we can complete the proof of Ahlfors $ k $-regularity of $ S_{\va}^k(u) $.

\subsection{Proof of Theorem \ref{rectthm}} Without loss of generality, we prove that $ S^k(u)\cap B_1 $ and $ S_{\va}^k(u)\cap B_1 $ are rectifiable for any $ k\in\{1,2,...,n-\lceil\al_p\rceil\} $ and $ 0<\va<1 $. It follows from Lemma \ref{SkSkva} that
\be
S^k(u)\cap B_{10}=\bigcup_{i\in\Z_+}S_{i^{-1}}^k(u).\label{Skiminus1}
\ee
By this, we only need to show the rectifiability of $ S_{\va}^k(u)\cap B_1 $, for any $ \va>0 $. Let $ S\subset S_{\va}^k(u)\cap B_1 $ be such that $ \HH^k(S)>0 $. For any $ x\in B_1 $ and $ 0<r\leq 1 $, we define
$$
g_r(u,x):=\vt_r(u,x)-\vt(u,x).
$$
By \eqref{thevt} and Proposition \ref{MonFor}, we have that $ \lim_{r\to 0^+}g_r(u,x)=0 $ for any $ x\in B_1 $ and $ g_r(u,\cdot) $ is bounded. As a result, we can apply the dominated convergence theorem to obtain that for any $ \delta>0 $, there exists $ r_0>0 $ such that 
$$
\f{1}{\HH^k(S)}\int_Sg_{10r_0}(u,x)\ud\HH^k(x)\leq\delta.
$$
By average arguments, we can choose an $ \HH^k $-measurable set $ E\subset S $ such that $ \HH^k(E)\leq\delta\HH^k(S) $ and $ g_{10r_0}(u,x)\leq\delta $ for any $ x\in F:=S\backslash E $. We can cover $ F $ by a finite number of balls $ \{B_{r_0}(x_i)\}_{i=1}^N $ such that $ \{x_i\}_{i=1}^N\subset F $. Now we claim that if $ \delta=\delta(\va,\Lda,n,p)>0 $ is sufficiently small, then for any $ i\in\{1,2,...,N\} $, $ F\cap B_{r_0}(x_i) $ is $ k $-rectifiable. If such a claim is true, then $ F $ is rectifiable. If $ \HH^k(F)>0 $, we can repeat this procedure to $ F $ for countably times and finally obtain that $ S $ is $ k $-rectifiable. By the arbitrariness for the choice of $ S $, $ S_{\va}^k(u)\cap B_1 $ is $ k $-rectifiable. Let us show this claim. Without loss of generality, we only consider the ball $ B_{r_0}(x_1) $ and assume that $ 0<r_0<\f{1}{100} $. By the assumption on $ E $, we have 
\be
g_{10r_0}(u,z)=\vt_{10r_0}(u,z)-\vt(u,z)\leq\delta,\label{G10r0small}
\ee 
for any $ z\in F $. Choosing $ \delta=\delta(\delta',\Lda,n,p)>0 $ sufficiently small, we can apply Lemma \ref{SmaHom} to obtain that $ u $ is $ (0,\delta') $-symmetric in $ B_{4s}(z) $ for any $ 0<s\leq r_0 $, where $ \delta'>0 $ is to be determined later. For $ z\in F\subset S_{\va}^k(u) $, $ u $ is not $ (k+1,\va) $-symmetric in $ B_{4s}(z) $. Choosing $ \delta'=\delta'(\va,\Lda,n,p)>0 $ sufficient small and using Corollary \ref{beta22}, we deduce that
$$
D_{\HH^k\llcorner F}^k(z,s)\leq C(\va,\Lda,n,p)s^{-k}\int_{B_s(z)}W_s(u,\zeta)\ud(\HH^k\llcorner F)(\zeta),
$$
for any $ z\in F $ and $ 0<s\leq r_0 $. Integrating with respect to $ z $ for both sides of the above on $ B_r(x) $ with $ x\in B_{r_0}(x_1) $ and $ 0<r\leq r_0 $, we have
\begin{align*}
&\int_{B_r(x)}D_{\HH^k\llcorner F}^k(z,s)\ud(\HH^k\llcorner F)(z)\\
&\quad\quad\leq Cs^{-k}\int_{B_r(x)}\(\int_{B_s(z)}W_s(u,\zeta)\ud(\HH^k\llcorner F)(\zeta)\)\ud(\HH^k\llcorner F)(z)\\
&\quad\quad\leq Cs^{-k}\int_{B_r(x)}\(\int_{B_{r+s}(x)}\chi_{B_s(z)}(\zeta)W_s(u,\zeta)\ud(\HH^k\llcorner F)(\zeta)\)\ud(\HH^k\llcorner F)(z)\\
&\quad\quad\leq Cs^{-k}\int_{B_{r+s}(x)}\HH^k(F\cap B_s(\zeta))W_s(u,\zeta)\ud(\HH^k\llcorner F)(\zeta)\\
&\quad\quad\leq C(\va,\Lda,n,p)\int_{B_{r+s}(x)}W_s(u,z)\ud(\HH^k\llcorner F)(z).
\end{align*}
For the last inequality above, we have used \eqref{Ahlforses}, the Ahlfors $ k $-regularity of $ S_{\va}^k(u)\cap B_1 $. It follows that
\begin{align*}
&\int_{B_r(x)}\(\int_0^rD_{\HH^k\llcorner F}^k(z,s)\f{\ud s}{s}\)\ud(\HH^k\llcorner F)(z)\\
&\quad\quad\leq C\int_{B_{2r}(x)}\(\int_0^r(\vt_{2s}(u,z)-\vt_s(u,z))\f{\ud s}{s}\)\ud(\HH^k\llcorner F)(z)\\
&\quad\quad=C\int_{B_{2r}(x)}\(\sum_{i=0}^{+\ift}\int_{\f{r}{2^{i+1}}}^{\f{r}{2^i}}(\vt_{2s}(u,z)-\vt_s(u,z))\f{\ud s}{s}\)\ud(\HH^k\llcorner F)(z)\\
&\quad\quad\leq C\int_{B_{2r}(x)}\sum_{i=0}^{+\ift}\(\vt_{\f{r}{2^{i-1}}}(u,z)-\vt_{\f{r}{2^{i+1}}}(u,z)\)\ud(\HH^k\llcorner F)(z)\\
&\quad\quad\leq C\int_{B_{2r}(x)}g_{10r_0}(u,z)\ud(\HH^k\llcorner F)(z)\\
&\quad\quad\leq C'(\va,\Lda,n,p)\delta r^k,
\end{align*}
for any $ x\in B_{r_0}(x_1) $ and $ 0<r\leq r_0 $. Here for the last inequality, we have used \eqref{G10r0small} and \eqref{Ahlforses}. Taking sufficiently small $ \delta=\delta(\va,\Lda,n,p)>0 $ such that $ C'(\va,\Lda,n,p)\delta<\delta_{Rei,2} $, we can apply Theorem \ref{Rei2} to obtain that $ F\cap B_{r_0}(x_1) $ is $ k $-rectifiable.

To show further results, we focus on $ S^k(u)\cap B_{10} $ and for $ S^k(u) $, results follow from some covering arguments. We claim that for any $ 0<\va<1 $ and $ \HH^k $-a.e. $ x\in S_{\va}^k(u) $, there exists $ V\in\bG(n,k) $ such that any tangent pair of $ u $ at $ x $ is $ k $-symmetric with respect to $ V $. We first assume that the claim has already been proved. As a result, we choose $ \wt{S}_{i^{-1}}^k(u)\subset S_{i^{-1}}^k(u) $ such that
$$
\HH^k(S_{i^{-1}}^k(u)\backslash \wt{S}_{i^{-1}}^k(u))=0,
$$
and for any $ x\in\wt{S}_{i^{-1}}^k(u) $, there exists $ V\in\bG(n,k) $ with any tangent pair of $ u $ at $ x $ being $ k $-symmetric with respect to $ V $. We define 
\be
\wt{S}^k(u):=\bigcup_{i\in\Z_+}\wt{S}_{i^{-1}}^k(u).\label{Swtunion}
\ee
Given the definition of $ \wt{S}_{i^{-1}}^k(u) $ and \eqref{Skiminus1}, we obtain 
$$
\HH^k((S^k(u)\cap B_{10})\backslash\wt{S}^k(u))=0.
$$
This, together with \eqref{Swtunion}, implies that for $ \HH^k $-a.e. $ x\in S^k(u)\cap B_{10} $, there exists some $ i\in\Z_+ $ such that $ x\in\wt{S}_{i^{-1}}^k(u) $. By the claim given at the beginning of this paragraph, the final results follow directly.

Let us fix $ 0<\va<1 $ and prove the claim. For $ x\in S_{\va}^k(u) $, recall the definition of $ \vt(u,x) $, given by \eqref{uxthetalim}. For any $ \xi,\va>0 $, and $ i\in\Z_+ $, we define
$$
W_{\va,\xi}^{k,i}(u):=\{x\in S_{\va}^k(u):\vt(u,x)\in[i\xi,(i+1)\xi)\}.
$$
It follows from the assumption $ \theta_{40}(u,0)\leq\Lda $ that
$$
S_{\va}^k(u)=\bigcup_{i=0}^{\lceil\xi^{-1}C''(\Lda,n,p)\rceil}W_{\va,\xi}^{k,i}(u).
$$
Here, $ \xi>0 $ is to be determined. Since $ S_{\va}^k(u) $ is $ k $-rectifiable, we see that for any $ \xi>0 $ and $ i\in\{0,1,2,...,\lceil\xi^{-1}C''(\Lda,n,p)\rceil\} $, $ W_{\va,\xi}^{k,i}(u) $ is also $ k $-rectifiable. By Theorem \ref{Simexis}, for fixed $ \xi,\va,i $, there exists a subset $ \wt{W}_{\va,\xi}^{k,i}(u)\subset W_{\va,\xi}^{k,i}(u) $ such that for any $ x\in\wt{W}_{\va,\xi}^{k,i}(u) $, the approximate tangent space of $ W_{\va,\xi}^{k,i}(u) $ exists, denoted by $ V_x\subset\R^n $. Fix $ x\in\wt{W}_{\va,\xi}^{k,i}(u) $, and let $ V_x $ be the approximate tangent space of $ W_{\va,\xi}^{k,i}(u) $ at $ x $. Choosing $ r>0 $ sufficiently small, we have 
$$
\vt_r(u,x)-\vt(u,x)<\xi.
$$
In view of Proposition \ref{MonFor} and the sixth property of Lemma \ref{5prop}, we infer that for $ r>0 $ sufficiently small and $ y\in W_{\va,\xi}^{k,i}(u)\cap B_r(x) $, there holds that 
\be
\vt_r(u,y)-\vt(u,y)<2\xi.\label{vtrxi2}
\ee
Since the approximate tangent space at $ x $ is $ V_x $, for any $ r>0 $ sufficiently small, we can obtain $ \f{r}{10} $-independent points $ \{x_i\}_{i=0}^k\subset W_{\va,\xi}^{k,i}(u)\cap B_r(x) $, where $ x_0=x $. In view of \eqref{vtrxi2}, we can apply Proposition \ref{QuaConSpl} to obtain that if $ \xi=\xi(\delta,\Lda,n,p)>0 $ is sufficiently small, then $ u $ is $ (k,\delta) $-symmetric in $ B_r(x) $, with respect to $ V_x $, where $ \delta=\delta(\xi)>0 $, and $ \delta\to 0^+ $ as $ \xi\to 0^+ $. Consequently, any tangent pair at $ x $ is $ (k,\delta(\xi)) $-symmetric with respect to $ V_x $. Define
$$
\wt{W}_{\va,\xi}^k(u):=\bigcup_{i\in\Z_+} \wt{W}_{\va,\xi}^{k,i}(u).
$$
As a result, we see that $
\wt{W}_{\va,\xi}^k(u)\subset S_{\va}^k(u) $ is a subset with 
\be
\HH^k(S_{\va}^k(u)\backslash\wt{W}_{\va,\xi}^k(u))=0.\label{Svakuback}
\ee 
We further define
$$
\wh{S}_{\va}^k(u):=\bigcap_{j\in\Z_+}\wt{W}_{\va,j^{-1}}^k(u).
$$
Using \eqref{Svakuback}, we have $
\HH^k(S_{\va}^k(u)\backslash\wh{S}_{\va}^k(u))=0 $. Moreover, it follows from the definition of $ \wt{W}_{\va,\xi}^k(u) $ that for any $ x\in\wh{S}_{\va}^k(u) $, any tangent pair must be $ (k,\delta) $-symmetric with respect to some $ V_x $ for any $ \delta>0 $. In particular, every tangent pair of $ u $ at $ x $ must be $ k $-symmetric with respect to some $ V_x $. 

\subsection{Proof of Theorem \ref{enhance}}

Using Corollary \ref{rucor}, we obtain 
$$
\sing(u)\cap B_1\subset\{x\in B_1:r_u^j(x)<r\}\subset S_{\delta,2r}^{k_{n,p}-1}(u)\cap B_1
$$
for some $ \delta=\delta(j,\Lda,n,p)>0 $. The result now follows directly from estimates in Theorem \ref{volthm}.

\section*{Acknowledgement}

The authors thank Professor Daniele Valtorta for his essential enlightening guide on the covering arguments in this paper. The authors are partially supported by the National Key R\&D Program of China under Grant 2023YFA1008801 and NSF of China under Grant 12288101.


\begin{thebibliography}{00000000}

\bibitem[All72]{All72}
\newblock W.K. Allard
\newblock On the first variation of a varifold,
\newblock \emph{Annals of mathematics}, \textbf{95} (1972), 417-491.

\bibitem[Alp18]{Alp18}
\newblock O. Alper,
\newblock Rectifiability of line defects in liquid crystals with variable degree of orientation
\newblock \emph{Archive for Rational Mechanics and Analysis}, \textbf{228} (2018), 309-339.

\bibitem[Alp20]{Alp20}
\newblock O. Alper,
\newblock On the singular set of free interface in an optimal partition problem,
\newblock \emph{Communications on Pure and Applied Mathematics}, \textbf{73} (2020), 855-915.

\bibitem[Aro57]{Aro57}
\newblock N. Aronszajn, 
\newblock A unique continuation theorem for elliptic differential equations or inequalities of the second order, 
\newblock \emph{Journal de Math\'{e}matiques Pures et Appliqu\'{e}es}, \textbf{36} (1957), 235-239.

\bibitem[Bet93]{Bet93}
\newblock F. Bethuel,
\newblock On the singular set of stationary harmonic maps,
\newblock \emph{Manuscripta Mathematica}, \textbf{78} (1993), 417-443.

\bibitem[Cab17]{Cab17}
\newblock X. Cabr\'{e}, 
\newblock Boundedness of stable solutions to semilinear elliptic equations: a survey,
\newblock \emph{Advanced Nonlinear Studies}, \textbf{17} (2017), 355-368.

\bibitem[CFRS20]{CFRS20}
\newblock X. Cabr\'{e}, A. Figalli, X. Ros-Oton, and J. Serra,
\newblock Stable solutions to semilinear elliptic equations are smooth up to dimension $ 9 $,
\newblock \emph{Acta Mathematica}, \textbf{224} (2020), 187-252.

\bibitem[CHN13]{CHN13}
\newblock J. Cheeger, R. Haslhofer, and Aaron Naber,
\newblock Quantitative stratification and the regularity of mean curvature flow,
\newblock \emph{Geometric and Functional Analysis}, \textbf{23} (2013), 828-847.

\bibitem[CHN15]{CHN15}
\newblock J. Cheeger, R. Haslhofer, and A. Naber,
\newblock Quantitative stratification and the regularity of harmonic map flow,
\newblock \emph{Calculus of Variations and Partial Differential Equations}, \textbf{53} (2015), 365-381.

\bibitem[CN13a]{CN13a}
\newblock J. Cheeger and A. Naber,
\newblock Lower bounds on Ricci curvature and quantitative behavior of singular sets,
\newblock \emph{Inventiones mathematicae}, \textbf{191} (2013), 321-339.

\bibitem[CN13b]{CN13b}
\newblock J. Cheeger and A. Naber,
\newblock Quantitative stratification and the regularity of harmonic maps and minimal currents,
\newblock \emph{Communications on Pure and Applied Mathematics}, \textbf{66} (2013), 965-990.

\bibitem[Del08]{Del08}
\newblock C. De Lellis,
\newblock Rectifiable sets, densities and tangent measures, 
\newblock European Mathematical Society, 2008.

\bibitem[DMSV18]{DMSV18}
\newblock C. De Lellis, A. Marchese, E. Spadaro, and D. Valtorta,
\newblock Rectifiability and upper Minkowski bounds for singularities of harmonic $ Q $-valued maps, \newblock \emph{Commentarii Mathematici Helvetici}, \textbf{93} (2018), 737-779.

\bibitem[EE19]{EE19}
\newblock N. Edelen, and M. Engelstein,
\newblock Quantitative stratification for some free-boundary problems,
\newblock \emph{Transactions of the American Mathematical Society}, \textbf{371} (2019), 2043-2072.

\bibitem[Dup11]{Dup11}
\newblock L. Dupaigne,
\newblock Stable solutions of elliptic partial differential equations
\newblock CRC press, 2011.

\bibitem[Eva90]{Eva90}
\newblock L. C. Evans,
\newblock Partial regularity for stationary harmonic maps into spheres,
\newblock \emph{Archive for Rational Mechanics and Analysis}, \textbf{116}, (1990) 203-218.

\bibitem[Eva18]{Eva18}
\newblock L. Evans,
\newblock Measure theory and fine properties of functions,
\newblock Routledge, 2018.

\bibitem[Far07]{Far07}
\newblock A. Farina,
\newblock On the classification of solutions of the Lane-Emden equation on unbounded domains of $ \R^n $,
\newblock \emph{Journal de math\'{e}matiques pures et appliqu\'{e}es}, \textbf{87} (2007), 537-561.

\bibitem[GM05]{GM05}
\newblock M. Giaquinta and L. Martinazzi,
\newblock An introduction to the regularity theory for elliptic systems, harmonic maps and minimal graphs,
\newblock Edizioni della Normale, Pisa, 2005.

\bibitem[Gra08]{Gra08}
\newblock L. Grafakos,
\newblock Classical fourier analysis
\newblock New York: Springer, 2008.

\bibitem[GJXZ24]{GJXZ24}
\newblock C. Guo, G. Jiang, C. Xiang, and G. Zheng,
\newblock Optimal higher regularity for biharmonic maps via quantitative stratification,
\newblock \emph{arXiv preprint arXiv:2401.11177}, (2024).

\bibitem[HLP92]{HLP92}
\newblock R. Hardt, F. Lin, and C. Poon,
\newblock Axially symmetric harmonic maps minimizing a relaxed energy,
\newblock \emph{Communications on pure and applied mathematics}, \textbf{45} (1992), 417-459.

\bibitem[HSV19]{HSV19}
\newblock J. Hirsch, S. Stuvard, and D. Valtorta,
\newblock Rectifiability of the singular set of multiple-valued energy minimizing harmonic maps,
\newblock \emph{Transactions of the American Mathematical Society}, \textbf{371} (2019), 4303-4352.

\bibitem[Lin99]{Lin99}
\newblock F. Lin,
\newblock Gradient estimates and blow-up analysis for stationary harmonic maps,
\newblock \emph{Annals of Mathematics. Second Series}, \textbf{149} (1999), 785-829.

\bibitem[LW99]{LW99}
\newblock F. Lin and C. Wang, 
\newblock Harmonic and quasi-harmonic spheres,
\newblock \emph{Communications in Analysis and Geometry}, \textbf{7} (1999), 397-429.

\bibitem[LW02a]{LW02a}
\newblock F. Lin and and C. Wang, \newblock Harmonic and quasi-harmonic spheres, Part II,
\newblock \emph{Communications in Analysis and Geometry}, \textbf{10} (2002), 341-375.

\bibitem[LW02b]{LW02b}
\newblock F. Lin and C. Wang,
\newblock Harmonic and quasi-harmonic spheres, Part III, rectifiability of the parabolic defect measure and generalized varifold flows,
\newblock \emph{Annales de l'Institut Henri Poincar\`{e} C}, \textbf{19} (2002), 209-259.

\bibitem[LW08]{LW08}
\newblock F. Lin and C. Wang,
\newblock The analysis of harmonic maps and their heat flows,
\newblock World Scientific Publishing, Hackensack, 2008.

\bibitem[Mat23]{Mat23}
\newblock P. Mattila,
\newblock Rectifiability: A Survey
\newblock Cambridge University Press, 2023. 

\bibitem[Mos03]{Mos03}
\newblock R. Moser,
\newblock Stationary measures and rectifiability,
\newblock \emph{Calculus of Variations and Partial Differential Equations}, \textbf{17} (2003), 357-368.

\bibitem[NV17]{NV17}
\newblock A. Naber and D. Valtorta,
\newblock Rectifiable-Reifenberg and the regularity of stationary and minimizing harmonic maps,
\newblock \emph{Annals of Mathematics}, \textbf{85} (2017), 131-227.

\bibitem[NV18]{NV18}
\newblock A. Naber and D. Valtorta,
\newblock Stratification for the singular set of approximate harmonic maps,
\newblock \emph{Mathematische Zeitschrift}, \textbf{290} (2018), 1415-1455.

\bibitem[NV24]{NV24}
\newblock A. Naber and D. Valtorta,
\newblock Energy identity for stationary harmonic maps,
\newblock \emph{arXiv preprint arXiv:2401.02242}, (2024).

\bibitem[Pac93]{Pac93}
\newblock F. Pacard, 
\newblock Partial regularity for weak solutions of a nonlinear elliptic equation,
\newblock \emph{manuscripta mathematica}, \textbf{79} (1993), 161-172. 

\bibitem[Pac94]{Pac94}
\newblock F. Pacard,
\newblock Convergence and partial regularity for weak solutions of some nonlinear elliptic equation: the supercritical case
\newblock \emph{Annales de l'Institut Henri Poincar\`{e} C}, \textbf{11} (1994), 537-551.

\bibitem[Poo91]{Poo91}
\newblock C. Poon,
\newblock Some new harmonic maps from $ \mathbb{B}^3 $ to $ \mathbb{S}^2 $,
\newblock \emph{Journal of Differential Geometry}, \textbf{34} (1991), 165-168.

\bibitem[Pre87]{Pre87}
\newblock D. Preiss,
\newblock Geometry of measures in $ R^n $: distribution, rectifiability, and densities,
\newblock \emph{Annals of Mathematics. Second Series}, \textbf{125} (1987), 537-643.

\bibitem[Pri83]{Pri83}
\newblock P. Price
\newblock A monotonicity formula for Yang-Mills fields,
\newblock \emph{manuscripta mathematica} \textbf{43} (1983), 131-166.

\bibitem[SU82]{SU82}
\newblock R. Schoen and K. Uhlenbeck,
\newblock A regularity theory for harmonic maps,
\newblock \emph{Journal of Differential Geometry}, \textbf{17} (1982), 307-335.

\bibitem[Sim83]{Sim83}
\newblock L. Simon, 
\newblock Lectures on geometric measure theory,
\newblock  Proceedings of the Centre for
Mathematical Analysis, Australian National University, 3. Australian National University, Centre for Mathematical Analysis, Canberra, 1983.

\bibitem[Sin18]{Sin18}
\newblock Z. Sinaei,
\newblock Convex functionals and the stratification of the singular set of their stationary points,
\newblock \emph{Advances in Mathematics}, \textbf{338} (2018), 502-548.

\bibitem[Str00]{Str00}
\newblock M. Struwe, 
\newblock Variational methods,
\newblock Berlin: Springer-verlag, 2000.

\bibitem[Tia00]{Tia00}
\newblock G. Tian,
\newblock Gauge theory and calibrated geometry, I
\newblock \emph{Annals of mathematics}, \textbf{151} (2000), 193-268.

\bibitem[TT04]{TT04}
\newblock T. Tao and G. Tian,
\newblock A singularity removal theorem for Yang-Mills fields in higher dimensions
\newblock \emph{Journal of the American Mathematical Society}, \textbf{17} (2004), 557-593.


\bibitem[Ved21]{Ved21}
\newblock M. Vedovato,
\newblock Quantitative regularity for p-minimizing maps through a Reifenberg theorem
\newblock \emph{The Journal of Geometric Analysis}, \textbf{31} (2021): 8271-8317.

\bibitem[Wan12]{Wan12}
\newblock K. Wang,
\newblock Partial regularity of stable solutions to the supercritical equations and its applications, \newblock \emph{Nonlinear Analysis: Theory, Methods \& Applications}, \textbf{75} (2012), 5238-5260.

\bibitem[Wan21]{Wan21}
\newblock Y. Wang,
\newblock Quantitative stratification of stationary connections,
\newblock \emph{Journal f\"{u}r die reine und angewandte Mathematik (Crelles Journal)}, \textbf{775} (2021), 39-69.

\bibitem[WW15]{WW15}
\newblock K. Wang and J. Wei,
\newblock Analysis of blow-up locus and existence of weak solutions for nonlinear supercritical problems,
\newblock \emph{International Mathematics Research Notices}, \textbf{2015} (2015), 2634-2670.

\bibitem[WW21]{WW21}
\newblock K. Wang and J. Wei,
\newblock Refined blowup analysis and nonexistence of type II blowups for an energy critical nonlinear heat equation,
\emph{arXiv preprint arXiv:2101.07186}, (2021).

\end{thebibliography}
\end{document}